\newtheorem{theorem}{Theorem}[section]
\newtheorem{proposition}[theorem]{Proposition}
\newtheorem{lemma}[theorem]{Lemma}
\newtheorem{definition}[theorem]{Definition}
\newtheorem{assumption}{Assumption}
\theoremstyle{definition}
\newtheorem{remark}{Remark}[section]
\newtheorem{example}{Example}[section]
\DeclarePairedDelimiterX{\bracket}[3]{#1}{#2}{#3}
\newcommand{\round}[1]{\bracket*{(}{)}{#1}}
\newcommand{\squarebrack}[1]{\bracket*{\lbrack}{\rbrack}{#1}}
\providecommand*{\eu}{\ensuremath{\mathrm{e}}} 
\providecommand{\newoperator}[3]{\newcommand*{#1}{\mathop{#2}#3}}
\providecommand{\renewoperator}[3]{\renewcommand*{#1}{\mathop{#2}#3}}
\renewoperator{\Re}{\mathrm{Re}}{\nolimits}
\renewoperator{\Im}{\mathrm{Im}}{\nolimits}
\providecommand*{\ped}[1]{\ensuremath{_\mathrm{#1}}}
\providecommand*{\slot}[1]{\ifblank{#1}{\,\cdot\,}{#1}}
\DeclarePairedDelimiterXPP{\nrm}[2]{}{\lVert}{\rVert}{\ensuremath{_{#1}}}{\ifblank{#2}{\:\cdot\:}{#2}}
\newcommand{\norm}[2]{\nrm*{#1}{#2}}
\newcommand{\enorm}[1]{\norm{2}{#1}} 
\newcommand{\infnorm}[1]{\norm{\infty}{#1}}
\newcommand{\normF}[1]{\norm{\mathrm{F}}{#1}}
\newcommand{\abs}[1]{\bracket*{\lvert}{\rvert}{#1}}
\newcommand{\inner}[1]{\bracket*{\langle}{\rangle}{#1}}
\newcommand{\ceil}[1]{\bracket*{\lceil}{\rceil}{#1}}
\newcommand{\floor}[1]{\bracket*{\lfloor}{\rfloor}{#1}}
\DeclarePairedDelimiterXPP\prob[1]{\mathbb{P}}{\lbrace}{\rbrace}{}{\renewcommand\given{\nonscript\:\delimsize\vert\nonscript\:\mathopen{}}#1} 
\newcommand{\Prob}[1]{\prob*{#1}} 
\DeclarePairedDelimiterXPP\probability[2]{\mathbb{P}_{#1}}{\lbrace}{\rbrace}{}{\renewcommand\given{\nonscript\:\delimsize\vert\nonscript\:\mathopen{}}#2} 
\DeclarePairedDelimiterXPP\expectation[1]{\mathbb{E}}{\lbrack}{\rbrack}{}{\renewcommand\given{\nonscript\:\delimsize\vert\nonscript\:\mathopen{}}#1} 
\newcommand{\E}[1]{\expectation*{#1}} 
\DeclarePairedDelimiterXPP\expectationdist[2]{\mathbb{E}_{#1}}{\lbrack}{\rbrack}{}{\renewcommand\given{\nonscript\:\delimsize\vert\nonscript\:\mathopen{}}#2} 
\newcommand{\Exp}[2]{\expectationdist*{#1}{#2}} 
\DeclarePairedDelimiterXPP\variance[1]{\mathrm{Var}}{\lbrack}{\rbrack}{}{\renewcommand\given{\nonscript\:\delimsize\vert\nonscript\:\mathopen{}}#1} 
\DeclarePairedDelimiterXPP\variancedist[2]{\mathrm{Var}_{#1}}{\lbrack}{\rbrack}{}{\renewcommand\given{\nonscript\:\delimsize\vert\nonscript\:\mathopen{}}#2} 
\DeclarePairedDelimiterXPP\covariance[2]{\mathrm{Cov}}{(}{)}{}{#1,\mathopen{}#2} 
\newcommand{\Cov}[2]{\covariance*{#1}{#2}} 
\DeclarePairedDelimiterXPP\covariancedist[3]{\mathrm{Cov}_{#1}}{(}{)}{}{#2,\mathopen{}#3} 
\DeclarePairedDelimiterXPP\law[1]{\mathrm{Law}}{(}{)}{}{\renewcommand\given{\nonscript\:\delimsize\vert\nonscript\:\mathopen{}}#1} 
\newcommand{\Law}[1]{\law*{#1}}
\newcommand{\inv}[1]{\frac{1}{#1}}
\newcommand{\indicator}[2]{\mathbbm{1}\ensuremath{_{#1}}\ifblank{#2}{}{\set{#2}}} 
\newoperator{\supp}{\mathrm{supp}}{\nolimits}
\providecommand*{\diff}
{\@ifnextchar^{\DIfF}{\DIfF^{}}}
\def\DIfF^#1{
	\mathop{\mathrm{\mathstrut d}}
	\nolimits^{#1}\gobblespace}
\def\gobblespace{
	\futurelet\diffarg\opspace}
\def\opspace{
	\let\DiffSpace\!
	\ifx\diffarg(
	\let\DiffSpace\relax
	\else
	\ifx\diffarg[
	\let\DiffSpace\relax
	\else
	\ifx\diffarg\{
	\let\DiffSpace\relax
	\fi\fi\fi\DiffSpace}
\providecommand*{\pdiff}
{\@ifnextchar^{\pDIfF}{\pDIfF^{}}}
\def\pDIfF^#1{
	\mathop{\mathrm{\mathstrut \partial}}
	\nolimits^{#1}\gobblespace}
\def\gobblespace{
	\futurelet\diffarg\opspace}
\def\opspace{
	\let\DiffSpace\!
	\ifx\diffarg(
	\let\DiffSpace\relax
	\else
	\ifx\diffarg[
	\let\DiffSpace\relax
	\else
	\ifx\diffarg\{
	\let\DiffSpace\relax
	\fi\fi\fi\DiffSpace}
\providecommand*{\deriv}[3][]{\frac{\diff^{#1}{#2}}{\diff {#3}^{#1}}}
\providecommand\given{}
\newcommand\SetSymbol[1][]{
	\nonscript\:#1\vert
	\allowbreak
	\nonscript\:
	\mathopen{}}
\DeclarePairedDelimiterX\Set[1]\{\}{
	\renewcommand\given{\SetSymbol[\delimsize]}
	#1
}
\newcommand{\set}[1]{\Set*{#1}}
\newcommand{\interval}[1]{\squarebrack{#1}}
\newcommand{\setinline}[1]{\Set{#1}}
\newcommand{\R}{\mathbb{R}}
\newcommand{\Rd}[1]{\mathbb{R}^{#1}}
\newcommand{\Natural}{\mathbb{N}}
\newcommand{\Integer}{\mathbb{Z}}
\newcommand{\id}{\mathrm{id}}
\newcommand{\G}{\mathbf{G}}
\newcommand{\Hmat}{\mathbf{H}}
\newcommand{\K}{\mathbf{K}}
\newcommand{\N}{\mathbb{N}}
\newcommand{\X}{\mathbf{X}}
\newcommand{\Y}{\mathbf{Y}}
\newcommand{\Bcal}{\mathcal{B}}
\newcommand{\Ccal}{\mathcal{C}}
\newcommand{\Dcal}{\mathcal{D}}
\newcommand{\Ecal}{\mathcal{E}}
\newcommand{\Fcal}{\mathcal{F}}
\newcommand{\Gcal}{\mathcal{G}}
\newcommand{\Hcal}{\mathcal{H}}
\newcommand{\Kcal}{\mathcal{K}}
\newcommand{\Lcal}{\mathcal{L}}
\newcommand{\Mcal}{\mathcal{M}}
\newcommand{\Ncal}{\mathcal{N}}
\newcommand{\Pcal}{\mathcal{P}}
\newcommand{\Scal}{\mathcal{S}}
\newcommand{\Tcal}{\mathcal{T}}
\newcommand{\Vcal}{\mathcal{V}}
\newcommand{\Wcal}{\mathcal{W}}
\newcommand{\Zcal}{\mathcal{Z}}
\newcommand{\Wfrak}{\mathfrak{W}}
\newcommand{\eps}{\epsilon}
\newcommand{\eq}{\begin{equation}}
\newcommand{\en}{\end{equation}}
\newcommand{\SP}[1]{\textcolor{red}{SP:#1}}
\newcommand{\ESBM}[1]{\mathrm{ESBM}{\left[#1\right]}}
\newcommand{\NN}{\mathbb{N}}
\newcommand{\hamil}{\mathcal{H}}
\newcommand{\statesp}{\mathcal{S}}
\newcommand{\mcal}[1]{\mathcal{#1}}
\providecommand*{\Graphons}{{\widehat\Wcal}}
\providecommand*{\mvGraphons}{{\widehat\Wfrak}}
\providecommand*{\cut}{\square}
\newcommand{\cutnorm}[1]{\norm{\cut}{#1}}
\newcommand{\W}{\mathbb{W}}
\newoperator{\tensor}{\otimes}{}
\newcommand{\Ber}[1]{\mathrm{Ber}\round{#1}}
\newcommand{\Sko}[1]{\mathrm{Sko}\round{#1}}
\def\dsub{d_{\sub}}
\def\sub{\mathrm{sub}}
 \def\babs#1{\bigl\vert#1\bigr\vert}
\newcommand{\RaghavS}[1]{{\color{orange}[Somani: #1]}}
\title{Path convergence of Markov chains on large graphs}
\author{Siva Athreya}
\address{Siva Athreya\\ ICTS-TIFR, Survey No. 151, Shivakote, 
Hesaraghatta Hobli,  Bengaluru - 560 089, India and
Indian Statistical Institute, Bangalore centre, Bengaluru, India 560059\\ {Email:athreya@isibang.ac.in}}
\author{Soumik Pal}
\address{Soumik Pal\\ Department of Mathematics \\ University of Washington\\ Seattle WA 98195, USA\\ {Email: soumikpal@gmail.com}}
\author{Raghav Somani}
\address{Raghav Somani\\ Paul G. Allen School of Computer Science \& Engineering \\ University of Washington\\ Seattle WA 98195, USA\\ {Email: raghavs@cs.washington.edu}}
\author{Raghavendra Tripathi}
\address{Raghavendra Tripathi\\ Department of Mathematics \\ University of Washington\\ Seattle WA 98195, USA\\ {Email: raghavt@uw.edu}}
\keywords{exchangeable arrays, exponential random graphs, gradient flows, graphons, measure-valued graphons, Metropolis algorithm, stochastic block model}
\subjclass[2000]{05C80, 60K35, 65C05}
\thanks{S. A. research was supported in part by Knowledge Exchange grant and Infosys Excellence grant at ICTS. S. P., R. S. and R. T. gratefully acknowledge the support from NSF grant DMS-2134012 (Scale MoDL). S. P. and R. T. are also partially supported by NSF grant DMS-2052239. Thanks to PIMS Kantorovich Initiative (KI) for facilitating this collaboration. KI is supported by a PIMS PRN and an NSF Infrastructure grant DMS-2133244. We also thank Zaid Harchaoui and Sewoong Oh for their support and helpful comments on this work.}
\date{\today}
\begin{document}

\begin{abstract}
We consider two classes of natural stochastic processes on finite unlabeled graphs. These are Euclidean stochastic optimization algorithms on the adjacency matrix of weighted graphs and a modified version of the Metropolis MCMC algorithm on stochastic block models over unweighted graphs. 
In both cases we show that, as the size of the graph goes to infinity, the random trajectories of the stochastic processes converge to deterministic curves on the space of measure-valued graphons. Measure-valued graphons, introduced by Lov\'{a}sz and Szegedy in \cite{lovasz2010decorated}, are a refinement of the concept of graphons that can distinguish between two infinite exchangeable arrays that give rise to the same graphon limit. We introduce new metrics on this space which provide us with a natural notion of convergence for our limit theorems. This notion is equivalent to the convergence of infinite-exchangeable arrays. Under  suitable assumptions and a specified time-scaling, the Metropolis chain admits a diffusion limit as the number of vertices go to infinity. We then demonstrate that, in an appropriately formulated zero-noise limit, the stochastic process of adjacency matrices of this diffusion converges to a deterministic gradient flow curve on the space of graphons introduced in~\cite{Oh2023}. A novel feature of this approach is that it provides a precise exponential convergence rate for the Metropolis chain in a certain limiting regime.  The connection between a natural Metropolis chain commonly used in exponential random graph models and gradient flows on graphons, to the best of our knowledge, is new in the literature as well.
\end{abstract}

\maketitle



\section{Introduction}\label{sec:Introduction}


Let $(\Omega, \mcal{F}, \mathbb{Q})$ be a probability space and let $\mcal{H}\colon \Omega \rightarrow [0,\infty]$ be a measurable function (called the Hamiltonian). We are often interested in the set of minimizers of $\mcal{H}$. Instead of finding the actual minimizers, which can be computationally expensive, one often considers a Gibbs measure on $(\Omega, \mcal{F})$ whose density, with respect to $\mathbb{Q}$, is proportional to $\exp(-\beta \hamil(\cdot))$, for some $\beta >0$. Here $\mathbb{Q}$ is usually taken to be some kind of a ``uniform'' probability distribution on $\Omega$. The parameter $\beta$ is often called the \textit{inverse temperature}. It is well known that, in many circumstances of interest, as $\beta \rightarrow \infty$, the Gibbs measure concentrates around the minimizers of $\hamil$ (see~\cite{hwa80}). Thus, one may replace the problem of optimization of $\hamil$ by a problem of sampling from the Gibbs measure for a large $\beta$. This is achieved by running suitable stochastic processes with an invariant distribution given by the Gibbs measure~\cite{LA87}. 


When $\Omega$ is continuous with a notion of differentiability of $\hamil$, such as $\R^d$, a natural stochastic process is the Langevin diffusion: 
\begin{equation*} \label{eq:sdeH} 
\diff X(t) = -\nabla \hamil (X(t))\diff t + \sqrt{\frac{2}{\beta}}\diff B(t),
\end{equation*} 
where $B$ is standard $d$-dimensional Brownian motion and $\beta$ is commonly called the inverse temperature parameter. In practice, stochastic gradient descent algorithms are used to mimic the paths of the Langevin diffusion in discrete time. As $\beta \rightarrow \infty$, the paths of the Langevin diffusion converge to that of the gradient flow of $\hamil$, namely $$\dot{x}(t)=-\nabla \hamil(x(t)),$$ which in a sense gives the fastest decay of the Hamiltonian. On the other hand, on discrete spaces or when the gradient of the Hamiltonian is not well defined, one employs a MCMC algorithm~\cite{Diaconis_MCMC}, such as the celebrated Metropolis algorithm~\cite[Section 2.4]{MCMC_survey}, to sample from the Gibbs distribution.


In this paper, $\Omega$ is the space of dense edge-weighted unlabeled graphs and we consider natural stochastic processes used to optimize a Hamiltonian $\hamil$. Examples of such stochastic processes are graph-valued Metropolis Markov chains and processes arising out of optimization algorithms on edge-weights such as stochastic gradient descent (SGD)~\cite{robbins1951stochastic,kiefer1952stochastic, benaim1999dynamics,kushner2003stochastic,borkar2009stochastic,moulines2011nonasymptotic,kushner2012stochastic}. A theme that we pursue is that with certain modifications and in a certain limiting regime, as the size of the graph goes to infinity, both processes are related to a gradient flow. More specifically, we analyze a Metropolis Markov chain on a stochastic block model (SBM). SBMs are a widely used family of models of random graphs (see~\cite{hll83, v18}). The base Markov chain runs on a SBM with $r$ communities, with $n$ members in each community, with an acceptance-rejection step specified by $\hamil$ and the inverse temperature parameter $\beta$. Our algorithm includes a novel relaxation procedure after each accept-reject step which introduces a further positive parameter $\sigma$. When we keep $r$ fixed and let $n \rightarrow \infty$, with other parameters suitably scaled, the edge densities between communities converge to a stochastic differential equation on $r \times r$ symmetric matrices (see Section~\ref{sec:relax_Metro} for an overview  and Proposition~\ref{thm:SBM_to_rDiffusion} for a precise statement of the result).  Further, in Proposition~\ref{prop:rDiffusion_to_McKeanVlasov} we prove that, as $r\rightarrow \infty$, the paths of the stochastic evolution of the edge density matrices converge to a deterministic curve on a metric space of \textit{measure-valued graphons} (MVG).

Although MVGs have already been introduced in the literature by~\cite{lovasz2010decorated}, our contribution here are twofold. 
First, we define two cut-like metrics, $\Delta_\blacksquare$ and $\W_\blacksquare$ (see Definitions~\ref{defn:mvg_cut_metric} and~\ref{defn:Wass_cut}), that capture the topology of convergence as in~\cite{lovasz2010decorated} under which the space of MVGs is a compact topological space. See Theorem~\ref{thm:equivalence_of_cut}  and Examples \ref{exp:moment_graphon}-\ref{exp:ternoulli} where we illustrate the strengths and nuances of this convergence. 
Second, we relate this convergence to the convergence of infinite exchangeable arrays (IEAs).  An (IEA) $\X$, see~\cite[Chapter 7]{kallenberg2005probabilistic}, is a doubly-indexed sequence of random variables $\left( X_{i,j}\right)_{(i,j)\in\Natural^{(2)}}$ defined on a single probability space whose joint distribution is invariant under finite permutations of its rows and columns. That is, if $\varsigma$ is any finite permutation on $\Natural$, then the joint distribution of $\left( X_{\varsigma_i,\varsigma_j}\right)_{(i,j)\in\Natural^{(2)}}$ is the same as that of the original array (see Section~\ref{sec:Setup_Results}). It follows from the Aldous-Hoover representation theorem~\cite{kallenberg1989representation}, that the IEAs are in one-to-one correspondence with random measure-valued graphons (MVG). In Theorem~\ref{thm:Correspondence} we prove a homeomorphism between the set of probability measures on the space of MVGs and the space of laws of IEA, each equipped with the corresponding weak topology. This is a generalization of \cite[Theorem 5.3]{diaconis2007graph} suited to our applications below. 
 
In Section~\ref{subsec:McKeanVlasov_mvg}, we define a general class of diffusions on symmetric $r \times r$ matrices and also show in Theorem~\ref{thm:mv} that under suitable assumptions these processes converge, as $r\rightarrow \infty$, to a deterministic curve on the space of MVGs specified by a McKean-Vlasov stochastic differential equation (SDE). See Proposition~\ref{prop:rate} for the explicit rate. This significantly strengthens a similar convergence results in~\cite{HOPST22} where a similar convergence statement had been obtained on the space of graphons as opposed to MVGs (or, equivalently, IEAs). One example, within the purview of Theorem~\ref{thm:mv}, is the $r\times r$ matrix of pairwise edge-densities from our Metropolis model above. We summarise this result in Proposition~\ref{prop:rDiffusion_to_McKeanVlasov}. The other kind of example covered by Theorem~\ref{thm:mv} is stochastic gradient descent of suitable functions on $r\times r$ symmetric matrices. In~\cite{HOPST22} it was argued that these stochastic gradient descents converge pathwise to the gradient flow on the space of graphons, a concept introduced in~\cite{Oh2023}. We show in Proposition~\ref{lem:Sigma0convergence} that, as the relaxation parameter $\sigma\rightarrow 0+$, the random trajectory of $r\times r$ random matrices of edge densities between communities generated by the Metropolis algorithm converges to a deterministic curve on graphons which the gradient flow of $\beta \hamil$ on the space of graphons. Therefore, under suitable convexity assumptions on $\hamil$, we obtain (see Proposition~\ref{prop:convergence_rate_MH}) an exponential rate of convergence of to the minimizer of $\hamil$. Combining this with Proposition~\ref{prop:rate}, which gives non-asymptotic error bounds, we obtain that, in a certain limiting regime, the adjanceny matrix from our Metropolis chain converges exponentially fast to the minimizer of $\hamil$.
This completes the cycle of ideas that constitute the theme of this paper that we had alluded to earlier. In Section~\ref{sec:graphons,mvgs,IEAs} we give basic definitions related to graphons and MVGs and introduce the Metropolis chain in Section~\ref{sec:intro_metropolis}. A reader familiar with these basics may directly go to Section~\ref{subsec:ComputationExample} where we give a computational example.

\vspace{-5pt}

\subsection{Graphons, measure-valued graphons and infinite exchangeable arrays}\label{sec:graphons,mvgs,IEAs}

We will fix  some objects and notations for the rest of the article. For any set $X$, we use $X^2$ to denote the usual Cartesian product $X \times X$ while $X^{(2)}$ is used to denote the set $X^2/{\sim}$ where we identify $(a, b) \sim (b, a)$ for all $a, b \in X$. We use this notation for domains of symmetric functions.

Let $\Wcal$ denote the space of all bounded measurable functions $w\colon~[0,1]^{(2)}\to\R$ that are symmetric, i.e., $w(x,y)=w(y,x)$ for all $(x,y)\in[0,1]^{(2)}$. Let {$\Wcal_{[0,1]}$} be the set of functions $w\in\Wcal$ with $0\le w(x,y)\le 1$ for $(x,y)\in[0,1]^{(2)}$. More generally, for a bounded interval $I\subset\R$, let $\Wcal_I$ be the set of all functions $w\in\Wcal$ with $w(x,y)\in I$. Given a function $w\in\Wcal$, we can think of the interval $[0,1]$ as the set of nodes, and of the value $w(x,y)$ as the weight of the edge $(x,y)\in[0,1]^{(2)}$. We call the functions in $\Wcal$ {\it kernels}. For every kernel $w$ we may define the homomorphism density function with respect to a simple graph $F$ as follows: If $F$ is a simple graph with $V(F)=\{1,\dots,k\}$, then let
\begin{align}
    t(F,w)\coloneqq \int_{[0,1]^k} \prod_{\set{i,j}\in E(F)} w(x_i,x_j)\diff x.\label{eq:homomorphism_density_graphon}
\end{align}
Every weighted graph $G$ with vertices labeled $[n]\coloneqq \{1,\dots,n\}$ and edge-weights $\beta_{i,j}$ between any two vertices labeled $i$ and $j$ induces a piecewise constant kernel as follows. Let $Q_n \coloneqq \set{Q_{n,i}}_{i\in[n]}$ be defined as $Q_{n,1}=\squarebrack{0,\frac{1}{n}}$, $Q_{n,2}=\left(\frac{1}{n},\frac{2}{n}\right\rbrack$, $\dots$, and $Q_{n,n}=\left(\frac{n-1}{n},1\right\rbrack$. Set $w_G(x,y)=\beta_{v(x),v(y)}$, where $v(x)=i$ whenever $x\in Q_{n,i}$ for some $i\in[n]$. Informally, we consider the symmetric adjacency matrix of $G$, and replace each entry $(i,j)$ by a square of size $\frac{1}{n}\times\frac{1}{n}$ with the constant function $\beta_{i,j}$ on this square. Let us denote the set of all $r\times r$ symmetric matrices with elements in $[0,1]$ and $[-1,1]$ by $\Mcal_{r,+}$ and $\Mcal_r$ respectively, i.e., $\Mcal_{r, +} \coloneqq [0,1]^{[r]^{(2)}}$ and similarly for $\Mcal_r$. It follows from our discussion that, for any $r \in \Natural$, the set $\Mcal_r$ can be naturally identified with a subset of finite dimensional kernels, $\Wcal_r \subset \Wcal$. This identification/embedding will be denoted by $K$ (as in $K(A)$ is the kernel corresponding to the matrix $A$) and its inverse will be denoted by $M_r$ (as in matrix). A simple unweighted graph $G$ can be thought of as a weighted graph with edge-weights being the indicators that the edges exist. Therefore, the notion of homomorphism density $t(F, \slot{})$ extends naturally to weighted or unweighted simple graphs as well.


If two graphs are isomorphic, the two may be identified by permuting the labels on the vertices. This may be done for kernels by the following equivalence relation. Recall that a map $\varphi\colon~[0,1]\to[0,1]$ is {\it Lebesgue measure-preserving}, if it is measurable and push-forwards the Lebesgue measure to itself, i.e. $\abs{\varphi^{-1}(A)} = \abs{A}$ for all Borel measurable sets $A\subseteq [0,1]$. Throughout this paper, we will refer to a Lebesgue measure-preserving map simply as measure-preserving map and denote the set of all such maps by $\Tcal$. For $w\in \Wcal$ and $\varphi\colon~[0,1]\to[0,1]$, we define $w^\varphi$ by $w^\varphi(x,y)=w(\varphi(x),\varphi(y))$ when $(x,y) \in [0,1]^{(2)}$. By definition, it is immediate that, if $\varphi$ is measure-preserving, $t(F,w) = t(F,w^\varphi)$ for every finite simple graph $F$. So one defines an equivalence relation $\cong$ on $\Wcal$ such that $w_1\cong w_2$ if there exist measure preserving transformations $\varphi_1,\varphi_2\colon~[0,1]\to[0,1]$ and $w\in\Wcal$ such that $w_1=w^{\varphi_1}$, and $w_2 = w^{\varphi_2}$. We will call $\widehat{\Wcal}\coloneqq \Wcal/{\cong}$ as the space of graphons and we will refer to any element as a graphon. Wherever it is clear from the context, for any kernel $w\in\Wcal$, we will use an abuse of notation and use the same symbol $w$ to denote the equivalence class, or the graphon, corresponding to the kernel. 



It is shown in~\cite{lovasz2006limits} or~\cite[Theorem 3.1]{borgs2008convergent} that $\widehat{\Wcal}_{[0,1]}\coloneqq \Wcal_{[0,1]}/{\cong}$, the space $[0,1]$-valued graphons, is a \textit{completion} of the space of finite unlabeled graphs with respect to the \textit{cut metric} $\delta_{\cut}$, which first appeared, for matrices, in the work of Frieze and Kannan~\cite{frieze1999quick} (see Definition~\ref{def:cut_metric}). It is also well known that  $\widehat{\Wcal}_{[0,1]}$ is related to infinite exchangeable arrays taking values in $\set{0,1}$~\cite{austin2012exchangeable,diaconis2007graph,janson2010graphons}. 
To understand the relation between IEAs and graphons, it is useful to consider an analogy from the classical de Finetti's Theorem~\cite[Theorem 1.1]{kallenberg2005probabilistic}. An infinite exchangeable sequence of random variables $(X_i)_{i\in \Natural}$ corresponds to a random probability distribution in the sense that the sequence of finite empirical distributions $\frac{1}{n} \sum_{i=1}^n \delta_{X_i}$ converges weakly, in probability, to this (random) probability distribution, as $n\rightarrow \infty$. Moreover, given an instance of this random distribution, the conditional distribution of the exchangeable sequence is i.i.d. with the same law. Similar correspondence between IEAs and (random) graphons is achieved by a powerful generalization of de Finetti's theorem due to Aldous and Hoover~\cite{aldous1981representations,hoover1982row,aldous1982exchangeability,kallenberg1989representation}. It states that for every infinite symmetric exchangeable array ${\bf X}=(X_{i,j})_{(i,j)\in \N^{(2)}}$ there exists a Borel measurable function $f\colon [0, 1]^{4}\to \R$ satisfying $f(\slot{}, x, y, \slot{})=f(\slot{}, y, x, \slot{})$ for a.e. $(x,y)\in[0,1]^{(2)}$, and a collection of i.i.d. $\mathrm{Uniform}[0, 1]$ random variables $U,\set{U_i}_{i\in\Natural},\set{U_{i,j}=U_{\set{i,j}}}_{i,j\in\Natural}$ on some probability space such that the IEA $\Y$ defined as $Y_{i, j}=f(U, U_i, U_{j}, U_{\{i, j\}})$ for all $(i,j)\in\Natural^{(2)}$,  has the same distribution as $\X$.

Thus, via an Aldous-Hoover representation, an IEA ${\bf X}$ induces a random graphon $w^{(U)}$, defined as
\begin{equation}\label{eqn:AH_Graphon}
    w^{(u)}(x, y) \coloneqq \E{f(U, U_1, U_2, U_{1,2})\given U=u, (U_1, U_2)=(x,y)},
\end{equation}
for $(x,y)\in[0,1]^{(2)}$ and $u\in[0,1]$.

%

In~\cite[Section 5]{diaconis2007graph} it is shown that, when the entries in IEA take values in $\set{0,1}$, there is a one-to-one correspondence between IEAs and probability distributions on the space of graphons.
However, when the IEA takes more nontrivial values, this is no longer the case. We illustrate this via an example.
Let $\G=(G_{i,j})_{(i, j)\in \N^{(2)}}$ be an IEA such that every $G_{i,j}$ is an i.i.d. $\mathrm{Ber}(1/2)$ random variables. 
Let $\K=(K_{i,j})_{(i, j)\in \N^{(2)}}$ be a constant IEA such that $K_{i,j}\equiv 1/2$ for all $i, j\in\Natural$. Both the IEAs $\G$ and $\K$ correspond to a constant graphon $w_{p}\equiv 1/2$. 

We propose a remedy for this by expanding the definition of a graphon to take values in $\mathcal{P}\left([-1,1]\right)$, the set of Borel probability measures on $[-1,1]$.

\begin{definition}[Measure-valued kernel]\label{def:mvKernel}
    A \emph{measure-valued kernel} is a measurable function $W\colon [0, 1]^{(2)}\to \Pcal([-1, 1])$ such that $W(x, y)=W(y, x)$ for a.e. $(x,y)\in[0,1]^{(2)}$. Here $\Pcal([-1, 1])$ is the space of probability measures on the interval $[-1, 1]$ equipped with the Borel sigma-algebra generated by the topology of weak convergence. 
    We will denote the set of all measure-valued kernels by $\Wfrak$.
\end{definition}

Measure-valued graphons are defined similarly in Definition~\ref{def:mvGraphons}. More details can be found in~\cite{lovasz2010decorated,kovacs2014multigraph} where a notion of convergence based on \textit{decorated} homomorphism functions is also discussed. In this paper we show that this convergence is metrizable via a metric similar to the Wasserstein metric for probability measures.

\begin{definition}[Natural projection from $\mvGraphons$ to $\Graphons$]\label{def:natural_projection}
Given a measure-valued kernel $W\in\Wfrak$, we can define a corresponding kernel $w\in\Wcal$ defined as $\displaystyle w(x, y)\coloneqq \int_{[-1, 1]} \zeta \, \, W(x, y)(\diff \zeta)$ for a.e. $(x,y)\in[0,1]^{(2)}$. This naturally defines a projection from $\mvGraphons$ to $\Graphons$. We will often refer to this projection as \emph{natural projection} and denote $w=\E{W}$. This map from $(\mvGraphons, \Delta_\blacksquare)$ to $(\Graphons, \delta_{\cut})$ is $1$-Lipschitz  as seen from Definition~\ref{defn:mvg_cut_metric}.
\end{definition}

Suppose an IEA ${\bf X}$ has an Aldous-Hoover representation given by a function $f$. Analogous to equation~\eqref{eqn:AH_Graphon}, one can define a random measure-valued kernel $W\in\Wfrak$ (and hence an MVG) as follows. For $(x,y)\in[0,1]^{(2)}$, set 
\begin{align}\label{eqn:AH}
    W^{(u)}(x, y) &\coloneqq \Law{X_{i,j}\,\vert\, U=u, (U_i,U_j)=(x,y)}, \quad u\in[0,1].
\end{align}
Conversely, an MVG $W$ generates an IEA in the following manner. Let $(U_i)_{i \in \Natural}$ denote an i.i.d. sequence of $\mathrm{Uniform}[0,1]$ random variables. Define an IEA $\X$, where, given $(U_i=u_i)_{i \in \Natural}$, $X_{i,j}$ for $(i,j)\in\Natural^{(2)}$, is independently sampled from the probability measure $W(U_i, U_j)\in\Pcal([-1,1])$. This indeed remedies the issue raised in the example above with IEAs $\G$ and $\K$. The MVG corresponding to $\G$ is $W_{\G}(x, y)\equiv \frac{1}{2}(\delta_0+\delta_{1})$ while the MVG corresponding to $K$ is $W_{\K}(x, y)\equiv \delta_{1/2}$, where $\delta_{\cdot}$ refers to the delta mass.  Note that the random graphon $w$ corresponding to IEA $\X$ in~\eqref{eqn:AH_Graphon} can be obtained from the random MVG $W$ as defined in~\eqref{eqn:AH} via the natural projection of $W$. If we take $n\times n$ blocks of the IEA $\X$, one can show that, as $n\rightarrow \infty$, this sequences of random exchangeable matrices converge to deterministic limits in the spaces of both graphons and measure-valued graphons. The two limits are related by the natural projection. We carry this idea in the dynamics setting as well.



This correspondence between IEAs and random MVGs recovers the results in~\cite{diaconis2007graph} for general exchangeable arrays. In Section~\ref{sec:Setup_Results}, we define the space of MVGs and notion of convergence that defines the topology on the space of MVGs. We introduce two new metrics on MVGs analogous to the usual cut metric and the Wasserstein metric respectively. In Theorem~\ref{thm:equivalence_of_cut} we show the equivalence of all these. We, then, prove a correspondence theorem between the compact metric space of probability measures on MVGs and the space of IEA equipped with the weak topology in Theorem~\ref{thm:Correspondence}. This allows us to consider processes on exchangeable matrices/graphs and take their limits either as IEAs or as MVGs. 

\subsection{Limits of Markov Processes on Weighted Graphs}\label{sec:intro_metropolis}



We introduce the following general class of deterministic curves in the space of MVGs described by a stochastic differential equation (SDE). Suppose we are given a pair of  functions 
\begin{equation}\label{eqn:Def_b_Sigma}
    \begin{split}
        b\colon [-1,1]\times \Wfrak \to L^\infty\big([0,1]^{(2)}\big),\; \text{and}\quad 
        \Sigma\colon [-1, 1]\times \Wfrak \to L^\infty([0,1]^{(2)}),
    \end{split}
\end{equation}
where $L^\infty([0,1]^{(2)})$ is the set of all functions $f\colon[0, 1]^{(2)}\to \R$ such that $\infnorm{f}< \infty$.

Let $(\Omega, \Fcal, \mathbb{P})$ be a probability space that supports a standard Brownian motion $B(\cdot)$ and a pair of independent $\mathrm{Uniform}\interval{0,1}$ random variables $U, V$. Given $W_0\in \Wfrak$, consider the following coupled system $(X, W, U, V)$ of one-dimensional reflected diffusion $X$, a curve $W$ on $\Wfrak$ and the pair of uniform random variables, such that given $(U, V)=(u,v)$,  the process $X(\cdot)$ satisfies the initial condition $X(0)\sim W_0(u, v)$ and the SDE
\begin{equation}\label{eq:McKeanV1}
\begin{split}
    \diff X(t) &= b\left( X(t), W(t)\right)(u, v)\diff t + \Sigma(X(t), W(t))(u, v)\diff B(t)\\
    &\qquad\qquad\qquad+ \diff L^-(t) - \diff L^+(t),\\
     W(t)(x,y) &\coloneqq  \Law{ X(t) \given (U,V)=(x,y)},\quad \forall\; (x,y)\in\interval{0,1}^{(2)},
\end{split} 
\end{equation}
 where $(X,L^+,L^-)$ solves the Skorokhod problem~\cite{kruk2007explicit} with respect to $\interval{-1,1}$ (see Section~\ref{sec:Skorokhod} for details). The system described by equation~\eqref{eq:McKeanV1} will be referred to as the MVG \textit{McKean-Vlasov SDE} (MVSDE). Under appropriate assumptions on $b$ and $\Sigma$, Proposition~\ref{prop:McKean-Vlasov_existence} shows that the MVSDE admits a pathwise unique solution. Notice that $W$ is a deterministic curve on measure-valued kernels, and thus, on measure-valued graphons. A similar McKean-Vlasov SDE was introduced in~\cite{HOPST22} but the convergence was obtained only in the sense of graphons and, hence, cannot capture the convergence of general exchangeable arrays. However, there is a corresponding deterministic curve on graphons $w(t)=\E{W(t)}$ given by the natural projection map. It is useful to think of $w$ as capturing the evolution of macroscopic properties while $W$ describing the microscopic properties.   

Where do such processes appear? In Section~\ref{subsec:McKeanVlasov_mvg} we consider a general class of diffusion on symmetric $r \times r$ matrices whose coordinates are exchangeable and are evolving under a suitable mean-field interaction. In Theorem~\ref{thm:mv} we prove that processes in this general class have corresponding deterministic limits that are examples of~\eqref{eq:McKeanV1}. This is natural since one can spot that~\eqref{eq:McKeanV1} is equivalently characterized by an IEA of independent diffusions satisfying the McKean-Vlasov SDE generated by an i.i.d. sequence of $\mathrm{Uniform}[0,1]$ random variables. Such diffusions naturally arise in the context of stochastic gradient descent of functions defined on the space of graphs. For another example, consider the problem of ``soft'' optimization described at the very beginning where, for $\beta>0$, one may wish to consider a Gibbs measure on $\Graphons$ with a ``density'' proportional to $\exp\left( - \beta \hamil \right)$. However, as there is no canonical measure on the space of graphons, this does not seem feasible. On the other hand, consider $\hamil$ restricted to the space of $r\times r$ graphons $\Graphons_r$. By pulling back the natural map from kernels to graphons and identifying $r\times r$ kernels with $r\times r$ symmetric $[0,1]$-valued matrices, one can think of $\hamil$ as a function $H_r$ on symmetric matrices, i.e., $H_r = \Hcal \circ K$ on $\Mcal_{r,+}$. One can define a natural Gibbs measure on $\Mcal_{r, +}$ corresponding to $H_{r}$. A large class of commonly used models fall in this umbrella. See the thesis \cite{ChernThesis} for a historical development and some beautiful real-world applications. In particular, it appears in statistical physics models such as the Curie-Weiss models~\cite[Chapter 4]{ChernThesis}, the exponential random graph models (ERM)~\cite[Chapter 5]{ChernThesis}. We may wish to sample from such a Gibbs measure whether we are trying to find graphs that approximately minimize the Hamiltonian (i.e., an approximate nonparametric maximum likelihood estimator such as MCMLE \cite[Chapter 3.3]{ChernThesis}) or we are sampling from a Bayesian posterior distribution. Although Metropolis or the Gibbs sampling algorithms are popular choices to run MCMC algorithms, their mixing times are generally not known. Another example comes from a series of works of Radin, Sadun and others \cite{Radin18, Radin20, Radin23, radin2023optimal} on the so-called edge-triangle model. Their focus is on a typical graphs with a given number of edges and triangles and to show that they exhibit phase transitions. In \cite[Section 3.1]{Radin23} the authors construct an MCMC scheme to sample from an edge-triangle model. They justify convergence, not theoretically, but empirically. Given a target edge density $e$ and a triangle density $t$, one may easily construct a Hamiltonian that gets minimized when the edge-density and the triangle densities are $e$ and $t$, respectively. Then, sampling from this Gibbs measure will approximately sample from an edge-triangle model. 

We will show that, for suitable $\hamil$, satisfying a semiconvexity condition (Assumption~\ref{asmp:hamil}), the edge density matrix obtained from the Metropolis chain admits limits that are particular cases of \eqref{eq:McKeanV1}. With stricter convexity assumptions we will also be able to say something about the exponential rate of convergence. In particular, this is true for all linear combinations of homomorphism functions ~\cite[Section 5.1.2]{Oh2023}. Notice that given $W\in \Wfrak$, consider the corresponding kernel $w=\E{W}\in\Wcal_{[0,1]}$ via the natural projection. Therefore, any function $b_0:[-1, 1]\times \Wcal\to \rightarrow L^\infty\big([0,1]^{(2)}\big)$ naturally gives a function $b:[-1,1]\times \Wfrak\rightarrow L^\infty\big([0,1]^{(2)}\big)$ via the pullback $b(z, W)=b_0(z, w)$. Such drift functions will naturally arise in our examples. 
Let $\hamil$ be a Hamiltonian on $\Wfrak$ that admits a Fr\'echet-like derivative $D\Hcal$ (see Definition~\ref{def:frechet_like_derivative}), and fix a parameter $\beta >0$. The solution to the McKean-Vlasov SDE~\eqref{eq:McKeanV1} with the drift function induced by $b_0(w)=-\beta D\hamil(w)$ and constant $\Sigma\equiv \sigma$ is analogous to the Langevin diffusion on Euclidean spaces. This family of processes arises as the limit of both stochastic gradient descent on  symmetric matrices as well as the following Metropolis chain on the popular stochastic block models (SBM) \cite{eldan2018exponential}.

\begin{definition}[Empirical Stochastic Block Model (ESBM)] \label{def:ESBM}
    For $r, n \in \NN$ let $q\equiv(q_{i,j})_{1\leq i,j\leq r}\in\Mcal_{r,+}$, and let $N=rn$. A random simple graph with $N$ vertices is called $\ESBM{r,n,q}$ if
    \begin{itemize}
        \item for $i \in [r]$, there are $n$ many vertices having color $i$,
        \item for $i,j\in[r]$, $i\neq j$, $n^2q_{i,j}$ many edges (unordered pairs of vertices $\{u,v\}$) are drawn by randomly sampling without replacement where one vertex has color $i$ and the other has color $j$,
        \item for $i\in[r]$, $\binom{n}{2}q_{i,i}$ many edges are drawn by randomly sampling without replacement unordered pairs of vertices of color $i$, and
        \item the samplings in the last two items are done independently for all pairs $(i,j)\in [r]^{(2)}$.
    \end{itemize}
\end{definition}

To construct the Gibbs probability measure on $\Mcal_{r, +}$, we will be interested in $\ESBM{r,n,q}$ random graphs where the entries of $q$ are also random. For each $n\in \NN$, consider the uniform distribution $\mu_n$ on the discrete set $\set{i/n^2\given i \in \{0\}\cup [n^2]}$  and $\nu_n$ the uniform distribution on the discrete set $\set{i/\binom{n}{2}\given i \in \{0\}\cup \left[\binom{n}{2}\right]}$. Define $U_{n,r}$ to be the probability measure on $\Mcal_{r, +}$ where each entry above the diagonal is independently distributed as $\mu_n$ and the diagonal entries are independently distributed as $\nu_n$. Thus $U_{n,r}$ can be viewed as a discrete uniform distribution on the set of possible edge-densities.

Recall that $H_{r}$ is the restriction of $\hamil$ on the space of $r\times r$ symmetric matrices for each $r\in\Natural$. Fix a positive sequence $(\gamma_n)_{n\in \NN}$ such that 
\begin{equation}\label{eqn:gammagrowth}
    \lim_{n\rightarrow \infty} \gamma_n\log^2 n=0,\quad \text{and} \quad \lim_{n\rightarrow \infty}\frac{\gamma_n n^2}{\log n}=\infty.
\end{equation}
Fix $\beta>0$ and let $\beta_{n,r}\coloneqq \beta r^{-2}/\gamma_n$. 
Consider a family of Gibbs probability measures on $\Mcal_{r,+}$ given by
\[
    \widehat{Q}_{n,r,\beta}(\diff q) = \frac{1}{Z_{n,r, \beta}} \eu^{-\beta_{n, r} H_r(q)} U_{n,r}(\diff q)= \frac{1}{Z_{n,r, \beta}} \eu^{-\beta \gamma_n^{-1} r^{-2} H_r(q)} U_{n,r}(\diff q).
\]
where $Z_{n,r, \beta}$ is the normalizing constant. As each $q\in\Mcal_{r,+}$ corresponds to a simple random graph in $\ESBM{r,n,q}$,  $\widehat{Q}_{n,r,\beta}$ can be thought of as  a random probability distribution on simple graphs over $rn$ vertices. We will denote the model specified  by $\widehat{Q}_{n,r,\beta}$, as $\ESBM{r,n,\beta,\hamil}$. It should be emphasized that the measure $\widehat{Q}_{n, r, \beta}$ depends on the choice of the parameter $\gamma_n$. Note that the above model closely resembles commonly used framework in exponential random graphs (see~\cite{chatterjee2017large} and references therein).


The following Metropolis chain algorithm (see~\cite[Section 3.2]{levin2017markov}) can be used to sample from $\ESBM{r,n,\beta,\hamil}$. 


\begin{itemize}
    \item {\em Base Markov Chain}: The state space of the chain is the set $\statesp_{n,r}$ of all simple graphs on $rn$ vertices with $r$ colors assigned to equal number of vertices. The base chain starts at an arbitrary graph $G(0)=G$ in the state space. Suppose, for $\ell \geq 0$, the Markov chain has completed $\ell$ steps, $\setinline{G(p)}_{p=0}^{\ell}$, and is at graph $G(\ell)$. For $(i,j)\in [r]^{(2)}$, let $m_{i,j}(\ell)$ denote the number of edges between vertices of color $i\in[r]$ and color $j\in[r]$ in $G(\ell)$. The next step in the Markov chain is generated as follows.  
    \begin{itemize}
        \item  For every $(i,j)\in [r]^{(2)}$, $i\neq j$, if $m_{i,j}(\ell)\notin \{0, n^2\}$, then, toss a fair coin. If the coin comes up heads, then delete an edge between color $i$ and color $j$, chosen at random, and if the coin turns up tails, place an additional edge between color $i$ and color $j$ at random. Replace $n^2$ by $\binom{n}{2}$ if $i=j$.
        \item For every $(i,j)\in [r]^{(2)}$, if $m_{i,j}(\ell)=0$, then toss a fair coin. If the coin comes up heads, then add an additional edge, chosen at random, and if the coin turns up tails, do nothing. Similarly, if $m_{i,j}(\ell)=n^2$, $i\neq j$ (or $\binom{n}{2}$, if $i=j$), then toss a fair coin. If the coin turns up heads then delete an existing edge, chosen at random, otherwise do nothing. 
        \item Do these independently for every pair $(i,j)\in [r]^{(2)}$.
    \end{itemize}
    The resulting graph is $G(\ell+1)$ and $q(\ell+1)=\round{{q}_{i,j}(\ell+1)}_{1\leq i, j \leq r}$ be its edge density matrix.  It is not hard to see that the base chain viewed as a process on edge densities is also a Markov chain that is reversible with respect to the uniform distribution $U_{n,r}$. 
    \item {\em Metropolis Chain}: We run the base chain for $s_n\approx \gamma_n^2n^{4}$ many steps followed by an accept-reject step. Suppose we started the base chain at graph $G$ and edge density matrix $q$. After running the base chain for $s_n$ many steps we arrive at a graph $G'$ and a corresponding edge density matrix $q'$. 
    \begin{itemize}
        \item {Accept-reject step}: Accept $G'$ as the next state of the Metropolis chain with probability
        $\exp\left( -\beta_{n,r}\left(  \hamil(q') - \hamil(q)\right)^+ \right)$,
        otherwise, remain at $G$. Here $x^+\coloneqq \max\set{x,0}$.
    \end{itemize}
\end{itemize}
It is standard to see the unique invariant distribution of this Metropolis Markov chain is the Gibbs measure $\widehat{Q}_{n,r,\beta}$. We will explore scaling limits of the chain as $n, r\rightarrow \infty$, $\gamma_n, \beta_{n,r}$ as specified above and when $s_n=O(\gamma_n^2n^4)$. 
But, first, we introduce an additional relaxation step.

   
\begin{itemize}
    \item {\em Relaxed Metropolis Chain}: After every Metropolis {accept-reject} step, we run the base chain for an additional $\ell_{n,r}(\sigma)=O(\sigma^2r^{-4}\gamma_n n^4)$ many steps, for some $\sigma>0$, and always accept the last state. 
\end{itemize}

Thus, our final Markov chain repeatedly runs the base chain for $s_n$ many steps, performs an {accept-reject step} and then runs another $\ell_{n,r}(\sigma)$ many steps of the base chain. We call this the \textit{relaxed Metropolis chain}. Since $\ell_{n,r}(0)=0$, when $\sigma=0$, we recover the true Metropolis chain. However, note that the relaxed chain has a different invariant distribution for any positive $\sigma$. 



Consider the resulting process of $r\times r$ matrix of pairwise edge-densities $q(\cdot)$. One may think of this exchangeable process as a Markov chain on symmetric matrices. In Proposition~\ref{thm:SBM_to_rDiffusion} and Proposition~\ref{prop:rDiffusion_to_McKeanVlasov} we show that, as $n\to\infty$ and $r\to\infty$, and the other parameters are scaled as above, the paths of this process  converge to a deterministic curve on MVG, describe by a McKean-Vlasov SDE~\eqref{eq:McKeanV1} with drift $b$ given by $-\beta D\hamil$ and $\Sigma\equiv \sigma$. By taking the natural projection from MVG to graphons, this implies that the paths of $q$ also converge (see Remark \ref{rem:grapon_curve_MKV}) to a deterministic curve on the space of graphons in the same scaling limit. For a fixed $r$, as $n\to \infty$ the adjanceny matrix of $\ESBM{r, n, q}$ converges to the edge density matrix $q$ in the cut metric. Therefore, this deterministic curve on graphons can be interpreted as the limiting evolution of the adjacency matrices of the sequence of graphs $G(\cdot)$ and is parameterized by $\beta>0$ and $\sigma >0$. 

Notice that the drift is a constant multiple of $-D\hamil$, the direction of steepest descent of $\hamil$. When $\sigma=0$, it is clear that the limiting curve is a time-reparametrization of the gradient flow of $\hamil$ on $\Wcal$. Proposition~\ref{lem:Sigma0convergence} shows that, as $\sigma\rightarrow 0$, the family of limiting curves on graphons converges to a time-changed gradient flow of $\hamil$. Finally, Proposition~\ref{prop:convergence_rate_MH} establishes the exponential convergence rate of this flow under appropriate convexity conditions on the Hamiltonian.
\subsection{A computational example from extremal graph theory}\label{subsec:ComputationExample}

To illustrate our results, we give a concrete example with numerical simulations. To motivate our example, we first recall the celebrated Mantel's theorem~\cite{mantel1907problem} from extremal graph theory. It states that the maximum number of edges in an $n$-vertex triangle-free graph is $n^2/4$. Further, any Hamiltonian graph with at least $n^2/4$ edges must either be the complete bipartite graph $K_{n/2,n/2}$ or it must be pancyclic~\cite{bondy1971pancyclic}. One may attempt to computationally verify this theorem by considering a ``softer" version of the problem. That is, consider the Hamiltonian $\Hcal(\slot{})\coloneqq t(\triangle,\slot{}) - \alpha t(\mathrel{-},\slot{})$ for sufficiently small $\alpha>0$. Here $\triangle$ and $\mathrel{-}$ are the triangle and the edge graphs respectively. Recall that the homomorphism density function $t(F,\slot{})$ of simple graph $F$, defined over unweighted graphs, simply computes the density of the simple graph $F$ in the unweighted graph. Thus, minimizing $\Hcal$ can be roughly thought of as an attempt to minimize the number of triangles in a graph while simultaneously maximizing the number of edges. The linear combinations of homomorphism densities also appear in the study of exponential random graph models (ERGMs) which is usually defined as a probability measure on finite graphs with density proportional to $\exp{(-\hamil)}$ where $\hamil$ is a linear combination of homomorphism density function~\cite{chatterjee2011large}. Hence, in either case the behavior of the Metropolis algorithm to simulate samples from the Gibbs measure is of interest.


We simulate the relaxed Metropolis chain sampling algorithm for $\Hcal$ with $\alpha = 1/4$, $n=16$, $r=16$, $\sigma = 1$, $\gamma_n = 1/4n$ and $\beta = 1/4$. In particular, $\Hcal(\slot{})\coloneqq t(\triangle,\slot{}) - \frac{1}{4} t(\mathrel{-},\slot{})$. The Fr\'echet-like derivative of the Hamiltonian is given by $D\hamil(w)(x,y) = 3\int_{[0,1]}w(x,z)w(z,y)\diff z - {1}/{4}$, for $(x,y)\in[0,1]^{(2)}$, which is an affine transformation of the homomorphism density of $2$-stars in the graphon.

The limit of the adjacency matrix process of the relaxed Metropolis chain as $n\to\infty$, followed by $r\to\infty$, and finally $\sigma\to 0$, is given by the a curve $w\colon \R_+ \to \Wcal_{[0,1]}$ given by
\begin{equation}
    w(t)(x, y)=w(0)(x, y)-\beta \int_0^t D\hamil(w(s))(x, y)\indicator{G_{w(s)}}{}\diff s, \qquad (x,y)\in[0,1]^{(2)},
\end{equation}
where the starting point $w(0)\in\Wcal_{[0,1]}$ is the $L^2$-limit of
the community edge density kernel of the initialization of the Metropolis chain as $r\to\infty$.  The set function $G_u$ for any $u\in\Wcal_{[0,1]}$, defined in equation~\eqref{eqn:G_function}, ensures that the velocity field does not point outside the domain of $\Wcal$ when any coordinate of the flow hits the boundary $[0,1]$.

\sloppy Since the drift is a constant multiple the Fr\'echet-like derivative of $\hamil$, the curve $w$ is a time reparametrization of the gradient flow of $\hamil$ defined in equation~\eqref{eqn:GF}. In Figure~\ref{fig:Mantel_MH} we see that the iteration sequence of the MCMC chain has a close resemblance with the curves shown in~\cite[Figure 1, Section 1.2]{Oh2023} which is a forward Euler discretization of the gradient flow of $\Hcal$ on $\big(\Graphons,\delta_2\big)$. After sufficiently many iterations, we see that the community density kernel corresponding to the graph $G^{(n)}_{r,3.7\times 10^5}$ is close to the graph the one corresponding to a complete bipartite graph as one would expect from Mantel's theorem.

\begin{figure}[!t]
	\centering
        \includegraphics[width=0.3\linewidth]{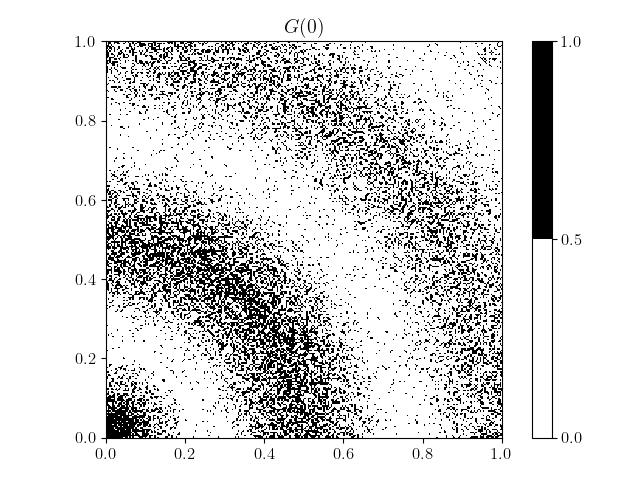}
        \includegraphics[width=0.3\linewidth]{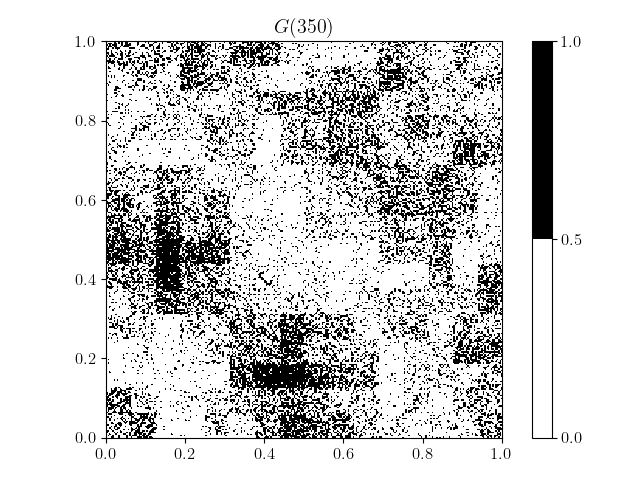}
        \includegraphics[width=0.3\linewidth]{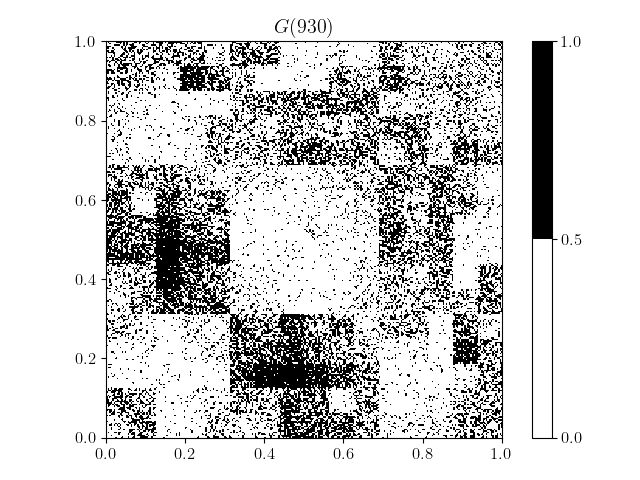}
        \includegraphics[width=0.3\linewidth]{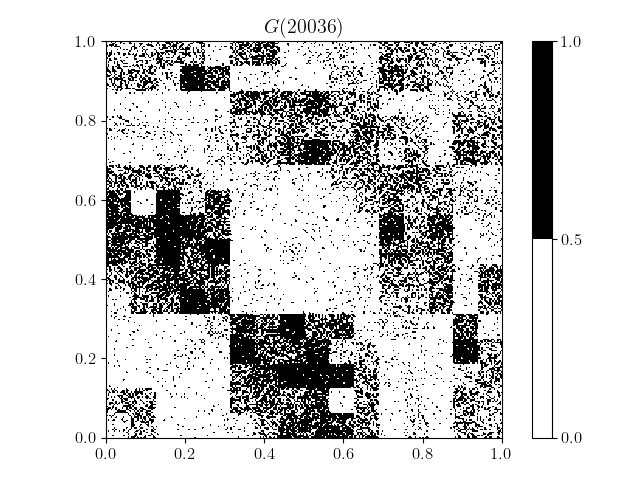}
        \includegraphics[width=0.3\linewidth]{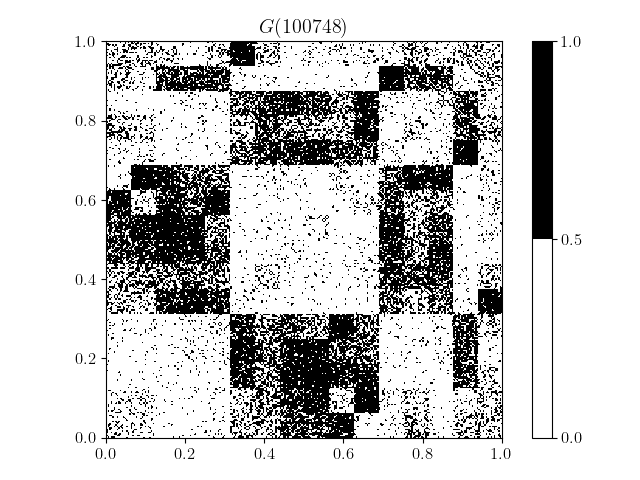}
        \includegraphics[width=0.3\linewidth]{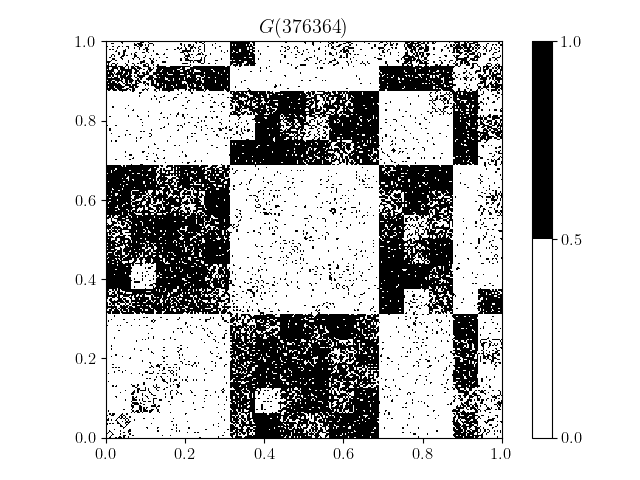}
	\caption{A relaxed Metropolis chain algorithm simulation for $\Hcal = t(\triangle,\slot{}) - \frac{1}{4} t(\mathrel{-},\slot{})$ at initialization and after $3.5\times 10^2$, $9.3\times 10^2$, $2.0\times 10^4$, $1.0\times 10^5$ and $3.7\times 10^5$ iterations respectively (order: from left to right).}
	\label{fig:Mantel_MH}
\end{figure}

\sloppy In Proposition~\ref{prop:convergence_rate_MH} we show that if the Hamiltonian is strongly convex, the curve $w$ converges to the minimizer of $\hamil$ with an exponential rate. Homomorphism density functions, although semiconvex, are not generally strongly convex. To remedy, one may add to the Hamiltonian a multiple of the scalar entropy function~\cite[Section 5.1.3]{Oh2023} that makes it strongly convex enough to guarantee an exponential rate of convergence. That is, for $\gamma>0$ large enough, the following new Hamiltonian $\hamil_\gamma$ defined as $\hamil_\gamma(w)\coloneqq  t(\triangle,w) - \frac{1}{4} t(\mathrel{-},w) + \gamma \int_{[0,1]^2} h(w(x,y))\diff x\diff y$, where $h(p)=p\log p + (1-p)\log(1-p)$ for $p\in(0,1)$ and zero if $p\in \{0,1\}$, is strongly convex and the corresponding gradient flow curve converges exponentially fast. In fact, in this particular example, $\hamil_{\gamma}$ as defined above is strongly convex for any $\gamma > 9/2$. In particular, if we set $\gamma = 5$, following Proposition~\ref{prop:convergence_rate_MH}, we obtain an exponential rate of convergence to the minimizer with rate $\beta\lambda$ with respect to the $\delta_2$ metric, where the semiconvexity constant $\lambda > \gamma-9/2 = 1/2$.  However, to compare our current simulation with those in~\cite{Oh2023} we do not add the entropy regularization here.

\section{Metric and Topology on Measure-valued graphons}\label{sec:Setup_Results}
The key results of this section are contained in Section~\ref{subsec:Metric_mvg} and Section~\ref{subsec:ExchangeableArray_and_mvg}. In Section~\ref{subsec:Background}, we setup the notations and the necessary background on graphons and MVGs. 
In Section~\ref{subsec:Metric_mvg} we introduce two novel metrics on $\mvGraphons$. In Theorem~\ref{thm:equivalence_of_cut} we show that these metrics induce the usual topology (see Definition~\ref{def:mvGraphons}) on $\mvGraphons$. In Section~\ref{subsec:ExchangeableArray_and_mvg}, we establish the correspondence between the distributions of IEAs and probability distributions on $\mvGraphons$ (see Theorem~\ref{thm:Correspondence}). We further use this connection to make precise the notion of convergence of symmetric exchangeable matrices to IEAs. 

\subsection{Background and notations}\label{subsec:Background}
We first recall some preliminaries on graphons which serves as a useful parallel. One key concept that is frequently used later is the notion of Fr\'echet-like differentiability (see Definition~\ref{def:frechet_like_derivative}) introduced in~\cite{Oh2023}.

\subsubsection{Topology, metric and gradient flow on graphons}\label{sec:BackgroundGraphons}
We begin with a brief introduction to dense graph limits introduced in~\cite{lovasz2006limits} and refer the reader to the monograph~\cite{lovasz2012large} and papers~\cite{borgs2008convergent,borgs2012convergent} for in-depth discussions. We consider  simple edge-weighted graphs. A simple graph $G$ is a graph without loops or multiple edges. We denote the vertex and edge set of $G$ by $V(G)$ and $E(G)$, respectively. An {\it edge-weighted graph} $G$ is a graph with a weight $\beta_{i,j}=\beta_{i,j}{(G)}\in\R$ associated with each edge $\{i,j\}\in E(G)$. Set $\beta_{i,j}=0$ if $\set{i,j}\notin E(G)$. Let~$(G_n)_{n\in\Natural}$ be a sequence of simple edge-weighted graphs with vertex set $[n] \coloneqq \{1,\dots,n\}$. We call~$(G_n)_{n\in\Natural}$ a \emph{dense graph sequence} if the number of edges~$E(G_n)$ in~$G_n$ is~$\Theta(n^2)$.

Let $F$ and $G$ be two simple graphs with edge-weights $1$. We define $\hom(F,G)$ as the number of homomorphisms from $F$ to $G$, i.e., the number of adjacency preserving maps $V(F)\to V(G)$, and the {homomorphism density} of $F$ in $G$ as
\[
    t(F,G)=\frac {1}{\abs{V(G)}^{\abs{V(F)}}}\hom(F,G).
\]
For  weighted graphs $G$ let $\displaystyle {\rm hom}(F,G)=\sum_{\phi\colon V(F)\to V(G)} \prod_{i,j\in E(F)}\beta_{\phi(i),\phi(j)}(G)$
and define the {homomorphism density} of $F$ in $G$ as
\[
    t(F,G)=\frac{{\rm hom}(F,G)}{\abs{V(G)}^k},
\]
where $k$ is the number of vertices in $F$. When $\abs{F} > \abs{G}$ we define~$t(F,G) = 0$.  Note that~$t(F,G)\in [0,1]$. Let ${\mathcal F}$ denote the class of isomorphism classes on finite graphs and let~$(F_{i})_{i\in\Natural}$ be a particular enumeration of~$\mathcal F$, where each element is the representative of an isomorphism class. We can define a distance function between graphs by $\dsub(G,G') = \sum_{i\in\Natural} 2^{-i}\babs{t(F_i,G) - t(F_i,G')}$.

A key feature of $\dsub$ is that there is a natural completion of $({\mathcal F}, \dsub)$ by {\em graphons}.  It is shown in~\cite{lovasz2006limits} or~\cite[Theorem 3.1]{borgs2008convergent} that a sequence $(G_n)_{n\in\Natural}$ of simple graphs is $\dsub$-Cauchy if and only if there is a graphon $w$ with values in $[0,1]$ such that $t(F,G_n)\to t(F,w)$ for every simple graph $F$ as $n\to\infty$.

There is a more convenient metric that captures the above completion. For any kernel~$w\in\Wcal$ the \emph{cut-norm} of~$w$ is defined as
\begin{equation}\label{eq:cut-norm} 
    \norm{\cut}{w} \coloneqq \sup_{S,T \subseteq [0,1]} \abs{\int_{S \times T} w(x,y) \diff x \diff y},
\end{equation}
where the supremum is taken over Lebesgue-measurable subsets of~$[0,1]$. 

The cut norm $\cutnorm{}$ as defined in equation~\eqref{eq:cut-norm} induces a metric on graphons called the 
\emph{cut metric}, denoted by $\delta_{\cut}$, when restricted to the quotient space $\widehat{\Wcal}\coloneqq\mathcal{\Wcal}/{\cong}$. Moreover, $(\Graphons, \delta_{\cut})$ is compact~\cite{Lovsz2007SzemerdisLF}.
\begin{definition}[Cut Metric~{\cite[Section 3.2]{borgs2008convergent}}]\label{def:cut_metric}
    For any two graphons $w_1, w_2\in \Graphons$, let us pick kernel representatives from these graphons and denote it by the same symbol. Then, 
    \[
        \delta_{\cut}(w_1, w_2)\coloneqq \inf_{\varphi, \psi\in\Tcal} \cutnorm{w_1^{\varphi}-w_2^{\psi}}.
    \]
     where $\Tcal$ is the set of all Lebesgue measure preserving transformations from $[0,1]$ to itself.
\end{definition}
The convergence in cut-metric is equivalent to the convergence of the homomorphism density functions $t(F, \slot{})$ for all finite simple graph $F$. This notion readily extends to weighted graphs as well and it has been employed to study the limit of processes on weighted graphs (or equivalently symmetric matrices) in~\cite{CraneAAP,bayraktar2020graphon,athreya2016dense,Oh2023,HOPST22} as the size of graphs grows to infinity. More generally, given any norm $\norm{}{}$ on $\mathcal{W}$, one can similarly define an induced metric $\delta_{\norm{}{}}$ on $\widehat{\Wcal}$. In particular, the induced metric due to the $L^2$ norm, $\enorm{}\colon L^2(\interval{0,1}^{(2)})\to\R_+$, is called the \emph{invariant $L^2$ metric}, $\delta_2$, and it will be used in our discussion.

The space $(\Graphons, \delta_2)$ enjoys a nice geometry like the Wasserstein space. In particular, it allows one to make sense of geodesic convexity of functions defined on $\Graphons$, have  a notion of differentiability, called the Fr\'echet-like differentiability, for $\Hcal\colon\Wcal \to \R$ and gradient flows on it. We define both of these next.

\begin{definition}[Fr\'echet-like derivative on $\Wcal$ {\cite[Definition 20]{Oh2023}}]\label{def:frechet_like_derivative}
    The Fr\'echet-like derivative of $\Hcal\colon \Wcal\to\R$ at $v\in\Wcal$, denoted $D\hamil(v)$, is given by some $\phi(v) \in L^\infty\big(\interval{0,1}^{(2)}\big)$ that satisfies the following condition,
    \begin{align}
        \lim_{\substack{w\in\Wcal,\\\norm{2}{w-v}\to 0}}\frac{\Hcal(w) - \Hcal(v) - \round{ \inner{\phi(v),w}- \inner{\phi(v),v}}}{\norm{2}{w-v}} &= 0,\label{eq:frechet_limit_def}
    \end{align}
    where $\inner{\slot{},\slot{}}$ is the usual inner product on $L^2\big(\interval{0,1}^{(2)}\big)$. If $\Hcal$ admits a Fr\'echet-like derivative at every $v\in\Wcal$, we say that $\Hcal$ is Fr\'echet-like differentiable. 
\end{definition}
The Euclidean gradient is related to this notion of differentiability by scaling. From~\cite[Lemma 4.10]{Oh2023}, $r^2\nabla H_r \equiv D\Hcal$ where $H_r = \Hcal\circ K$ for all $r\in\Natural$.  We will be only concerned  in functions $\Hcal\colon\Wcal\to \R$ that are \emph{invariant}, that is, $\Hcal(w)=\Hcal(w^{\varphi})$ for any Lebesgue measure-preserving map $\varphi\colon[0, 1]\to [0, 1]$. For such functions, the Fr\'echet-like derivative of $\Hcal$ can be thought of as a well-defined function on $\Graphons$ (See ~\cite[Section 4.2]{Oh2023}).

Let $\Hcal\colon \Graphons\to \R$ such that $\Hcal$ is Fr\'echet-like differentiable. For any $W_0\in \Wcal$ one can define curve on $\Wcal$ (and hence a curve on $\Graphons$ via projection) given by 
\begin{equation}\label{eqn:GF}
    w(t)(x, y)=w(0)(x, y)- \int_0^t D\Hcal(w(s))(x, y)\indicator{G_{w(s)}}{}\diff s, \qquad (x,y)\in[0,1]^{(2)},
\end{equation}
for $t \in [0,1]$, where $G_u\subseteq [0,1]^{(2)}$ for any $u\in\Wcal_{[0,1]}$ is defined as
\begin{align}
    \begin{split}
        G_u &\coloneqq \set{(x,y)\in[0,1]^{(2)}\given u(x,y)=1 , D\Hcal(u)(x,y)<0}\\
        &\qquad\cup \set{(x,y)\in[0,1]^{(2)}\given 0<u(x,y)<1}\\
        &\qquad\qquad\cup \set{(x,y)\in[0,1]^{(2)}\given u(x,y)=0 , D\Hcal(u)(x,y)>0}.
    \end{split}
    \label{eqn:G_function}
\end{align}
It is shown in~\cite[Theorem 4.14]{Oh2023} that the curve in $\Graphons$ obtained by projection of the curve defined in~\eqref{eqn:GF} is a \emph{gradient flow} of $\Hcal$ on $(\Graphons, \delta_2)$. Moreover, under appropriate assumptions on $\Hcal$, the Euclidean gradient flow of $H_r$ on $\Mcal_{r, +}$ converges in cut-metric, uniformly on compact subsets of time, to the curve described by~\eqref{eqn:GF}.
\subsubsection{Measure-valued graphons}\label{sec:mvg}
As mentioned already, the convergence of the homomorphism density functions $t(F, \slot{})$ can be used to define a notion of convergence for weighted graphs as well. However, a better approach to the convergence of weighted graphs is given by the convergence of \emph{decorated homomorphism density functions} that we describe below (see~\cite{lovasz2010decorated}). In the following, we will use $I$ to denote the compact interval $[-1, 1]$ and $\Ccal \equiv C(I)$, to denote the space of continuous functions on $I$. 
\begin{definition}[Decorated graph~{\cite[Section 2.1]{lovasz2010decorated}}] 
    Let $m \geq 1$ and $\Dcal\subseteq \Ccal$. Let $F=([m], E)$ be a simple graph. The pair $(F, f)$ is called a $\Dcal$-{\em decorated graph} where $f\colon E(F)\to \Dcal$ is a function from the edges $E(F)$ of $F$ to $\Dcal$. We will refer to $F$ as the \emph{skeleton} and $f$ as the \emph{decoration} of the decorated graph $(F, f)$. If there is no confusion, the decoration of a graph will be implicitly assumed without mention and we will denote $f(\{i, j\})$ by $F_{i,j}$ for $\{i, j\}\in E(F)$.
\end{definition}

Throughout this paper, a decorated graph will mean a $\Ccal$-decorated graph unless stated otherwise.  Let  $W\in\Wfrak$ be a measure-valued kernel (See Definition \ref{def:mvKernel}) and  $F$ a decorated graph. Following~\cite[Section 2.5]{lovasz2010decorated} one can define the (decorated) homomorphism density $t\ped{d}(F, W)$ of $F$ in $W$ as 
\begin{align}
     t\ped{d}(F, W) \coloneqq \int_{[0, 1]^{m}}\round{ \prod_{\{j, k\}\in E(F)}\int_{[-1,1]} F_{j,k}(\zeta)W(x_{j}, x_{k})(\diff \zeta)}\prod_{i=1}^{m}\diff x_{i}.\label{eq:decorated_homomorphism_density_W}
\end{align}

\begin{definition}[Measure-valued graphon~{\cite[Definition 2.4]{lovasz2010decorated}}]\label{def:mvGraphons}
    Define an equivalence relation $\sim$ on $\Wfrak$ such that $W\sim U$ if $t\ped{d}(F, W)=t\ped{d}(F, U)$ for every decorated graph $F$. Let $\mvGraphons\coloneqq \Wfrak/{\sim}$ be equipped with the weakest topology that makes $W\mapsto t\ped{d}(F, W)$ continuous for every decorated graph $F$. We will call $\mvGraphons$ (equipped with this topology), the space of measure-valued graphons. A measure-valued graphon (MVG) is an element in $\mvGraphons$. Naturally,  $W_n\to W$ in $\mvGraphons$ if $t\ped{d}(F, W_n)\to t\ped{d}(F, W)$ for every $\Ccal$-decorated graph $F$. We refer to this topology as the \emph{usual topology} on MVG throughout this paper. 
\end{definition}

Analogous to the space of kernels $\Wcal$, one defines an equivalence relation $\cong$ on $\Wfrak$ such that $W_1\cong W_2$ if there exist measure preserving transformations $\varphi_1,\varphi_2\colon~[0,1]\to[0,1]$ and $W\in\Wfrak$ such that $W_1=W^{\varphi_1}$, and $W_2 = W^{\varphi_2}$. It follows from~\cite[Theorem 11(ii)]{kunszenti2019uniqueness} that $W_1\cong W_2$ if and only if $t\ped{d}(F, W_1)=t\ped{d}(F, W_2)$ for every decorated graph $F$. In particular, the space of MVGs can be equivalently defined as $\mvGraphons\coloneqq \Wfrak/{\cong}$. Wherever it is clear from the context, for any measure-valued kernel $W\in\Wfrak$, we will use an abuse of notation and use the same symbol $W$ to denote the equivalence class, or the measure-valued graphon, corresponding to the measure-valued kernel.

\subsubsection{Embedding matrices and graphons into MVG}\label{subsec:Embedding}
Recall that a weighted graph or (equivalently a symmetric matrix) can be identified with a kernel (and hence a graphon). Similarly, a weighted graph or a graphon can be identified with a measure-valued kernel (and hence a measure-valued graphon). Let $M$ be an $n\times n$ symmetric matrix with entries in $I=[-1, 1]$. Let $\Mcal_n$ denote the set of all such matrices. Let $F$ 
be a $\Ccal$-decorated graph  on $[m]$ vertices. One can define the homomorphism density of $F$ in $M$, denoted $t\ped{d}(F, W)$, as 
\begin{equation}\label{eqn:Def_homDensity_M}
    t\ped{d}(F, M)\coloneqq \frac{1}{n^{m}}\sum_{i_1, \ldots, i_m}\prod_{\set{j,k}\in E(F)} F_{j, k} (M_{i_j, i_k}),
\end{equation}
where the summation runs over all indices $i_1,i_2,\ldots,i_m$ taking values in $[n]$. We make some simple observations. Observe that $t\ped{d}(F, M)=t\ped{d}(F, M^{\sigma})$ where $\sigma$ is any permutation of $[n]$ and  $M^\sigma_{i,j} = M_{\sigma(i),\sigma(j)}$ for all $(i,j)\in[n]^{(2)}$. Also note that one can naturally associate a measure-valued kernel, say $W_M\in\Wfrak$, with a symmetric matrix $M\in\Mcal_n$ as follows. For $n\in\Natural$, let $Q_n \coloneqq \set{Q_{n,i}}_{i\in[n]}$ be a partition of the interval $[0, 1]$ into contiguous intervals of equal length as defined in Section~\ref{sec:graphons,mvgs,IEAs}. Set $W_M(x, y) = \delta_{M_{i,j}}$ whenever $(x,y)\in Q_{n,i}\times Q_{n,j}$ for some $(i,j)\in[n]^{(2)}$. For any decorated graph $F$ we have $t\ped{d}(F, W_M)=t\ped{d}(F, M)$. Therefore, when there is no scope of ambiguity we make no distinction between a symmetric matrix $M$ and the corresponding MVG $W_M$. Similarly, if $w\in\Wcal$ is a graphon, we can define a corresponding MVG, say $W$ by setting $W(x, y)=\delta_{w(x, y)}$ for a.e. $(x,y)\in[0,1]^{(2)}$ and the notion of homomorphism density extends naturally. We denote this map taking a matrix/graphon to the corresponding MVG by $\Kcal$.

Let $\round{M_n\in\Mcal_n}_{n\in\Natural}$ be a sequence of matrices with growing dimension and let $W\in \mvGraphons$. We say that $\round{M_n}_{n\in\Natural}\to W$ as $n\to\infty$ if $t\ped{d}(F, M_n)\to t\ped{d}(F, W)$ as $n\to\infty$ for every decorated graph $F$. In particular, we will often say $\round{M_n}_{n\in\Natural}\to W$ as $n\to\infty$ where $\round{M_n}_{n\in\Natural}$ is a sequence of matrices, or $\round{w_n}_{n\in\Natural}\to W$ as $n\to\infty$ where $\round{w_n}_{n\in\Natural}$ is a sequence of graphons and $W$ is an MVG. It is to be understood that this convergence is with respect to decorated graphs, or equivalently these statements mean that MVG corresponding to $M_n$ (or $w_n$) converge to $W$ in $\mvGraphons$ as $n\to\infty$. For an $n\times n$ finite exchangeable random matrix $X$, we can define a measure valued kernel $W_X$ as $W_X(x,y) = \mathrm{Law}(X_{i,j})$ whenever $(x,y)\in Q_{n,i}\times Q_{n,j}$ for some $(i,j)\in[n]^{(2)}$. We will denote this map by $\Kcal$, i.e., $\Kcal(X) = W_X$. Note that the measure valued kernel cannot recover the joint distribution among the entries of $X$, unless they are mutually independent. 

\begin{remark}\label{rem:MVG_graphon_cnvg}
 Recall that $W\mapsto w\coloneqq \E{W}$ is Lipschitz (see Definition~\ref{def:natural_projection}). It follows that if $(M_n)_{n\in \Natural}$ is a sequence of matrices such that $\round{M_n}_{n\in\Natural}\to W$ as $n\to\infty$ for some $W\in \Wfrak$ in the MVG sense, then $\round{M_n}_{n\in\Natural}\to w$ as $n\to\infty$ in cut-metric as well. This illustrates that convergence in MVG sense is stronger than cut convergence.


\end{remark}

\subsection{Topology and metrics on measure-valued graphons}\label{subsec:Metric_mvg}

In this section we introduce an alternate notion of convergence for MVGs and two metrics on $\mvGraphons$. We then show that this new notion of convergence and the metrics introduced in this section give the same topology on $\mvGraphons$ as defined in Definition~\ref{def:mvGraphons}. 
\begin{definition}[Homomorphism density]
    Let $F$ be a finite simple connected graph and let $W\in \Wfrak$. The \emph{homomorphism density} of $F$ into $W$, denoted $t(F, W)$, is a probability measure on $I_F\coloneqq [-1, 1]^{E(F)}$ is defined as a mixture of probability measures as
    \begin{equation}\label{eqn:Def_homdensity_W}
        t(F, W) \coloneqq \int_{[0, 1]^{V(F)}} \tensor_{\set{i, j}\in E(F)} W(x_i, x_j) \prod_{v\in V(F)} \diff x_{v}.
    \end{equation}
\end{definition}
The measure in~\eqref{eqn:Def_homdensity_W} is to be interpreted as the unique measure on $I_{F}$ such that for any bounded measurable function $\varphi\colon I_F \to \R$, we have 
\begin{align}\label{eqn:Def_homdensity_W_alt}
    \int_{I_F} \varphi(\zeta) t(F, W)(\diff \zeta) &= \int_{[0, 1]^{V(F)}}\round{\int_{I_F} \varphi(\zeta) \tensor_{\set{i, j}\in E(F)} W(x_i, x_j)(\diff \zeta)}\prod_{k\in V(F)}\diff x_{k}.
\end{align}
Let $\{U_i\}_{i\in V(F)}$ be a collection of i.i.d. $\mathrm{Uniform}\squarebrack{0,1}$ random variables, then
\begin{align}\label{eqn:Def_homdensity_W_alt1}
     \int_{I_F} \varphi(\zeta) t(F, W)(\diff \zeta)&= \E{\inner{ \varphi, \tensor_{\set{i, j}\in E(F)} W(U_i, U_j)}},
\end{align}
where $\inner{\slot{},\slot{}}$ is the usual duality between continuous functions on $I_{F}$ and probability measures on $I_{F}$ and expectation is taken with respect to the random variables $\{U_{i}\}_{i\in \Natural}$. It is important to note that the homomorphism density of a simple graph $F$ into a graphon $w$ (see Section~\ref{sec:graphons,mvgs,IEAs}), $t(F, w)$ is a real number in $[0,1]$ whereas the homormorhism density of a simple graph $F$ into an MVG $W$, $t(F, W)$, is a (mixture) of probability measures. Secondly, in the context of MVGs, $t(F,W)$ is defined for a simple graph $F$ and it is a (mixture of) probability measure on $I_F$, on the other hand $t\ped{d}(F,W)$ is defined for a decorated graph $F$ and it is a real number.

\begin{definition}[Convergence of MVGs]\label{def:hom_density_convergence}
    A sequence of MVGs $\round{W_n}_{n\in\Natural}$ converge to a MVG $W$ \emph{in hom-density sense} if
    $\lim\limits_{n\to \infty} t(F, W_n)= t(F, W)$ weakly for every finite simple graph $F$.
\end{definition}

The above definition naturally extends to any measure-valued symmetric matrix $M$. And, using the embedding defined in Section~\ref{subsec:Embedding}, the definition can be naturally extended to symmetric matrices and graphons. We skip the details to avoid repetitions. 

We now introduce the metrics on MVGs. Let $\Lcal$ be the set of all Lipschitz functions $\psi \colon [-1,1] \to \R$ with bounded Lipschitz norm, i.e., $\norm{\rm BL}{\psi}=\max\{\norm{\infty}{\psi}, \norm{{\rm Lip}}{\psi}\}\leq 1$. Define an operator $\Gamma\colon \Lcal\times \Wfrak \to \Wcal$ defined as
\begin{align}
    \Gamma(\psi,W)(x,y) \coloneqq \int_{-1}^{1}\psi(\zeta)W(x,y)(\diff\zeta).\label{def:Gamma}
\end{align}
\begin{definition}[Generalized Cut norm on $\Wfrak$]\label{defn:CutNorm}
    For any $W\in\Wfrak$, define $\norm{\blacksquare}{\slot{}}\colon\Wfrak\to\R_+$ as
    \[
        \norm{\blacksquare}{W} \coloneqq \sup_{\psi\in\Lcal}\cutnorm{\Gamma(\psi,W)},
    \]
    where $\Gamma$ is as defined in~\eqref{def:Gamma}. 
\end{definition}
\begin{remark}\label{rem:matrixGraphon_2_MVG}
    Recall from Section~\ref{subsec:Embedding} that both a kernel and a finite matrix can be associated with a corresponding MVG. With this association, we can reference $\norm{\blacksquare}{w}$ or $\norm{\blacksquare}{A}$ for $w\in\Wcal$ or $A\in \cup_{r\in\Natural}\Mcal_{r}$. That is, the definition of \emph{generalized cut norm} extends to both kernels and matrices. We'll adopt this notation moving forward.
\end{remark}
Recall that $\Tcal$ is the set of all Lebesgue measure preserving maps $\varphi\colon[0, 1]\to [0, 1]$ and for any $W\in\Wfrak$ and $\varphi\in\Tcal$, we define $W^\varphi \in\Wfrak$ as $W^\varphi(x,y) \coloneqq W(\varphi(x),\varphi(y))$ for a.e. $(x,y)\in[0,1]^{(2)}$.

\begin{table}[ht]
    \centering
    \renewcommand{\arraystretch}{2}
    \begin{tabular}{|l c|c l|}
         \multicolumn{2}{c}{\bf Graphons} &  \multicolumn{2}{c}{\bf Measure-Valued Graphons} \\
         \hline
        Cut norm on $\Wcal$ & $\cutnorm{}$ & $\norm{\blacksquare}{}$ & Generalized Cut norm on $\Wfrak$\\
        \hline
        $L^2$ metric on $\Wcal$ & $d_2$ & $D_2$ & $\W_2$ metric on $\Wfrak$\\
        \hline
        Invariant $L^2$ metric on $\Graphons$ & $\delta_2$ & $\Delta_2$ & Invariant $\W_2$ metric on $\mvGraphons$\\
        \hline
        Cut metric on $\Graphons$ & $\delta_\cut$ & \makecell{$\Delta_\blacksquare$\\$\W_\blacksquare$} & \makecell{Generalized Cut metric on $\mvGraphons$\\Wasserstein Cut metric on $\mvGraphons$}\\
        \hline
        Curve on $\Graphons$ & $w\colon t \mapsto w(t)$ & \qquad $W\colon t \mapsto W(t)$ & Curve on $\mvGraphons$\\
        \hline
    \end{tabular}
    \renewcommand{\arraystretch}{1}
    \caption{Table contains notations used for graphons and measure-valued graphons. Each row contains the corresponding notation used in both these settings in the article.}
    \label{tab:notations}
\end{table}
\begin{definition}[Generalized Cut metric on $\mvGraphons$]\label{defn:mvg_cut_metric}
    Define  $\Delta_\blacksquare \colon \mvGraphons \times \mvGraphons \to \R_+$ as
    \begin{align*}
        \Delta_\blacksquare(W_1, W_2) &\coloneqq  \inf_{\varphi_1,\varphi_2\in\Tcal} \norm{\blacksquare}{W_1^{\varphi_1} - W_2^{\varphi_2}}, \qquad W_1,W_2\in\mvGraphons.
    \end{align*}
\end{definition}

Let $\mu_1$ and $\mu_2$ be two finite measures with the same total mass $m$. The extension of the Wasserstein distance between $\mu_1$ and $\mu_2$ is defined as  $\W_1(\mu_1, \mu_2)= \sup_{\psi\in \Lcal}\int \psi \diff(\mu_1-\mu_2)$, where $\Lcal$ is the set of all bounded Lipschitz functions with bounded Lipschitz norm at most $1$. Since we are working with a bounded metric space, this definition is equivalent to the standard definition (see~\cite[Section 1.2.1, Corollary 1.16]{V03}) which considers all Lipschitz functions.

\begin{definition}[Wasserstein Cut metric on $\mvGraphons$]\label{defn:Wass_cut}
    Define $\W_\blacksquare \colon \mvGraphons \times \mvGraphons \to \R_+$ as
    \begin{align*}
        \W_\blacksquare(W_1, W_2) &\coloneqq \inf_{\varphi_1,\varphi_2\in\Tcal} \sup_{S, T\subseteq [0, 1]} \mathbb{W}_1\left(\int_{S\times T} W_1^{\varphi_1}(x, y)\diff x\diff y, \int_{S\times T}W_2^{\varphi_2}(x, y)\diff x\diff y\right).
    \end{align*}
\end{definition}
We also make the following definition. For $W_1, W_2\in \Wfrak$, define the Wasserstein-$2$ metric $D_2$ on $\Wfrak$ as
\begin{align}
    D^{2}_2(W_1, W_2)\coloneqq \int_{[0,1]^2}\mathbb{W}_2^{2}(W_1(x, y), W_{2}(x, y))\diff x\diff y,
\end{align}
where $\mathbb{W}_2$ is the Wasserstein-$2$ metric on $\Pcal([-1,1])$.
\begin{definition}[Invariant Wasserstein-$2$ metric on $\mvGraphons$]\label{def:invariant_wass}
    Define $\Delta_2 \colon \mvGraphons \times \mvGraphons \to \R_+$ as
    \begin{align*}
        \Delta_2^2(W_1, W_2) &\coloneqq \inf_{\varphi_1,\varphi_2\in\Tcal} D_2^2(W_1^{\varphi_1}, W_2^{\varphi_2}),\qquad W_1, W_2\in \mvGraphons.
    \end{align*} 
\end{definition}
Lemma~\ref{lem:ComparisonMetric} in Appendix~\ref{sec:Proof_metric_mvg} shows that $\Delta_{\blacksquare}\leq \Delta_{2}$. 

\begin{proposition}\label{prop:d_D_are_metric}
    Let $\W_\blacksquare$ and $\Delta_\blacksquare$ be as defined in Definitions~\ref{defn:mvg_cut_metric} and~\ref{defn:Wass_cut}. Then, $\W_\blacksquare$ and $\Delta_\blacksquare$ are metrics on $\mvGraphons$. Furthermore, $\W_\blacksquare = \Delta_\blacksquare$.
\end{proposition}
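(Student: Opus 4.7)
My plan is to treat the identity $\W_\blacksquare=\Delta_\blacksquare$ first and then verify the metric axioms for one of them (and hence both).

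For the equality, the idea is to dualize $\W_1$ and swap suprema. For fixed $\varphi_1,\varphi_2\in\Tcal$ and Borel $S,T\subseteq[0,1]$, the measures $\mu_i^{S,T}\coloneqq\int_{S\times T}W_i^{\varphi_i}(x,y)\diff x\diff y$ are finite Borel measures on $I=[-1,1]$ with the same total mass $\abs{S}\abs{T}$, so the extended Kantorovich duality gives
\begin{align*}
\W_1(\mu_1^{S,T},\mu_2^{S,T})=\sup_{\psi\in\Lcal}\int_{-1}^{1}\psi(\zeta)\,\diff(\mu_1^{S,T}-\mu_2^{S,T})(\zeta).
\end{align*}
Fubini then turns the right-hand side into $\int_{S\times T}\bigl(\Gamma(\psi,W_1^{\varphi_1})-\Gamma(\psi,W_2^{\varphi_2})\bigr)(x,y)\diff x\diff y$, and since $\Lcal$ is closed under $\psi\mapsto-\psi$, inserting an absolute value is harmless. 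Commuting the two suprema $\sup_{S,T}$ and $\sup_{\psi\in\Lcal}$ and noting that $\Gamma$ is linear in its second argument identifies the integrand with $\Gamma(\psi,W_1^{\varphi_1}-W_2^{\varphi_2})$, so the resulting expression is exactly $\|W_1^{\varphi_1}-W_2^{\varphi_2}\|_\blacksquare$. Taking $\inf_{\varphi_1,\varphi_2}$ yields $\W_\blacksquare=\Delta_\blacksquare$.

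For the metric axioms, symmetry, non-negativity, and $\Delta_\blacksquare(W,W)=0$ (taking $\varphi_1=\varphi_2=\id$) are immediate. The serious work is in the triangle inequality and the identity of indiscernibles. For the triangle inequality my plan is: (i) observe that $\|\cdot\|_\blacksquare$ satisfies the seminorm triangle inequality on differences (linearity of $\Gamma$ in the measure argument plus the triangle inequality for $\cutnorm{\cdot}$); (ii) establish the invariance $\|U^\varphi\|_\blacksquare=\|U\|_\blacksquare$ for every m.p.\ $\varphi$, which by the identity $\Gamma(\psi,U^\varphi)=\Gamma(\psi,U)^\varphi$ reduces to the invariance of the cut norm under m.p.\ transformations and can be proved via a conditional-expectation / change-of-variables argument (i.e., $\E[h(X)\mid\varphi(X)]$ realises any $[0,1]$-valued test function after pushforward); (iii) show that the two-sided infimum in Definition~\ref{defn:mvg_cut_metric} equals the one-sided infimum $\inf_{\varphi}\|W_1-W_2^\varphi\|_\blacksquare$ over the equivalence classes. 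Given (iii), choose near-optimal $\varphi$ and $\psi$ so that $\|W_1-W_2^\varphi\|_\blacksquare$ and $\|W_2-W_3^\psi\|_\blacksquare$ each are within $\epsilon$ of the respective distances; invariance turns the second into $\|W_2^\varphi-W_3^{\psi\circ\varphi}\|_\blacksquare$, and the seminorm triangle inequality closes the argument.

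For identity of indiscernibles, the plan is: assuming $\Delta_\blacksquare(W_1,W_2)=0$, for each fixed $\psi\in\Lcal$ bounding $A(\varphi_1,\varphi_2,\psi)\le\sup_\psi A(\varphi_1,\varphi_2,\psi)$ and taking the inf in $(\varphi_1,\varphi_2)$ shows $\delta_\cut(\Gamma(\psi,W_1),\Gamma(\psi,W_2))=0$, so $\Gamma(\psi,W_1)$ and $\Gamma(\psi,W_2)$ coincide in $\Graphons$. Applied to a countable $\|\cdot\|_{\mathrm{BL}}$-dense family of $\psi$'s (and using that $t\ped{d}(F,W)$ with all edge decorations equal to $\psi$ coincides with the ordinary homomorphism density $t(F,\Gamma(\psi,W))$, extended to mixed edge decorations by polarization / product test functions), this forces equality of all decorated homomorphism densities; by~\cite[Theorem 11(ii)]{kunszenti2019uniqueness} this gives $W_1\cong W_2$, i.e.\ equality in $\mvGraphons$.

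The main obstacle I expect is step (iii): reducing the two-sided infimum over $(\varphi_1,\varphi_2)$ to a one-sided one. Measure-preserving maps on $[0,1]$ do not form a group, so one cannot simply invert $\varphi_1$; the reduction has to be done via refinement/approximation of m.p.\ maps (as in the Lovász--Szegedy treatment of the cut metric on $\Graphons$) and then lifted to $\Wfrak$ using that $\Gamma(\psi,\cdot)$ intertwines the action of $\Tcal$. A secondary delicate point is arranging the decorated hom density computation in the last paragraph so that simultaneous alignment of the $\Gamma(\psi,\cdot)$'s across varying $\psi$ is extracted, which is where the countable dense family and polynomial density in $C(I)$ will be used.
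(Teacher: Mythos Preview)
Your argument for $\W_\blacksquare=\Delta_\blacksquare$ is exactly the paper's: Kantorovich duality, Fubini, and swapping the two suprema. Your triangle-inequality plan is more explicit than the paper's (which simply defers to the classical cut-metric argument), and step (iii) is the standard reduction via density of invertible measure-preserving maps; that part is fine.

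The gap is in identity of indiscernibles. Your plan first passes from $\Delta_\blacksquare(W_1,W_2)=0$ to the weaker statement $\delta_\cut(\Gamma(\psi,W_1),\Gamma(\psi,W_2))=0$ for each fixed $\psi$ \emph{separately}, and then tries to recover equality of all decorated homomorphism densities by polarization. But polarization of the diagonal $\psi\mapsto t(F,\Gamma(\psi,W))$ (i.e.\ $t\ped{d}$ of $F$ with every edge decorated by the same $\psi$) only yields the \emph{fully symmetrized} multilinear form $\tfrac{1}{m!}\sum_{\sigma\in S_m} t\ped{d}\bigl((F,(f_{\sigma(1)},\dots,f_{\sigma(m)})),W\bigr)$, not $t\ped{d}((F,f),W)$ itself; the latter is invariant only under the generally much smaller action of $\mathrm{Aut}(F)$ on $E(F)$. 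The ``product test functions'' idea does not help either, since $\Gamma(\psi_1\psi_2,W)\neq\Gamma(\psi_1,W)\,\Gamma(\psi_2,W)$ in general. What you would actually need is that ``$\Gamma(\psi,W_1)\cong\Gamma(\psi,W_2)$ in $\Graphons$ for every $\psi$'' already forces $W_1\cong W_2$ in $\mvGraphons$; this simultaneous-alignment statement is exactly the information discarded when you pushed the infimum inside the supremum, and your sketch does not supply a proof of it.

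The paper's route avoids this entirely: it invokes Theorem~\ref{thm:equivalence_of_cut} (in effect Lemma~\ref{lem:D_cut_implies_mvG}), which applies the Counting Lemma for decorated graphs directly to obtain, for any Lipschitz-decorated $(F,f)$ with $\max_e\norm{\rm BL}{f_e}\le L$,
\[
\abs{t\ped{d}(F,W_1)-t\ped{d}(F,W_2)}\le 4L\,\abs{E(F)}\,\norm{\blacksquare}{W_1^{\varphi_1}-W_2^{\varphi_2}}
\]
for every $\varphi_1,\varphi_2\in\Tcal$, and then takes the infimum over $(\varphi_1,\varphi_2)$. The point is that the single quantity $\norm{\blacksquare}{\cdot}$ already dominates $\cutnorm{\Gamma(f_e,\cdot)}$ for all edge decorations at once, so no per-$\psi$ alignment and reassembly step is needed.
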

The proof of Proposition~\ref{prop:d_D_are_metric} is deferred to Appendix~\ref{sec:Proof_metric_mvg}. We can now state our first main result. 
\begin{theorem}\label{thm:equivalence_of_cut}
    Let $W, \round{W_n}_{n\in\Natural}\subset\mvGraphons$. Then, the following limits are equivalent, as $n\to \infty$.
    \begin{enumerate}
        \item\label{item:mvg_convg} $W_n\to W$ in $\mvGraphons$, that is, $t\ped{d}(F, W_n)\to t\ped{d}(F, W)$ for every decorated graph $F$.
        \item\label{item:hom_convg} $W_n\to W$ in homomorphism density sense, i.e., $t(F, W_n)\to t(F, W)$ weakly for every finite simple graph $F$.
        \item\label{item:D_cut_convg} $\Delta_\blacksquare(W_n, W)\to 0$.
        \item\label{item:d_cut_convg} $\W_\blacksquare(W_n, W)\to 0$.
    \end{enumerate}
\end{theorem}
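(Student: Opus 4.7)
The four statements split into a topological pair (1)--(2) and a metric pair (3)--(4); the latter equivalence is Proposition~\ref{prop:d_D_are_metric}, so my plan is to establish (1)$\Leftrightarrow$(2), then (3)$\Rightarrow$(1), and finally the main direction (1)$\Rightarrow$(3).

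For (1)$\Leftrightarrow$(2), unrolling~\eqref{eqn:Def_homdensity_W_alt} shows that for any $\Ccal$-decorated graph $F$ with edge decorations $\{F_e\}_{e\in E(F)}$,
\[
    t\ped{d}(F,W)=\int_{I_F}\prod_{e\in E(F)}F_e(\zeta_e)\,t(F,W)(d\zeta),
\]
so (2) immediately implies (1). Conversely, tensor products $\bigotimes_e F_e$ of continuous functions form a dense subalgebra of $C(I_F)$ by Stone--Weierstrass, so convergence of $t\ped{d}(F,W_n)$ for every choice of continuous decoration on every simple skeleton upgrades to $\int\varphi\,dt(F,W_n)\to\int\varphi\,dt(F,W)$ for every $\varphi\in C(I_F)$, which is precisely weak convergence of $t(F,W_n)$ to $t(F,W)$.

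For (3)$\Rightarrow$(1), I rewrite $t\ped{d}(F,W)=\int_{[0,1]^{V(F)}}\prod_{e=\{i,j\}}\Gamma(F_e,W)(x_i,x_j)\,dx$, a homomorphism integral with a different real-valued kernel on each edge. A multi-kernel counting lemma (edge-by-edge telescoping, as in~\cite[Ch.~8]{lovasz2012large}) gives, for any $\varphi_1,\varphi_2\in\Tcal$,
\[
    \bigl|t\ped{d}(F,W_n)-t\ped{d}(F,W)\bigr|\;\le\;C(F)\max_{e\in E(F)}\cutnorm{\Gamma(F_e,W_n^{\varphi_1})-\Gamma(F_e,W^{\varphi_2})},
\]
with $C(F)$ depending only on $|E(F)|$ and $\max_e\|F_e\|_\infty$. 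Each continuous $F_e$ can be approximated in sup norm by a scalar multiple of an element of $\Lcal$, with remainder $r$ contributing at most $\|r\|_\infty$ to the cut norm since $\cutnorm{\Gamma(r,\cdot)}\le\|r\|_\infty$. This dominates the right-hand side by $c_F\,\|W_n^{\varphi_1}-W^{\varphi_2}\|_\blacksquare$ plus a vanishing approximation error; taking the infimum over $\varphi_1,\varphi_2$ and invoking (3) then gives $t\ped{d}(F,W_n)\to t\ped{d}(F,W)$.

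The main obstacle is (1)$\Rightarrow$(3); my plan is to argue by contradiction using sequential compactness of $(\mvGraphons,\Delta_\blacksquare)$. Granted compactness, if $\Delta_\blacksquare(W_n,W)\not\to 0$ I extract a subsequence with $\Delta_\blacksquare(W_{n_k},W)\ge\varepsilon$ and then a further $\Delta_\blacksquare$-convergent subsequence with some limit $W^{\ast}$; by the already-established (3)$\Rightarrow$(1) this subsequence also converges to $W^{\ast}$ in the usual topology, so (1) forces $W^{\ast}=W$ in $\mvGraphons$, contradicting the separation. To establish compactness I would prove a weak regularity lemma for MVGs: since $\Lcal$ is compact in the sup norm by Arzela--Ascoli, for each $\eta>0$ I fix a finite $\eta$-net $\{\psi_1,\ldots,\psi_{k_\eta}\}\subset\Lcal$ and apply the classical Frieze--Kannan regularity lemma \emph{simultaneously} to the $k_\eta$ graphons $\{\Gamma(\psi_j,W)\}_j$, obtaining an equipartition of $[0,1]$ into a bounded number of parts on which each $\Gamma(\psi_j,W)$ is $\eta$-close in cut norm to its block-average. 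A $3\eta$-net estimate over $\psi\in\Lcal$ (using the pointwise bound $\|\Gamma(\psi,W)-\Gamma(\psi',W)\|_\infty\le\|\psi-\psi'\|_\infty$) then produces a block MVG $\tilde W$ with $\|W-\tilde W\|_\blacksquare=O(\eta)$. Applied to each $W_n$ with $\eta=1/m$, the approximants $\tilde W_n^{(m)}$ live in a finite product of the compact space $\Pcal(I)$ (weak convergence of measures on the compact interval $I$); a diagonal extraction over $m$ therefore produces a $\Delta_\blacksquare$-Cauchy subsequence of $\{W_n\}$, which establishes sequential compactness and completes the proof.
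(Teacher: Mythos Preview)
Your arguments for (1)$\Leftrightarrow$(2) and (3)$\Rightarrow$(1) are correct and match the paper's Lemmas~\ref{lem:OurConvg_implies_LovaszConvg} and~\ref{lem:D_cut_implies_mvG}. The gap is in (1)$\Rightarrow$(3): your regularity-plus-diagonalization argument produces only a $\Delta_\blacksquare$-\emph{Cauchy} subsequence, and you then assert this ``establishes sequential compactness''. But Cauchy is not convergent without completeness of $(\mvGraphons,\Delta_\blacksquare)$, which you never prove. The natural candidates for a limit are the step-MVG accumulation points $\hat W^{(m)}$ of your block approximants, but showing the sequence $(\hat W^{(m)})_m$ itself has a $\Delta_\blacksquare$-limit is the same problem again; and manufacturing a limit via the known compactness of the usual topology (Remark~\ref{rem:Compactness}, from~\cite[Theorem~17.9]{lovasz2012large}) leads you straight back to the unproved direction (1)$\Rightarrow$(3). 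Establishing completeness directly is possible but non-trivial --- it essentially requires the limit-construction half of the compactness argument for decorated graphons in~\cite{lovasz2012large}.

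The paper's route sidesteps compactness entirely. It uses the same Arzel\`a--Ascoli ingredient you identified (total boundedness of $\Lcal$), but packages it as Lemma~\ref{lem:finitesufficiency}: for every $\varepsilon>0$ there is a finite $\Fcal_\varepsilon\subset\Lcal$ with $\Delta_\blacksquare\le\Delta^{\Fcal_\varepsilon}_\blacksquare+\varepsilon/2$. The contradiction then closes by invoking~\cite[Lemmas~3.2 and~3.7]{lovasz2010decorated}, which say precisely that usual-topology convergence of MVGs implies $\Delta^{\Fcal}_\blacksquare\to0$ for any \emph{finite} family $\Fcal$ --- i.e., finitely many kernels $\Gamma(\psi_j,W_n)$ can be made simultaneously cut-close to $\Gamma(\psi_j,W)$ under a \emph{common} relabelling. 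That joint-relabelling fact is the non-trivial external input your compactness attempt is effectively trying to reproduce from scratch.
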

   
The proof of Theorem~\ref{thm:equivalence_of_cut} relies on several lemmas which are proved in Appendix~\ref{sec:Proof_metric_mvg}. The proof of Theorem~\ref{thm:equivalence_of_cut} is also deferred to Appendix~\ref{sec:Proof_metric_mvg}.
\begin{remark}\label{rem:Compactness}
    It follows from~\cite[Theorem 17.9]{lovasz2012large} that $\mvGraphons$ is compact (with respect to the usual topology). In order to apply that theorem, notice that our MVG is a $K$-graphon in the terminology of Lov\'asz for $K=[-1,1]$. The set $\mathcal{B}$ can be taken to the countable set of polynomials. By Theorem~\ref{thm:equivalence_of_cut},
    $\big(\mvGraphons,\W_\blacksquare\big)$ (or $(\mvGraphons, \Delta_\blacksquare)$) is a compact metric space.
\end{remark}

It is clear from Definition~\ref{defn:mvg_cut_metric} and Theorem~\ref{thm:equivalence_of_cut} that if $\round{W_n}_{n\in\Natural}\to W$ in $\mvGraphons$ then $\round{\Gamma(\psi, W_n)}_{n\in\Natural}\to \Gamma(\psi, W)$ in $\delta_{\cut}$ for every bounded continuous function $\psi$ defined on $[-1,1]$.
However, the convergence  $W_n\to W$ is stronger since it implies \textit{simultaneous convergence} of all kernels $\Gamma(\psi, W)$. We now give some examples to illustrate the difference between the convergence of graphons and the convergence of MVGs.
\begin{example}\label{exp:moment_graphon}
    For $k\in \Integer_+$, let $\psi_k\colon[-1, 1]\to \R$ be the map given by $\zeta\mapsto \zeta^k$. Let $W\in \Wfrak$. We will call $\Gamma(\psi_k, W)$ the \emph{moment graphon} of $W$ (if we need to emphasize $k$, we will call it \emph{$k$-th moment graphon}). For simplicity, we will also denote $\Gamma(\psi_k, W)$ by $m_k(W)$. It is easy to see that $\round{W_n}_{n\in\Natural}\to W$ in $\mvGraphons$ as $n\to\infty$ implies $\round{m_k(W_n)}_{n\in\Natural}\to m_k(W)$ in $\delta_\cut$ as $n\to\infty$, for all $k\in\Integer_+$. Since the convergence in $\delta_{\cut}$ metric implies that for each $k$, there is a sequence of Lebesgue measure preserving transforms $\varphi_{n, k}\colon[0, 1]\to [0, 1]$ such that $\norm{\cut}{m_k(W_n^{\varphi_{n, k}})-m_k(W)}\to 0$ as $n\to \infty$. However, $W_n\to W$ in $\mvGraphons$ as $n\to\infty$ implies that $\varphi_{n, k}$ could be chosen to be independent of $k$. I.e., there exists a sequence of common `labellings' $(\varphi_n)$ such that $\norm{\cut}{m_k(W_n^{\varphi_{n}})-m_k(W)}\to 0$ as $n\to \infty$. This is what we mean by simultaneous convergence. 
\end{example}

\begin{example}\label{exp:dirac_embedding}
    Consider a sequence of kernels $(a_n)_{n\in\Natural \cup \{\infty\}}$, i.e. $a_n\colon[0, 1]^{(2)}\to [-1, 1]$, for $n\in \Natural \cup \{\infty\}$. For every $n\in\Natural$, define a measure-valued kernel $W_n\in\Graphons$ by setting $W_n(x, y)=\delta_{a_n(x, y)}$, $(x,y)\in[0,1]^2$.
    Let $\psi\in C(I)$ be a continuous test function such that $\norm{\infty}{\psi}\leq 1$. Then
    $\Gamma(\psi, W_n)(x, y)= \psi(a_n(x, y))$, $(x,y)\in[0,1]^{(2)}$.
    Suppose $\round{W_n}_{n\in\Natural}\to W_{\infty}$ in $\mvGraphons$, then $\round{\Gamma(\psi, W_n)}_{n\in\Natural}\to \Gamma(\psi, W_{\infty})$ in $\delta_\cut$. In particular, taking $\psi(z)=z^k$, for every $k\in \N$, it follows that simultaneously $\round{a_n^k}_{n\in\Natural}\to a_{\infty}^k$ in $\delta_\cut$. It is well-known that $\delta_{\cut}(a_n, a)\to 0$ does not imply $\delta_{\cut}(a_n^2, a^2)\to 0$ in general. This illustrates that convergence in the MVG sense is a stronger notion than the cut convergence. 
    
\end{example}
    

\begin{example}\label{exp:Bernoulli_embedding}
    Let $a\colon [0, 1]^{(2)}\to [0, 1]$ be a kernel. Define a measure-valued kernel $W_a$ as $W_{a}(x, y) \coloneqq (1-a(x, y))\delta_0+a(x, y)\delta_1$ for $(x,y)\in\interval{0,1}^{(2)}$. That is, $W(x, y)$ is $\mathrm{Ber}(a(x, y))$ for $(x,y)\in\interval{0,1}^{2}$. Let $\psi$ be any bounded measurable function on $[0, 1]$. Then, 
    $\Gamma(\psi, W_{a})(x,y) = (1-a(x, y))\psi(0)+a(x, y)\psi(1)$.
    If $\round{a_n}_{n\in\Natural}$ is a sequence of graphons such that $\round{W_{a_n}}_{n\in\Natural}\to W_{a}$ then $\round{a_n}_{n\in\Natural}\to a$ in $\delta_{\cut}$. Conversely, in this example, it is easy to verify that if $\round{a_n}_{n\in\Natural}\to a$ in $\delta_{\cut}$ then $\sup_{\psi}\norm{\cut}{\Gamma(\psi, a_n)-\Gamma(\psi, a)}\to 0$ as $n\to\infty$ where the supremum is taken over all continuous and bounded functions $\psi\in C([0,1])$. In particular, we conclude that $\round{W_{a_n}}_{n\in\Natural}\to W_{a}$ in $\mvGraphons$ if and only if $\round{a_n}_{n\in\Natural}\to a$ in $\delta_{\cut}$. 
\end{example}
\begin{example}\label{exp:ternoulli}
    Let $a, b\in \Wcal$ such that $a(x, y)\geq 0, b(x, y)\geq 0$ and $a(x, y)+b(x, y)\leq 1$. Define a ``ternoulli'' MVG as $W_{a, b}(x, y)=a(x, y)\delta_{-1}+(1-a(x, y)-b(x, y))\delta_0+b(x, y)\delta_{-1}$. If $\round{W_n}_{n\in\Natural}\to W_{a, b}$ as $n\to\infty$ in $\mvGraphons$ then $\round{a_n}_{n\in\Natural}\to a, \round{b_n}_{n\in\Natural}\to b$ as $n\to\infty$. Conversely, suppose that $(a_n, b_n)$ are ``coupled graphons'' and $\round{a_n}_{n\in\Natural}\to a$ and $\round{b_n}_{n\in\Natural}\to b$ under a common labeling as $n\to\infty$. I.e., there exists a sequence $\varphi_n\in \Tcal$ such that $\norm{\cut}{a_n^{\varphi_n}-a}+\norm{\cut}{b_n^{\varphi_n}-b}\to 0$ as $n\to \infty$. Note that there exists a common sequence of measure-preserving transforms for both $a_n$ and $b_n$. Then, $\round{W_{a_n, b_n}}_{n\in\Natural}\to W_{a,b}$ as $n\to\infty$. 
\end{example}
\subsubsection{Sampling from MVGs}\label{subsec:samplingMVG}
Recall that one can generate an IEA from an MVG. We now describe a similar procedure to generate a measure-valued random matrix from an MVG. 
Let $\round{U_i}_{i\in \N}$ be an i.i.d. sequence of $\mathrm{Uniform}[0, 1]$ random variables defined on a common probability space, say $(\Omega, \Fcal, \mathbb{P})$. Let $W\in\Wfrak$. For any $n\in \N$  we define the \emph{sampled $n$-MVG}, denoted $\mu(n, W)$, as
\begin{align}
    \mu(n, W)(i, j) \coloneqq W(U_i, U_j),\qquad (i,j)\in [n]^{(2)}.\label{eq:sampled_mvGraphon}
\end{align}
Note that we can identify $\mu(n, W)$ with a random MVG. In the next lemma we show that the random MVG $\mu(n, W)$ converges to $W$ as $n\to\infty$, $\mathbb{P}$-almost surely. 


\begin{lemma}\label{lem:Convergence_of_smaples}
Let $W\in \mvGraphons$. For $n\in\Natural$, let $\mu(n, W)$ be defined as in~\eqref{eq:sampled_mvGraphon}. Then $\mu(n, W)\to W$ in $\mvGraphons$ as $n\to \infty$, $\mathbb{P}$-almost surely. That is,  $\mathbb{P}$-almost surely, for any finite simple graph $F$ we have $t(F, \mu(n, W))\to t(F, W)$, weakly as $n\to\infty$.
\end{lemma}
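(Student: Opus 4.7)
The plan is to reduce the claim, via Theorem~\ref{thm:equivalence_of_cut}, to the almost-sure convergence of countably many real-valued decorated homomorphism densities, and to prove each such convergence by a bounded-difference martingale argument. Fix a decorated graph $F = ([m], E, f)$ and, for each edge $e = \set{j,k} \in E$, define the bounded measurable kernel $g_e(x, y) \coloneqq \int_{I} f_e(\zeta)\, W(x, y)(\diff \zeta)$ on $[0, 1]^{(2)}$, which satisfies $\norm{\infty}{g_e} \le \norm{\infty}{f_e}$. From~\eqref{eq:decorated_homomorphism_density_W} and the definition of $\mu(n, W)$ in~\eqref{eq:sampled_mvGraphon},
\begin{align*}
    t\ped{d}(F, W) &= \int_{[0,1]^m} \prod_{\set{j,k}\in E} g_{jk}(x_j, x_k)\, \prod_{v \in V(F)} \diff x_v, \\
    t\ped{d}(F, \mu(n, W)) &= \frac{1}{n^m} \sum_{i_1, \ldots, i_m \in [n]} \prod_{\set{j, k}\in E} g_{jk}(U_{i_j}, U_{i_k}).
\end{align*}
Splitting the sum according to whether $i_1, \ldots, i_m$ are pairwise distinct, the distinct part has expectation $\frac{n(n-1)\cdots(n-m+1)}{n^m}\,t\ped{d}(F, W)$ by the i.i.d.\ uniform law of the $U_i$, while the diagonal contribution is $O(n^{m-1}/n^m) = O(1/n)$. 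Hence $\E{t\ped{d}(F, \mu(n, W))} = t\ped{d}(F, W) + O(1/n)$.

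For the concentration step, I would apply Azuma--Hoeffding to the Doob martingale $M_k \coloneqq \E{t\ped{d}(F, \mu(n, W)) \given U_1, \ldots, U_k}$, $k = 0, 1, \ldots, n$. Resampling a single $U_k$ affects only those terms in the sum whose index tuple contains the coordinate $k$, of which there are at most $m \cdot n^{m-1}$; each such term changes by at most $2 \prod_{e \in E} \norm{\infty}{f_e}$, so $\abs{M_k - M_{k-1}} \le C_F / n$ for a constant $C_F$ depending only on $F$. Azuma's inequality gives
\[
    \Prob{\abs{t\ped{d}(F, \mu(n, W)) - \E{t\ped{d}(F, \mu(n, W))}} > \eps} \le 2 \exp(-c_F \eps^2 n),
\]
and Borel--Cantelli yields $t\ped{d}(F, \mu(n, W)) \to t\ped{d}(F, W)$ almost surely for this fixed $F$.

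Finally I upgrade this to the full conclusion by invoking Theorem~\ref{thm:equivalence_of_cut}: since items \eqref{item:mvg_convg} and \eqref{item:hom_convg} there are equivalent, it suffices to show the weak convergence $t(F, \mu(n, W)) \to t(F, W)$ of probability measures on the compact set $I^{E(F)}$ for every finite simple graph $F$. There are only countably many isomorphism classes of finite simple graphs, and for each such $F$ weak convergence on $I^{E(F)}$ can be tested against any sup-norm-dense countable subset of $C(I^{E(F)})$, e.g.\ polynomials with rational coefficients; by~\eqref{eqn:Def_homdensity_W_alt} each such polynomial test corresponds to a finite linear combination of decorated homomorphism densities $t\ped{d}(F, \slot{})$ of $F$ with polynomial decorations, i.e.\ to elements of a countable family to which the preceding paragraph applies. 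Intersecting the corresponding countably many full-probability events gives the claim. The main technical point is the bounded-difference estimate feeding Azuma's inequality; the remainder is standard separability-based bookkeeping to reduce the uncountable test family to a countable one.
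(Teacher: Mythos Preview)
Your argument is correct. The paper does not prove this lemma directly: it simply cites \cite[Theorem 3.8]{kovacs2014multigraph} with $\Bcal = C[-1,1]$ and $\Zcal = M([-1,1])$ and omits the proof entirely. Your direct proof via bounded differences and Borel--Cantelli, followed by the countable reduction through Lemma~\ref{lem:Hom_density_is_point_separating}/Theorem~\ref{thm:equivalence_of_cut}, is self-contained and valid. It is in the same spirit as the paper's own proof of the neighboring Lemma~\ref{lem:convergence_of_sample_weightedgraph}, except that there the concentration step is done with a fourth-moment bound (giving $\Prob{\abs{\cdot} \ge \eps} \le C/(\eps^2 n^2)$), whereas your Azuma--Hoeffding argument yields the sharper exponential tail $2\exp(-c_F \eps^2 n)$. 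Either bound is summable in $n$, so Borel--Cantelli applies in both cases; your version is slightly stronger and arguably cleaner.
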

Lemma~\ref{lem:Convergence_of_smaples} follows directly from~\cite[Theorem 3.8]{kovacs2014multigraph} by taking $\Bcal=C[-1, 1]$ and $\Zcal=M([-1, 1])$ the space of finite Radon measures on $[-1, 1]$. We, therefore, skip the proof of Lemma~\ref{lem:Convergence_of_smaples}. 

We now describe a sampling procedure to generate weighted graphs from an MVG. Let $n\in \N$ and $W\in\Wfrak$. For every $n\in\Natural$, define $\mathbb{G}(n, W)$ to be a random (weighted) graph on $[n]$ with edge-weights $\mathbb{G}(n, W)(i, j)\sim W(U_i, U_j)$ and are conditionally independent given $(U_i, U_j)$ for every $(i,j)\in[n]^{(2)}$. Note that the adjacency matrix of $\mathbb{G}(n, W)$ is a random $n\times n$ symmetric matrix with entries in $I=[-1, 1]$ and we will not make any distinction between the adjacency matrix and the graph. Lemma~\ref{lem:convergence_of_sample_weightedgraph} shows that almost surely $\mathbb{G}(n, W)\to W$ as $n\to\infty$ (see Subsection~\ref{subsec:Embedding}).

\begin{lemma}\label{lem:convergence_of_sample_weightedgraph}
    Let $W\in\mvGraphons$ and let $\mathbb{G}(n, W)$ be defined for every $n\in\Natural$ as above. Then, $\mathbb{P}$-almost surely, $\mathbb{G}(n, W)\to W$ as $n\to\infty$. That is, $\mathbb{P}$-almost surely, for every decorated graph $F$,
    \[
        t\ped{d}(F, \mathbb{G}(n, W))\to t\ped{d}(F, W), \qquad \text{as }n\to\infty.
    \]
\end{lemma}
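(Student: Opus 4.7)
The plan is to compare the random weighted graph $\mathbb{G}(n, W)$ with the measure-valued sample $\mu(n, W)$ from~\eqref{eq:sampled_mvGraphon} and then invoke Lemma~\ref{lem:Convergence_of_smaples}. By Theorem~\ref{thm:equivalence_of_cut}, it suffices to show $t\ped{d}(F, \mathbb{G}(n, W)) \to t\ped{d}(F, W)$ almost surely for every decorated graph $F$. For a fixed skeleton, the map $(F_e)_{e\in E(F)}\mapsto t\ped{d}(F, M)$ is uniformly continuous in sup-norm of decorations, uniformly over the adjacency matrices and MVGs we consider, so by Weierstrass approximation it is enough to verify convergence for decorations lying in a countable dense subset of $\Ccal$ (e.g.~polynomials with rational coefficients on $I$). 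Combined with the countability of skeletons, a countable intersection of almost-sure events then yields the full statement.

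Fix such a decorated graph $F$ on $m$ vertices, and set $C \coloneqq \max_{e\in E(F)}\infnorm{F_e}$ and $\Ucal_n \coloneqq \sigma(U_1, \ldots, U_n)$. Conditional on $\Ucal_n$, the edge weights $\round{\mathbb{G}(n,W)(i,j)}_{\set{i,j}\in[n]^{(2)}}$ are independent with $\mathbb{G}(n,W)(i,j)\sim W(U_i, U_j)$. I would split the sum~\eqref{eqn:Def_homDensity_M} defining $t\ped{d}(F, \mathbb{G}(n,W))$ according to whether the induced edges $\set{i_j, i_k}$ for $\set{j,k}\in E(F)$ are all distinct in $[n]^{(2)}$. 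By conditional independence, the ``good'' contribution factors and matches~\eqref{eq:decorated_homomorphism_density_W} term by term with $t\ped{d}(F, \mu(n, W))$. The ``bad'' tuples with some repeated edge number only $O(n^{m-1})$ and each summand is bounded by $C^{\abs{E(F)}}$, so
\[
    \E{t\ped{d}(F, \mathbb{G}(n, W)) \given \Ucal_n} = t\ped{d}(F, \mu(n, W)) + O(1/n) \qquad \text{almost surely.}
\]

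Next I would establish concentration of $t\ped{d}(F, \mathbb{G}(n, W))$ around this conditional mean via bounded differences. A single unordered pair $\set{a,b}\in[n]^{(2)}$ can arise as $\set{i_j, i_k}$ for some $\set{j,k}\in E(F)$ in at most $2\abs{E(F)}\, n^{m-2}$ tuples $(i_1,\ldots,i_m)$, so perturbing the edge weight $\mathbb{G}(n,W)(a,b)$ changes $t\ped{d}(F, \mathbb{G}(n, W))$ by at most $2\abs{E(F)}\, C^{\abs{E(F)}} / n^{2}$. McDiarmid's inequality applied conditionally on $\Ucal_n$ to the $\binom{n}{2}$ independent edges then gives
\[
    \Prob{\absinline{t\ped{d}(F, \mathbb{G}(n, W)) - \E{t\ped{d}(F, \mathbb{G}(n, W)) \given \Ucal_n}} > \varepsilon \given \Ucal_n} \leq 2\exp\round{-c(F)\, n^2 \varepsilon^2}
\]
for some $c(F)>0$, whence Borel--Cantelli supplies almost-sure convergence of this difference to $0$. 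Combining with the previous display and Lemma~\ref{lem:Convergence_of_smaples} (which yields $t\ped{d}(F, \mu(n, W))\to t\ped{d}(F, W)$ almost surely) proves the claim for the fixed $F$.

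The main obstacle is the conditional-expectation computation: one must verify carefully that the ``good'' tuples reproduce $t\ped{d}(F, \mu(n, W))$ exactly (not merely up to signs or permutations of edges) and that the ``bad'' tuples with coincident edges contribute only $O(1/n)$. Once this is done, the McDiarmid step is a routine bounded-differences argument, and the passage from a countable dense family of decorated graphs to all decorated graphs is standard via Weierstrass approximation.
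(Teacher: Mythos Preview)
Your proof is correct but takes a different route from the paper's. The paper observes that the \emph{unconditional} expectation satisfies $\E{t\ped{d}(F,\mathbb{G}(n,W))}=t\ped{d}(F,W)$ (up to the same $O(1/n)$ diagonal/coincidence error you identify), and then controls the deviation directly by a fourth-moment bound of the form $\Prob{\abs{t\ped{d}(F,\mathbb{G}(n,W))-t\ped{d}(F,W)}\geq\epsilon}\le C/(\epsilon^2 n^2)$, following the combinatorial counting in~\cite[Equation~11.5]{lovasz2012large}; Borel--Cantelli then finishes. Your argument instead conditions on the vertex samples $\Ucal_n$, identifies the conditional mean with $t\ped{d}(F,\mu(n,W))$ up to $O(1/n)$, and uses McDiarmid on the $\binom{n}{2}$ conditionally independent edge weights before invoking Lemma~\ref{lem:Convergence_of_smaples}. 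The trade-offs: your decomposition is more modular (it cleanly separates vertex randomness, handled by Lemma~\ref{lem:Convergence_of_smaples}, from edge randomness, handled by bounded differences) and yields exponential rather than $1/n^2$ concentration; the paper's approach is self-contained and does not rely on Lemma~\ref{lem:Convergence_of_smaples} or the two-stage conditioning. Both reach the countability reduction in the same way.
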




The proof of Lemma~\ref{lem:convergence_of_sample_weightedgraph} follows essentially the same idea as the proof of~\cite[Theorem 3.8]{kovacs2014multigraph}. We defer the proof to Appendix~\ref{sec:Proofs}. An immediate consequence of Lemma~\ref{lem:convergence_of_sample_weightedgraph} is that every MVG can be obtained as the limit of finite weighted graphs. In fact, MVGs were introduced as the limits of finite weighted in~\cite[Section 2.5]{lovasz2010decorated}.

\subsection{Infinite exchangeable arrays and measure-valued graphons}\label{subsec:ExchangeableArray_and_mvg}
Recall the correspondence between IEAs and random MVGs described in the Introduction. In this section, we formalize that correspondence. 

Let $\Scal$ be the set of all symmetric infinite arrays with their elements taking values in $[-1, 1]$ with $0$ diagonal. That is, let
\[
    \Scal \coloneqq \set{{\bf x} \in \Rd{\Natural^2} \given x_{i,j}=x_{j,i} \in [-1, 1],\;\; x_{i,i}=0\quad \forall\ i,j\in\Natural}.
\]
Equip $\Scal$ with the product topology under which it is compact. Equip $\Scal$ with the corresponding Borel sigma-algebra. Let $\Pi_{\infty}$ be the set of all finite permutations of $\N$. Observe that $\Pi_{\infty}$ has a natural action on $\Scal$ given by ${\bf x}^{\sigma}(i, j)\coloneqq {\bf x}(\sigma(i), \sigma(j))$ for all $(i,j)\in\Natural^2$.  Observe that an IEA is an $\Scal$-valued random variable ${\bf X}$ whose law is invariant under the action of $\Pi_{\infty}$.  Let $\Pcal(\Scal)$ be the space of Borel probability measures on $\Scal$. Let $\Pcal\ped{e}(\Scal)\subseteq \Pcal(\Scal)$ be the set of {\em exchangeable probability measures} on $\Scal$, that is, $\Pcal\ped{e}(\Scal)\coloneqq \set{ \rho \in\Pcal(\Scal) \given \rho = \Law{\X}, \X \text{ is an IEA}}$. Throughout our discussion we will assume that $\Pcal\ped{e}(\Scal)$ inherits the subspace topology from $\Pcal(\Scal)$, that is, it is equipped with the topology of weak convergence unless stated otherwise. 

Recall the correspondence between IEAs and (random) MVGs defined in~\eqref{eqn:AH} in Section~\ref{sec:graphons,mvgs,IEAs}. Theorem~\ref{thm:Correspondence} generalizes~\cite[Theorem 5.3]{diaconis2007graph} and makes formal the idea that IEA are in one-to-one correspondence with random measure-valued graphons. Moreover, the convergence of IEAs is equivalent to convergence of the corresponding MVGs. We first extend the definition of decorated homomorphism densities to an IEA.

\begin{definition}[Homomorphism density of IEAs w.r.t. decorated graphs]\label{def:hom_density_IEA}
    Let ${\bf X}$ be an IEA. For every decorated graph $F$, define 
    $
        t\ped{d}(F, {\bf X})\coloneqq \E{\prod_{\{i, j\}\in E(F)} F_{i,j}(X_{i,j})}.
    $
\end{definition}
The following assertion is immediate from the definition and Theorem~\ref{thm:Correspondence}. For the importance of it, we record it as a Proposition. We skip the proof.
\begin{proposition}\label{prop:Equal_hom_density_excharray_mvg}
    Let ${\bf Y}$ be an IEA and let $W_{\bf Y}$ be the corresponding (random) measure-valued graphon as described above. Then, for any decorated graph $F$ we have $t\ped{d}(F, {\bf Y})= \E{t\ped{d}(F, W_{{\bf Y}})}$.
    In particular, if ${\bf X}, \round{{\bf X}_n}_{n\in \N}$ are infinite exchangeable arrays then $\round{{\bf X}_n}_{n\in\Natural}\to {\bf X}$ weakly as $n\to\infty$ (with respect to the product topology) if and only if $\lim_{n\to\infty}t\ped{d}(F, {\bf X}_n) = t\ped{d}(F, {\bf X})$ for every decorated graph $F$.
\end{proposition}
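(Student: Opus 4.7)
The plan is to establish the two assertions in sequence: first the identity $t\ped{d}(F, \mathbf{Y}) = \E{t\ped{d}(F, W_\mathbf{Y})}$, and then use it together with a Stone--Weierstrass argument to characterize convergence of IEAs.

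For the identity, I would invoke the Aldous--Hoover representation referenced in equation~\eqref{eqn:AH}: write $Y_{i,j} = g(U_\emptyset, U_i, U_j, U_{\set{i,j}})$ with $\set{U_\emptyset, U_i, U_{\set{i,j}}}$ an i.i.d.\ family of $\mathrm{Uni}\round{\squarebrack{0,1}}$ variables. By the construction of the random MVG, $W_\mathbf{Y}(x, y)$ is precisely the conditional law of $g(U_\emptyset, x, y, U_{\set{i,j}})$ given $U_\emptyset$. Plugging this into Definition~\ref{def:hom_density_IEA} and conditioning first on $(U_\emptyset, U_1, \ldots, U_{\abs{V(F)}})$, the variables $\set{U_{\set{i,j}}}_{\set{i,j}\in E(F)}$ are mutually independent and independent of everything else, so the edge contributions factor:
\begin{align*}
    t\ped{d}(F, \mathbf{Y}) &= \E{\Exp{}{\prod_{\set{i,j}\in E(F)} F_{i,j}(Y_{i,j}) \given U_\emptyset, \set{U_k}_{k\in V(F)}}}\\
    &= \E{\prod_{\set{i,j}\in E(F)} \int_{-1}^{1} F_{i,j}(\zeta) W_\mathbf{Y}(U_i, U_j)(\diff\zeta)}.
\end{align*}
Integrating out the $U_k$'s (using that they are uniform and independent of $U_\emptyset$) and recognizing the definition of $t\ped{d}(F, W_\mathbf{Y})$ in~\eqref{eq:decorated_homomorphism_density_W} gives the identity.

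For the convergence equivalence, recall that $\Scal$ is compact in the product topology, so weak convergence on $\Scal$ is characterized by convergence of integrals of continuous test functions. Consider the linear span $\Acal$ of products $\mathbf{x} \mapsto \prod_{\set{i,j}\in E(F)} F_{i,j}(x_{i,j})$ ranging over all finite decorated graphs $F$. Each such product depends on only finitely many coordinates and is continuous (since each $F_{i,j}\in\Ccal$), so $\Acal \subseteq C(\Scal)$. The family $\Acal$ is a subalgebra closed under multiplication (the product of two such monomials is again one, after merging skeletons and multiplying decorations on shared edges), contains the constants (take $F$ with no edges), and separates points of $\Scal$ (for $\mathbf{x}\neq \mathbf{y}$ there is a pair $\set{i,j}$ with $x_{i,j}\neq y_{i,j}$, and a single continuous decoration distinguishes them). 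By Stone--Weierstrass, $\Acal$ is uniformly dense in $C(\Scal)$. Hence $\mathbf{X}_n \to \mathbf{X}$ weakly if and only if $\Exp{}{\prod F_{i,j}(X_{n;i,j})}\to \Exp{}{\prod F_{i,j}(X_{i,j})}$ for every decorated $F$, which is exactly $t\ped{d}(F, \mathbf{X}_n) \to t\ped{d}(F, \mathbf{X})$.

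The main care point is the conditional-independence step in the first part: one must justify that the Aldous--Hoover kernel $W_\mathbf{Y}$ correctly integrates out the edge-level noise $U_{\set{i,j}}$ while retaining the randomness carried by $U_\emptyset$, so that the outer expectation after factoring corresponds precisely to $\E{t\ped{d}(F, W_\mathbf{Y})}$ and not to $t\ped{d}(F, \E{W_\mathbf{Y}})$. Once this is set up, everything else is a direct application of Fubini and the preceding discussion, which is presumably why the authors state they skip the proof.
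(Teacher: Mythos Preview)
Your proof is correct. The paper skips the proof entirely, remarking only that the proposition is ``immediate from the definition and Theorem~\ref{thm:Correspondence}.'' Your argument for the identity is exactly the intended unpacking of the Aldous--Hoover representation and Fubini, and your care point about retaining the $U_\emptyset$-randomness so that the outer expectation yields $\E{t\ped{d}(F,W_{\mathbf{Y}})}$ rather than $t\ped{d}(F,\E{W_{\mathbf{Y}}})$ is precisely the right observation.

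For the convergence equivalence, your Stone--Weierstrass argument is cleaner and more self-contained than the route the paper gestures at. The paper's reference to Theorem~\ref{thm:Correspondence} is mildly circular: the proof of that theorem invokes the identity in this proposition, and then the homeomorphism would give the backward implication of the convergence statement via compactness. Your direct approach---showing that the decorated-graph monomials form a point-separating subalgebra of $C(\Scal)$ containing constants---establishes both directions at once without passing through $\Pcal(\mvGraphons)$. One small remark: to see closure under multiplication cleanly, you are implicitly allowing skeletons with isolated vertices (so that two graphs on $[m_1]$ and $[m_2]$ merge into one on $[\max(m_1,m_2)]$); this is permitted by the paper's definition of a decorated graph as any simple graph on $[m]$, so there is no gap.
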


We now state and prove the main result of this section.

\begin{theorem}[Homeomorphism Theorem]\label{thm:Correspondence}
    Let $\mvGraphons$ be the compact space of MVG equipped with its usual topology. Let $\Pcal(\mvGraphons)$ be the space of Borel probability measures on $\mvGraphons$ equipped with the weak topology. Then, $\Pcal(\mvGraphons)$ is homeomorphic to $\Pcal\ped{e}(\Scal)$.
\end{theorem}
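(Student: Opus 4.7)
The plan is to build an explicit continuous bijection $\Phi\colon\Pcal(\mvGraphons)\to\Pcal\ped{e}(\Scal)$ and then invoke the soft fact that a continuous bijection from a compact space into a Hausdorff space is automatically a homeomorphism: here $\Pcal(\mvGraphons)$ is compact since $\mvGraphons$ is (Remark~\ref{rem:Compactness}), while $\Pcal\ped{e}(\Scal)\subseteq\Pcal(\Scal)$ is Hausdorff in the weak topology. For $\rho\in\Pcal(\mvGraphons)$, define $\Phi(\rho)$ to be the law of the IEA $\mathbf{X}$ produced by hierarchical sampling: draw $W\sim\rho$; independently draw $(U_i)_{i\in\Natural}$ i.i.d.\ $\mathrm{Uniform}\interval{0,1}$; then, conditionally on $W$ and $(U_i)$, draw $X_{i,j}\sim W(U_i,U_j)$ independently over $\set{i,j}\in\Natural^{(2)}$, with $X_{j,i}=X_{i,j}$ and $X_{i,i}=0$. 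Well-definedness on $\cong$-equivalence classes follows from the invariance of i.i.d.\ uniform sampling under measure-preserving rearrangements of $\interval{0,1}$, and exchangeability of $\Phi(\rho)$ is immediate from that of $(U_i)_{i\in\Natural}$.

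Both continuity and injectivity of $\Phi$ reduce to properties of the family
\[
    \Fcal \coloneqq \set{W\mapsto t\ped{d}(F,W) \given F \text{ a decorated graph}}.
\]
By Definition~\ref{def:mvGraphons} these are continuous on the compact space $\mvGraphons$ and separate points, and $\Fcal$ is closed under pointwise products via the identity $t\ped{d}(F_1,W)\,t\ped{d}(F_2,W) = t\ped{d}(F_1\sqcup F_2,W)$. Stone–Weierstrass then makes $\mathrm{span}(\Fcal\cup\set{1})$ dense in $C(\mvGraphons)$. Proposition~\ref{prop:Equal_hom_density_excharray_mvg} identifies $t\ped{d}(F,\Phi(\rho)) = \mathbb{E}_\rho\squarebrack{t\ped{d}(F,W)}$, so if $\Phi(\rho_1)=\Phi(\rho_2)$ then $\rho_1$ and $\rho_2$ agree on every element of $\Fcal$, hence on a dense subspace of $C(\mvGraphons)$, hence $\rho_1=\rho_2$ by Riesz. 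Similarly, if $\rho_n\to\rho$ weakly, then $\mathbb{E}_{\rho_n}\squarebrack{t\ped{d}(F,W)}\to\mathbb{E}_\rho\squarebrack{t\ped{d}(F,W)}$ for every decorated $F$, so all decorated homomorphism densities of $\Phi(\rho_n)$ converge; since the product topology on $\Scal$ is generated by polynomial statistics of finite sub-arrays—linear combinations of these densities—this upgrades to $\Phi(\rho_n)\to\Phi(\rho)$ weakly in $\Pcal\ped{e}(\Scal)$.

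The only remaining step, and the main obstacle, is surjectivity. Given $\pi\in\Pcal\ped{e}(\Scal)$ with $\mathbf{X}\sim\pi$, I would invoke the Aldous–Hoover representation to obtain a measurable $f\colon\interval{0,1}^4\to\interval{-1,1}$, symmetric in its two middle arguments, such that $X_{i,j}\stackrel{d}{=}f(\xi,\xi_i,\xi_j,\xi_{\{i,j\}})$ for i.i.d.\ uniform random variables $\xi,(\xi_i),(\xi_{\{i,j\}})$. Define the random measure-valued kernel $W^\xi(x,y)$ as the conditional law of $f(\xi,x,y,U)$ given $\xi$, where $U\sim\mathrm{Uniform}\interval{0,1}$ is independent, and set $\rho\coloneqq\Law{W^\xi}\in\Pcal(\mvGraphons)$. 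Conditional independence of the $\xi_{\{i,j\}}$'s given $\xi$ and $(\xi_i)$ makes $\Phi(\rho)=\pi$ a direct matter of matching the two constructions. The delicate part is verifying that $\xi\mapsto W^\xi$ is a Borel map into the quotient space $\mvGraphons$, which requires a measurable-selection argument to pass from the ambient space of measure-valued kernels to the $\cong$-quotient; alternatively, one can obtain $\rho$ as a weak subsequential limit of the laws of random MVGs built from the finite $n\times n$ sub-arrays using a Dirac-mass embedding, together with Remark~\ref{rem:Compactness} and Proposition~\ref{prop:Equal_hom_density_excharray_mvg} to identify the limit. Either way, with surjectivity in hand, the compact-to-Hausdorff argument concludes the proof.
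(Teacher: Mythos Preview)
Your argument is correct and lands on the same compact-to-Hausdorff principle the paper uses, but the two proofs are organized differently. The paper works with the map in the opposite direction, $\mathbf{X}\mapsto W_{\mathbf{X}}$, and simply cites the Aldous--Hoover representation (equation~\eqref{eqn:AH}) as furnishing the bijection between $\Pcal\ped{e}(\Scal)$ and $\Pcal(\mvGraphons)$ outright; it then checks continuity of that map via Proposition~\ref{prop:Equal_hom_density_excharray_mvg} in two lines. You instead build $\Phi$ in the sampling direction, prove injectivity from scratch via a Stone--Weierstrass argument on the algebra of decorated homomorphism densities, and reserve Aldous--Hoover only for surjectivity. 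Your injectivity step is a genuine addition: it shows that the \emph{uniqueness} half of Aldous--Hoover is not needed here, only existence. The cost is that you must confront the measurability of $\xi\mapsto W^\xi$ into the quotient $\mvGraphons$, which you flag but do not fully resolve; the paper avoids this entirely by treating the correspondence as already established in the introduction. Your continuity argument is essentially the second assertion of Proposition~\ref{prop:Equal_hom_density_excharray_mvg}, so you may as well cite it directly rather than re-describe the product-topology reduction.
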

\begin{proof}
Recall that the Aldous-Hoover representation provides a one-to-one correspondence (see~\eqref{eqn:AH}) between IEAs and random MVGs, in other words, between $\Pcal\ped{e}(\Scal)$ and $\Pcal(\mvGraphons)$. Also note that $\Pcal\ped{e}(\Scal)$ and $\Pcal(\mvGraphons)$ are both compact and metrizable (and hence Hausdorff). To show that $\Pcal\ped{e}(\Scal)$ and $\Pcal(\mvGraphons)$ are homeomorphic, it suffices to show that the ${\bf X}\mapsto W_{{\bf X}}$ is continuous. Let ${\bf X}_n$ be a sequence of exchangeable arrays such that ${\bf X}_n\to {\bf X}$ weakly as $n\to \infty$ for some exchangeable array $\X$. Let $W_{n}$ and $W$ be the corresponding (random) measure valued graphons. We want to show that $W_n\to W$ weakly, that is, $\E{t\ped{d}(F, W_n)}\to \E{t\ped{d}(F, W)}$ for every decorated graph $F$. 
To see this, fix a decorated graph $F$. Since $\X_n\to \X$ weakly as $n\to \infty$, it follows that $t\ped{d}(F, \X_n)\to t\ped{d}(F, \X)$ as $n\to \infty$. Observe that $\E{t\ped{d}(F, W_n)}= t\ped{d}(F, {\bf X}_n)$ and $ t\ped{d}(F, {\bf X}) = \E{t\ped{d}(F, W)}$ by Proposition~\ref{prop:Equal_hom_density_excharray_mvg}. Hence, $\E{t\ped{d}(F, W_n)}\to \E{t\ped{d}(F, W)}$ as $n\to \infty$. This completes the proof.
\end{proof}


Recall that by the Aldous-Hoover representation theorem, we know that every exchangeable array $\X$ can be written as $X_{i,j}=f(U, U_i, U_j, U_{i,j})$ for some Borel measurable function $f$. Throughout our discussion, we always assume that $U,\set{U_i}_{i\in\Natural},\set{U_{i,j}=U_{\set{i,j}}}_{i,j\in\Natural}$ is a collection of i.i.d. $\mathrm{Uniform}[0,1]$ random variables on some probability space.  We now give examples of IEAs and their Aldous-Hoover representation which in turn yields the corresponding (random) MVG. These examples emphasize that the graphons do not capture general IEAs while MVGs do.  We first begin with a definition.
\begin{definition}[Vertex, extrinsic, and edge dependence]\label{defn:IEATypes}
    Let $\X$ be an IEA and let $f\colon[0, 1]^{4}\to \R$ be a corresponding Aldous-Hoover function. We say that $f$ has {\em vertex} dependence if $f$ depends on the second and third argument. Similarly, we will say that $f$ has {\em extrinsic} (respectively, {\em edge}) dependence if $f$ depends on the first (respectively, fourth) argument. An IEA that doesn't have {\emph extrinsic} dependence is called \emph{pure} and corresponds to a deterministic MVG.
\end{definition}
We must emphasize that Aldous-Hoover function for an IEA is not unique and is often not known explicitly. However, the above definition does not depend on the choice of Aldous-Hoover function (see~\cite{kallenberg1989representation}).

\begin{example}[Edge dependence - Mixture of two Dirac masses]\label{ex:Bernoulli_finer_topology}
Let ${\bf X}$ be an infinite exchangeable array such that $X_{i,j}$s are all i.i.d. Bernoulli random variables, $\mathrm{Ber}(1/2)$. Let $f\colon[0, 1]^{4}\to \R$ be defined as $f(u, x, y, z)=\indicator{}{z\leq 1/2}$ for $(u, x, y, z)\in [0,1]^4$. We see that $\X$ is directed by $f$.
On the other hand, let $\widetilde{{\bf X}}$ be an IEA such that $\widetilde{X}_{i,j}$s are all i.i.d. (up to matrix symmetry) and $\widetilde{X}_{i,j}\sim \inv{2}\delta_{-1/2} + \inv{2}\delta_{3/2}$. Then, $\widetilde{{\bf X}}$ is directed by an Aldous-Hoover function $g$
where $g\colon[0, 1]^{4}\to \R$ is defined as $g(u, x, y, z)=\frac{1}{2}-\indicator{}{z\leq 1/2}+\indicator{}{z> 1/2}$. Note that the graphons and MVGs corresponding to $\X$ and $\widetilde{\X}$ (see~\eqref{eqn:AH_Graphon} and~\eqref{eqn:AH} in Section~\ref{sec:graphons,mvgs,IEAs}) are given by $w_{{\bf X}} \equiv \frac{1}{2} \equiv w_{\bf \widetilde{X}}$ while $W_{\bf X} \equiv \frac{1}{2}\delta_0+\frac{1}{2}\delta_{1}$ and $W_{\bf \widetilde{X}} \equiv \frac{1}{2}\delta_{-1/2}+\frac{1}{2}\delta_{3/2}$.
Note that the graphons and the measure-valued graphons corresponding to $\X$ and $\widetilde{\X}$ are deterministic. This is reflected by the Aldous-Hoover representations $f$ and $g$ which are both independent of their first coordinates. 
\end{example}

\begin{example}[Extrinsic and edge dependence - correlated Gaussians]\label{ex:correlated_gaussians}
Consider an infinite exchangeable array ${\bf X}$ such that $\set{X_{i,j}}_{(i,j)\in\Natural^{(2)}}$ are standard Gaussian random variables such that $\Cov{X_{i,j}}{X_{l,m}}=1/2$ whenever $\{i, j\}\neq \{l, m\}$. 
Let $\Phi\colon[0, 1]\to \R$ be a function that pushes forward the Lebesgue measure on $[0, 1]$ to the standard Gaussian measure on $\R$. And, let $f\colon[0, 1]^{4}\to \R$ be defined by $f(u, x, y, z)=\frac{1}{\sqrt{2}}\Phi(x)+ \frac{1}{\sqrt{2}}\Phi(z)$ for all $(u,x,y,z)\in[0,1]^4$. It is easy to verify that $\X$ is directed by $f$.
We, therefore, obtain for a.e. $(x,y)\in[0,1]^{(2)}$,
\begin{align*}
    w_{\bf X}(x, y)\equiv \frac{\Phi(U)}{\sqrt{2}},\quad
    W_{\bf X}(x, y)\equiv \Law{\mathcal{N}\round{\frac{\Phi(U)}{\sqrt{2}}, \frac{1}{2}}}.
\end{align*}
Note that $\Phi(U)$ is a standard normal random variable. Also note that $w_{\X}$ is a random kernel and $W_{\X}$ is a random MVG.

Following the same approach as above, one can more generally take $f(u, x, y, z)=\alpha\Phi(u)+\beta(\Phi(x)+\Phi(y))+\gamma\Phi(z),$
say, where $\alpha^2+2\beta^2+\gamma^2=1$. And, define $X_{i,j}=f(U, U_i, U_j, U_{i,j})$ to obtain Gaussian exchangeable arrays with various correlation structures. This would yield 
\[
    w_{\bf X}^{(u)}(x, y) = \alpha\Phi(u)+\beta(\Phi(x)+\Phi(y)),\qquad W_{\bf X}^{(u)}(x, y)= \Law{\round{\Ncal\round{w_{\bf X}^{(u)}(x, y), \gamma^2}}},
\]
for $u,x,y\in[0,1]$.
Because of the extrinsic dependence, this IEA is not pure. Note that in this case the graphons $w_{\bf X}$ and the measure-valued graphon $W_{\bf X}$ are indeed random. 
\end{example}

\begin{example}[Vertex and edge dependence - Stochastic Block Model (SBM)]\label{ex:SBM}
    We now describe an exchangeable array that can be seen as limits of a certain sequence of SBM. Fix $p\in [0, 1]$. For every $n\in \N$, color every vertex $i\in [n]$ blue with probability $p$ and red with probability $(1-p)$ independently of each other. More formally, this is associating an independent $p\delta_1 + (1-p)\delta_{-1}$ distributed random variable $C(i)$ with $i\in [n]$, where $1$ represents color `blue' and $-1$ represents the color `red'. Fix $p_{bb}, p_{rr}, p_{rb}\in [0, 1]$. For each $\set{i, j}\subseteq [n]$, create an edge with probability $p_{bb}$ if both $i$ and $j$ are colored blue, with probability $p_{rr}$ if both are colored red and with probability $p_{rb}$ otherwise. This defines an SBM with two communities `blue' and `red'. Let $A_n$ denote the adjacency matrix of this SBM on vertex set $[n]$. It is easy to see that $A_n$ is an exchangeable matrix, that is, $\Law{A_n}=\Law{A_n^{\sigma}}$. It is natural to ask, if $A_n$ converges to some infinite exchangeable array as $n\to \infty$. This is indeed the case. Here we describe the infinite exchangeable array ${\bf X}$ that arises as the limit of $\round{A_n}_{n\in\Natural}$.
     
    In order to define the Aldous-Hoover function for infinite exchangeable arrays, we first define some sets for notational simplicity. Fix $p\in[0,1]$. Define $B=[0,p]^2$, $R=[p,1]^2$ and $D=[0,p]\times [p,1] \cup [p,1]\times[0,p]$. Let $f\colon[0, 1]^{4}\to \{0, 1\}$ be defined as 
    \[
        f(u, x, y, z) = \indicator{B}{(x,y)}\indicator{[0,p_{bb}]}{z}+\indicator{R}{(x,y)}\indicator{[0,p_{rr}]}{z}+\indicator{D}{(x,y)}\indicator{[0,p_{rb}]}{z},
    \]
    for $u,x,y,z\in[0,1]$. The infinite exchangeable array ${\bf X}$ can be defined as $X_{i,j}\coloneqq f(U, U_i, U_j, U_{i,j})$ for $i,j\in\Natural$. The corresponding graphon and measure-valued graphon are $w^{(u)}$ and $W^{(u)}$ defined as
    \begin{align*}
        w^{(u)}(x, y) &= p_{bb}\indicator{B}{(x,y)}+p_{rr}\indicator{R}{(x,y)}+p_{rb}\indicator{D}{(x,y)},\\
        W^{(u)}(x, y) &= \mathrm{Ber}(p_{bb})\indicator{B}{(x,y)}+\mathrm{Ber}(p_{rr})\indicator{R}{(x,y)}+\mathrm{Ber}(p_{rb})\indicator{D}{(x,y)},
    \end{align*}
    for a.e. $(x,y)\in[0,1]^2$. This example can be generalized to distributions other than Bernoulli in a straightforward manner.  
\end{example}

\subsubsection{From finite exchangeable matrices to IEAs}
The previous section establishes that the weak convergence of a sequence of IEAs is equivalent to the weak convergence of corresponding (random) MVGs (see Theorem~\ref{thm:Correspondence}). In practice, we are often interested in taking limits of finite exchangeable matrices. For instance, we would like to say that $G(n, 1/2)$ converges to the IEA $\G$.  One way to do this is to identify $G(n, 1/2)$ with the corresponding (random) MVG, say $W_{\G_n}$ and show that $W_{\G_n}\to W_{\G}$ where $W_{\G}$ is the MVG corresponding to the IEA $\G$ (see Section~\ref{subsec:Embedding}). However, it is more natural to show the convergence of a sequence of finite exchangeable matrices to an IEA and deduce the convergence to an MVG from there. This is what we do in this section.


A (random) symmetric  matrix $A\in \Mcal_n$ is called (finite) \emph{exchangeable} if $\mathrm{Law}(A)=\mathrm{Law}(A^{\sigma})$ for every permutation $\sigma$ of $[n]$. Given an $n\times n$ exchangeable matrix $A$, we can construct an IEA follows. Let $\set{U_i}_{i\in \N}$ be a family of i.i.d. $\mathrm{Uniform}[0, 1]$ random variables independent of $A$. Define an IEA $\X$ such that $X_{i, j}\coloneqq A_{\lceil nU_i \rceil, \lceil nU_j \rceil }$. 
In plain words, each coordinate (up to matrix symmetry) of $\X$ is chosen independently and uniformly at random from the coordinates of $A$. With this correspondence, for every decorated graph $F$, define $t\ped{finite}^{(0)}(F,\slot{})$ over exchangeable matrices as $t\ped{finite}^{(0)}(F, A)\coloneqq t(F, {\bf X})$. On the other hand, analogous to Definition~\ref{def:hom_density_IEA} we have following definition for homomorphism density into exchangeable matrices.
\begin{definition}[Homomorphism density for exchangeable matrices]
    Let $A$ be an $n\times n$ exchangeable matrix. Let $F$ be a decorated graph such that $\abs{V(F)}<n$. Define 
    $
        t\ped{finite}^{(1)}(F, A) \coloneqq \E{\prod_{\{i, j\}\in E(F)} F_{i,j}\round{A_{i,j}}}.
    $
\end{definition}
\begin{remark}\label{rem:finite_hom} 
    It is easy to see that $\abs{t\ped{finite}^{(1)}(F, A)-t\ped{finite}^{(0)}(F, A)}\leq C(F)n^{-1}$ where $C(F)$ is a constant depending only on $F$. Therefore, both $t\ped{finite}^{(0)}$ and $t\ped{finite}^{(1)}$ determine the same limit as $n\to \infty$.
    Also note that using the embedding described in Section~\ref{subsec:Embedding}, we can define $t\ped{d}(F, A)$ as in equation~\eqref{eqn:Def_homDensity_M}. Notice that $t\ped{finite}^{(0)}(F, A)=\E{t\ped{d}(F, A)}$. 
\end{remark}
This motivates the following definition for the convergence of finite exchangeable matrices to an IEA. 
\begin{definition}[Convergence of exchangeable matrices]\label{def:convegence_matrix_exchangeable_array}
    Let $\round{A_n}_{n\in\Natural}$ be a sequence of $n\times n$ symmetric exchangeable matrices. We say that $\round{A_n}_{n\in\Natural}\to {\bf X}$ as $n\to\infty$ if for every decorated graph $F$ we have $t\ped{finite}^{(1)}(F, A_n)=\E{t\ped{d}(F, A_n)}\to t\ped{d}(F, {\bf X})$ as $n\to\infty$.
\end{definition}
We end this section with some examples of finite exchangeable matrices converging to an IEA. 
\begin{example}
Let $V\colon\mathbb{R}^{(2)}\to [-1, 1]$ be a $C^{2}$ function such that $V(x, y)=V(y, x)$ and $\norm{\infty}{\nabla^2V}\leq 1$, where $\nabla^2V$ is the Hessian of $V$. Define $\Vcal\colon\Pcal(\R) \to \R$ as 
$\Vcal(\mu)\coloneqq \frac{1}{2}\iint_{\Rd{2}} V(x, y)\mu(\diff x)\mu(\diff y)$. Define $\Vcal_n\colon\mathbb{R}^n\to \R$ as $\Vcal_n(x_1, \ldots, x_n)=\Vcal(\mu_n)$ where $\mu_n=\frac{1}{n}\sum_{i=1}^{n} \delta_{x_i}$ for every $\set{x_i}_{i\in[n]}\subset \R$. In particular, $\Vcal_n(x_1, \ldots, x_n)\coloneqq \frac{1}{2n^2}\sum_{i, j=1}^n V(x_i, x_j)$.
Let $H_n({\bf x})\in\Mcal_n$ be the Hessian matrix of $\Vcal_n$ at ${\bf x}\in \R^{n}$. Then, $n^2H_n({\bf x})_{(i, j)} = \pdiff_{1,2}V(x_i, x_j)$ if $i\neq j$, and $n^2H_n({\bf x})_{(i, i)}=\pdiff_{1,1}V(x_i, x_i)$ for $(i,j)\in[n]^{(2)}$. Now, let $\set{X_i}_{i\in\Natural}$ be i.i.d. random variables distributed according to some probability measure $\mu\in \Pcal([-1, 1])$ and let ${\bf X}_n=(X_1, \ldots, X_n)$. Then, $\Hcal^{(n)}=n^2H_n({\bf X}_n)$ is an exchangeable matrix and $\Hcal^{(n)} \to \Hcal^{(\infty)}$, where $\Hcal^{(\infty)}$ is an exchangeable array defined as
\[
    \Hcal^{(\infty)}_{(i, j)} = \begin{cases}
        \partial_{1,2} V(X_i, X_j), & \text{if } i\neq j,\\
        \partial_{1,1} V(X_i, X_i), & \text{if } i=j,
        \end{cases}
        \qquad (i,j)\in\Natural^{(2)}.
\]


For concreteness, assume that $V(x, y)=c(x-y)$ for $(x,y)\in\R^{(2)}$ for some even $C^2$ function $c\colon \R \to [-1,1]$. In that case, notice that $\Hcal^{(\infty)}_{(i, j)}=-c''(X_i- X_j)$ for $(i,j)\in\Natural^{(2)}$. Also assume, for simplicity, that $\set{X_i}_{i\in\Natural}$ are i.i.d. $\mathrm{Uniform}[0, 1]$. Then, $c''$ is the Aldous-Hoover representation function and the MVG corresponding to $\Hcal^{(\infty)}$ is nothing but $W^\infty\in\mvGraphons$ defined as $W^{\infty}(x, y)\coloneqq \delta_{-c''(x-y)}$ for a.e. $(x,y)\in\Rd{(2)}$.
\end{example}

\begin{example}
One can consider higher order polynomials of measures. That is, for any $k\in\Natural\setminus\set{1}$, define $\Vcal\colon\Pcal(\R) \to \R$ as
$
    \Vcal(\mu)\coloneqq \int_{\R^k}V(x_1, \ldots, x_k)\mu(\diff x_1)\ldots \mu(\diff x_k).
$
Define $V_n\colon \Rd{n}\to \R$ as $x\mapsto V_n(x)=\Vcal(\mu_n)$ where $\mu_n\coloneqq \frac{1}{n}\sum_{j=1}^{n}\delta_{x_i}$. This amounts to evaluating the expectation of $V$ when its arguments are sampled uniformly with replacement from the entries of $x\in\Rd{n}$. Let $H_n(x)$ be the Hessian matrix of $V_n$ at $x\in\Rd{n}$. Let us define $G\colon \Rd{2} \to \R$ as $G(a, b)\coloneqq$
\begin{align*}
     & \sum_{i,j\in[k], i\neq j}\int_{\Rd{k-2}}\pdiff_{i,j}V(x_1, \ldots,x_{i-1},a,x_{i+1},\ldots,x_{j-1},b,x_{j+1}, \ldots, x_k)\;\prod_{m\in[k]\setminus \set{i, j}}\mu(\diff x_m).
\end{align*}
Let $\round{X_i}_{i\in\Natural}$ be i.i.d. random variables with distribution $\mu\in\Pcal(\R)$. Then, $ n^{k}H_{n}(X_1, \ldots, X_n)\to \Hmat$, as $n\to\infty$, where $\Hmat_{i,j}=G(X_i, X_j)$ for $(i,j)\in\Natural^{(2)}$.
\end{example}

\begin{example}
    Consider a Markov chain $\round{X_n}_{n\in\Natural}$ on $[0, 1]$ with a unique stationary distribution $\pi\in \Pcal([0, 1])$. Let $W\colon[0,1]^2 \to [0,1]$ be a kernel such that $W$ is continuous $\pi\times \pi$ a.e. For each $n\in\Natural$, let $(Y_1, \ldots, Y_n)$ be a uniform permutation of $(X_1, \ldots, X_n)$ and let $\Hcal^{(n)}$ be an exchangeable matrix defined by
    $\Hcal^{(n)}_{i, j}=W(Y_i, Y_j)$, $i,j\in[n]$.
    Let $\set{V_i}_{i\in\Natural}$ be a collection of i.i.d. random variables with distribution $\pi$ and let $\Hcal^{(\infty)}$ be an exchangeable array such that $\Hcal^{(\infty)}_{i,j}=W(V_i, V_j)$. Then, $\Hcal^{(n)}\to \Hcal^{(\infty)}$ as $n\to \infty$.
\end{example}

\section{Dynamics}\label{subsec:McKeanVlasov_mvg}
In this section we study the limit of exchangeable processes on symmetric matrices as the dimension grows to infinity. In Theorem~\ref{thm:mv}, we show that a general class of processes on symmetric matrices converges to a deterministic curve on $\mvGraphons$ that is described by~\eqref{eq:McKeanV1} as the dimension grows to infinity. In Section~\ref{sec:relax_Metro}, we study the relaxed Metropolis chain algorithm on SBMs described in Section~\ref{sec:intro_metropolis}. We begin with preliminaries on Skorokhod map.
\subsection{Skorokhod map on cube}\label{sec:Skorokhod}
For $n\in\Natural$, consider the domain $\Mcal_n$ (and $\Mcal_{n,+}$). Notice that $\Mcal_n$ is a cube, and is closed with respect to the usual topology.
Consider the SDE:
\begin{align}
    \diff X_{n,(i,j)}(t) &= b_{n,(i,j)}\round{X_{n,(i,j)}(t),X_n(t)}\diff t + \Sigma_{n,(i,j)}\round{X_{n,(i,j)}(t),X_n(t)} \diff B_{n,(i,j)}(t)\nonumber\\
    &\qquad\qquad\qquad+\diff L_{n,(i,j)}^-(t) - \diff L_{n,(i,j)}^+(t),\label{eq:SDE_with_L}
\end{align}
for all $(i,j)\in[n]^{(2)}$ and $t\in\interval{0,T}$ for some fixed $T\in\R_+$ and starting at $X_n(0)=X_{n,0}\in \Mcal_n$. Here $b_n$ and $\Sigma_n$ are maps from $[-1,1]\times \Mcal_n$ to $\Rd{[n]^{(2)}}$ and $\R_+^{[n]^{(2)}}$ respectively. The process $B_{n}$ is a $C\round{[0, T],\Rd{[n]^{(2)}}}$-valued process such that $\round{B_{n,(i,j)}}_{(i,j)\in\squarebrack{n}^{(2)}}$ is a collection of i.i.d standard Brownian motion up to matrix symmetry, and the processes $L_{n}^-$ and $L_{n}^+$ are local times at the boundary. More precisely, they satisfy the following conditions:
\begin{enumerate}
    \item The processes $X_{n}$, $L_{n}^-$ and $L_n^+$ are adapted processes.
    \item The process $L_{n}^-$ and $L_n^+$ are coordinatewise non decreasing processes a.e.
    \item For every $(i,j)\in\squarebrack{n}^{2}$,
    \begin{align}
        \begin{split}
            \int_0^\infty \indicator{}{X_{n,(i,j)}(t) > -1} \diff L_{n,(i,j)}^-(t) &= 0, \quad\text{and}\\
            \int_0^\infty \indicator{}{X_{n,(i,j)}(t) < +1} \diff L_{n,(i,j)}^+(t) &= 0.
        \end{split}
    \end{align}
\end{enumerate}
We say that $\round{X_{n},L_{n}^+,L_{n}^-}$ solves the Skorokhod problem with respect to the set $\Mcal_n$. Following~\cite[Definition 1.2]{kruk2007explicit}, the strong solution $\round{X_{n},L_{n}^+,L_{n}^-}$ of the Skorokhod problem exists and is unique if $b_n$ and $\Sigma_n$ are Lipschitz with respect to $\normF{}$.


Let $Y_1$ and $Y_2$ be two real valued stochastic processes. Let $\Lambda_{\interval{-1,1}}$ denote the Skorokhod map that maps the set c\`adl\`ag functions on $\interval{0,T}$ to itself. If $(X_1 \coloneqq \Lambda_{\interval{-1,1}}(Y_1),L_1^+,L_1^-)$ and $(X_2 \coloneqq \Lambda_{\interval{-1,1}}(Y_2),L_2^+,L_2^-)$ solve the Skorokhod problem with respect to the set $\interval{-1,1}$, then the Skorokhod map $\Lambda_{\interval{-1,1}}$ is $4$-Lipschitz under the uniform metric~\cite[Corollary 1.6]{kruk2007explicit}, i.e.,
\begin{align}
    \sup_{t\in\interval{0,T}}\abs{X_1(t) - X_2(t)} \leq 4\sup_{t\in\interval{0,T}}\abs{Y_1(t) - Y_2(t)}, \qquad\forall\ T\in\R_+.\label{eq:skorokhod_lipschitz}
\end{align}

\subsection{McKean-Vlasov SDE}\label{subsec:McKeanVlasov}
Recall MVG McKean-Vlasov SDE~\eqref{eq:McKeanV1} described in Section~\ref{sec:Introduction}. Following a standard Picard's iteration argument, as done in~\cite[Proposition 4.5]{HOPST22}, it can be shown that MVG McKean-Vlasov SDE~\eqref{eq:McKeanV1} admits a pathwise unique solution under appropriate assumptions on $b$ and $\Sigma$. For completeness, we record this as Proposition~\ref{prop:McKean-Vlasov_existence} but skip the proof.
\begin{assumption}\label{asmp:general_Lipschitz}
    Recall the definition of the generalized cut norm, $\norm{\blacksquare}{}$, from Definition~\ref{defn:CutNorm}. Let $b, \Sigma$ be as in~\eqref{eqn:Def_b_Sigma} and satisfy global Lipschitz conditions, that is, there exists $L,\kappa_\blacksquare \in \R_+$ such that
    \begin{align*}
        \sup_{W\in\Wfrak} \norm{\infty}{b(z_1,W) - b(z_2,W)}, \sup_{W\in\Wfrak} \norm{\infty}{\Sigma(z_1, W) - \Sigma(z_2, W)}&\leq L\abs{z_1-z_2},\\
        \sup_{z\in[-1,1]} \norm{\infty}{b(z,W_1) - b(z,W_2)}, \sup_{z\in[-1,1]} \norm{\infty}{\Sigma(z, W_1) - \Sigma(z, W_2)} &\leq \kappa_\blacksquare \norm{\blacksquare}{W_1-W_2},
    \end{align*}
    for all $W_1,W_2\in\Wfrak$ and $z_1, z_2 \in [-1,1]$. 
\end{assumption}

\begin{proposition}\label{prop:McKean-Vlasov_existence}
Let $b$ and $\Sigma$ be as above. Let $\round{\Omega, \Fcal, \round{\Fcal_t}_{t\in\R_+}, \mathbb{P}}$ be a filtered probability space satisfying the usual conditions that supports a pair of independent $\mathrm{Uniform}[0, 1]$ random variables $U, V$ (measurable with respect to $\Fcal_0$) and a Brownian motion $B$ (adapted to the filtration $\round{\Fcal_t}_{t\in\R_+}$). Then, for any $W_0\in \Wfrak$, there exists a pathwise unique strong solution $(X, W)$ to the MVG McKean-Vlasov SDE~\eqref{eq:McKeanV1}.
\end{proposition}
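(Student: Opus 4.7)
The plan is to follow the standard Picard iteration scheme, exactly as in the reference \cite[Proposition 4.5]{HOPST22}, adapted to the present setting where the law of the diffusion, conditioned on the pair $(U,V)$, defines the curve of MVGs through which the drift and diffusion coefficients feed back. The main work is to identify the correct norm in which the Picard operator is a contraction on short time intervals, and the key observation is that the generalized cut norm on $\Wfrak$ is tailor-made for this purpose thanks to Assumption~\ref{asmp:general_Lipschitz}.

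\textbf{Step 1 (Setup of the iteration).} Fix $T \in \R_+$, and on the given filtered probability space define $W^{(0)}(t) \coloneqq W_0$ for all $t\in[0,T]$. Assuming a curve $W^{(n)}\colon[0,T]\to\Wfrak$ has been constructed, define $X^{(n+1)}$ as the pathwise unique strong solution of the one-dimensional reflected SDE
\begin{align*}
    \diff X^{(n+1)}(t) &= b\bigl(X^{(n+1)}(t), W^{(n)}(t)\bigr)(U,V)\,\diff t + \Sigma\bigl(X^{(n+1)}(t), W^{(n)}(t)\bigr)(U,V)\,\diff B(t) \\
    &\qquad + \diff L^{-,(n+1)}(t) - \diff L^{+,(n+1)}(t),
\end{align*}
started from $X^{(n+1)}(0) \sim W_0(U,V)$, which exists and is unique by standard theory for reflected SDEs since, conditional on $(U,V)$, the coefficients are Lipschitz in the state variable by Assumption~\ref{asmp:general_Lipschitz}. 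Then set $W^{(n+1)}(t)(x,y)\coloneqq \Law{X^{(n+1)}(t)\given (U,V)=(x,y)}$ for a.e.\ $(x,y)\in[0,1]^{(2)}$.

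\textbf{Step 2 (Key cut-norm bound).} For any two curves $W_1,W_2$ arising as conditional laws of processes $X_1,X_2$ respectively, and any $\psi\in\Lcal$, a direct computation using the disintegration
\[
    \Gamma(\psi,W_i(t))(x,y) = \E{\psi(X_i(t))\given (U,V)=(x,y)}
\]
gives, for all measurable $S,T\subseteq[0,1]$,
\[
    \abs{\int_{S\times T} \round{\Gamma(\psi,W_1(t))-\Gamma(\psi,W_2(t))}\diff x \diff y} \leq \E{\abs{\psi(X_1(t))-\psi(X_2(t))}} \leq \E{\abs{X_1(t)-X_2(t)}}.
\]
Taking suprema over $S,T$ and $\psi\in\Lcal$ yields the crucial inequality
\[
    \norm{\blacksquare}{W_1(t)-W_2(t)} \leq \E{\abs{X_1(t)-X_2(t)}}.
\]

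\textbf{Step 3 (Contraction estimate).} Let $Y^{(n+1)}(t)$ denote the unreflected drift-plus-noise part in the SDE for $X^{(n+1)}$. By the Lipschitz property~\eqref{eq:skorokhod_lipschitz} of the Skorokhod map on $\interval{-1,1}$,
\[
    \sup_{s\leq t}\abs{X^{(n+1)}(s)-X^{(n)}(s)} \leq 4\sup_{s\leq t}\abs{Y^{(n+1)}(s)-Y^{(n)}(s)}.
\]
Squaring, taking expectations, and applying Doob's inequality to the martingale part and Cauchy--Schwarz to the drift part, combined with Assumption~\ref{asmp:general_Lipschitz} and the bound from Step~2, yields a constant $C=C(T,L,\kappa_\blacksquare)$ such that
\[
    \E{\sup_{s\leq t}\abs{X^{(n+1)}(s)-X^{(n)}(s)}^{2}} \leq C \int_0^t \E{\sup_{r\leq s}\abs{X^{(n)}(r)-X^{(n-1)}(r)}^{2}}\diff s.
\]
Iterating this bound gives the familiar $\round{(Ct)^{n}/n!}$ decay, so $(X^{(n)})_{n\in\Natural}$ is Cauchy in $L^{2}(\Omega;C([0,T]))$ and converges to a limit $X$. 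The corresponding conditional-law curves $W^{(n)}$ then converge in the generalized cut norm uniformly on $[0,T]$ to $W(t)(x,y)=\Law{X(t)\given (U,V)=(x,y)}$, and passing to the limit in the SDE (using Lipschitz continuity of $b,\Sigma$ once more) shows that $(X,W)$ solves~\eqref{eq:McKeanV1}.

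\textbf{Step 4 (Uniqueness).} If $(X_1,W_1)$ and $(X_2,W_2)$ are two solutions driven by the same Brownian motion and the same $(U,V)$, the same chain of estimates (Skorokhod Lipschitz, Assumption~\ref{asmp:general_Lipschitz}, and the bound of Step~2) gives
\[
    \E{\sup_{s\leq t}\abs{X_1(s)-X_2(s)}^{2}} \leq C \int_0^t \E{\sup_{r\leq s}\abs{X_1(r)-X_2(r)}^{2}} \diff s,
\]
and Gr\"onwall's inequality forces $X_1\equiv X_2$, hence $W_1\equiv W_2$.

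The main obstacle, in my view, is Step~2: verifying that the generalized cut norm of the difference of conditional-law curves is controlled by the $L^{1}$ distance between the underlying processes. Once this bridge between the pathwise world of the reflected SDE and the MVG topology is in hand, the rest is a routine Picard/Gr\"onwall argument in the spirit of~\cite{HOPST22}. The Lipschitz-in-$\norm{\blacksquare}{}$ hypothesis in Assumption~\ref{asmp:general_Lipschitz} is precisely what makes this bridge usable, which is why that particular norm is chosen in Section~\ref{subsec:Metric_mvg}.
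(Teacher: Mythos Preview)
Your proposal is correct and matches the paper's approach exactly: the paper explicitly skips the proof, stating only that it follows from ``a standard Picard's iteration argument, as done in \cite[Proposition~4.5]{HOPST22}'', and your write-up fills in precisely those details. Your Step~2 --- controlling $\norm{\blacksquare}{W_1(t)-W_2(t)}$ by $\E{\abs{X_1(t)-X_2(t)}}$ via the disintegration of $\Gamma(\psi,W_i(t))$ --- is indeed the one nonstandard ingredient that makes the generalized cut norm the right choice here, and the rest (Skorokhod Lipschitz, Doob, Gr\"onwall absorption, iteration) is routine.
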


To motivate the study of McKean-Vlasov SDE~\eqref{eq:McKeanV1}, consider the problem of minimizing triangle density while maximizing the edge density as in Section~\ref{subsec:ComputationExample}. As explained in Section~\ref{subsec:ComputationExample}, a close proxy would be minimize the function $\Hcal(\slot{})\coloneqq t(\triangle,\slot{}) - \alpha t(\mathrel{-},\slot{})$ for sufficiently small $\alpha>0$ over the space of $\Graphons$. Notice that $\Hcal$ naturally restricts to a well-defined function $H_{r}$, on $\Mcal_{r, +}$, the space of symmetric matrices with entries in $[0, 1]$. Consider the following SDE on $\Mcal_{r, +}$
\[
    \diff X_{r, (i, j)}(t)= -r^2\nabla H_r(X_r)+ \sigma \diff B_{r, (i, j)}(t)+ \diff L^{-}_{r, (i, j)}(t) -\diff L^{+}_{r, (i, j)}(t),
\]
with where $\set{B_{r, (i, j)}}_{(i, j)\in \Natural^{(2)}}$ is a collection of i.i.d. Brownian motion. Note that the Euclidean gradient $\nabla H_r$ is scaled by a factor of $r^{2}$. The above SDE can be thought of as a time-scaling of noisy Euclidean gradient flow of $H_{r}$. These processes are considered in~\cite{HOPST22} and they are indeed obtained as the continuous time limit of \emph{projected noisy stochastic gradient descent} (PNSGD) of $H_r$~\cite[Definition 1.2, Theorem 3.2]{HOPST22}. It is clear that $X_r$ is a symmetric matrix valued process whose coordinates are exchangeable. It is shown in~\cite[Theorem 1.4]{HOPST22} that, under appropriate assumptions on the initial condition $X_{r}(0)$, the matrix valued process $(X_r(t))_{t\in\R_+}$ converges to a deterministic curve $(w_{\sigma}(t))_{t\in\R_+}$ on the space of graphons, uniformly on compact time intervals, as $r\to \infty$. Moreover, the curve $(w_{\sigma}(t))_{t\in\R_+}$ is described by a McKean-Vlasov SDE similar to~\eqref{eq:McKeanV1}. 

Note that the graphon convergence of $X_r(\cdot)$ comes with a loss of microscopic information as discussed in Section~\ref{sec:mvg}.
It may be reasonable to expect that the matrix valued process $X_{r}(\cdot)$ converges to an IEA as $r\to \infty$ and one should rather consider the convergence of $X_{r}(\cdot)$ to a deterministic curve on MVG space. In this section we accomplish this convergence and do so for a larger class of $\hamil$ than those allowed in~\cite[Theorem 1.4]{HOPST22}. We begin with an illustrative example.
\begin{example}\label{exp:Example}
    Let $F$ be the triangle graph where two of its edges are decorated by $x\mapsto x^2$, and the third edge is decorated by $x\mapsto x$. Consider the function $\Hcal\coloneqq t\ped{d}(F,\slot{})$. Note that $\Hcal$ restricts naturally to a function $H_r\colon\Mcal_r \to \R$ as defined in equation~\eqref{eqn:Def_homDensity_M}. One can easily see that $\pdiff_{(i,j)} t\ped{d}(F,\slot{})(X_r) = \frac{4}{r^3}\sum_{k=1}^r X_{r,(i,k)}^2X_{r,(k,j)}$ for every $(i,j)\in[r]^{(2)}$. Let $B_{r, (i, j)}$ be a collection of i.i.d. Brownian motions. Consider the following SDE on $\Mcal_r$:
    \[
        \diff X_{r,(i,j)}(t) = -\frac{4}{r}\sum_{k=1}^r X_{r,(i,k)}^2(t)X_{r,(k,j)}(t)\diff t + \diff B_{r,(i,j)}(t) + \diff L^-_{r,(i,j)}(t) - \diff L^+_{r,(i,j)}(t),
    \]
    where $(i,j)\in[r]^{(2)}$ and $(X, L^{+}, L^{-})$ solves the Skorokhod problem. The above SDE can be recovered as a continuous time limit of the PNSGD algorithm~\cite[Definition 1.2, Theorem 3.2]{HOPST22} when we consider $t\ped{d}(F,\slot{})$ to be the optimization objective. We remark that the function $\Hcal$ does not satisfy the assumptions of~\cite[Theorem 1.4]{HOPST22} as it is not continuous in the cut-metric (see~\cite[Example C.3]{janson2010graphons}). However, the function $\Hcal$ does satisfy Assumption~\ref{asmp:general_Lipschitz} for $b$ defined as $b(z,W)(x,y)\coloneqq$ 
    \begin{align}
        &\sum_{\ell=1}^m \E{\prod_{s=1}^{\ell-1}\Gamma(F_{e_s},W)(Z_{e_s})\cdot \Gamma(F'_{e_\ell},W)(Z_{e_\ell})\cdot \prod_{s=\ell+1}^{m}\Gamma(F_{e_s},W)(Z_{e_s})\given Z_{e_\ell}=(x,y)},\nonumber\\
        &\eqqcolon \sum_{\ell=1}^m \mathbf{t}_{x,y}(\pdiff_{e_\ell}F,W),\qquad z\in[-1,1], W\in\Wfrak, (x,y)\in[0,1]^{(2)},
    \end{align}
    where $\set{e_s}_{s=1}^m$ is the set of edges of the skeleton of the triangle graph with $m=3$ edges, $Z_e \coloneqq (Z_{e(1)},Z_{e(2)})$ for an edge $e\in E(F)$ and $\pdiff_e F$ denotes the graph obtained by replacing the decoration at edge $e\in E(F)$ with its derivative. This can be seen by following a very similar argument in~\cite[Example 5]{HOPST22} and Lemma~\ref{lem:D_cut_implies_mvG}.
    
    As a consequence of our main result (Theorem~\ref{thm:mv}), we will see that the solution of the SDE on $\Mcal_r$ as defined above, converges to the solution $X$ to the MVG McKean-Vlasov SDE~\eqref{eq:McKeanV1}, which in this example, takes the form:
    \[
        \begin{split}
            \diff X(t) &= -4m_2(W)(u,v)m_1(W)(u,v)\diff t + \diff B(t) + \diff L^{-}(t) - \diff L^+(t),\\
            W(t)(x,y) &= \Law{X(t)\given (U,V)=(x,y)},\qquad (x,y)\in[0,1]^{(2)},\qquad t\in\R_+,
        \end{split}
    \]
    given $(U,V)=(u,v)$. Here $m_1$ and $m_2$ evaluate the first and second moment graphons as defined in Example~\ref{exp:moment_graphon}. This example, naturally extends to all decorated homomorphism density functions, thereby expanding the scope of~\cite[Theorem 1.4]{HOPST22}. 
\end{example}

More generally we consider the following family of diffusions on symmetric matrices. Let $b$ and $\Sigma$ be as defined in Section~\ref{sec:Introduction}. For $r\in \Natural$, let $\Sigma_r\colon [-1, 1]\times \Mcal_{r}$ be the restrictions of $\Sigma$, i.e., $\Sigma_r(x, X) = \Sigma(x, \Kcal(X))$ and similarly define $b_r\colon [-1, 1]\times \Mcal_{r}$. Consider the diffusions $X_r$ defined on $\Mcal_{r}$ as follows 
\begin{align}
    \begin{split}
        \diff X_{r, (i, j)}(t) &= b_{r,(i,j)}(X_{r, (i,j)}(t), X_r(t))\diff t + \Sigma_{r,(i,j)}(X_{r, (i, j)}(t), X_n(t)) \diff B_{r, (i, j)}(t)\\
        &\qquad\qquad\qquad\qquad\qquad+ \diff L_{r, (i, j)}^{-}(t) - \diff L_{r, (i, j)}^{+}(t),
    \end{split}
    \label{eq:SDE_n}
\end{align}
for each $(i, j)\in [r]^{(2)}$ and $t\in\R_+$, with the initial condition $X_r(0)\in \Mcal_r$.

In Theorem~\ref{thm:mv} we show that under appropriate assumption on $\round{X_r(0)}_{r\in\Natural}$, the process $X_r(\cdot)$ converges, uniformly on compact intervals of time, to a deterministic curve $W(\cdot)$ on MVGs as $r\to \infty$. And, this curve $W(\cdot)$ is described by the McKean-Vlasov system~\eqref{eq:McKeanV1}. 

\begin{theorem}\label{thm:mv}
	Suppose Assumption~\ref{asmp:general_Lipschitz} holds. Let $W_0\in \Wfrak$ and let $W$ be described by the MVG McKean-Vlasov SDE~\eqref{eq:McKeanV1} with initial condition $W(0)=W_0$. Let $X_n$ be the solution of equation~\eqref{eq:SDE_n} for $r=n$ with initial conditions $X_{n}(0)\in \Mcal_n$. Suppose that
    $
        \lim_{n\to \infty}D_2({X_{n}(0), W_0})= 0.
    $
    Then, for any finite time horizon $T>0$, almost surely, 
	\begin{align}
		\lim_{n\to\infty}\sup_{t\in[0,T]}\Delta_\blacksquare\round{\Kcal(X_n(t)),W(t)} &= 0,
	\end{align}
\end{theorem}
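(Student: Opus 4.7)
The plan is to couple $X_n$ with an auxiliary $n$-particle reflected diffusion $Y_n$ driven by the same Brownian motions but with coefficients frozen at the deterministic McKean--Vlasov curve $W(\cdot)$, then split the error $\Delta_\blacksquare\round{\Kcal(X_n(t)),W(t)}$ by triangle inequality into a coupling term and a sampling term. Let $(U_i)_{i\in[n]}$ be i.i.d.\ $\mathrm{Uniform}[0,1]$, independent of the Brownian noise $B_n$ driving $X_n$. For each $(i,j)\in[n]^{(2)}$, define $Y_{n,(i,j)}$ as the reflected diffusion on $[-1,1]$ driven by $B_{n,(i,j)}$ with drift $b(\slot{},W(t))(U_i,U_j)$ and diffusion $\Sigma(\slot{},W(t))(U_i,U_j)$, with initial value coupled to $X_{n,(i,j)}(0)$ optimally (conditional on $(U_i,U_j)$) so that $Y_{n,(i,j)}(0)\sim W_0(U_i,U_j)$. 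By construction, the marginal law of $Y_{n,(i,j)}(t)$ given $(U_i,U_j)$ is $W(t)(U_i,U_j)$, so $\Kcal(Y_n(t))$ is a sampled MVG from $W(t)$ in the sense of Section~\ref{subsec:samplingMVG}; Lemma~\ref{lem:convergence_of_sample_weightedgraph} then yields $\Delta_\blacksquare\round{\Kcal(Y_n(t)),W(t)} \to 0$ almost surely for each fixed $t\in[0,T]$.

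Set $Z_{n,(i,j)}(t) := X_{n,(i,j)}(t) - Y_{n,(i,j)}(t)$. Because both processes solve reflected SDEs on $[-1,1]$ with the same Brownian motions, the pathwise $4$-Lipschitz property~\eqref{eq:skorokhod_lipschitz} of the Skorokhod map combined with Assumption~\ref{asmp:general_Lipschitz} yields
\begin{align*}
    \sup_{s\leq t}\abs{Z_{n,(i,j)}(s)} &\leq 4\abs{Z_{n,(i,j)}(0)} + 4L\int_0^t\abs{Z_{n,(i,j)}(s)}\diff s + 4\kappa_\blacksquare\int_0^t\norm{\blacksquare}{\Kcal(X_n(s))-W(s)}\diff s \\
    &\qquad + 4\sup_{s\leq t}\abs{\int_0^s(\Sigma_n - \Sigma)\diff B_{n,(i,j)}(r)}.
\end{align*}
The essential estimate, using that each $\psi\in\Lcal$ is $1$-Lipschitz and that the cut norm is dominated by the $L^1$ norm, is
\begin{align*}
    \norm{\blacksquare}{\Kcal(X_n(s))-\Kcal(Y_n(s))} &\leq \frac{1}{n^2}\sum_{(i,j)\in[n]^{(2)}}\abs{Z_{n,(i,j)}(s)}.
\end{align*}
Splitting $\norm{\blacksquare}{\Kcal(X_n)-W} \leq \norm{\blacksquare}{\Kcal(X_n)-\Kcal(Y_n)} + \norm{\blacksquare}{\Kcal(Y_n)-W}$, squaring, averaging over $(i,j)$, using Doob/BDG for the martingale term, and applying the Cauchy--Schwarz bound $\round{\tfrac{1}{n^2}\sum a_{i,j}}^2 \leq \tfrac{1}{n^2}\sum a_{i,j}^2$ produces
\begin{align*}
    \Psi_n(t) \leq C_T\Psi_n(0) + C_T\int_0^t \Psi_n(r)\diff r + C_T\int_0^t \E{\norm{\blacksquare}{\Kcal(Y_n(r))-W(r)}^2}\diff r,
\end{align*}
where $\Psi_n(t) := \tfrac{1}{n^2}\sum_{(i,j)}\E{\sup_{s\leq t}\abs{Z_{n,(i,j)}(s)}^2}$. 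Gronwall then gives $\Psi_n(T) \lesssim \Psi_n(0) + \int_0^T\E{\norm{\blacksquare}{\Kcal(Y_n(r))-W(r)}^2}\diff r$. The initial coupling combined with the assumption $D_2(X_n(0),W_0)\to 0$ yields $\Psi_n(0)\to 0$, and bounded convergence applied to the pointwise sampling convergence gives the time-integral term tending to zero; hence $\Psi_n(T)\to 0$.

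Combined with the key estimate, this gives $\E{\sup_{t\leq T}\Delta_\blacksquare\round{\Kcal(X_n(t)),\Kcal(Y_n(t))}} \to 0$, and almost sure convergence follows by Borel--Cantelli applied to a subsequence with quantitative tail bounds. The main obstacle is upgrading the pointwise sampling convergence $\Delta_\blacksquare\round{\Kcal(Y_n(t)),W(t)}\to 0$ to the uniform-in-$t$ statement $\sup_{t\leq T}\Delta_\blacksquare\round{\Kcal(Y_n(t)),W(t)}\to 0$ almost surely. This is handled by an equicontinuity argument: uniform boundedness of $b,\Sigma$ and the non-expansiveness of Skorokhod reflection yield the averaged modulus $\E{\tfrac{1}{n^2}\sum_{(i,j)}\abs{Y_{n,(i,j)}(t)-Y_{n,(i,j)}(s)}^2} \leq C\abs{t-s}$, hence a common modulus of continuity for $t\mapsto \Kcal(Y_n(t))$ in $\Delta_\blacksquare$; combined with the $\Delta_\blacksquare$-continuity of $t\mapsto W(t)$, the pointwise convergence on a dense countable subset of $[0,T]$, and the compactness of $(\mvGraphons,\Delta_\blacksquare)$ from Remark~\ref{rem:Compactness}, this upgrades to the required uniform-in-$t$ convergence.
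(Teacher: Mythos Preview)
There is a genuine gap in the coupling. The entry $X_{n,(i,j)}$ evolves with drift $b_{n,(i,j)}(\cdot,X_n)$, which is the evaluation of $b(\cdot,\Kcal(X_n))\in L^\infty\big([0,1]^{(2)}\big)$ at the deterministic block $Q_{n,i}\times Q_{n,j}$; your auxiliary $Y_{n,(i,j)}$ has drift $b(\cdot,W(t))(U_i,U_j)$ at the random point $(U_i,U_j)$. Assumption~\ref{asmp:general_Lipschitz} controls the dependence on the scalar argument and on the MVG argument uniformly in $(x,y)$, but gives no regularity of $(x,y)\mapsto b(z,W)(x,y)$. The drift difference therefore contains the term
\[
b\bigl(Y_{n,(i,j)},W(s)\bigr)\bigl(\text{block }(i,j)\bigr)-b\bigl(Y_{n,(i,j)},W(s)\bigr)(U_i,U_j),
\]
which is uncontrolled, and your displayed Gr\"onwall inequality for $\sup_{s\le t}\abs{Z_{n,(i,j)}(s)}$ does not follow. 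The same spatial mismatch breaks the claim $\Psi_n(0)\to 0$: optimally coupling $Y_{n,(i,j)}(0)\sim W_0(U_i,U_j)$ to the fixed number $X_{n,(i,j)}(0)$ gives
\[
\Psi_n(0)=\frac{1}{n^2}\sum_{i,j}\int_{[0,1]^2}\mathbb{W}_2^2\bigl(\delta_{X_{n,(i,j)}(0)},W_0(u,v)\bigr)\,du\,dv,
\]
which is not $D_2^2\bigl(\Kcal(X_n(0)),W_0\bigr)$. For $W_0=\delta_f$ with a nonconstant continuous $f$ and $X_{n,(i,j)}(0)=f(i/n,j/n)$, one has $D_2\to 0$ but $\Psi_n(0)\to 2\operatorname{Var}(f)>0$. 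Reindexing so that $Y_{n,(i,j)}$ is compared to $X_{n,(m_i,m_j)}$ with $U_i\in Q_{n,m_i}$ would align the evaluation points, but covers all entries of $X_n$ only when $U_1,\dots,U_n$ hit distinct blocks, an event of probability $n!/n^n\to 0$; and your estimate $\norm{\blacksquare}{\Kcal(X_n)-\Kcal(Y_n)}\le \tfrac{1}{n^2}\sum\abs{Z_{n,(i,j)}}$ needs all entries.

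The paper avoids this by separating the sampling size from $n$: it draws $k$ uniforms with $k\to\infty$ but $k=o(\sqrt n)$, works on the event $E_k(n)$ (probability $\ge 1-k^2/n$) that they land in distinct blocks, and couples the sampled $k\times k$ submatrix $\widetilde X_n[k]$ of $X_n$ with the $k\times k$ block $X[k]$ of the infinite McKean--Vlasov array at the \emph{same} evaluation points $(U_i,U_j)$, so that the Lipschitz bounds of Assumption~\ref{asmp:general_Lipschitz} apply cleanly. The price is that control of the sampled norm $\norm{\blacksquare}{\Kcal(\widetilde X_n[k])-\Kcal(W[k])}$ must be transferred back to the full norm $\norm{\blacksquare}{\Kcal(X_n)-W}$; this is where the first sampling lemma~\cite[Lemma~10.6]{lovasz2012large} together with the finite-net reduction of Lemma~\ref{lem:finitesufficiency} enters, yielding~\eqref{eq:after_using_lovasz}. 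Your scheme has no analogue of this transfer step, and inserting one is essentially what is required to close the gap.
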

The proof of Theorem~\ref{thm:mv} closely parallels the proof of~\cite[Proposition 4.9]{HOPST22}. Therefore, we only give a sketch of the proof highlighting only the crucial differences.

\begin{proof}
    Consider the probability space satisfying Assumption of Proposition~\ref{prop:McKean-Vlasov_existence} and an infinite exchangeable array of diffusions $(X_{i,j})_{(i,j)\in\Natural^{(2)}}$ on it. For $k\in[n]$ and any $t\in\R_+$, consider the sampled $k\times k$ symmetric measure-valued matrix $W(t)[k]$ defined as $W(t)[k](i,j) = W(t)(U_i,U_j)$ for $(i,j)\in[k]^{(2)}$. Consider also the corresponding $k\times k$ matrix of diffusions $X[k](\cdot) \coloneqq \round{X_{i,j}}_{(i,j)\in[k]^{(2)}}$. Now consider $\Kcal(X_n(t))$, the measure-valued finite dimensional kernel from a solution of SDE~\eqref{eq:SDE_n}. One may construct a sampled $k\times k$ measure-valued matrix from this measure-valued finite dimensional kernel as well. We estimate the cut distance of this sampled measure-valued matrix from $W(t)[k]$ by coupling this sampled matrix with $\Kcal(X[k])$ in a particular way.

    Divide $[0, 1]$ into $n$ contiguous intervals of equal length. Let $E_k(n)$ denote the event that that $U_i\in ((m_i-1)/n, m_i/n]$ where each $m_i$, $i\in [k]$, is distinct.
    On this event, we can couple $X_{n, m_i, m_j}(\cdot)$ and $X_{i, j}$ so that they are driven by the same copies of independent Brownian motion and having starting laws $W^{(n)}_0(U_i, U_j)$ and $W_0(U_i, U_j)$ respectively. Our subsequent analysis will be on the event $E_k(n)$ and it is unimportant how the coupling is done on $E^c_k(n)$. For any $i\neq j$ we have $\Prob{\abs{U_i-U_j}}\leq \frac{1}{n}$. Since there are at most $\binom{k}{2}$ distinct pairs $(i, j)\in[k]^2$, a simple union bound yields that $\Prob{E^c_k(n)}\leq k^2/n$.

    Define, $\widetilde{X}_{n,i,j}(t) \coloneqq K(X_n(t))(U_i, U_j),\; (i,j)\in [k]^{2}$. The evolution of $\widetilde{X}_{n,1,2}$, for example, can be described by the SDE
    \begin{align*}
        \begin{split}
            \diff \widetilde{X}_{n,1,2}(t) &= b\round{\widetilde{X}_{n,1,2}(t),\Kcal(X_n(t))}(U_1,U_2)\diff t + \Sigma\round{\widetilde{X}_{n,1,2}(t),\Kcal(X_n(t))}(U_1,U_2) \diff B_{1,2}(t)\\
            &\qquad\qquad + \diff L^-_{n,1,2}(t) - \diff L^+_{n,1,2}(t), 
        \end{split}
    \end{align*}
    with the initial condition $\Law{\widetilde{X}_{n,1,2}(0)}=W^{(n)}_{0}(U_1, U_2)$. Define 
    \[
        M^{(n)}(s) \coloneqq \int_0^s\round{\Sigma(X_{1,2}(s),W(r))(u_1,u_2) - \Sigma\round{\widetilde{X}_{n,1,2}(s),\Kcal(X_n(r))}(u_1,u_2)} \diff B_{1,2}(r),
    \]
   for $s\in[0,t]$. 
   Note that 
   \begin{align*}
       \Prob{\sup_{s\in[0,t]} M^{(n)}(s)\geq \sqrt{\lambda_k\E{M^{(n)}(t)^2}}}&=\Prob{\sup_{s\in[0,t]}\exp\round{u M^{(n)}(s)}\geq \exp\round{\lambda_k}},
   \end{align*}
   where $u=\sqrt{\lambda_k/\E{M^{(n)}(t)^2}}$. Using Markov's inequality followed by Doob's maximal inequality~\cite[page 14, Thoerem 3.8.iv]{KS91}, we obtain that with probability at least $1-4\eu^{-\lambda_k/2}$,
    \begin{align}
        \begin{split}
            \sup_{s\in[0,t]}M^{(n)}(s)^2
            &\leq 2\lambda_k\squarebrack{\kappa_\blacksquare^2 \int_0^t \norm{\blacksquare}{W(s) - \Kcal(X_n(s))}^2 + L^2\abs{X_{1,2}(s) - \widetilde{X}_{n,1,2}(s)}^2\diff s},
        \end{split}
        \label{eq:markov_doob_sigma}
    \end{align}
    where the parameter $\lambda_{k}\to \infty$ will be chosen later. Redefining the event $E_k(n)$ to intersect with the event where the above bound holds. Since $X_{1,2}$ is also driven by the same Brownian motion on this event, using~\eqref{eq:markov_doob_sigma} and the Lipschitz property of the Skorokhod map, triangle inequality and Assumption~\ref{asmp:general_Lipschitz} (replacing $(1,2)$ by any other $(i,j)\in [k]^{(2)}$ and summing over),
   \begin{equation}
        \begin{split}
            \sup_{s\in \interval{0,t}} &\norm{\blacksquare}{\Kcal\round{\widetilde{X}_{n}[k](s)} - \Kcal\round{X[k](s)}}^2 \le \frac{48}{k^2}\sum_{(i,j)\in [k]^{(2)}}\abs{\widetilde{X}_{n,i,j}(0) - X_{i,j}(0)}^2\\
            &+ 96(\lambda_k + 1)\kappa_{\blacksquare}^2 \int_0^t \norm{\blacksquare}{W(s) - \Kcal(X_n(s))}^2\diff s\\
            &+ 96(\lambda_k + 1)L^2\int_0^t\sup_{s\in \interval{0,t}}\norm{\blacksquare}{\Kcal\round{\widetilde{X}_{n}[k](s)} - \Kcal\round{X[k](s)}}^2\diff s.
        \end{split}\label{eq:triangle_lipschitz}
    \end{equation}

\sloppy We now want to replace $\norm{\blacksquare}{\Kcal\round{\widetilde{X}_{n}[k](s)} - \Kcal\round{W(s)[k]}}^2$ by $\norm{\blacksquare}{\Kcal(X_n(s)) - W(s)}^2$ up to some error that goes to zero as $k\to \infty$. This is achieved by exploiting the first sampling lemma~\cite[Lemma 10.6]{lovasz2012large} for cut norm in~\cite{HOPST22}. The first sampling lemma is not available directly to us for the $\norm{\blacksquare}{\slot{}}$. However, we notice that using the first sampling lemma~\cite[Lemma 10.6]{lovasz2012large} and equation~\eqref{eqn:InftoFinite} we obtain that 
 for every $\epsilon>0$ there exists a constant $F_{\epsilon}<\infty$ such that 
    \begin{align*}
        \abs{\norm{\blacksquare}{\Kcal\round{\widetilde{X}_{n}[k](s)} - \Kcal\round{W(s)[k]}}^2 - \norm{\blacksquare}{\Kcal(X_n(s)) - W(s)}^2} \leq \frac{1}{k^{1/4}}+\epsilon,
    \end{align*}
    with probability at least $F_{\epsilon}\eu^{-\sqrt{k}/10}$. Moreover, from Lemma~\ref{lem:finitesufficiency}, we can choose $\epsilon_{k}=\frac{64}{k^{1/4}}$ so that $F_{\epsilon_{k}}\leq \eu^{\sqrt{k}/40}$. In particular, setting $C_{k}=\frac{65}{k^{1/4}}$ and $c_{k}=\sqrt{k}/20$ we can repeat the same proof as in~\cite{HOPST22} to obtain
    \begin{align}
        \begin{split}
            \sup_{s\in \interval{0,t}}&\norm{\blacksquare}{\Kcal\round{\widetilde{X}_{n}[k](s)} - \Kcal\round{X[k](s)}}^2 \geq \inv{2}\norm{\blacksquare}{ \Kcal(X_n(s)) - W(s) }^2 - C_k \\
            &\qquad- \sup_{s\in \interval{0,t}}\norm{\blacksquare}{ \Kcal\round{W(s)[k]} - \Kcal\round{X[k](s)}}^2.
    \end{split}\label{eq:after_using_lovasz}
    \end{align}
    with probability at least $1-\eu^{-c_k}$. Once again we redefine the event $E_{k}(n)$ to intersect with the event where the above bound holds and note that we still have $\Prob{E_k(n)}\geq 1-4\eu^{-\lambda_k/2}-\frac{k^2}{n}-\eu^{-c_k}$.
We can now repeat the same argument as in~\cite{HOPST22}. After doing some rearrangement and applying Gr\"onwall's inequality~\cite{gronwall1919note} we obtain that on the event $E_k(n)$,
    \begin{align}
        \begin{split}
            \sup_{s\in \interval{0,t}} &D_2^2\round{\Kcal\round{\widetilde{X}_n[k](s)},\Kcal\round{X[k](s)}} + \sup_{s\in \interval{0,t}}\norm{\blacksquare}{ \Kcal(X_n(s)) - W(s) }^2\\
            &\le 2\left(  A_k + B_k(n)  \right) \exp\round{192(L^2+2\kappa_\blacksquare^2)(\lambda_k + 1)t},
        \end{split}
        \label{eq:semifinal_rate}
    \end{align} where $A_k=\sup_{s\in \interval{0,t}}\norm{\blacksquare}{\widetilde{A}_k(s)}^2$ and
     \begin{align}
        \begin{split}
            \widetilde{A}_k(s) &\coloneqq  \Kcal\round{W(s)[k]} - \Kcal\round{X[k](s)},\\
            B_k(n) &\coloneqq C_k + \frac{96}{k^2}\sum_{(i,j)\in [k]^{(2)}}\abs{\widetilde{X}_{n,i,j}(0) - X_{i,j}(0)}^2.
        \end{split}
        \label{eq:def_Ak_Bk}
    \end{align}
    Note that $\lim_{n\rightarrow \infty}\E{\abs{\widetilde{X}_{n,i,j}(0) - X_{i,j}(0)}^2}=0$, by the assumption. Using a variance bound  and the fact that $\lim_{k\to \infty}C_k\to 0$ it follows that $\lim_{k\rightarrow \infty} \lim_{n\rightarrow \infty} B_k(n)=0$, in probability. By Lemma~\ref{lem:convergence_of_sample_weightedgraph} and Lemma~\ref{lem:Convergence_of_smaples} we have $\norm{\blacksquare}{\widetilde{A}_k(s)}\to 0$ in probability for each fixed $s\in[0,t]$ as $k\to \infty$. It can be shown following the proof of~\cite[Proposition 4.5]{HOPST22}) that $\round{\widetilde{A}_k}_{k\in\Natural}$ is equicontinuous over $[0, t]$, almost surely, for sufficiently large $k$. Therefore, we conclude that $A_k\to 0$ in probability as $k\to \infty$. Since $\lim_{k\to\infty}\lim_{n\rightarrow \infty}\Prob{E_k(n)}=1$,
    \begin{align*}
        \begin{split}
            \lim_{n\rightarrow \infty}\sup_{s\in\interval{0,t}}&\norm{\blacksquare}{\Kcal(X_n(s)) - W(s)} =0,\ 
            \lim_{k\to\infty}\lim_{n \to \infty}\sup_{s\in \interval{0,t}} D_2^2\round{\Kcal\round{\widetilde{X}_n[k](s)},\Kcal\round{X[k](s)}} = 0,
        \end{split}
    \end{align*}
    in probability, by choosing $\round{\lambda_k}_{k\in\Natural}$ (depending on $\round{A_k,\lim_{n\to\infty} B_k(n)}_{k\in\Natural}$) that increases sufficiently slowly to infinity as $k\to\infty$. Moreover, it is clear that one can choose $k=o(\sqrt{n})$. This completes the proof.
\end{proof}
\begin{remark}\label{rem:Perturbation}
    Note that the proof is stable with respect to small perturbations of drift. More precisely, suppose $b_r$ in~\eqref{eq:SDE_n} is replaced by $\widetilde{b}_r$ such that $\norm{\infty}{\widetilde{b}_r-b_r}\leq \alpha_{r}$ for some $\alpha_r\to 0$ as $r\to \infty$. Then, the proof continues to hold and we still obtain the limiting McKean-Vlasov SDE with the same drift $b$ as in Theorem~\ref{thm:mv}.
\end{remark}

\begin{remark}\label{rem:MVG_Graphon_MKV}
    \sloppy In practice, we are often interested in the functions $b$ and $\Sigma$ in~\eqref{eq:SDE_n} that are obtained as the pull back of some functions $b_0, \Sigma_0$ defined on $[-1, 1]\times \Wcal$ as defined in Section~\ref{sec:intro_metropolis}. Throughout this remark, we always assume this. It follows from Remark~\ref{rem:MVG_graphon_cnvg} (i.e. by Lipschitzness of $W\mapsto w=\E{W}$) that under appropriate conditions on the $X_r(0)$, solution of SDE~\eqref{eq:SDE_n} converges, in probability, to a deterministic curve $(w(t))_{t\geq 0}$ on the space of graphons uniformly on compact time intervals.
    
    Furthermore, note that the McKean-Vlasov equation~\eqref{eq:McKeanV1} depends only on $w(t)=\E{W(t)}$. One can, therefore, say that $w(t)$ satisfies a graphon McKean-Vlasov SDE that satisfies on $(U, V)=(u, v)$
    \begin{align}\label{eq:Graphon_MKV}
    \begin{split}
       \diff X(t) &= b_0(X(t), w(t))+ \Sigma_0(X(t), w(t))+ \diff L^{-}(t) - \diff L^{+}(t),\\
       w(t)(x, y) &= \E{X(t)\given (U, V)=(x, y)},
    \end{split}
    \qquad t\in\R_+.
    \end{align}
    Thus we recover~\cite[Theorem 1.4]{HOPST22}. 

   We should emphasize the point made in Remark~\ref{rem:MVG_graphon_cnvg} once again here. The same IEA gives rise to both McKean-Vlasov SDEs~\eqref{eq:McKeanV1} and~\eqref{eq:Graphon_MKV}. The crucial difference is $X_r(\cdot)$ converging to this IEA in cut-metric is equivalent to only checking the convergence of homomorphism densities with respect to simple graphs, while  $X_r(\cdot)$ converging to this IEA in MVG is equivalent to the convergence of homomorphism densities with respect to decorated simple graphs which is a bigger class of test functions. 
\end{remark}

\subsection{Analysis of the relaxed Metropolis chain}\label{sec:relax_Metro}
Recall that our goal is to minimize some function $\Hcal$ defined on large networks. One class of functions that is of interest is linear combination of homomorphism densities. The key takeaway of the above discussion is that one can either solve for the McKean-Vlasov system described above or perform (stochastic) gradient flow of $\Hcal$ restricted to $\Mcal_r$ for large $r$. Both of these techniques will yield approximate minimizers of $\hamil$. However, notice that both these techniques give a graphon/MVG or symmetric matrices with entries in $[-1, 1]$. In other words, these methods do not directly yield a curve on the space of graphs. Naturally, given a function $\Hcal$ defined on all graphs, one would want to run a Markov chain directly on the graphs that mimics the behaviour of McKean-Vlasov SDE~\eqref{eq:McKeanV1}. This is what is achieved by our relaxed Metropolis chain that we study in this section. Another crucial feature of the relaxed Metropolis chain is that it does not need the access to the gradient of $\Hcal$ but it can still mimic the gradient flow.


\begin{assumption}\label{asmp:hamil}
    Let $\Hcal\colon\Graphons \to \R$ be bounded below, Fr\'echet-like differentiable with $D\Hcal$ denoting its Fr\'echet-like derivative (see Definition~\ref{def:frechet_like_derivative}) and satisfy
    \begin{align}
        \frac{\lambda}{2}\enorm{u-v}^2 \leq \Hcal(v) - \Hcal(u) - \inner{D\Hcal(u),v-u} \leq \frac{L}{2}\enorm{u-v}^2,
    \end{align}
    for every $u,v\in\Wcal_{[0,1]}$, for some constants $\lambda\in \mathbb{R}$ and $L>0$.
    Furthermore, assume that $D\Hcal$ is Lipschitz, that is, there exists $\kappa_\blacksquare>0$ such that for all $u, v\in \Wcal$
    \begin{align}\label{eqn:Asmp_hamilLip}
         \norm{\infty}{D\Hcal(u)-D\Hcal(v)}\leq \kappa_\blacksquare\norm{\blacksquare}{u-v},
    \end{align}
    where $\norm{\blacksquare}{u-v}$ is defined as in Remark~\ref{rem:matrixGraphon_2_MVG}.
\end{assumption}

\sloppy  Recall that $W\mapsto \E{W}$ is $(\norm{\blacksquare}{} \to \cutnorm{})$-Lipschitz. In particular, if $w\mapsto D\hamil(w)$ is $(\cutnorm{}\to \norm{\infty}{})$-Lipschitz, then~\eqref{eqn:Asmp_hamilLip} holds. All decorated homomorphism density functions satisfy Assumption~\ref{asmp:hamil}. Using a similar argument as in~\cite[Section 5.1.2]{Oh2023}, it can be shown that $\max\set{\abs{\lambda},L}$ for the decorated homomorphism density function of a decorated graph $H$ is bounded by $\abs{E(H)}\abs{V(H)}(\abs{V(H)}-1)$, and $\kappa_\blacksquare = \abs{E(H)}\round{\abs{E(H)}-1}$ following~\cite[Section 5, equation (84)]{HOPST22}.

\begin{definition}\label{def:bFunction}
    Let $\Hcal$ satisfy Assumptions~\ref{asmp:hamil}. Let $\beta >0$. Define $b_0\colon \Wcal_{[0,1]}\to L^\infty\big([0,1]^{(2)}\big)$ as
    \begin{align*}
        b_0(w) \coloneqq -2\beta D\Hcal(w)\exp\round{\beta^2r^{-2}\enorm{D\Hcal(w)}^2}\overline{\Phi}\round{\sqrt{2}\beta r^{-1}\enorm{D\Hcal(w)}},\qquad w\in\Wcal_{[0,1]},
    \end{align*}
    where $\overline{\Phi}$ is the right tail of standard Gaussian, i.e., $\overline{\Phi}(x) = \frac{1}{\sqrt{2\pi}}\int_{x}^{\infty}\exp\round{-y^2/2}\diff y$, for $x\in\R_+$. For any $r\in\Natural$, we will denote the restriction of $b_0$ to $\Mcal_{r}$ by $b_{r}$. That is, $b_{r}=M_r\circ b_0\circ K$.
\end{definition}
Recall that $b_0$ defined as above naturally defines a function on $\Wfrak$ via pullback. In the context of the Metropolis chain algorithm, with an abuse of notation, we will use the notation $b$ and $b_0$ interchangeably. By Lemma~\ref{lemma:ExplicitDrift} $r^{-4}b_{r}(w) = \E{Z\exp\round{-\beta_{n, r}\gamma_n\inner{\nabla H_{r}, Z}_{F}^{+}}}$ where $Z$ is $r\times r$ symmetric matrix with i.i.d. Gaussian entries, and $\beta_{n,r}\coloneqq \beta r^{-2}/\gamma_n$ as defined in Section~\ref{sec:intro_metropolis}. Thus $\norm{\infty}{b_{r}}< \infty$. 
\begin{remark}\label{rem:DriftConvergence}
    It follows from~\eqref{eqn:Asmp_hamilLip} that $\norm{\infty}{D\hamil}\leq C$ for some $C$ and therefore $\norm{2}{D\hamil(v)}\leq C$ for every $v\in \Wcal$. Since $\eu^{x}\to 1$ as $x\to 0$ and $\overline{\Phi}(x)\to \frac{1}{2}$ as $x\to 0$, it follows that $\norm{\infty}{b_0(w)+\beta D\Hcal}\to 0$ as $r\to \infty$. 
\end{remark}


We now recall, from Section \ref{sec:intro_metropolis}, the Metropolis algorithm to sample from the $\ESBM{r,n,\beta,\hamil}$. Given $G(k)\in \statesp_{n,r}$ and the matrix $q^{(n)}_{r,k}\in\Mcal_{r,+}$ of edge-densities for any $k\in\Integer_+$, we run the relaxed Metropolis chain consisting of the following steps.
\begin{enumerate}
    \item\label{step1} Run the base chain for $s_n \coloneqq \ceil{\gamma_n^2 n^4}$ many steps. Let $\widetilde{G}(k+1)\in\statesp_{n,r}$ be the graph obtained after such $s_n$ many steps. Let $\widetilde{q}^{(n)}_{r,k+1}$ denote the matrix of edge densities of $\widetilde{G}(k+1)$. 
    \item Given $G(k), \widetilde{G}(k+1)$ for any $k\in\Integer_+$, define
    \[
        Y(k+1) = \begin{cases}
                    \widetilde{G}(k+1),& \quad \text{w.p.}\quad \exp\left( -\beta_{n,r}\squarebrack{ H_r\round{\widetilde{q}^{(n)}_{r,k+1}} - H_r\round{q^{(n)}_{r,k}}}^+ \right),\\
                    G(k), & \quad \text{otherwise},
                \end{cases}
    \]
    where $a^+=\max\set{0,a}$ for $a\in\R$ and $\beta_{n,r}\coloneqq \beta r^{-2}/\gamma_n$ as defined in Section~\ref{sec:intro_metropolis}. Let $p^{(n)}_{r,k+1}$ be the matrix of edge-densities for $Y(k+1)$. Observe that
    \[
        p^{(n)}_{r,k+1} = \begin{cases}
                    \widetilde{q}^{(n)}_{r,k+1},& \quad \text{if }\quad Y(k+1)=\widetilde{G}(k+1),\\
                    q^{(n)}_{r,k}, & \quad \text{if }\quad Y(k+1)=G(k).
                \end{cases}
    \]
    \item After the accept-reject step, we again run the base chain starting from $Y(k+1)\in\statesp_{n,r}$ for $\ell_{n,r} \coloneqq \ceil{r^{-4}\sigma^2\gamma_nn^4}$ many steps for some $\sigma>0$. Let the graph obtained thereafter be $G(k+1)$, and let $q^{(n)}_{r,k+1}$ be the edge density matrix of $G(k+1)$.
\end{enumerate}

This procedure gives a Markov chain $(G(k))_{k\in \Natural}$ on the state space $\statesp_{n,r}$ with corresponding process of edge-density matrix $\round{q^{(n)}_{r,k}}_{k\in \Natural}$. In the following we show that, as $n\rightarrow \infty$ and $r\rightarrow \infty$, the latter process converges to a MVG McKean-Vlasov SDE with drift $b$ as defined in Definition~\ref{def:bFunction}. Taking a natural projection of this MVG curve to the space of graphons, we recover a deterministic curve on the space of graphons. For fixed $r$, the adjacency matrix of $G(k)$ converges to the corresponding edge-density matrix $q^{(n)}_{r, k}$ in the cut metric as $n\to \infty$ uniformly. We can thus interpret the limiting deterministic curve on the space of graphons as the cut limit of the process of adjacency matrices of $(G(k))_{k\in \Natural}$ as $n\to\infty$ followed by $r\to \infty$.

\subsubsection{A heuristic analysis of the edge-density process}
Before we state our result we analyse heuristically the process of edge-density matrix $\round{q^{(n)}_{r,k}}_{k\in\Integer_+}$ as defined above. To this end, for any $k\in\Integer_+$, define $\Delta q^{(n)}_{r,k} \coloneqq q^{(n)}_{r,k+1}-q^{(n)}_{r,k}$ and let $\Fcal_k$ be the sigma algebra generated by $\set{q^{(n)}_{r,i}\given i=0,\ldots,k}$. Given $k\in \Integer_+$, let us analyze the $\E{\Delta q^{(n)}_{r,k}\given \Fcal_k}$. Notice that 
\[
    \E{\Delta q^{(n)}_{r,k}\given \Fcal_k} = \E{\widetilde{\Delta} q^{(n)}_{r,k}\given \Fcal_k}+ \E{ q^{(n)}_{r,k+1}-p^{(n)}_{r,k+1}\given \Fcal_k},
\]
where $\widetilde{\Delta} q^{(n)}_{r,k} \coloneqq p^{(n)}_{r,k+1} - q^{(n)}_{r,k}$.
Notice that given $p^{(n)}_{r,k+1}$, the increment of the $(i, j)$-th coordinate, $q^{(n)}_{r,k+1,(i,j)}-p^{(n)}_{r,k+1,(i,j)}$, has the same distribution as the reflected random walk of step-size $\frac{1}{n^2}$ run for $\ell_{n,r}$ steps. Assuming that the random walk does not hit the boundary during relaxation (hitting the boundary is rare), this is very small. It follows that $\E{\Delta q^{(n)}_{r,k}\given \Fcal_k} \approx \E{\widetilde{\Delta} q^{(n)}_{r,k}\given \Fcal_k}$
\begin{align}\label{eqn:DeltaApprox}
    &=\E{\round{\widetilde{q}^{(n)}_{r,k+1}-q^{(n)}_{r,k}}\exp\round{-\beta_{n, r}\squarebrack{\Hcal\round{K\round{\widetilde{q}^{(n)}_{r,k+1}}}-\Hcal\round{K\round{q^{(n)}_{r,k}}}}^{+}}}.
\end{align}
By Assumption~\ref{asmp:hamil}, 
\begin{align*}
    \Hcal\round{K\round{\widetilde{q}^{(n)}_{r,k+1}}}-\Hcal\round{K\round{q^{(n)}_{r,k}}} &- \inner{D\Hcal\round{K\round{q^{(n)}_{r,k}}}, K\round{\widetilde{q}^{(n)}_{r,k+1}}-K\round{q^{(n)}_{r,k}}}\\
    &\qquad \approx \norm{2}{K\round{\widetilde{q}^{(n)}_{r,k+1}}-K\round{q^{(n)}_{r,k}}}^2.
\end{align*}

On the other hand, given $q^{(n)}_{r,k}$, the increment of each coordinate $\widetilde{q}^{(n)}_{r,k+1,(i,j)}-q^{(n)}_{r,k,(i,j)}$ for every $(i,j)\in[r]^{(2)}$ has the same distribution as a symmetric random walk (with reflections at the boundary) with step-size ${1}/{n^2}$ run for $s_n=\gamma_n^2 n^4$ steps. In particular, $\E{\norm{2}{K\round{\widetilde{q}^{(n)}_{r,k+1}}-K\round{q^{(n)}_{r,k}}}^2}=\gamma_n^2$. Two important and non-trivial consequences of this heuristic are the following:
\begin{enumerate}
    \item Due to a concentration of measure argument, $\norm{2}{K\round{\widetilde{q}^{(n)}_{r,k+1}}-K\round{q^{(n)}_{r,k}}}^2\leq C\gamma_n^2\log n$ for some constant $C>0$ with high probability. 
    \item $\widetilde{q}^{(n)}_{r,k+1}-q^{(n)}_{r,k}$ has approximately the same distribution as $\gamma_n Y_r$ where $Y_r$ is an $r\times r$ symmetric matrix of independent standard Gaussians. Notice that if $q^{(n)}_{r,k,(i,j)}\in \{0, 1\}$ for any $(i,j)\in[r]^{(2)}$, this is not true. This approximation is valid only when all the coordinates of $q^{(n)}_{r,k}$ are sufficiently away from $\{0, 1\}$. With a careful analysis, one can show that this is indeed the case except for a negligible fraction of time.
\end{enumerate}
Assuming the above heuristics and using equation~\eqref{eqn:DeltaApprox} we obtain that with high probability, $\E{\Delta q^{(n)}_{r,k}\given \Fcal_k}$
\begin{align}\label{eqn:DeltaApprox1}
    &\approx \E{\round{\widetilde{q}^{(n)}_{r,k+1}-q^{(n)}_{r,k}}\exp\round{-\beta_{n, r}\inner{D\Hcal\round{K\round{q^{(n)}_{r,k}}}, K\round{\widetilde{q}^{(n)}_{r,k+1}}-K\round{q^{(n)}_{r,k}}}^{+}}}\nonumber\\
    &=\gamma_n\E{Y_r\exp\round{-\beta_{n, r}\gamma_n\inner{\nabla H_r\round{q^{(n)}_{r,k}}, Y_r}_{\rm F}^{+}}},
\end{align}
where we used the fact that for any two $r\times r$ symmetric matrices $A, B\in \Mcal_{r}$ we have $\inner{A, B}_{F}=r^2\inner{K(A), K(B)}$ and that the fact that $D\Hcal = r^{-2}\nabla H$ (see~\cite[Lemma 4.10]{Oh2023}). 
The expectation in the last expression above is very amenable to analysis. It follows from Lemma~\ref{lemma:ExplicitDrift} that $\E{Y_r\exp\round{-\beta_{n, r}\gamma_n\inner{\nabla H_r\round{q^{(n)}_{r,k}}, Y_r}_{\rm F}^{+}}} = r^{-4}b_r\round{q^{(n)}_{r,k}}$ where $b_r$ is defined in Definition~\ref{def:bFunction}.

The above heuristic can now be summarised as follows. With high probability 
\begin{equation}\label{eqn:DriftHeuristicapprox}
    \E{\Delta q^{(n)}_{r,k}\given \Fcal_k}\approx \gamma_nr^{-4}b_r\round{q^{(n)}_{r,k}},
\end{equation}
provided that the all coordinates of $q^{(n)}_{r,k}$ are away from $\{0, 1\}$. Now let us analyse the conditional covariance of $\Delta q^{(n)}_{r,k}$. Recall that $\E{\norm{2}{K\round{\widetilde{q}^{(n)}_{r,k+1}}-K\round{q^{(n)}_{r,k}}}^2\given \Fcal_k}\leq \gamma_n^2$. On the other hand, given $p^{(n)}_{r,k+1}$, the increment $q^{(n)}_{r,k+1,(i,j)}-p^{(n)}_{r,k+1,(i,j)}$ 
of coordinate $(i, j)\in[r]^{(2)}$ has the same distribution as the symmetric random walk with step-size $\frac{1}{n^2}$ (reflected at the boundary $\set{0,1}$) running for $\ell_{n,r}\approx r^{-4}\gamma_n\sigma^2n^4$ steps.
In particular, each coordinate has variance $\approx r^{-4}\gamma_n\sigma^2$. Also note that given $p^{(n)}_{r,k}$, the coordinates of $q^{(n)}_{r,k+1}-p^{(n)}_{r,k+1}$ are independent. In particular, 
\begin{align}\label{eqn:conditionalcovariance}
    \mathrm{Cov}\round{\Delta q^{(n)}_{r,k}\,\Big\vert\, \Fcal_k} = r^{-4}\gamma_n\sigma^2 I + O(\gamma_n^2),
\end{align}
where $O(\gamma_n^2)$ means that each coordinate of $\mathrm{Cov}\round{\Delta q^{(n)}_{r,k}\,\Big\vert\, \Fcal_k}$ differs from $r^{-4}\gamma_n\sigma^2I$ at most by a constant factor of $\gamma_n^2$.

For a fix $t>0$, we will define $t_{n, r}\coloneqq \floor{tr^4/\gamma_n}$. Also define $q^{n}_r\colon\R_+ \to \Mcal_{r,+}$ to be a piecewise constant interpolation of $\round{q^{(n)}_{r,k}}_{k\in\Integer_+}$ given by
\begin{align}
    q^{(n)}_{r}(t) \coloneqq q^{(n)}_{r,t_{n,r}},\qquad t\in\R_+,\label{eq:q^n(t)}
\end{align}
In particular, we obtain 
\begin{align*}
    q^{(n)}_{r}(t) - q^{(n)}_{r}(0)=\sum_{k=0}^{t_{n,r}-1}\E{\Delta q^{(n)}_{r,k}\given \Fcal_k} + \sum_{k=0}^{t_{n, r}-1}\round{\Delta q^{(n)}_{r,k}-\E{\Delta q^{(n)}_{r,k}\given \Fcal_k}},\qquad t\in\R_+.
\end{align*}
Using the heuristic derived in~\eqref{eqn:DriftHeuristicapprox} and~\eqref{eqn:conditionalcovariance}, one expects that 
\begin{align}\label{eqn:Heur}
    q^{(n)}_{r}(t) - q^{(n)}_{r}(0) \approx \sum_{k=0}^{t_{n,r}-1}\gamma_nr^{-4}b_{r}\round{q^{(n)}_{r,k}} + \sum_{k=0}^{t_{n, r}-1}\Delta M^{(n)}_{r,k},\qquad t\in\R_+,
\end{align}
where $\round{\Delta M^{(n)}_{r,k}}_{k\in\Integer_+}$ is a $\Mcal_r$-valued martingale difference sequence with uniform coordinatewise variance $\gamma_nr^{-4}\sigma^2$. We must caution that the approximation in~\eqref{eqn:Heur} is not valid if $q^{(n)}_{r,k}$ is close to boundary $\set{0,1}$. The heuristic calculations have been derived under the assumption that all coordinates of $q^{(n)}_{r,k}$ are away from $\{0, 1\}$. Ignoring this boundary contribution, it is reasonable to conclude that 
\[
    q^{(n)}_{r}(t) - q^{(n)}_{r}(0) \approx \int_{0}^{t} b_{r}\round{q^{(n)}_{r}(s)}\diff s+ \sigma B_{r}(t),\qquad t\in\R_+,
\]
where $B_r$ is an $r\times r$ matrix with i.i.d. Brownian motions (up to matrix symmetry), in the interior of the state space. In view of this, it is reasonable to expect to that if the process $\round{q^{(n)}_{r,k}}_{k\in\Integer_+}$ spends negligible proportion of time at the boundary, then 
\[
    q^{(n)}_{r}(t) - q^{(n)}_{r}(0) \approx \int_{0}^{t} b_r\round{q^{(n)}_{r}(s)}\diff s + \sigma B_r(t)+ L^{(0)}_{r}(t)- L^{(1)}_{r}(t),\qquad t\in\R_+,
\]
where $\round{q^{(n)}_{r}, L^{(0)}_r, L^{(1)}_r}$ solves the Skorokhod problem on the cube $\Mcal_{r,+}$. That is, each coordinate process of $q^{(n)}_{r}$ satisfies the above SDE with reflection at the boundary $\{0, 1\}$. This heuristic argument can be made precise (see Proposition~\ref{thm:SBM_to_rDiffusion}) and it is one of the main takeaways of this section. Before we state the main theorem, we make a brief digression to the Skorokhod problem and the Skorokhod map which will play a crucial role in Proposition~\ref{thm:SBM_to_rDiffusion} and its proof.

\begin{proposition}\label{thm:SBM_to_rDiffusion}
    Let $\hamil$ satisfy Assumption~\ref{asmp:hamil} and let $(\gamma_n)_{n\in \N}$ satisfy condition~\eqref{eqn:gammagrowth}. 
    Let $D([0, \infty), \Mcal_{r,+})$ be the space of right continuous functions with left limits equipped with the topology of uniform convergence over compact subsets. Let $q^{(n)}_{r}\colon \R_+ \to \Mcal_{r,+}$ be a piecewise interpolation of $\round{q^{(n)}_{r,k}}_{k\in\Integer_+}$ (see equation~\eqref{eq:q^n(t)}). Then,
    $q^{(n)}_{r}$ converges weakly in $D([0, \infty), \Mcal_{r,+})$ to a process $X_{r}$ over compact time intervals, with continuous path that satisfies the SDE 
    \begin{equation}\label{eqn:rDiffusion}
        \diff X_{r}(t) = b_{r}(X_{r}(t)))\diff t+\sigma \diff B_{r}(t)+ \diff L_r^{(0)}(t) - \diff L_r^{(1)}(t), \qquad t\in\R_+,
    \end{equation}
    with initial condition $X_r(0)=q^{(n)}_{r,0}$, where $B_r$ is a symmetric $r\times r$ matrix with whose coordinates are i.i.d. Brownian motions (up to matrix symmetry) and $\round{X_r,L_r^{(0)},L_r^{(1)}}$ solves the Skorokhod problem w.r.t. the finite dimensional cube $\Mcal_{r,+}$ (see Section~\ref{sec:Skorokhod}).
\end{proposition}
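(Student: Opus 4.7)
The plan is to make rigorous the heuristic computation already sketched in the paper, and then conclude via a standard martingale functional central limit theorem combined with the Lipschitz continuity of the multi-dimensional Skorokhod map on the cube $\Mcal_{r,+}$. Since $r$ is fixed throughout this proposition, the problem is genuinely finite dimensional (of dimension $\binom{r}{2}$), which simplifies matters considerably.

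First I would decompose one elementary step of the chain $\Delta q^{(n)}_{r,k} = q^{(n)}_{r,k+1}-q^{(n)}_{r,k}$ into three additive contributions corresponding to the three substeps: the pre-acceptance base-chain run of $s_n=\gamma_n^2n^4$ steps, the accept/reject step, and the post-acceptance relaxation run of $\ell_{n,r}\approx r^{-4}\sigma^2\gamma_nn^4$ steps. For the pre-acceptance contribution $\widetilde{q}^{(n)}_{r,k+1}-q^{(n)}_{r,k}$, each coordinate is a one-dimensional symmetric random walk with step-size $n^{-2}$ reflected in $\interval{0,1}$. A local CLT with reflection, together with concentration inequalities for sums of bounded martingale differences, shows that conditionally on $\Fcal_k$ and on the event $\Acal_k$ that all coordinates of $q^{(n)}_{r,k}$ lie at distance at least $\gamma_n^{1/2}\log n$ from $\set{0,1}$, the increment is approximately $\gamma_n Y_r$ for an $r\times r$ symmetric Gaussian matrix $Y_r$ with i.i.d.\ entries, up to total-variation error $o(1)$.

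Next I would compute $\E{\Delta q^{(n)}_{r,k}\given \Fcal_k}$ on $\Acal_k$. Substituting the Gaussian approximation into the accept/reject probability and using the Fr\'echet-like Lipschitz/smoothness bounds of Assumption~\ref{asmp:hamil} to Taylor-expand $\Hcal\circ K$, the leading contribution becomes exactly the expression that Lemma~\ref{lemma:ExplicitDrift} identifies as $\gamma_nr^{-4}b_r(q^{(n)}_{r,k})$; the remainder terms are controlled by the quadratic bound in Assumption~\ref{asmp:hamil} and are $o(\gamma_nr^{-4})$ uniformly. The final relaxation step is a centered random walk so contributes negligibly to the drift but adds $\gamma_nr^{-4}\sigma^2 I$ to the conditional covariance. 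Combining, one obtains
\begin{align*}
    \E{\Delta q^{(n)}_{r,k}\given \Fcal_k} &= \gamma_nr^{-4}b_r\round{q^{(n)}_{r,k}}\indicator{\Acal_k}{} + o\round{\gamma_nr^{-4}},\\
    \mathrm{Cov}\round{\Delta q^{(n)}_{r,k}\given \Fcal_k} &= \gamma_nr^{-4}\sigma^2 I\indicator{\Acal_k}{} + o\round{\gamma_nr^{-4}}.
\end{align*}

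I would then express $q^{(n)}_r(t)$ as the image under the $\Mcal_{r,+}$-Skorokhod map of an unconstrained process $Y^{(n)}_r(t)$ obtained by accumulating the drift and noise before reflection; the reflection local times of the underlying base-chain random walks at $\set{0,1}$ become the $L^{(0)}_r,L^{(1)}_r$ contributions. Writing
\[
    q^{(n)}_{r}(t) - q^{(n)}_{r}(0) = \sum_{k=0}^{t_{n,r}-1}\E{\Delta q^{(n)}_{r,k}\given \Fcal_k} + M^{(n)}_r(t) + R^{(n)}_r(t),
\]
where $M^{(n)}_r$ is a martingale and $R^{(n)}_r$ collects the boundary-reflection local times plus the complement-of-$\Acal_k$ error, a Riemann-sum argument with $t_{n,r}\gamma_nr^{-4}\to t$ identifies the first sum with $\int_0^t b_r(q^{(n)}_r(s))\diff s + o(1)$; the martingale functional CLT (e.g.\ Ethier--Kurtz) identifies $M^{(n)}_r$ with $\sigma B_r$; and a time-spent-at-boundary estimate, proved by comparing the reflected random walk to a geometric excursion count, shows that the contribution of $\Acal_k^c$ is $o(1)$ in $n$. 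The main obstacle, in my view, is precisely this last estimate, namely controlling the time the chain spends near the faces of the cube where the Gaussian step approximation breaks down; this requires showing that hitting times to the $\gamma_n^{1/2}\log n$-boundary strip are rare enough that the error in the drift/covariance computation from $\Acal_k^c$ is summably small. Once this is in hand, continuous-mapping applied to the $4$-Lipschitz Skorokhod map on $\Mcal_{r,+}$ (the multi-dimensional analogue of~\eqref{eq:skorokhod_lipschitz}) converts weak convergence of $Y^{(n)}_r$ into weak convergence of $q^{(n)}_r$ to the unique solution of~\eqref{eqn:rDiffusion}, whose existence and uniqueness follow from the Lipschitz property of $b_r$ inherited from Assumption~\ref{asmp:hamil} and the well-posedness of the Skorokhod problem on the cube.
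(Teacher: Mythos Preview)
Your overall architecture matches the paper's proof closely: decompose one step into drift plus martingale increment plus boundary contribution, invoke the Ethier--Kurtz martingale functional CLT to identify the martingale part with $\sigma B_r$, use the Lipschitz Skorokhod map to pass to the reflected diffusion, and close with a Gr\"onwall argument. The paper executes this via a chain of four auxiliary processes $Y^{(n)}_r,\widetilde{Y}^{(n)}_r,\widetilde{Z}^{(n)}_r,Z^{(n)}_r$ interpolating between $q^{(n)}_r$ and $X_r$, but this is organisational rather than conceptual.

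The one substantive divergence is the boundary-time estimate, which you rightly single out as the crux. You propose to handle it by ``comparing the reflected random walk to a geometric excursion count,'' but $q^{(n)}_r$ is \emph{not} a reflected random walk: the accept/reject step injects a state-dependent drift that couples all $\binom{r}{2}$ coordinates, so an excursion count for the base chain does not directly control the occupation time of $q^{(n)}_r$ near the faces of the cube. Your plan does not say how to absorb this drift, and this is a genuine gap.

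The paper's device here is different and worth knowing. It fixes $\epsilon>0$, replaces $\indicator{B_\epsilon}{}$ by a smooth Lipschitz $\psi_{\epsilon,\delta}$, and uses the Lipschitzness of $\psi_{\epsilon,\delta}\circ\mathrm{Sko}$ to transfer the occupation-time functional from $q^{(n)}_r$ to the auxiliary process $\widetilde{Z}^{(n)}_r$, which has the form $q^{(n)}_r(0)+\int_0^t f(s)\diff s + \sigma B_r(t)$ with $f$ uniformly bounded. It then applies Girsanov with the bounded exponential martingale $\mathcal{E}$ and Cauchy--Schwarz to remove the drift entirely, reducing the question to the occupation time of a driftless reflected Brownian motion in the $\epsilon$-strip, which is $O(\epsilon)$. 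Only after taking $n\to\infty$ does one send $\epsilon\to 0$. This Girsanov reduction is the missing ingredient in your plan; either supply it, or give an explicit argument showing that the drift of $q^{(n)}_r$ does not spoil a direct excursion count.
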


\subsubsection{Proof of Proposition~\ref{thm:SBM_to_rDiffusion}}\label{sec:outline}
The proof is long and requires several lemmas. Therefore, we first give an outline of the proof before presenting the details. Fix $r\in \Natural$. For every $k\in\Integer_+$, let $\Fcal^{n}_k$ be the sigma algebra generated by $\set{q^{(n)}_{r,\ell}\given \ell\in\set{0}\cup[k]}$. Let $t_{n,r} = \floor{tr^4/\gamma_n}$ as defined earlier. For $i, j\in [r]$, notice that 
\begin{align*}
    q^{(n)}_{r,(i,j)}(t) - q^{(n)}_{r,(i,j)}(0) &= \sum_{\ell=0}^{t_{n, r}-1}\E{\Delta q^{(n)}_{r,\ell,(i,j)}\given \Fcal^n_\ell}\indicator{(0,1)}{q^{(n)}_{r,\ell,(i,j)}}\\
    &\qquad\qquad + \sum_{\ell=0}^{t_{n, r}-1}\Delta M^{(n)}_{r,\ell,(i,j)}+ L^{(n,0)}_{r,(i,j)}(t)-L^{(n,1)}_{r,(i,j)}(t),
\end{align*}
for every $t\in\R_+$, where $\Delta M^{(n)}_{r,\ell} = \Delta q^{(n)}_{r,\ell}-\E{\Delta q^{(n)}_{r,\ell}\given \Fcal^n_\ell}$ for all $\ell\in\Integer_+$ and 
\begin{align*}
    L^{(n,0)}_{r,(i,j)}(t) &= \sum_{\ell=0}^{t_{n, r}-1}\E{\Delta q^{(n)}_{r,\ell,(i,j)} \given \Fcal^n_\ell}\indicator{\set{0}}{q^{(n)}_{r,\ell,(i,j)}},\\
    L^{(n,1)}_{r,(i,j)}(t) &= \sum_{\ell=0}^{t_{n, r}-1}\E{\Delta q^{(n)}_{r,\ell,(i,j)} \given \Fcal^n_\ell}\indicator{\set{1}}{q^{(n)}_{r,\ell,(i,j)}},
\end{align*}
for $t\in\R_+$, where $L^{(n,0)}_{r,(i,j)}(t)$ is $\frac{1}{n^2}$ times the number of times the process $q^{(n)}_{r,(i,j)}$ visits $\set{0}$ before time $t$ and similarly for $L^{(n,1)}_{r,(i,j)}(t)$. Note that $\round{M^{(n)}_{r,k}\coloneqq \sum_{\ell=0}^{k-1}\Delta M^{(n)}_{r,\ell}}_{k\in\Integer_+}$ is a $\Mcal_r$-valued martingale and we define a piecewise constant interpolation of this martingale process $M^{(n)}_{r}$ defined as $M^{(n)}_{r}(t)=M^{(n)}_{r,t_{n, r}}$ for $t\in\R_+$. Let $\mathrm{Sko}$ be the Skorokhod map (see Section~\ref{sec:Skorokhod}), then for any $t\in\R_+$, and any $(i,j)\in[r]^{(2)}$,
\begin{align}\label{eqn:qnDefinition}
    q^{(n)}_{r,(i,j)}(t) = \Sko{q^{(n)}_{r,(i,j)}(0)+\sum_{\ell=0}^{t_{n, r}-1} \E{\Delta q^{(n)}_{r,\ell,(i,j)}\given \Fcal^n_\ell}\indicator{(0,1)}{q^{(n)}_{r,\ell,(i,j)}}+M^{(n)}_{r,(i,j)}(t)}.
\end{align}

\begin{enumerate}
    \item\label{outlineStep0} Since the Skorokhod map $\mathrm{Sko}$ is a $4$-Lipschitz map~\cite{kruk2007explicit}, to show that $q^{(n)}_r(t)$ converges uniformly to $X_{r}(t)$ as $n\to\infty$, it is sufficient to show that 
    \begin{align*}
        \sum_{\ell=0}^{t_{n, r}-1}& \E{\Delta q^{(n)}_{r,\ell,(i,j)}\given \Fcal^n_\ell}\indicator{(0,1)}{q^{(n)}_{r,\ell,(i,j)}}\\
        &\to \int_{0}^{t} b_{r,(i,j)}(X_r(s))\indicator{(0, 1)}{X_{r,(i,j)}(s)}\diff s,\\
        \text{and}\quad M^{(n)}_r(t)&\to \sigma B_r(t),
    \end{align*}
    uniformly over compact time intervals as $n\to\infty$.
    \item\label{outlineStep1} In Lemma~\ref{lem:Martingale_QV}, we show that the quadratic variation of the martingale $M^{(n)}_r$ in the time interval $[0,t]$ converges to $t\sigma^2$ for every $t\in\R_+$ as $n\to \infty$. The key ingredient is the fact that a simple symmetric reflected random walk spends negligible amount of time at the boundary.
    \item \label{outlineStep2} Using Lemma~\ref{lem:Martingale_QV} and~\cite[Theorem 1.4, Chapter 7]{ethier2009markov} we conclude that the process $M^{(n)}_r$ converges to the process (weakly) $\sigma B_r$ where $B_r$ is an $r\times r$ symmetric matrix with i.i.d. Brownian motions. Using Skorokhod representation theorem both $M^{(n)}_r$ and $B_r$ can be defined on some common probability space $(\Omega, \Fcal, \mathbb{P})$, on which we get almost sure convergence.
    \item \label{oulineStep3} On the probability space $(\Omega, \Fcal, \mathbb{P})$ obtained in Step~\ref{outlineStep2}, we define versions of the processes $X_{r}$ and $q^{(n)}_r$ using~\eqref{eqn:rDiffusion} and~\eqref{eqn:qnDefinition} respectively.
    \item \label{outlineStep4} It remains to show that the first condition in Step~\ref{outlineStep0} holds. To this end, we first show that for every fixed $\epsilon>0$, $\E{\Delta q^{(n)}_{r,\ell,(i,j)}\given \Fcal^n_\ell}\indicator{(\epsilon,1-\epsilon)}{q^{(n)}_{r,\ell,(i,j)}} - b_{r,(i,j)}\round{q^{(n)}_{r, \ell}}\indicator{(\epsilon,1-\epsilon)}{q^{(n)}_{r,\ell,(i,j)}} \to 0$ as $n\to\infty$. This is achieved by a sequence of reductions in Lemma~\ref{lem:Drift_AwayfromBoundary}. 
    \item\label{outlineStep5} Finally, we show that $\sum_{\ell=0}^{t_{n, r}-1}\indicator{(\delta, 1-\delta)^{c}}{q^{(n)}_{r, \ell, (i, j)}}\to 0$ as $n\to \infty$. Since $\frac{1}{\gamma_n}\E{\Delta q^{(n)}_{r, \ell}\given \Fcal_{\ell}}$ is uniformly bounded. We conclude that the first condition in Step~\ref{outlineStep0} holds. This completes the proof.
\end{enumerate}

\begin{proof}[Proof of Proposition~\ref{thm:SBM_to_rDiffusion}]
Let $q^{(n)}_r$ be defined by~\eqref{eqn:qnDefinition}. In the following we will keep $r$ fixed and therefore drop it from the subscript wherever necessary. Throughout, we will keep $t\in\R_+$ fixed. The map $\mathrm{Sko}$ in the discussion will refer to the Skorokhod map defined on the space $D([0, t], \mathbb{R}^{[r]^{(2)}})$, the space of right continuous paths with left limits from $[0, t]$ to $\mathbb{R}^{[r]^{(2)}}$.
Define
\[
    b^{(n)}_{r,(i, j)}\round{q^{(n)}_{r, \ell}} = \frac{1}{\gamma_n r^{-4}}\E{\Delta q^{(n)}_{r,\ell,(i,j)}\given \Fcal^n_\ell}\indicator{(0,1)}{q^{(n)}_{r,\ell,(i,j)}},
\]
and let $b_{r}$ be in Definition in~\ref{def:bFunction}. Recall that both $b^{(n)}_r$ and $b_r$ are uniformly bounded by some constant $C$. Also, recall that $B_r$ is an $r\times r$ symmetric matrix with i.i.d. Brownian coordinates. Define the stochastic processes $Y^{(n)}_r, \widetilde{Y}^{(n)}_{r}, \widetilde{Z}^{(n)}_{r}, Z^{(n)}_{r}\colon \R_+ \to \Rd{[r]^{(2)}}$ as:
\begin{align*}
    \begin{split}
        Y^{(n)}_{r,(i, j)}(t) &= q^{(n)}_{r,(i,j)}(0)+\sum_{\ell=0}^{t_{n, r}-1}\gamma_n r^{-4} b^{(n)}_{r,(i, j)}\round{q^{(n)}_{r, \ell}}+M^{(n)}_{r,(i,j)}(t),\\
         \widetilde{Y}^{(n)}_{r,(i, j)}(t) &= q^{(n)}_{r,(i,j)}(0)+\sum_{\ell=0}^{t_{n, r}-1} \gamma_n r^{-4}b^{(n)}_{r,(i, j)}\round{q^{(n)}_{r, \ell}}+\sigma B_{r,(i, j)}(t),\\
         \widetilde{Z}^{(n)}_{r,(i, j)}(t) &= q^{(n)}_{r,(i,j)}(0)+\int_{0}^{t} b^{(n)}_{r,(i, j)}\round{q^{(n)}_r(s)}\diff s + \sigma B_{r,(i, j)}(t),\\
         Z^{(n)}_{r,(i, j)}(t) &= q^{(n)}_{r,(i,j)}(0)+ \int_{0}^{t} b_{r, (i, j)}\round{q^{(n)}_r(s)}\diff s+ \sigma B_{r,(i, j)}(t),
    \end{split}
    \qquad t\in\R_+, (i,j)\in[r]^{(2)}.
\end{align*}
Notice that $Y^{(n)}_r$ is the ``unconstrained version" of the process $q^{(n)}_r$ in the sense that $\Sko{Y^{(n)}_r}(s) = q^{(n)}_r(s)$ for every $s\in\R_+$. Finally, let $\round{X_r, L^{(0)}_r, L^{(1)}_r}$ be the process that satisfies the Skorokhod SDE
\[
    \diff X_r(t) = b_r(X(t))\diff t + \sigma \diff B_r(t) + \diff L^{(0)}_r(t) - \diff L^{(1)}_r(t),\qquad X_{r,(i, j)}(0) = q^{(n)}_{r, (i, j)}(0).
\]
Denote the corresponding unconstrained process 
\[
    \widetilde{X}_r(t) = q^{(n)}_{r}(0)+ \int_{0}^{t} b_r(X_r(s))\diff s+ \sigma \diff B_r(t).
\]
Note that $\Sko{\widetilde{X}_r}=X_r$. Recall that the goal is to show that $q^{(n)}_r$ converges to the process $X_r$ as $n\to \infty$. Using the fact that the Skorokhod map is Lipschitz (see Section~\ref{sec:Skorokhod}), it is sufficient to show that $Y^{(n)}_r$ converges to $\widetilde{X}_r$. To this end, set 
\begin{align*}
    \Delta^{(n)}(t) \coloneqq \E{\sup_{s\in[0,t]}\normF{Y^{(n)}_r(s)-\widetilde{X}_r(s)}^2},&\qquad 
    \Delta^{(n)}_1(t) \coloneqq \E{\sup_{s\in[0,t]}\normF{Y^{(n)}_r(s)-\widetilde{Y}^{(n)}_r(s)}^2},\\
    \Delta^{(n)}_2(t) \coloneqq \E{\sup_{s\in[0,t]}\normF{\widetilde{Z}^{(n)}_r(s)-\widetilde{Y}^{(n)}_r(s)}^2},&\qquad
    \Delta^{(n)}_3(t) \coloneqq \E{\sup_{s\in[0,t]}\normF{\widetilde{Z}^{(n)}_r(s)-Z^{(n)}_r(s)}^2},\\
    \Delta^{(n)}_4(t) \coloneqq \E{\sup_{s\in[0,t]}\normF{\widetilde{X}^{(n)}_r(s)-Z^{(n)}_r(s)}^2}, &\qquad \text{for all}\; t\in\R_+.
\end{align*}

Since $(a+b+c+d)^2 \leq 4(a^2+b^2+c^2+d^2)$ for all $(a,b,c,d)\in\Rd{4}$,
$\Delta^{(n)}(t)\leq 4\sum_{i=1}^{4}\Delta^{(n)}_i(t) \leq 4\sum_{i=1}^{3}\Delta^{(n)}_i(t)+ 64t\int_{0}^{t}\Delta^{(n)}(s)\diff s$,
where the final inequality is due to the $4$-Lipschtizness of the Skorokhod map.
In particular, for $t\in [0, T]$, we have 
$\Delta^{(n)}(t)\leq 4\sum_{i=1}^{3}\Delta^{(n)}_i(t)+ 64 T\int_{0}^{t}\Delta^{(n)}(s)\diff s$.
Note that $t\mapsto \Delta^{(n)}_i(t)$ is increasing. Therefore, using Gr\"onwall's inequality~\cite{gronwall1919note}, we obtain
\begin{equation}\label{eqn:Gronwall0}
    \Delta^{(n)}(T) \leq 4\round{\sum_{i=1}^{3}\Delta^{(n)}_i(T)}\exp\round{64T^2}.
\end{equation}


It is therefore sufficient to show that $\Delta^{(n)}_i(t)\to 0$ as $n\to \infty$ for $i\in[3]$. This is done in following steps. Using Lemma~\ref{lem:Martingale_QV} below and Theorem~\cite[Theorem 1.4, Chapter 7]{ethier2009markov}, we know that the process $M^{(n)}_r$ converges to $\sigma B_r$ uniformly on compact subsets of time.
In particular, for fixed $t>0$ we have that $\Delta^{(n)}_1(t)\to 0$ as $n\to \infty$. For $\Delta^{(n)}_2$, we notice that the error is actually the error from the Riemann sum approximation. Hence, $\Delta^{(n)}_2(t)\leq C_{r}\gamma_n^2\to 0$ as $n\to \infty$. To see this, first recall that $q^{(n)}(s)$ is piecewise constant on the interval of length $\gamma_nr^{-4}$, that is, $q^{(n)}_r(s)=q^{(n)}_{r, \lfloor s/(\gamma_nr^{-4})\rfloor}$. Now observe that
\begin{align*}
    \abs{\widetilde{Y}^{(n)}_{r, (i, j)}(t) - \widetilde{Z}^{(n)}_{r, (i, j)}(t)}&=\abs{\sum_{\ell=0}^{t_{n, r}-1}\gamma_n r^{-4}b^{(n)}_{(i, j)}\round{q^{(n)}_{r, \ell}}-\int_{0}^{t}b^{(n)}_{(i, j)}\round{q^{(n)}_{r}(s)}}\\
    &=\abs{\round{t-\gamma_nr^{-4}(t_{n, r}-1)} b^{(n)}_{(i, j)}\round{q^{(n)}_{r, t_{n, r}-1}}}\leq C \gamma_n r^{-4},
\end{align*}
where the inequality in the last line follows from the fact that $b^{(n)}$ is uniformly bounded. Squaring both sides and summing over all $(i, j)\in[r]^{(2)}$ we conclude that $\Delta^{(n)}_2(t)\leq C_r\gamma_n^2$. 

We now show that $\Delta^{(n)}_3(t)\to 0$ as $n\to \infty$. To do this, we fix $\epsilon>0$, $\delta>0$ (we assume that $\delta\ll \epsilon$ and $\delta+\epsilon\ll 1$). Define 
\begin{align}
    A_{\epsilon}\coloneqq \set{M\in \Mcal_{r}\given \epsilon \leq  M_{(i, j)}\leq 1-\epsilon\;\;\;\forall\;\; (i, j)\in [r]^{(2)}},\quad B_{\epsilon}\coloneqq \Mcal_r\setminus A_{\epsilon}.\label{eq:A_eps}
\end{align}

Start observing that $\sup_{s\in[0,t]}\normF{\widetilde{Z}^{(n)}_r(s)-Z^{(n)}_r(s)}^2$ is at most
\begin{align}\label{eq:twoterms}
    t\int_{0}^{t}&\normF{b^{(n)}_r\round{q^{(n)}_r(s)}-b_r\round{q^{(n)}_r(s)}}^2\diff s\nonumber\\
    &\leq t\int_{0}^{t}\normF{b^{(n)}_r\round{q^{(n)}_r(s)}-b_r\round{q^{(n)}_r(s)}}^2\indicator{A_{\epsilon}}{q^{(n)}_r(s)}\diff s\nonumber\\
    &\qquad+ t\int_{0}^{t}\normF{b^{(n)}_r\round{q^{(n)}_r(s)}-b_r\round{q^{(n)}_r(s)}}^2\indicator{B_{\epsilon}}{q^{(n)}_r(s)}\diff s.
\end{align}
From Lemma~\ref{lem:Drift_AwayfromBoundary} below $b^{(n)}_r\round{q^{(n)}_r(s)}$ and $b_{r}\round{q^{(n)}_r(s)}$ are close in $\normF{}^2$ by $\frac{Cr^2}{n^4}+4r^2\beta_{n,r}^2e_n^3\max\set{\abs{\lambda},L}$ with probability at least $1-p_{n, \epsilon}$, when $q^{(n)}_r(s)\in A_{\epsilon}$ where $p_{n, \epsilon} = \frac{4r^2}{n^4}+2r^2\mathrm{erf}\round{\frac{r^2\epsilon}{4\sigma\sqrt{2\gamma_n}}}$ and $e_n=O(\gamma_n^2\log n)$. On the other hand, we notice that $b^{(n)}_r$ and $b_r$ are both uniformly bounded by in $\norm{\infty}{}$. Using these two facts we conclude that 
\begin{align}\label{eqn:Delta3_awayfrombd}
    &\E{\int_{0}^{t}\normF{b^{(n)}_r\round{q^{(n)}_r(s)}-b_r\round{q^{(n)}_r(s)}}^2\indicator{A_{\epsilon}}{q^{(n)}_r(s)}\diff s}\nonumber\\
    &\qquad\leq C\frac{t}{\gamma_n}p_{n, \epsilon}+\frac{Cr^2}{n^4}+4r^2\beta_{n,r}^2e_n^3\max\set{\abs{\lambda},L}.
\end{align}
Since $b^{(n)}_r$ and $b_r$ are uniformly bounded, the second term in~\eqref{eq:twoterms} is bounded as
\begin{align*}
    \int_{0}^{t}\normF{b^{(n)}_r\round{q^{(n)}_r(s)}-b_r\round{q^{(n)}_r(s)}}^2\indicator{B_{\epsilon}}{q^{(n)}_r(s)}\diff s\leq CD^{(n)}(t),
\end{align*}
for some constant $C>0$, where $D^{(n)}(t)\coloneqq \int_{0}^{t}\indicator{B_{\epsilon}}{q^{(n)}_r(s)}\diff s $.


We now approximate the indicator function, $\indicator{B_{\epsilon}}{\cdot}$ by a smooth function $\psi_{\epsilon, \delta}\in C^{\infty}([0, 1])$. That is, Let $\psi_{\epsilon, \delta}$ be a smooth function such that $\psi_{\epsilon, \delta}\equiv 1$ on the set $I_{\epsilon}\coloneqq [0, \epsilon)\cup (1-\epsilon, 1]$ and $0\leq \psi_{\epsilon, \delta}\leq 1$ and $\mathrm{supp}(\psi) \subset [0, \epsilon+\delta)\cup (1-\epsilon-\delta, 1]$. Recall that by our assumption $\epsilon+\delta\ll 1$.
Then, $D^{(n)}(t) \leq  \int_{0}^{t} \psi_{\epsilon, \delta}\round{q^{(n)}_r(s)}\diff s$.

Recall that $q^{(n)}_r(s)=\Sko{Y^{(n)}}(s)$ for every $s\in\R_+$. Also recall that both $\psi_{\epsilon, \delta}$ and $\mathrm{Sko}$ are Lipschitz functions. Therefore, the composition $\psi_{\epsilon, \delta}\circ \mathrm{Sko}$ is is also a Lipschitz function, say, with Lipschitz constant $L_{\epsilon, \delta}$. Define $\Psi_{\epsilon, \delta}(s)\coloneqq \psi_{\epsilon, \delta}\round{\Sko{Y^{(n)}_r}(s)}$ and $\widetilde{\Psi}_{\epsilon, \delta}(s)\coloneqq \psi_{\epsilon, \delta}\round{\Sko{\widetilde{Z}^{(n)}_r}(s)}$. Now observe that 
\begin{align*}
    \Psi_{\epsilon, \delta}(s)&\leq \widetilde{\Psi}_{\epsilon, \delta}(s) + L_{\epsilon, \delta} \normF{\widetilde{Z}^{(n)}_r(s)-Y^{(n)}_r(s)}^2\\
    &\leq \widetilde{\Psi}_{\epsilon, \delta}(s)+2L_{\epsilon, \delta}\round{\normF{\widetilde{Z}^{(n)}_r(s)-\widetilde{Y}^{(n)}_r(s)}^2+\normF{Y^{(n)}_r(s)-\widetilde{Y}^{(n)}_r(s)}^2}.
\end{align*}
Therefore, we obtain 
\begin{equation}\label{eqn:ent}
    \E{D^{(n)}(t)}\leq \E{\int_{0}^{t}\widetilde{\Psi}_{\epsilon, \delta}(s)\diff s}+ 2L_{\epsilon, \delta}t\round{\Delta^{n}_1(t)+\Delta^{(n)}_2(t)}.  
\end{equation}

\sloppy Note that $\E{\int_{0}^{t}\widetilde{\Psi}_{\epsilon, \delta}(s)\diff s}=\E{F_{\epsilon, \delta}\round{\widetilde{Z}^{(n)}}}$ for some bounded continuous function $F_{\epsilon, \delta}\colon C([0, t], \Mcal_{r,+})\to \R$. Also, recall that $\widetilde{Z}^{(n)}$ satisfies the SDE $\widetilde{Z}^{(n)}_r(t) = q_{r}(0)+ \int_{0}^{t} f(s)\diff s+ \sigma B_r(t)$, where $f(s)= b^{(n)}_r\round{q^{(n)}_r(s)}$ is a bounded function. Set
\[
    \mathcal{E}= \exp\round{\frac{1}{\sigma}\int_{0}^{t}b^{(n)}_r\round{q^{(n)}_r(s)}\diff B_r(s)-\frac{1}{2\sigma^2}\int_{0}^{t}b^{(n)}_r\round{q^{(n)}_r(s)}\diff s}.
\]
Using Girsanov's theorem and the Cauchy--Schwarz inequality we obtain
\begin{align*}
    \E{F_{\epsilon, \delta}\round{\widetilde{Z}^{(n)}_r}}^2 = \E{F_{\epsilon, \delta}(B)\mathcal{E}}^2\leq \E{F_{\epsilon, \delta}^2(B)}\E{\mathcal{E}^2}.
\end{align*}
Finally using the fact that $b^{(n)}_r$ is uniformly bounded, we obtain that $\E{\Ecal^2}\leq C_{r, t, \sigma}$. On the other hand, we notice that by definition 
\begin{align}\label{eqn:occupationNearbd}
    \E{F_{\epsilon, \delta}^2(B)} &= \E{\round{\int_{0}^{t}\psi_{\epsilon, \delta}(\mathrm{RBM}(s))\diff s}^2}\leq t\int_{0}^{t} \Prob{\mathrm{RBM}(s)\in I_{\epsilon}}\diff s \eqqcolon C(\epsilon, \delta, r,t)^2,
\end{align}
where the equality follows from the Cauchy-Schwarz and the fact that $\psi_{\epsilon, \delta}^2\leq \indicator{B_{\epsilon+\delta}}{\cdot}$.
Combining equations~\eqref{eqn:Delta3_awayfrombd},~\eqref{eqn:ent} and~\eqref{eqn:occupationNearbd} we obtain that 
$\Delta^{(n)}_3(t)\leq \frac{t}{\gamma_n}p_n + C(\epsilon,\delta, r, t)+2L_{\epsilon, \delta}t\round{\Delta^{n}_1(t)+\Delta^{(n)}_2(t)}$.


Putting this back in equation~\eqref{eqn:Gronwall0} we conclude that 
\[
    \Delta^{(n)}(t)\leq \round{\frac{t}{\gamma_n}p_n + C(\epsilon,\delta, r, t)+(2L_{\epsilon, \delta}t+C)\round{\Delta^{n}_1(t)+\Delta^{(n)}_2(t)}}\eu^{Ct^2}.
\]
Recall that $\frac{p_n}{\gamma_n}\to 0$ and $\Delta^{(n)}_1(t)\to 0$ and $\Delta^{(n)}_2(t)\leq \gamma_n^2\to 0$ as $n\to \infty$. Therefore, we conclude that 
$\limsup_{n\to \infty}\Delta^{(n)}(t)\leq C(\epsilon, \delta, r, t)\eu^{Ct^2}$.
Since $C(\epsilon, \delta, r, t)\to 0$ as $\epsilon, \delta\to 0$, this concludes the proof.
\end{proof}

Let $b_0$ be as in Definition~\ref{def:bFunction}. Recall from Remark~\ref{rem:DriftConvergence} that $\norm{\infty}{b_0+\beta D\Hcal}\to 0$ as $r\to \infty$. 
As an immediate consequence of Theorem~\ref{thm:mv} (see Remark~\ref{rem:Perturbation}) we obtain that the process $X_{r}$ converges to the McKean-Vlasov SDE defined in~\eqref{eq:McKeanV1} with drift given by $-\beta D\Hcal(w)$ as $r\to \infty$. That is, given a pair of $\mathrm{Uni}([0,1])$ i.i.d. random variables $\left(U,V\right)$ and a standard Brownian motion $B$ on some probability space $(\Omega, \Gcal, \mathbb{P})$, consider the following SDE conditioned on $\{(U,V)=(u,v)\}$,
\begin{align}\label{eqn:McKean_Metropolis}
    \begin{split}
        \diff X(t)&= -\beta D\Hcal(\E{W^{\sigma}(t)})(u,v)\diff t + \sigma \diff B(t) + \diff L^{(0)}(t) - \diff L^{(1)}(t),\\
        W^{\sigma}(t)(x,y)&= \mathrm{Law}(X(t)\,\vert\, (U,V)=(x,y)),\qquad (x,y)\in [0,1]^{(2)},
    \end{split}
\end{align}
for $t\in\R_+$, where $\round{X,L^{(0)},L^{(1)}}$ solves the Skorokhod problem with respect to $[0,1]$. 

\begin{proposition}\label{prop:rDiffusion_to_McKeanVlasov}
    \sloppy Let $X_{r}$ be a solution of~\eqref{eqn:rDiffusion} with initial condition $X_{r}(0)\in \Mcal_{r,+}$. If $\lim_{r\to \infty}\norm{\blacksquare}{X_{r}(0)-W_0}=0$, then $X_{r}$ converges in MVG sense, in probability, uniformly over compact time intervals, to a deterministic curve $W^{\sigma}$ in the space of MVGs as $r\to\infty$. Moreover, $W^{\sigma}$ is described by~\eqref{eqn:McKean_Metropolis} with initial condition $W_0$. 
\end{proposition}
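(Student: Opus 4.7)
The plan is to reduce this directly to Theorem~\ref{thm:mv} via the perturbation statement of Remark~\ref{rem:Perturbation}. The limiting SDE~\eqref{eqn:McKean_Metropolis} is a special case of the MVG McKean--Vlasov SDE~\eqref{eq:McKeanV1} with drift $b^{*}(z,W)(x,y) \coloneqq -\beta\,D\Hcal(\E{W})(x,y)$ and constant diffusion $\Sigma^{*}\equiv\sigma$. The SDE~\eqref{eqn:rDiffusion} on $\Mcal_{r,+}$ fits the template~\eqref{eq:SDE_n} with drift $b_r$ from Definition~\ref{def:bFunction} and the same constant diffusion $\sigma$. Hence if (i) $(b^{*},\Sigma^{*})$ satisfies Assumption~\ref{asmp:general_Lipschitz} and (ii) the driving drifts $b_r$ converge uniformly to $b^{*}$, then Theorem~\ref{thm:mv} combined with Remark~\ref{rem:Perturbation} gives the convergence of $X_r$, uniformly on compact time intervals, to the solution of~\eqref{eqn:McKean_Metropolis} in the $\Delta_\blacksquare$ metric, which is exactly convergence in the MVG sense by Theorem~\ref{thm:equivalence_of_cut}.

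For step (ii), Remark~\ref{rem:DriftConvergence} already records that $\norm{\infty}{b_0+\beta D\Hcal}\to 0$ as $r\to\infty$, so setting $\alpha_r \coloneqq \norm{\infty}{b_r-b^{*}}$ we have $\alpha_r\to 0$, placing us in the scope of Remark~\ref{rem:Perturbation}. For step (i), note that $b^{*}$ and $\Sigma^{*}$ have no dependence on the first argument $z\in[-1,1]$, so the $z$-Lipschitz condition of Assumption~\ref{asmp:general_Lipschitz} is satisfied trivially with $L=0$. The $W$-Lipschitz condition on $\Sigma^{*}$ is also trivial since $\Sigma^{*}$ is constant. The only genuine verification is the $W$-Lipschitz condition on $b^{*}$: take $\psi(\zeta)=\zeta$, which is admissible in Definition~\ref{defn:CutNorm} since $\norm{\rm BL}{\psi}\leq 1$, so $\Gamma(\psi,W)=\E{W}$ and consequently $\cutnorm{\E{W_1}-\E{W_2}} \leq \norm{\blacksquare}{W_1-W_2}$. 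Combining with the Lipschitz hypothesis~\eqref{eqn:Asmp_hamilLip} of Assumption~\ref{asmp:hamil} yields
\begin{align*}
  \norm{\infty}{b^{*}(z,W_1)-b^{*}(z,W_2)} = \beta\norm{\infty}{D\Hcal(\E{W_1})-D\Hcal(\E{W_2})} \leq \beta\kappa_\blacksquare\norm{\blacksquare}{W_1-W_2},
\end{align*}
giving the desired estimate with constant $\beta\kappa_\blacksquare$.

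The last ingredient is the initial condition: Theorem~\ref{thm:mv} asks for $D_2(X_r(0),W_0)\to 0$, while we are given $\norm{\blacksquare}{X_r(0)-W_0}\to 0$. Since $X_r(0)\in\Mcal_{r,+}$ corresponds under $\Kcal$ to a Dirac-valued MVG, the apparent gap closes as soon as $W_0$ is also interpreted through its corresponding Dirac MVG (when $W_0$ is a graphon) or, more generally, is obtained as a limit of the matrix initializations in the appropriately strong sense; in either case the $\norm{\blacksquare}{}$-convergence of the initial data is sufficient for the coupling argument in the proof of Theorem~\ref{thm:mv} to go through verbatim (only the initial-error term $B_k(n)$ in that proof is affected, and it still vanishes in the limit). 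The main obstacle is exactly this reconciliation of the initial-condition metric; apart from that, everything reduces to checking the two Lipschitz estimates above and invoking the already-established Theorem~\ref{thm:mv} together with Remark~\ref{rem:Perturbation}.
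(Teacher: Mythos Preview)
Your approach is exactly the paper's: the proposition is presented there as ``an immediate consequence of Theorem~\ref{thm:mv} (see Remark~\ref{rem:Perturbation})'' via the drift convergence of Remark~\ref{rem:DriftConvergence}, with no further argument. You go further by spelling out the Lipschitz verification for $(b^{*},\Sigma^{*})$; the key step $\cutnorm{\E{W_1}-\E{W_2}}\le\norm{\blacksquare}{W_1-W_2}$ is precisely the observation the paper records right after Assumption~\ref{asmp:hamil} (``$W\mapsto\E{W}$ is $(\norm{\blacksquare}{}\to\cutnorm{})$-Lipschitz'').

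You correctly flag the initial-condition mismatch: the proposition hypothesizes $\norm{\blacksquare}{X_r(0)-W_0}\to 0$ while Theorem~\ref{thm:mv} requires the stronger $D_2(X_r(0),W_0)\to 0$. The paper does not address this at all. Your resolution is honest but hand-wavy: $\norm{\blacksquare}{}$-convergence is genuinely weaker than $D_2$-convergence, and the $B_k(n)$ term in~\eqref{eq:def_Ak_Bk} depends on squared pointwise differences whose vanishing in expectation is exactly the $D_2$ condition, so ``goes through verbatim'' overstates matters. In the intended application $W_0$ is a graphon (Dirac-valued MVG) and $X_r(0)$ arises from sampling or discretizing it, which does give $D_2$ convergence; but as written, neither you nor the paper closes this formal gap.
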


\begin{remark}\label{rem:grapon_curve_MKV}
    \sloppy Consider the curve $w^{\sigma}$ in the space of graphons defined as $w^{\sigma}(t)\coloneqq \E{W(t)}$
    for all $t\in\R_+$.  It follows from Remark~\ref{rem:MVG_Graphon_MKV} that the random curves $\round{X_{r}}_{r\in\Natural}$ converge in cut-metric, uniformly on compact intervals of time, to $w^{\sigma}$ in probability. And, $w^{\sigma}$ can be recovered as a solution of a MKV SDE satisfying on $\set{(U, V)=(u, v)}$
    \begin{align}\label{eqn:McKean_Graphon}
        \begin{split}
            \diff X(t)&= -\beta D\Hcal(w^{\sigma})(u,v)\diff t + \sigma \diff B(t) + \diff L^{(0)}(t) - \diff L^{(1)}(t),\\
            w^{\sigma}(t)(x,y)&= \E{X(t)\given (U,V)=(x,y)},\qquad (x,y)\in [0,1]^{(2)},\quad t\in\R_+,
        \end{split}
    \end{align}
    where $\round{X,L^{(0)},L^{(1)}}$ solves the Skorokhod problem with respect to $[0,1]$.
\end{remark}
When $\sigma=0$, the graphon McKean-Vlasov~\eqref{eqn:McKean_Graphon} reduces to a deterministic evolution $w$ of kernels given by 
\begin{equation}\label{eqn:Sigma0_evolution}
    w(t)(x, y)=w(0)(x, y)-\beta \int_0^t D\Hcal(w(s))(x, y)\indicator{G_{w(s)}}{}\diff s,
\end{equation}
for $(x,y)\in[0,1]^{(2)}$ and $t\in\R_+$, where $G_{u}$ is defined analogous to~\eqref{eqn:G_function}.
This can be seen by defining $L^{(0)}$ and $L^{(1)}$ as
\begin{align*}
    L^{(1)}(t) &\coloneqq +\int_0^t b(w(s))(u,v)\indicator{}{w(s)(u,v)=1,b(w(s))>0}\diff s,\\
    L^{(0)}(t) &\coloneqq -\int_0^t b(w(s))(u,v)\indicator{}{w(s)(u,v)=0,b(w(s))<0}\diff s,
\end{align*}
on $\set{(U,V)=(u,v)}$, and observing that the process $(X,L^{(0)},L^{(1)})$ solves the Skorokhod problem w.r.t. $[0,1]^{(2)}$ (see Section~\ref{sec:Skorokhod}). 
It is clear that that $w$ is a constant factor reparametrization of gradient flow of $\hamil$ on the space of graphons. We now show that this is indeed the case, that is, as $\sigma\to 0$ the curve $w^{\sigma}$ converges to $w$ on the space of graphons under the cut metric, uniformly over compact intervals of time. To this end, let $(\Omega, \Fcal, \mathbb{P})$ be a probability space equipped with a family of i.i.d. uniform random variables $\set{U_i}_{i\in \N}$ and a collection of independent linear BM $\set{B_{(i, j)}}_{(i, j)\in \N^{(2)}}$. We can therefore define an IEA $X^{\sigma}$ on this probability space such that
\begin{align*}
    \begin{split}
        \diff X^{\sigma}_{(i, j)}(t)&= -\beta D\Hcal(\E{W^{\sigma}(t)})(U_i,U_j)\diff t + \sigma \diff B_{(i, j)}(t) + \diff L^{(0)}_{(i, j)}(t) - \diff L^{(1)}_{(i, j)}(t),\\
        W^{\sigma}(t)(x,y)&= \mathrm{Law}(X^{\sigma}_{(i, j)}(t)\,\vert\, (U_i,U_j)=(x,y)),\qquad (x,y)\in [0,1]^{(2)},
    \end{split}
\end{align*}
for $t\in\R_+$, where $\round{X^{\sigma},L^{(0)},L^{(1)}}$ solves the Skorokhod problem with respect to $[0,1]$.

Let $w$ be as defined in~\eqref{eqn:Sigma0_evolution}. Recall that $w(t)$ can be naturally identified with an MVG $W(t)$ defined as $W(t)(x, y)\coloneqq \delta_{w(t)(x, y)}$ for $(x,y)\in[0,1]^{(2)}$. On the probability space $(\Omega, \Fcal, \mathbb{P})$ we define another IEA given by $X_{(i, j)}(t)=w(t)(U_i, U_j)$ for $(i,j)\in\Natural^{(2)}$. Notice that the IEA $X$ satisfies the McKean-Vlasov SDE given by 
\begin{align}\label{eqn:McKean_IEA_sigma}
    \begin{split}
        \diff X_{(i, j)}(t)&= -\beta D\Hcal(\E{W(t)})(U_i,U_j)\diff t + \diff L^{(0)}_{(i, j)}(t) - \diff L^{(1)}_{i, j}(t),\\
        W(t)(x,y)&= \mathrm{Law}(X_{(i, j)}(t)\,\vert\, (U_i,U_j)=(x,y)),\qquad (x,y)\in [0,1]^{(2)},
    \end{split}
    \qquad t\in\R_+,
\end{align}
where $\round{X,L^{(0)},L^{(1)}}$ solves the Skorokhod problem with respect to $[0,1]$. Note that given $\set{U_i}_{i\in \N}$ the IEA $X$ is deterministic. In particular, $W(t)(x, y)=\delta_{w(t)(x, y)}$. 


\begin{proposition}\label{lem:Sigma0convergence}
    Let $w_0\in \Wcal_{[0,1]}$ be a kernel. Let $w^{\sigma}$ and $w$ be defined in equation~\eqref{eqn:McKean_Graphon} and~\eqref{eqn:Sigma0_evolution}. Then, for every finite $t>0$,  $\sup_{s\in[0,t]}\norm{\cut}{w^{\sigma}(s)-w(s)}\leq 2C\sigma^2t\exp(Ct^2)$ for some universal constant $C>0$.
\end{proposition}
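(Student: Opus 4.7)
The plan is to couple the two McKean-Vlasov dynamics via the IEA representation~\eqref{eqn:McKean_IEA_sigma}: define infinite exchangeable arrays $X^\sigma$ (driven by Brownian motions $\{B_{(i,j)}\}_{(i,j)\in\Natural^{(2)}}$) and $X$ (with zero diffusion) on a common probability space, sharing the same i.i.d.\ $\mathrm{Uniform}[0,1]$ variables $\{U_i\}_{i\in\Natural}$ and the same initial condition $w_0$. Since both $w^\sigma$ and $w$ are deterministic curves, $u(t)(x,y) \coloneqq w^\sigma(t)(x,y) - w(t)(x,y)$ is a deterministic kernel and equals $\E{X^\sigma_{(1,2)}(t) - X_{(1,2)}(t) \given (U_1,U_2) = (x,y)}$. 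By the tower property, for any measurable $S,T\subseteq[0,1]$,
\begin{equation*}
    \int_{S\times T} u(t)(x,y)\,\diff x\,\diff y = \E{\bigl(X^\sigma_{(1,2)}(t) - X_{(1,2)}(t)\bigr)\indicator{}{U_1 \in S,\,U_2 \in T}},
\end{equation*}
so $\norm{\cut}{u(t)}$ is the supremum over $S, T$ of the absolute values of these cross-expectations.

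Next, use the Skorokhod representation to decompose $X^\sigma_{(1,2)}(t) - X_{(1,2)}(t)$ into a drift difference, the noise $\sigma B_{(1,2)}(t)$, and boundary local-time corrections. The crucial observation is that $B_{(1,2)}$ is independent of $(U_1,U_2)$ and mean zero, so its cross-expectation with $\indicator{}{U_1\in S,\,U_2\in T}$ vanishes; the noise enters $u(t)$ only through nonlinear effects of drift and reflection, both of order $\sigma^2$. Assumption~\ref{asmp:hamil} supplies $\norm{\infty}{D\hamil(w^\sigma(s)) - D\hamil(w(s))} \le \kappa_\blacksquare \norm{\blacksquare}{w^\sigma(s) - w(s)}$. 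A standard It\^o argument on $\bigl(X^\sigma_{(1,2)} - X_{(1,2)}\bigr)^2$ combined with the $4$-Lipschitz property of the Skorokhod map (Section~\ref{sec:Skorokhod}) and Gr\"onwall's inequality yields $\E{\abs{X^\sigma_{(1,2)}(s) - X_{(1,2)}(s)}^2} \le C\sigma^2 s\exp(Cs)$; by Jensen this controls $\norm{\blacksquare}{w^\sigma(s) - w(s)}$ at rate $\sigma\sqrt{s}\exp(Cs/2)$. The reflection local-time correction contributes $O(\sigma^2 t)$ in cut norm by the classical estimate that the expected boundary local time of a reflected diffusion scales linearly in $\sigma^2 t$.

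Assembling these estimates produces an integral inequality of the form $\norm{\cut}{u(t)} \le A\sigma^2 t + B\int_0^t \norm{\cut}{u(s)}\,\diff s$ for constants $A, B$ depending on $\beta$, $\kappa_\blacksquare$, and $L$; Gr\"onwall's inequality~\cite{gronwall1919note} then delivers the claimed bound. The main obstacle is rigorously establishing the $O(\sigma^2 t)$ rate for the reflection contribution uniformly in $(x,y)$: this is ultimately a consequence of the symmetry $B \stackrel{\mathrm{d}}{=} -B$, under which all odd-in-$\sigma$ contributions to the conditional mean $u(t)(x,y)$ vanish identically, ensuring that the local-time correction scales like $\sigma^2$ rather than $\sigma$. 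A secondary technical point is translating the $\norm{\blacksquare}{}$-Lipschitz bound on $D\hamil$ into an effective cut-norm estimate for the drift difference, which uses the square-root factor $\sqrt{V(s)} = O(\sigma\sqrt{s})$ from the $L^2$ coupling together with the extra time integration to recover the desired $\sigma^2 t$ scaling.
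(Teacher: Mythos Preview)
Your coupling setup and the observation that $\E{\sigma B_{(1,2)}(t)\given (U_1,U_2)}=0$ are correct, but the argument does not close at the claimed $O(\sigma^2)$ rate. The drift difference is controlled via Assumption~\ref{asmp:hamil} by $\kappa_\blacksquare\norm{\blacksquare}{w^\sigma(s)-w(s)}$, and there is no general inequality $\norm{\blacksquare}{w^\sigma-w}\le C\norm{\cut}{w^\sigma-w}$ that would let you feed this back into a Gr\"onwall inequality for $\norm{\cut}{u}$. Your proposed workaround---bound $\norm{\blacksquare}{w^\sigma(s)-w(s)}\le\sqrt{V(s)}=O(\sigma\sqrt{s})$ from the $L^2$ coupling and integrate in time---yields a drift contribution $\int_0^t O(\sigma\sqrt{s})\,\diff s=O(\sigma t^{3/2})$, which is first order in $\sigma$, not second. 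The symmetry $B\stackrel{\mathrm d}{=}-B$ only establishes that $w^\sigma$ depends on $\sigma$ through $\sigma^2$; it does not by itself give $O(\sigma^2)$ scaling (a function of $\sigma^2$ can still behave like $\abs{\sigma}=\sqrt{\sigma^2}$), and in particular it does not furnish the asserted $O(\sigma^2 t)$ bound for the local-time term $\E{\Delta L(t)\given U_1,U_2}$ without an additional regularity argument you have not supplied.

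The paper sidesteps both issues by working with the \emph{squared} generalized cut norm $\norm{\blacksquare}{W^\sigma-W}^2$ at the MVG level, never decomposing the conditional mean into drift, noise, and reflection. Pathwise, the $4$-Lipschitz Skorokhod map gives
\[
\abs{X^\sigma_{(i,j)}(t)-X_{(i,j)}(t)}^2\le Ct\int_0^t\abs{b(w^\sigma(s))(U_i,U_j)-b(w(s))(U_i,U_j)}^2\,\diff s+C\sigma^2\abs{B_{(i,j)}(t)}^2,
\]
so the local times are absorbed automatically. Averaging over $(i,j)\in[k]^2$, bounding the Brownian sum by Doob's maximal inequality (producing the $2C\sigma^2t$ term), invoking the drift Lipschitz bound, using the sampling lemma as in Theorem~\ref{thm:mv} to pass between $[k]$-samples and $\norm{\blacksquare}{W^\sigma-W}^2$, and applying Gr\"onwall yields $\sup_{s\le t}\norm{\blacksquare}{W^\sigma(s)-W(s)}^2\le 2C\sigma^2 t\,\eu^{C\beta^2\kappa_\blacksquare^2 t^2}$ after $k\to\infty$. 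The cut-norm statement then follows from the contraction $W\mapsto\E{W}$. Note that what this argument actually delivers is the bound $2C\sigma^2t\exp(Ct^2)$ for the \emph{squared} cut norm, i.e.\ $\norm{\cut}{w^\sigma-w}=O(\sigma\sqrt{t}\,)$; this suffices for the $\sigma\to 0$ convergence and matches the right-hand side of the proposition as a bound on $\norm{\cut}{w^\sigma-w}^2$.
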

\begin{proof}[Proof of Proposition~\ref{lem:Sigma0convergence}]
We first prove a slightly stronger result, that is, we show that $W^{\sigma}$ converges to $W$ in the MVG sense. The desired result therefore follows immediately. The proof closely resembles the proof of Theorem~\ref{thm:mv}. Let $(\Omega, \Fcal, \mathbb{P})$ be as above and $X^{\sigma}, X, W^{\sigma}, W$ be as above with the initial condition $W^{\sigma}(0)(x, y)=W(0)(x, y)=\delta_{w_0(x, y)}$. Using the Lipschitzness of Skorokhod map as in the proof of Theorem~\ref{thm:mv}, we observe that for any $(i, j)$ we have
\begin{align*}
    \abs{X^{\sigma}_{(i, j)}(t)-X_{(i, j)}(t)}^2 &\leq Ct\int_{0}^{t}\abs{b(w^{\sigma}(s))(U_i, U_j)-b(w(s))(U_i, U_j)}^2+ C\sigma^2 \abs{B_{i, j}(t)}^2.
\end{align*}
Summing over $(i, j)\in [k]^{2}$ and diving by $\frac{1}{k^2}$ we obtain that for each $k\in \N$ we have 
\begin{align*}
    &\norm{2}{\Kcal\round{X^\sigma[k](t)}-\Kcal\round{X[k](t)}}^2 \leq I_k(t) + J_k(t),\\
    \begin{split}
        \text{where}\quad I_{k}(t)&= Ct\int_{0}^{t}\frac{1}{k^2}\sum_{(i, j)\in [k]^2}\abs{b(w^{\sigma}(s))(U_i, U_j)-b(w(s))(U_i, U_j)}^2\diff s,\\
        \text{and}\quad J_k(t)&\coloneqq C\sigma^2\frac{1}{k^2}\sum_{(i, j)\in [k]^{2}}\abs{B_{(i, j)}(t)}^2.
    \end{split}
\end{align*}
\sloppy By Doob's maximal inequality~\cite[page 14, Theorem 3.8.iv]{KS91} and Markov's inequality we get $\Prob{\sup_{s\in[0,t]}J_{k}(s)\geq 2C\sigma^2t}\leq \frac{C}{4k^2}$. 
Using our assumption on $b$, we conclude that (compare with~\eqref{eq:triangle_lipschitz}) 
\begin{align}
    \sup_{s\in \interval{0,t}}\norm{\blacksquare}{\Kcal\round{X^\sigma[k](s)} - \Kcal\round{X[k](s)}}^2&\le C\beta^2\kappa^2_\blacksquare t \int_0^t \norm{\blacksquare}{W(s) - W^{\sigma}(s)}^2\diff s+ 2C\sigma^2t,
\end{align}
with probability at least $1-\frac{C}{4k^2}$.
Note that compared to equation~\eqref{eq:triangle_lipschitz} in the proof of Theorem~\ref{thm:mv}, the above inequality is much simpler. The reason being that the drift function $b$ depends only on MVG and not on $X_{(i, j)}(t)$. Secondly, the our initial condition ensures that $X^{\sigma}_{(i, j)}(0)=X_{(i, j)}(0)$. At this step, we use the same argument as in the proof of Theorem~\ref{thm:mv} to replace $\norm{\blacksquare}{\Kcal\round{X^\sigma [k](s)}-\Kcal\round{X[k](s)}}^2$ with $\norm{\blacksquare}{W(s)-W^{\sigma}(s)}^2$ up to a small error $C_k=64k^{-1/4}$ with probability at least $1-\eu^{-c_k}$ where $c_{k}=\sqrt{k}/20$. Combining all this and using  Gr\"onwall's inequality~\cite{gronwall1919note} as in the proof of Theorem~\ref{thm:mv} we conclude that
    \begin{align*}
        \begin{split}
            \sup_{s\in \interval{0,t}} &D_2^2\round{\Kcal\round{X^\sigma[k](s)},\Kcal\round{X[k](s)}} + \sup_{s\in \interval{0,t}}\norm{\blacksquare}{ W^{\sigma}(s) - W(s) }^2 \\
            &\le (C_k+ 2C\sigma^2t)\eu^{C\beta^2\kappa^2_\blacksquare t^2}, \; \text{with probability at least $1-\eu^{-c_k}-\frac{C}{4k^2}$.}
        \end{split}
    \end{align*}
     Letting $k\to \infty$, we conclude that $\sup_{s\in \interval{0,t}}\norm{\blacksquare}{ W^{\sigma}(s) - W(s) }^2\leq 2C\sigma^2t\eu^{C\beta^2\kappa^2_\blacksquare t^2}$. The desired claim now follows from the fact that $W\to \E{W}$ is a contraction.  
\end{proof}


\begin{proposition}[Non-asymptotic high probability error bound]\label{prop:rate}
    Let $X_n^\sigma$ be a solution to equation~\eqref{eq:SDE_n} for $\Sigma_n \equiv \sigma$. Let the assumptions of Theorem~\ref{thm:mv} and Theorem~\ref{lem:Sigma0convergence} hold. If the initial condition is i.i.d., then
    \begin{align}
        \sup_{s\in \interval{0,t}}\norm{\blacksquare}{ \Kcal(X^\sigma_n(s)) - W(s) }^2 &\leq C_tn^{-1/56}\log^{3/2}n + (64n^{-1/14} + 2C\sigma^2 t)\eu^{C\beta^2\kappa^2_\blacksquare t^2},
    \end{align}
    with probability at least $1-5n^{-3/7} - t n^{-\frac{2}{7\kappa^2 t}}$. Here $W(s)(x,y) = \delta_{w(s)(x,y)}$ for a.e. $(x,y)\in[0,1]^{(2)}$, and $s\in\R_+$ following equation~\eqref{eqn:McKean_Graphon}; and $\kappa = 32\sqrt{6}\round{L^2+2\kappa_\blacksquare^2}^{1/2}$.
\end{proposition}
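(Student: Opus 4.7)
The plan is to combine the quantitative version of the convergence from Theorem~\ref{thm:mv} with the zero-noise comparison in Proposition~\ref{lem:Sigma0convergence}. By the triangle inequality,
\[
\norm{\blacksquare}{\Kcal(X^\sigma_n(s))-W(s)} \le \norm{\blacksquare}{\Kcal(X^\sigma_n(s))-W^\sigma(s)} + \norm{\blacksquare}{W^\sigma(s)-W(s)},
\]
where $W^\sigma$ is the MVG McKean--Vlasov solution of~\eqref{eqn:McKean_Metropolis} with diffusion coefficient $\sigma$. Proposition~\ref{lem:Sigma0convergence} controls the second term, producing the $(2C\sigma^2 t)\eu^{C\beta^2\kappa_\blacksquare^2 t^2}$ contribution. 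The first term is where all the work is: I would return to the proof of Theorem~\ref{thm:mv} and extract its quantitative estimate~\eqref{eq:semifinal_rate}, which on the good event $E_k(n)$ yields
\[
\sup_{s\in[0,t]}\norm{\blacksquare}{\Kcal(X_n^\sigma(s))-W^\sigma(s)}^2 \le 2\bigl(A_k+B_k(n)\bigr)\exp\bigl(192(L^2+2\kappa_\blacksquare^2)(\lambda_k+1)t\bigr),
\]
with $\Prob{E_k(n)} \ge 1 - 4\eu^{-\lambda_k/2} - k^2/n - \eu^{-\sqrt{k}/20}$, where $A_k, B_k(n)$ are as in~\eqref{eq:def_Ak_Bk}. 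The task then reduces to obtaining non-asymptotic high-probability bounds for $A_k$ and $B_k(n)$ and calibrating the free parameters $k$ and $\lambda_k$ polynomially in $n$.

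The bound on $B_k(n)$ is straightforward: because the initial condition is assumed i.i.d., the random average $\frac{96}{k^2}\sum_{(i,j)\in[k]^{(2)}}\abs{\widetilde X_{n,i,j}(0)-X_{i,j}(0)}^2$ has variance of order $k^{-2}$, so Chebyshev's inequality combined with the deterministic term $C_k = 65 k^{-1/4}$ gives $\Prob{B_k(n)\ge 66 k^{-1/4}}\le k^{-3/2}$. The bound on $A_k$ is the main technical obstacle. My plan is to use Lov\'asz's inequality $\norm{\blacksquare}{\widetilde A_k(s)}^4 \le \sup_{\psi\in\Lcal} t(C_4,\Gamma(\psi,\widetilde A_k(s)))$ from~\cite[Proposition~8.12]{lovasz2012large}; then, for each fixed $\psi\in\Lcal$, compute moments to show that $t(C_4,\Gamma(\psi,\widetilde A_k(s)))$ is sub-Gaussian with proxy $O(k^{-1/2})$, giving the pointwise tail $\Prob{t(C_4,\Gamma(\psi,\widetilde A_k(s)))\ge\sqrt{\delta\log(1/\delta)}}\le \eu^{-k\delta\log(1/\delta)/2}$.

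The hard step is upgrading this pointwise-in-$(\psi,s)$ bound to a uniform one. For the $\psi$-direction I would apply Lemma~\ref{lem:finitesufficiency}, which supplies an $\eps$-net of $\Lcal$ of cardinality at most $\eu^{32/\eps^2}$; taking $\eps=\tfrac12\sqrt{\delta\log(1/\delta)}$ and union-bounding over this net yields a supremum bound over $\Lcal$ with an additional factor $\exp(128/(\delta\log(1/\delta)))$. For the $s$-direction, I would run the equicontinuity argument from the proof of~\cite[Proposition~4.5]{HOPST22} to control modulus of continuity with high probability, then discretize $[0,t]$ on a $\delta$-grid, arriving at
\[
\Prob{\sup_{s\in[0,t]}A_k(s)^{1/2} \ge M_t(\delta\log(1/\delta))^{1/2}} \le k^{-2} + t\delta^{-1}\exp\!\bigl(128/(\delta\log(1/\delta))\bigr)\exp\!\bigl(-k\delta\log(1/\delta)/2\bigr).
\]

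Finally, I would calibrate parameters. Choosing $k = n^{2/7}$, $\lambda_k = \log k/(16\cdot 384\,t(L^2+2\kappa_\blacksquare^2))$ and $\delta = 64\sqrt{k^{-1}\log k}$, the estimate for $A_k$ becomes $M_t k^{-1/16}\log^{3/2} k = O(n^{-1/56}\log^{3/2} n)$, while $C_k = 65 n^{-1/14}$ matches the $64 n^{-1/14}$ term, $k^2/n = n^{-3/7}$, $k^{-3/2} = n^{-3/7}$, the term $\eu^{-\sqrt{k}/20}$ is superpolynomially small, and $4\eu^{-\lambda_k/2} \le 4 k^{-1/(\kappa^2 t)} = 4 n^{-2/(7\kappa^2 t)}$ with $\kappa = 32\sqrt{6}(L^2+2\kappa_\blacksquare^2)^{1/2}$. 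Plugging these into the Gr\"onwall-type bound above and adding the zero-noise contribution from Proposition~\ref{lem:Sigma0convergence} yields the stated inequality with probability at least $1 - 5n^{-3/7} - t n^{-2/(7\kappa^2 t)}$. The principal obstacle is establishing the uniform-in-$(s,\psi)$ sub-Gaussian tail for $\norm{\blacksquare}{\widetilde A_k(s)}$, since cut-norm-type functionals do not enjoy standard concentration; everything else is bookkeeping built on the estimates already contained in the proof of Theorem~\ref{thm:mv}.
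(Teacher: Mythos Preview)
Your proposal is correct and follows essentially the same route as the paper: extract the quantitative Gr\"onwall bound~\eqref{eq:semifinal_rate} from the proof of Theorem~\ref{thm:mv}, control $B_k(n)$ by Chebyshev under the i.i.d.\ assumption, control $A_k$ via the $C_4$-density bound from~\cite[Proposition~8.12]{lovasz2012large} together with sub-Gaussian moments, the $\psi$-net from Lemma~\ref{lem:finitesufficiency}, and the equicontinuity argument of~\cite[Proposition~4.5]{HOPST22}, then calibrate $k=n^{2/7}$, $\lambda_k$, $\delta$ exactly as you indicate and finish with Proposition~\ref{lem:Sigma0convergence}. The only cosmetic slip is writing $A_k(s)$ as if it were time-indexed when $A_k$ is already defined as a supremum over $s$, but this does not affect the argument.
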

\begin{proof}
    To get a non-asymptotic error rate, we need to control on $A_k$ and $B_k(n)$ in equation~\eqref{eq:semifinal_rate}. Observe that $B_k(n)$ depends on the initial condition and in general it can be arbitrarily slow. However, assuming that the initial condition is i.i.d., one can use Chebyshev's inequality to obtain $\Prob{B_k(n)\geq 66k^{-1/4}}\leq k^{-3/2}$. 

    On the other hand, combining the arguments in~\cite[Proposition 4.5]{HOPST22} and~\cite[Proposition 8.12]{lovasz2012large}, it can be shown that there exists a constant $M_{t}$ (depending only on $t$) such that for any $\delta>0$ we have $\Prob{A_k\geq M_t(\delta \log(1/\delta))^{1/4}}\leq k^{-2}+ t\delta^{-1}\eu^{\frac{128}{\delta\log(1/\delta)}} \eu^{-k\delta \log(1/\delta)/2}$. 
    

    To obtain above bound for $A_k$, we argue as follows. For each fixed $\psi\in \Lcal$, moment computation yields $t(C_4, \Gamma(\psi, \widetilde{A}_k(s)))$ is sub-gaussian with norm at most $\frac{1}{\sqrt{k}}$. In particular, for any $\delta>0$ we have $\Prob{t(C_4,\Gamma(\psi, \widetilde{A}_k(s))\geq \sqrt{\delta\log(1/\delta)}}\leq \eu^{-\frac{k\delta\log(1/\delta)}{2}}$. Note that the right side is independent of $\psi$. For any subset $F\subseteq \Lcal$ define $\Delta_{k, F}(s)\coloneqq \sup_{\psi\in F} \abs{t(C_4, \Gamma(\psi, \widetilde{A}_k(s)))}$. Fix $\epsilon>0$ and note that by Lemma~\ref{lem:finitesufficiency} there exists a finite set $F\subseteq \Lcal$ such that $\abs{F}\leq \eu^{\frac{32}{\epsilon^2}}$ and $\abs{\Delta_{k,\Lcal}(s)-\Delta_{k, F}(s)}\leq \epsilon$. Taking $\epsilon = \inv{2}\sqrt{\delta\log(1/\delta)}$, we get $\Prob{\Delta_{k, \Lcal}(s)\geq \sqrt{\delta \log(1/\delta)}}\leq \Prob{\Delta_{k, F}(s)\geq 2^{-1}\sqrt{\delta \log(1/\delta)}} \leq \eu^{\frac{128}{\delta\log(1/\delta)}} \eu^{-k\delta \log(1/\delta)/2}$. 

    Repeating the proof of~\cite[Proposition 4.5]{HOPST22}, we obtain that $(\Delta_{k, \Lcal})_{k\in\Natural}$ is equicontinuous with high probability. That is, for any fixed $\delta>0$ we have $\Prob{\sup_{\abs{s_1-s_2}\leq \delta, s_1, s_2\in [0, t]}\abs{\Delta_{k, \Lcal}(s_1)-\Delta_{k, \Lcal}(s_2)}\geq M_t \sqrt{\delta\log(1/\delta)}}\leq \frac{1}{k^2}$. It now follows from a $\delta$-net argument that $\Prob{\sup_{s\in [0, t]}\Delta_{k, \Lcal}(s)\geq M_t(\delta \log(1/\delta))^{1/2}}\leq k^{-2}+ t\delta^{-1}\eu^{\frac{128}{\delta\log(1/\delta)}} \eu^{-k\delta \log(1/\delta)/2}$. Following~\cite[Proposition 8.12]{lovasz2012large}, we have $\norm{\blacksquare}{\widetilde{A}_k(s)}^4\leq \Delta_{k, \Lcal}(s)$. This yields the desired conclusion.
    In particular, choosing $\delta=64\sqrt{k^{-1}\log k}$ and $\lambda_k=\log(k)/\round{16\cdot 384t(L^2+2\kappa_{\blacksquare}^2)}$, we have the left hand side of~\eqref{eq:semifinal_rate} bounded by $M_tk^{-1/16}\log^{3/2}k$ with probability at least $1-\frac{k^2}{n}-4k^{-\inv{\kappa^2t}} - 2t\eu^{-\sqrt{k}/20}- 2k^{-3/2}$, where $\kappa=32\sqrt{6}\round{L^2+2\kappa_{\blacksquare}^2}^{1/2}$.
    
    Since $t$ is fixed, we can choose $k$ to be a suitable function of $n$, say $k=n^{2/7}$. 
    The proof is now complete with the help of Proposition~\ref{lem:Sigma0convergence} and a triangle inequality.
\end{proof}
\begin{proposition}[Convergence McKean-Vlasov~\eqref{eqn:Sigma0_evolution} to equilibrium when $\sigma = 0$]\label{prop:convergence_rate_MH}
Let $\hamil$ be $\delta_\cut$-lower semicontinuous and satisfy Assumption~\ref{asmp:hamil} with $\lambda \geq 0$ and $L\in\lbrack\lambda,\infty)\cup\{\infty\} $. Let $w$ be the graphon valued curve as defined in~\eqref{eqn:Sigma0_evolution}. 
Let $w_{*}\in \Graphons_{[0,1]}$ be a minimizer of $\hamil$, then
\begin{align}
    \hamil(w(t))-\hamil(w_{*}) \leq \frac{\delta_2^2\round{w(0),w_{*}}}{2\beta t}, \qquad t\in\R_+.
\end{align}
Moreover, if $\lambda>0$ and $L<\infty$ and $w_{*}\in \Graphons_{[0,1]}$ is the unique minimizer of the strongly convex function $\hamil$, then for $t\in\R_+$,
\begin{align}
\begin{split}
    \delta_2\round{w(t),w_*} &\leq \eu^{-\beta\lambda t}\delta_2\round{w(0),w_*},\quad
    \hamil(w(t)) - \hamil(w_*) \leq \frac{L}{2}\eu^{-2\beta\lambda t}\delta_2^2\round{w(0),w_*}.
\end{split}
\end{align}
\end{proposition}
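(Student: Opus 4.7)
The plan is to analyze the evolution of $\enorm{w(t) - w_*^\varphi}^2$ along the flow for an arbitrary measure-preserving map $\varphi \in \Tcal$, deriving decay from the $\lambda$-convexity of $\hamil$, and then taking the infimum over $\varphi$ (using the $\Tcal$-equivariance of the flow) to pass to the invariant metric $\delta_2$. A minimizer $w_* \in \Graphons_{[0,1]}$ exists since $(\Graphons_{[0,1]}, \delta_\cut)$ is compact and $\hamil$ is $\delta_\cut$-lower semicontinuous. Differentiating along the flow yields
\[
    \frac{d}{dt} \tfrac{1}{2} \enorm{w(t) - w_*^\varphi}^2 = -\beta \inner{D\hamil(w(t)) \indicator{G_{w(t)}}{}, w(t) - w_*^\varphi}.
\]
The structure of $G_{w(t)}$, which encodes the reflecting boundary at $\{0,1\}$, together with $w_*^\varphi \in [0,1]$ forces, via a case-by-case sign check at the two boundary cases $w(t) \in \{0,1\}$, that the integrand $D\hamil(w(t))(x,y) \cdot (w(t)(x,y) - w_*^\varphi(x,y))$ is non-positive on $G_{w(t)}^c$. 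Hence the $G^c$-piece contributes non-positively to the unrestricted inner product $\inner{D\hamil(w(t)), w(t) - w_*^\varphi}$, so dropping this piece enlarges it, and multiplying by $-\beta$ reverses the inequality to give
\[
    \frac{d}{dt} \tfrac{1}{2} \enorm{w(t) - w_*^\varphi}^2 \leq -\beta \inner{D\hamil(w(t)), w(t) - w_*^\varphi}.
\]

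For Part 1, $\lambda$-convexity with $\lambda \geq 0$ gives $\inner{D\hamil(w(t)), w(t) - w_*^\varphi} \geq \hamil(w(t)) - \hamil(w_*)$, so integrating over $[0,t]$ produces $\int_0^t [\hamil(w(s)) - \hamil(w_*)]\, ds \leq \tfrac{1}{2\beta} \enorm{w(0) - w_*^\varphi}^2$. A parallel computation, $\frac{d}{dt} \hamil(w(t)) = -\beta \int D\hamil(w(t))^2 \indicator{G_{w(t)}}{} \le 0$, shows $\hamil \circ w$ is non-increasing, so $t[\hamil(w(t)) - \hamil(w_*)] \leq \int_0^t[\hamil(w(s)) - \hamil(w_*)]\,ds \leq \tfrac{1}{2\beta}\enorm{w(0)-w_*^\varphi}^2$. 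Taking the infimum over $\varphi \in \Tcal$ and using the standard identity $\inf_{\varphi}\enorm{w(0)-w_*^\varphi}^2 = \delta_2^2(w(0),w_*)$ yields the stated $O(1/t)$ bound.

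For Part 2, I would use the strong monotonicity $\inner{D\hamil(u) - D\hamil(v), u - v} \geq \lambda \enorm{u-v}^2$, an immediate consequence of $\lambda$-convexity. Decomposing $D\hamil(w(t)) = [D\hamil(w(t)) - D\hamil(w_*^\varphi)] + D\hamil(w_*^\varphi)$ in the key inequality, the first piece contributes at most $-\beta \lambda \enorm{w(t) - w_*^\varphi}^2$ by strong monotonicity, while the constrained first-order optimality condition $\inner{D\hamil(w_*^\varphi), w - w_*^\varphi} \geq 0$ for all $w \in \Wcal_{[0,1]}$ controls the second. This yields $\frac{d}{dt} \enorm{w(t) - w_*^\varphi}^2 \leq -2\beta\lambda \enorm{w(t) - w_*^\varphi}^2$; Gr\"onwall's inequality and the infimum over $\varphi$ produce $\delta_2(w(t), w_*) \leq \eu^{-\beta\lambda t}\, \delta_2(w(0), w_*)$. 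Combining with the $L$-smoothness upper bound $\hamil(w(t)) - \hamil(w_*) \leq \tfrac{L}{2} \enorm{w(t) - w_*^\psi}^2$ from Assumption~\ref{asmp:hamil} and infimizing over $\psi$ gives the $\hamil$-bound. The principal subtlety throughout is properly handling the indicator $\indicator{G_{w(s)}}{}$, which is resolved by the sign analysis on $G^c_{w(s)}$; the remainder is a standard adaptation of classical gradient-flow theory lifted to graphons via equivariance and the passage to the infimum over $\Tcal$.
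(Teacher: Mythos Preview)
Your approach is correct in spirit and substantially more self-contained than the paper's proof, which simply notes that $\beta D\hamil$ is the Fr\'echet-like derivative of $\beta\hamil$ and then defers entirely to \cite[Remark~4.16]{Oh2023} and \cite[Remark~4.0.5(d), Corollary~4.0.6]{ambrosio2005gradient}. You are essentially unpacking those references by hand: the EVI-type inequality underlying the Ambrosio--Gigli--Savar\'e machinery is precisely your differential inequality $\tfrac{d}{dt}\tfrac{1}{2}\enorm{w(t)-w_*^\varphi}^2 \le -\beta\inner{D\hamil(w(t)),w(t)-w_*^\varphi}$, and your pointwise sign check on $G_{w(t)}^c$ is exactly how one shows that the projected (reflected) flow still satisfies it. Your route yields a transparent, elementary argument that also makes explicit why the boundary does not spoil the decay; the paper's route buys brevity at the cost of importing the full metric gradient-flow framework.

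One genuine gap: in Part~2 you invoke ``the $L$-smoothness upper bound $\hamil(w(t))-\hamil(w_*)\le\tfrac{L}{2}\enorm{w(t)-w_*^\psi}^2$ from Assumption~\ref{asmp:hamil},'' but that assumption only yields
\[
\hamil(w(t))-\hamil(w_*)\ \le\ \inner{D\hamil(w_*^\psi),\,w(t)-w_*^\psi}\;+\;\tfrac{L}{2}\enorm{w(t)-w_*^\psi}^2,
\]
and the constrained first-order optimality condition you used earlier makes the cross term \emph{non-negative}, not non-positive, so it cannot simply be dropped (take, e.g., $\hamil(x)=(x-2)^2$ on $[0,1]$). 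A clean fix is to assume the minimizer is interior so that $D\hamil(w_*)=0$; alternatively, exponential decay of $\hamil(w(t))-\hamil(w_*)$ (with a slightly worse constant than $L/2$) follows by combining your $\delta_2$-bound with the Part~1 estimate applied on a shifted interval $[t-c,t]$.
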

\begin{proof}
    Notice that $\beta D\hamil$ is the Fr\'echet-like derivative evaluation map of $\beta\hamil$. The proof immediately follows from~\cite[Remark 4.16]{Oh2023} following~\cite[Remark 4.0.5, part (d)]{ambrosio2005gradient},~\cite[Corollaray 4.0.6]{ambrosio2005gradient} and Assumption~\ref{asmp:hamil}.
\end{proof}


The notion of (geodesic) convexity of functions on graphons can be found in~\cite[Definition 2.15]{Oh2023}. When $\hamil$ is a linear combinations of homomorphism densities, it is only semiconvex. However, one may regularize $\hamil$ by adding a large enough multiple of scalar entropy to make it strictly convex~\cite[Section 5.1.1, Section 5.1.3]{Oh2023} and satisfy the conditions for exponential convergence in Proposition~\ref{prop:convergence_rate_MH}.

\section{Remaining Proofs}\label{sec:Proofs}
\subsection{Topology and metric on MVG}\label{sec:Proof_metric_mvg}

\begin{proof}[Proof of Proposition~\ref{prop:d_D_are_metric}]
    We first show that $\W_\blacksquare$ and $\Delta_\blacksquare$ are equal. Let $U, V$ be measure valued graphons and let $\varphi$ be some bounded Lipschitz function. Using the definition of the cut norm and using Fubini's theorem,
    \begin{align*}
          \norm{\cut}{\Gamma(\psi, U)-\Gamma(\psi, V)}&=\sup_{S, T}\abs{\int_{S\times T}(\Gamma(\psi, U)-\Gamma(\psi, V))(x, y)\,\diff x\diff y}\\
        &= \sup_{S, T}\abs{\int \psi(\zeta) (\Lcal_{S\times T}U)(\diff \zeta)-\int \psi(\zeta)(\Lcal_{S\times T}V)(\diff \zeta)},
    \end{align*}
    where $(\Lcal_{S\times T}W)\coloneqq \int_{S\times T}W(x, y)\diff x\diff y$ for any $W\in\Wfrak$, and Borel measurable sets $S,T\subseteq [0,1]$. Taking supremum over all Lipschitz functions $\psi$ on $[-1,1]$ with $\norm{\rm Lip}{\psi}\leq 1$ on both side and interchanging the order of two suprema in the right, we obtain 
    $\sup_{\psi} \norm{\cut}{\Gamma(\psi, U)-\Gamma(\psi, V)} = \sup_{S, T}\mathbb{W}_1(\Lcal_{S\times T}U, \Lcal_{S\times T}V)$.
    Since $U, V$ were arbitrary, the desired result now follows by replacing $V$ with $V^{\varphi}$ and taking infimum over all $\varphi\in \Tcal$. It follows that $\W_\blacksquare$ and $\Delta_\blacksquare$ are equal. The fact that $\W_\blacksquare$ is a metric on $\mvGraphons$ follows by mimicking the standard proof of cut-metric being a metric on graphons (see~\cite{lovasz2006limits}). We briefly outline the idea of the proof. Note that $\W_\blacksquare$ and $\Delta_\blacksquare$ do not satisfy positivity on $\Wfrak$, that is, $\W_\blacksquare(U, V)$ can be $0$ even though $U\neq V$. It suffices to show that $\W_{\blacksquare}(U, V)=0$ if and only if $U\cong V$ in $\mvGraphons$, that is, $t\ped{d}(F, U)=t\ped{d}(F, V)$ for every decorated graph $F$. This follows from Theorem~\ref{thm:equivalence_of_cut}. 
\end{proof}

\begin{lemma}\label{lem:Hom_density_is_point_separating}
    Let $\Dcal\subseteq \Ccal$ be a subset that is closed under finite products. Suppose that the linear span $A(\Dcal)$ generated by $\Dcal$ is dense in $\Ccal$ in the sup norm.  Let $\round{W_n}_{n\in\Natural}\in \Wfrak$ and let $W\in\Wfrak$.
    Then, the following are equivalent.
    \begin{enumerate}
        \item  $\lim_{n\to \infty}t\ped{d}(F, W_n)=t\ped{d}(F, W)$ for every decorated graph $F$.
        \item $\lim_{n\to \infty}t\ped{d}(F, W_n)=t\ped{d}(F, W)$ for every $\Dcal$-decorated graph $F$.
    \end{enumerate}
\end{lemma}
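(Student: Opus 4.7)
The direction \ref{item:mvg_convg} $\Rightarrow$ \ref{item:hom_convg} is immediate since every $\Dcal$-decorated graph is a $\Ccal$-decorated graph by $\Dcal\subseteq\Ccal$, so I would focus on the reverse implication. My plan is to exploit the fact that, with the skeleton $F$ fixed, the map $(f_e)_{e\in E(F)}\mapsto t\ped{d}\round{F,W}$ is multilinear in the decorations, together with the sup-norm density of $A(\Dcal)$ in $\Ccal$, to reduce the convergence statement for arbitrary decorated graphs to the hypothesis on $\Dcal$-decorated graphs. Since $W(x,y)$ is a probability measure on $I=[-1,1]$, the inner integral $\int_I f_e(\zeta)W(x,y)(\diff \zeta)$ is pointwise bounded by $\norm{\infty}{f_e}$, so $\abs{t\ped{d}(F,W)}\le \prod_{e\in E(F)}\norm{\infty}{f_e}$ holds uniformly in $W\in\Wfrak$. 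This uniform bound is the key quantitative input.

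Fix an arbitrary decorated graph $(F,(f_e)_{e\in E(F)})$ with $f_e\in\Ccal$. Given $\epsilon>0$, choose $g_e\in A(\Dcal)$ with $\norm{\infty}{f_e-g_e}\leq\epsilon$, possible by the density hypothesis. A one-edge-at-a-time telescoping together with the uniform bound above yields
\[
    \abs{t\ped{d}\round{F,(f_e)_e,W'} - t\ped{d}\round{F,(g_e)_e,W'}} \leq \abs{E(F)}\, M^{\abs{E(F)}-1}\, \epsilon,\qquad W'\in\Wfrak,
\]
where $M \coloneqq \max_e \norm{\infty}{f_e}+\epsilon$. Next, since $g_e\in A(\Dcal)$ is a finite linear combination $g_e=\sum_{j}c_{e,j}h_{e,j}$ with $h_{e,j}\in\Dcal$, expanding by multilinearity writes $t\ped{d}\round{F,(g_e)_e,W'}$ as a finite linear combination of terms $t\ped{d}\round{F,(h_{e,\alpha_e})_e,W'}$, each of which is a $\Dcal$-decorated homomorphism density. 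By the hypothesis \ref{item:hom_convg}, every such term for $W'=W_n$ converges to the corresponding one for $W'=W$, so $t\ped{d}\round{F,(g_e)_e,W_n}\to t\ped{d}\round{F,(g_e)_e,W}$ as $n\to\infty$.

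Combining the previous two steps via the triangle inequality gives
\[
    \limsup_{n\to\infty}\abs{t\ped{d}(F,W_n)-t\ped{d}(F,W)} \leq 2\abs{E(F)}\, M^{\abs{E(F)}-1}\,\epsilon,
\]
and letting $\epsilon\downarrow 0$ finishes the proof. There is no real obstacle: the only quantitative estimate needed is the uniform-in-$W$ bound on decorated densities by $\prod_e \norm{\infty}{f_e}$, which is immediate from $W(x,y)\in\Pcal(I)$. I note that the hypothesis that $\Dcal$ is closed under finite products is not actually used in this multilinearity-plus-approximation argument; it is convenient in typical applications (e.g.\ $\Dcal=\set{\zeta\mapsto \zeta^k}_{k\in\Integer_+}$) because it lets one verify the density of $A(\Dcal)$ in $\Ccal$ via the Stone--Weierstrass theorem.
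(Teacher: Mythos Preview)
Your proof is correct and follows essentially the same approach as the paper: approximate each $\Ccal$-decoration by an $A(\Dcal)$-decoration, bound the resulting error uniformly in $W$, and reduce the $A(\Dcal)$-decorated case to the $\Dcal$-decorated case by multilinearity. The only cosmetic difference is that the paper cites the Counting Lemma for decorated graphs~\cite[Lemma~10.26]{lovasz2006limits} to obtain the bound $\abs{t\ped{d}((F,g),U)-t\ped{d}((F,f),U)}\leq 4\abs{E(F)}C'\epsilon$, whereas you derive an equivalent bound directly via an edge-by-edge telescope; both yield the same $O(\abs{E(F)}\epsilon)$ estimate. Your observation that the hypothesis ``$\Dcal$ closed under finite products'' is not used in the argument is also accurate---the paper's proof does not invoke it either, and as you note it serves only to make the density hypothesis verifiable via Stone--Weierstrass. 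One small remark: your labels \texttt{item:mvg\_convg} and \texttt{item:hom\_convg} point to the items of Theorem~\ref{thm:equivalence_of_cut} rather than to items (1) and (2) of the present lemma, so you should replace them with explicit ``(1)'' and ``(2)''.
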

\begin{proof}
    Obviously $(1)$ implies $(2)$. To see the converse, first note that if $\lim_{n\to \infty}t\ped{d}(F, W_n)= t(F, W)$ for every $\Dcal$-decorated graph $F$ then $\lim_{n\to \infty}t\ped{d}(F, W_n)= t\ped{d}(F, W)$ for every $A(\Dcal)$-decorated graph $F$. Now let $(F, f)$ be a $\Ccal$-decorated graph and let $\epsilon>0$ be fixed. Then, there exists an $A(\Dcal)$-decoration $(F, g)$ of the skeleton $F$ such that $\max_{i,j\in E(F)}\norm{\infty}{f_{i,j}-g_{i,j}}\leq \epsilon$. Let $C>0$ be a finite constant such that $\max_{i,j}\norm{\infty}{f_{i,j}}\leq C$. It follows that $\max_{i,j}\norm{\infty}{g_{i,j}}\leq C'=(1+C)$. Using the Counting Lemma for decorated graphs~\cite[Lemma 10.26]{lovasz2006limits}, for any $U\in \mvGraphons$ we have $\abs{t\ped{d}((F, g), U) - t\ped{d}((F, f), U)}\leq 4|E(F)|C'\epsilon$. Thus 
    \begin{align*}
        &\abs{t\ped{d}((F, f), W_n) - t\ped{d}((F, f), W)}\\
        &\leq \abs{t\ped{d}((F, f), W_n) - t\ped{d}((F, g), W_n)}+\abs{t\ped{d}((F, g), W) - t\ped{d}((F, f), W)}\\
        &\qquad+ \abs{t\ped{d}((F, g), W_n) - t\ped{d}((F, g), W)}\\
        &\leq 2C'\epsilon + \abs{t\ped{d}((F, g), W_n) - t\ped{d}((F, g), W)}.
    \end{align*}
    Since $g$ is an $A(\Dcal)$-decoration and $\abs{t\ped{d}((F, g), W_n) - t\ped{d}((F, g), W)}\to 0$ as $n\to \infty$. It follows that $\lim_{n\to \infty}\abs{t\ped{d}((F, f), W_n) - t\ped{d}((F, f), W)}\leq 2C'\epsilon$. Taking $\epsilon\to 0$ completes the proof. 
\end{proof}

\begin{lemma}\label{lem:OurConvg_implies_LovaszConvg}
    \sloppy Let $\round{W_n}_{n\in\Natural}\in \Wfrak$ and let $W\in\Wfrak$. Then, $\round{W_n}_{n\in\Natural}\to W$ in $\mvGraphons$ if and only if $\round{t(F, W_n)}_{n\in\Natural}\to t(F, W)$ for every finite simple graph $F$.
\end{lemma}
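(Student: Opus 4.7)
The plan is to reduce both directions to the single identity
\[
    t\ped{d}((F,f), W) \;=\; \int_{I_F} \prod_{\{i,j\}\in E(F)} f_{i,j}(\zeta_{i,j})\, t(F,W)(\diff \zeta),
\]
valid for every $\Ccal$-decorated graph $(F,f)$ and every $W\in\Wfrak$. This identity is immediate from equation~\eqref{eq:decorated_homomorphism_density_W} (which writes $t\ped{d}((F,f),W)$ as an integral over $[0,1]^{V(F)}$ of a product of single-variable integrals against the measures $W(x_i,x_j)$) together with the definition~\eqref{eqn:Def_homdensity_W_alt} of the measure $t(F,W)$ on $I_F$, after invoking Fubini. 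It says: the decorated density is simply the integral of a product of edge-wise test functions against the hom-density measure.

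For the backward direction, assume $t(F,W_n)\to t(F,W)$ weakly for every finite simple graph $F$. Fix any $\Ccal$-decorated graph $(F,f)$. The function $\Phi(\zeta)\coloneqq \prod_{\{i,j\}\in E(F)} f_{i,j}(\zeta_{i,j})$ is continuous and bounded on the compact set $I_F$ since each $f_{i,j}\in\Ccal$. The key identity gives $t\ped{d}((F,f),W)=\inner{\Phi,t(F,W)}$, and weak convergence of probability measures on a compact space against a bounded continuous test function yields $t\ped{d}((F,f),W_n)\to t\ped{d}((F,f),W)$, which is convergence in $\mvGraphons$ by Definition~\ref{def:mvGraphons}.

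For the forward direction, assume $W_n\to W$ in $\mvGraphons$ and fix a finite simple graph $F$. Each $t(F,W_n)$ and $t(F,W)$ is a Borel probability measure on the compact set $I_F=[-1,1]^{E(F)}$, so weak convergence is equivalent to $\inner{\varphi,t(F,W_n)}\to\inner{\varphi,t(F,W)}$ for every $\varphi\in C(I_F)$. The monomials $\zeta\mapsto \prod_{\{i,j\}}\zeta_{i,j}^{k_{i,j}}$, indexed by nonnegative integer exponents, form a unital subalgebra of $C(I_F)$ that separates points, so the Stone--Weierstrass theorem makes their linear span dense in $C(I_F)$ with respect to $\norm{\infty}{\cdot}$. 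By the key identity, the integral of a monomial against $t(F,W)$ equals $t\ped{d}((F,f^{(k)}),W)$, where $f^{(k)}$ is the $\Ccal$-decoration assigning $\zeta\mapsto \zeta^{k_{i,j}}$ to edge $\{i,j\}$. Hence MVG convergence delivers the desired limit on every monomial, and therefore on every polynomial. A standard $3\epsilon$-argument, using that each $t(F,W_n)$ is a probability measure, then extends the convergence to arbitrary $\varphi\in C(I_F)$, completing the proof.

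The argument involves no substantial obstacle; the only verification required is the key identity, which is a routine Fubini computation. Note that this lemma can alternatively be deduced by combining Lemma~\ref{lem:Hom_density_is_point_separating} (applied with $\Dcal$ the family of power functions $\psi_k(\zeta)=\zeta^k$) with the same Stone--Weierstrass approximation, but the direct approach above makes the role of the hom-density measure $t(F,W)$ transparent.
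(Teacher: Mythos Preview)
Your proof is correct and follows essentially the same route as the paper: both hinge on the identity $t\ped{d}((F,f),W)=\inner{\varphi_F,t(F,W)}$ with $\varphi_F=\prod_{\{i,j\}}f_{i,j}$, use it directly for the backward direction, and invoke Stone--Weierstrass for the forward direction. The only cosmetic difference is that you single out monomial decorations as the separating subalgebra, whereas the paper uses the full family of tensor products of continuous functions; both choices work identically.
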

\begin{proof}
    \sloppy Let $(F, f)$ be a decorated graph. Define $\varphi_F\coloneqq \tensor_{\set{i,j}\in E(F)}f(\set{i,j})$. Hence, $t\ped{d}((F, f), W) = \inner{\varphi_F, t(F, W)}$,
    where $t(F, \cdot)$ is as in Definition~\ref{eqn:Def_homdensity_W}. It follows that if $t(F, W_n)\to t(F, W)$ weakly for a skeleton $F$, then $t\ped{d}(F, W_n)\to t\ped{d}(F, W)$ for any decoration $(F, f)$. Conversely, the linear span of $\set{\varphi_F\colon f \text{ is a decoration of }F}$ is dense in $C(I_{F})$ by the Stone-Weierstrass theorem. Thus, $t\ped{d}((F, f), W_n)\to t\ped{d}((F, f), W)$ implies that $\inner{\varphi, t(F, W_n)}\to \inner{\varphi, t(F, W)}$ for any $\varphi\in C(I_F)$.
\end{proof}
\begin{lemma}\label{lem:D_cut_implies_mvG}
    If $\lim\limits_{n\to \infty} \Delta_\blacksquare(W_n, W)= 0$ then $\lim\limits_{n\to \infty}W_n = W$ in $\mvGraphons$ (see Definition~\ref{def:mvGraphons}). 
\end{lemma}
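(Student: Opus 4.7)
The plan is to verify the definition of convergence in $\mvGraphons$ directly: $t\ped{d}(F, W_n) \to t\ped{d}(F, W)$ for every $\Ccal$-decorated graph $F$. Since $t\ped{d}(F, \slot{})$ is invariant under measure-preserving transformations of the underlying measure-valued kernel, the hypothesis $\Delta_\blacksquare(W_n, W)\to 0$ allows us to pass to aligned representatives $(W_n^{\varphi_n}, W^{\psi_n})$ realizing the infimum up to vanishing error and thereby assume, without loss of generality, that $\norm{\blacksquare}{W_n - W}\to 0$, where $W_n - W$ is interpreted as a signed-measure-valued kernel on which $\Gamma(\psi, \slot{})$ extends linearly.

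By Lemma~\ref{lem:Hom_density_is_point_separating} applied to the class $\Dcal$ of Lipschitz functions on $I=[-1,1]$, which is closed under pointwise products and whose linear span is dense in $\Ccal$ by the Stone--Weierstrass theorem, it suffices to verify convergence only for Lipschitz-decorated graphs. For any such $(F, f)$, swapping the order of integration in~\eqref{eq:decorated_homomorphism_density_W} yields
\begin{align*}
    t\ped{d}((F,f), W) = \int_{[0,1]^{V(F)}} \prod_{\{i,j\}\in E(F)} \Gamma(f_{\{i,j\}}, W)(x_i, x_j)\prod_{v\in V(F)}\diff x_v,
\end{align*}
which expresses the decorated homomorphism density as an ordinary multi-edge-weight homomorphism count against the uniformly bounded symmetric kernels $\Gamma(f_{\{i,j\}}, W) \in L^\infty\big([0,1]^{(2)}\big)$.

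The remaining step is a telescoping/hybrid argument, i.e., a multi-kernel version of the classical counting lemma~\cite[Lemma 10.24]{lovasz2012large}: for uniformly bounded symmetric measurable kernels $\{u_e\}_{e\in E(F)}, \{v_e\}_{e\in E(F)}$,
\begin{align*}
    \abs{\int\prod_e u_e(x_{i_e},x_{j_e})\diff \x - \int\prod_e v_e(x_{i_e},x_{j_e})\diff \x} \leq C_F \sum_{e\in E(F)} \cutnorm{u_e - v_e},
\end{align*}
where $C_F$ depends only on $F$ and the uniform $L^\infty$ bounds. Taking $u_e = \Gamma(f_e, W_n)$ and $v_e = \Gamma(f_e, W)$, and using linearity of $\Gamma$ in its second argument together with the rescaling bound $\cutnorm{\Gamma(f_e, W_n - W)} \leq \norm{\rm BL}{f_e}\cdot\norm{\blacksquare}{W_n - W}$, we obtain
\begin{align*}
    \abs{t\ped{d}((F,f), W_n) - t\ped{d}((F,f), W)} \leq C(F, f)\,\norm{\blacksquare}{W_n - W} \to 0.
\end{align*}

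The only mild obstacle is the bookkeeping around the alignment by measure-preserving transformations, which is handled transparently by the invariance of $t\ped{d}$ noted at the outset. The multi-kernel counting lemma itself is a routine telescoping from the single-kernel version and does not pose a real difficulty.
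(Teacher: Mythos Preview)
Your proof is correct and follows essentially the same route as the paper's: reduce to Lipschitz-decorated graphs via Lemma~\ref{lem:Hom_density_is_point_separating}, rewrite $t\ped{d}(F,W)$ as a product integral of the kernels $\Gamma(f_e,W)$, apply the multi-kernel counting lemma to bound the difference by $C\sum_e\cutnorm{\Gamma(f_e,W_n)-\Gamma(f_e,W)}$, and control each summand by $\norm{\rm BL}{f_e}\cdot\norm{\blacksquare}{W_n-W}$. The only cosmetic difference is that you front-load the alignment step (passing to near-optimal representatives) whereas the paper first derives the bound in terms of $\norm{\blacksquare}{W_n-W}$ and then takes the infimum at the end; both are equivalent, though note that in your phrasing the aligned target $W^{\psi_n}$ varies with $n$, so it is slightly cleaner to argue as the paper does and postpone the infimum.
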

\begin{proof}
    Assume that $\lim\limits_{n\to \infty} \Delta_\blacksquare(W_n, W)= 0$. We want to show that $\lim_{n\to \infty}t\ped{d}(F, W_n)=t\ped{d}(F, W)$ for every decorated graph $F$. Since the set of Lipschitz continuous functions is dense in $\Ccal$, by Lemma~\ref{lem:Hom_density_is_point_separating} it is enough to show that $\lim_{n\to \infty}t\ped{d}(F, W_n)=t\ped{d}(F, W)$ for every Lipschitz decorated graph $F$. 
    
    To this end, fix a Lipschitz decorated graph $F$. Let $L>0$ be such that $\max_{\set{i,j}\in E(F)}\norm{\rm BL}{f_{i,j}}\leq L$. Now observe that for any $W\in \mvGraphons$ we have
       $t\ped{d}(F, W) = \int_{[0, 1]^{V(F)}}\prod_{\{i,j\}\in E(F)} \Gamma(F_{i,j}, W)(x_i, x_j)\prod_{v\in V(F)}\diff x_{v}$.
    It follows from above and the Counting Lemma for decorated graphs~\cite[Lemma 10.24]{lovasz2006limits} that 
    \begin{align*}
       \abs{t\ped{d}(F, W_n)-t\ped{d}(F, W)}&\leq 4\sum_{\set{i,j}\in E(F)}\norm{\cut}{\Gamma(F_{i,j}, W_n)-\Gamma(F_{i,j}, W)}\\
       &\leq 4L\sum_{\set{i,j}\in E(F)} \norm{\blacksquare}{W_n - W}= 4L\abs{E(F)}\norm{\blacksquare}{W_n - W}.
    \end{align*}
\sloppy Replacing $W_n$ by $W_n^{\varphi_n}$ and $W$ by $W^{\varphi}$ for any $\varphi_n,\varphi\in \Tcal$ and taking infimum we obtain $\abs{t\ped{d}(F, W_n)-t\ped{d}(F, W)}\leq L|E(F)|\Delta_\blacksquare(W_n, W)$. Since $\Delta_\blacksquare(W_n, W)\to 0$ as $n\to \infty$, it follows that $t\ped{d}(F, W_n)\to t\ped{d}(F, W)$ as $n\to \infty$. 
\end{proof}

\begin{lemma}\label{lem:finitesufficiency}
    Let $\Lcal$ be the space of all Lipschitz function on $[-1, 1]$ with bounded Lipschitz norm at most $1$. For every $\epsilon>0$, there exists a finite set $\Fcal_{\epsilon}\subseteq \Lcal$ such that $\abs{\Delta_\blacksquare(U, V) - \Delta^{\Fcal_{\epsilon}}_{\blacksquare}(U, V)} \leq \epsilon$,
    for all $U, V\in \mvGraphons$,
    where $\Delta^{\Fcal}_{\blacksquare}(U, V)\coloneqq \inf_{\varphi_1, \varphi_2\in \Tcal}\sup_{\psi\in \Fcal}\norm{\cut}{\Gamma(\psi, U^{\varphi_1})-\Gamma(\psi, V^{\varphi_2})}$,
    for any subset $\Fcal\subseteq \Ccal$. Moreover, the set $F_{\epsilon}$ can be chosen so that $\abs{F_{\epsilon}} \leq 3\cdot 2^{16/\epsilon^2}$.
\end{lemma}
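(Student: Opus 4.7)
The plan is to take $\Fcal_\epsilon$ to be a finite $(\epsilon/2)$-cover of $\Lcal$ in the uniform norm, then transfer this cover to $\Delta_\blacksquare$ uniformly in $U,V$.

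The analytic core is a short stability estimate. Since $\Gamma$ is linear in its second argument and each $W_i^{\varphi_i}(x,y)$ is a probability measure, for every $W_1,W_2\in\Wfrak$, every $\varphi_1,\varphi_2\in\Tcal$, every $(x,y)\in[0,1]^{(2)}$, and every $\psi_1,\psi_2\in\Lcal$,
\[
    \abs{\Gamma(\psi_1-\psi_2,W_1^{\varphi_1})(x,y)-\Gamma(\psi_1-\psi_2,W_2^{\varphi_2})(x,y)}\leq 2\norm{\infty}{\psi_1-\psi_2}.
\]
Since $[0,1]^{(2)}$ has unit Lebesgue measure, this passes to the cut norm and the reverse triangle inequality then gives
\[
    \abs{\cutnorm{\Gamma(\psi_1,W_1^{\varphi_1}-W_2^{\varphi_2})}-\cutnorm{\Gamma(\psi_2,W_1^{\varphi_1}-W_2^{\varphi_2})}}\leq 2\norm{\infty}{\psi_1-\psi_2},
\]
with the right-hand side independent of $W_1,W_2,\varphi_1,\varphi_2$.

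Next I would construct $\Fcal_\epsilon$ by the standard discretization for Lipschitz classes on a compact interval: place $N+1$ evaluation points $z_0<\cdots<z_N$ in $[-1,1]$ equally spaced at distance $\BigO{\epsilon}$, quantize each value $\psi(z_i)$ to a regular $\BigO{\epsilon}$-grid in $[-1,1]$, and let $\widetilde\psi$ be the piecewise-linear interpolant of the quantized values. The $1$-Lipschitz bound forces consecutive quantized values to differ by only a constant number of grid steps, so a routine count bounds the number of admissible sequences by $\textrm{poly}(1/\epsilon)\cdot\textrm{const}^{N}$; tuning the spacings appropriately gives $\norm{\infty}{\psi-\widetilde\psi}\leq\epsilon/2$ with log-cardinality $\BigO{\epsilon^{-1}\log(1/\epsilon)}$. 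The explicit bound $\abs{\Fcal_\epsilon}\leq 3\cdot 2^{16/\epsilon^2}$ stated in the lemma is a comfortably loose estimate: for $\epsilon$ below an explicit threshold it follows from the preceding count, and for $\epsilon$ above that threshold the singleton $\set{0}$ is itself a valid $(\epsilon/2)$-net since $\norm{\infty}{\psi}\leq 1$ for every $\psi\in\Lcal$.

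With both ingredients in hand, the conclusion is immediate. For every $U,V\in\mvGraphons$ and every $\varphi_1,\varphi_2\in\Tcal$, applying the stability estimate to each $\psi\in\Lcal$ and its closest element of $\Fcal_\epsilon$ gives
\[
    0\leq\sup_{\psi\in\Lcal}\cutnorm{\Gamma(\psi,U^{\varphi_1}-V^{\varphi_2})}-\sup_{\psi\in\Fcal_\epsilon}\cutnorm{\Gamma(\psi,U^{\varphi_1}-V^{\varphi_2})}\leq\epsilon,
\]
the lower bound being trivial since $\Fcal_\epsilon\subseteq\Lcal$. Using $\abs{\inf_\alpha f(\alpha)-\inf_\alpha g(\alpha)}\leq\sup_\alpha\abs{f(\alpha)-g(\alpha)}$, the infimum over $\varphi_1,\varphi_2\in\Tcal$ preserves the $\epsilon$-gap and yields $\abs{\Delta_\blacksquare(U,V)-\Delta^{\Fcal_\epsilon}_\blacksquare(U,V)}\leq\epsilon$, as required. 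The argument is analytically routine; the only mildly fiddly step is the explicit cardinality bookkeeping, where $3\cdot 2^{16/\epsilon^2}$ is deliberately a loose bound tailored to its use in Proposition~\ref{prop:rate} rather than an asymptotically sharp covering number.
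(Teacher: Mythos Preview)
Your proposal is correct and follows essentially the same approach as the paper: both take an $\epsilon/2$-net of $\Lcal$ in the uniform norm via an explicit piecewise-linear discretization, use the pointwise bound $\abs{\Gamma(\psi-\psi_0,W)(x,y)}\leq\norm{\infty}{\psi-\psi_0}$ (the paper phrases this as $\cutnorm{\Gamma(\psi-\psi_0,U)}+\cutnorm{\Gamma(\psi-\psi_0,V)}<\epsilon$) to transfer the net to the cut norms, and then pass the uniform $\epsilon$-gap through the $\sup_\psi$ and $\inf_{\varphi_1,\varphi_2}$. Your treatment of the cardinality bound is in fact more careful than the paper's, correctly identifying $3\cdot 2^{16/\epsilon^2}$ as a deliberately loose bound dominated by the sharper $\exp\bigl(O(\epsilon^{-1}\log(1/\epsilon))\bigr)$ covering number.
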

\begin{proof}[Proof of Lemma~\ref{lem:finitesufficiency}]
    It is an immediate consequence of Arz\'ela-Ascoli theorem that $\Lcal$ is compact in $\Ccal$. Let $\epsilon>0$ be given. By the compactness of $\Lcal$, there exists a finite subset $\Fcal_{\epsilon}\subseteq \Lcal$ such that union of $\epsilon/2$ balls centered at $\psi\in\Fcal_{\epsilon}$ cover $\Lcal$. In other words, for every $\psi\in \Lcal$ there exists $\psi_0\in \Fcal_{\epsilon}$ such that $\norm{\infty}{\psi-\psi_0}< \epsilon/2$. For any $U, V\in\Wfrak$, by triangle inequality, we obtain
    \begin{align*}
         \abs{\norm{\cut}{\Gamma(\psi, U)-\Gamma(\psi, V)}-\norm{\cut}{\Gamma(\psi_0, U)-\Gamma(\psi_0, V)}}&\leq \norm{\cut}{\Gamma(\psi-\psi_0, U)}+ \norm{\cut}{\Gamma(\psi-\psi_0, V)},
    \end{align*}
    that is strictly bounded by $\epsilon$. It follows that
    \begin{equation}\label{eqn:InftoFinite}
        \abs{\sup_{\psi\in \Lcal}\norm{\cut}{\Gamma(\psi, U)-\Gamma(\psi, V)}-\sup_{\psi\in \Fcal_{\epsilon}}\norm{\cut}{\Gamma(\psi, U)-\Gamma(\psi, V)}}<\epsilon.
    \end{equation}
    Since the above inequality holds for every $U, V\in \Wfrak$, the desired conclusion follows by replacing $U$ and $V$ by $U^{\varphi_1}$ and $V^{\varphi_2}$ respectively and taking infimum over $\varphi_1, \varphi_2\in \Tcal$.
    
    For the second part of the claim, we will construct the finite set of bounded Lipschitz functions, denoted as $F_\epsilon$, as follows: We divide the domain $[-1,1]$ into $4/\eps$ contiguous intervals, each of length $\eps/4$. Given our interest in functions with a Lipschitz constant bounded by $1$, we also partition the range $[-1,1]$ into $4/\epsilon$ contiguous intervals of length $\eps/4$. Observe that any continuous function with bounded Lipschitz norm bounded by $1$, can be approximated to within $\eps$ in the supremum norm using piecewise linear and continuous functions whose local slopes are taken from the set $\set{-1,0,1}$ over the divided domain. Therefore, to define $F_\eps$, it suffices to consider the set of piecewise linear and continuous functions that have local slopes in the set $\set{-1,0,1}$ over the aforementioned partition. By our construction, the size of this set is at most $3\cdot 2^{16/\epsilon^2}$.
\end{proof}

\begin{proof}[Proof of Theorem~\ref{thm:equivalence_of_cut}]
    Equivalence of $(1)$ and $(2)$ follows from Lemma~\ref{lem:OurConvg_implies_LovaszConvg}. Lemma~\ref{lem:D_cut_implies_mvG} shows that $(3)$ (or equivalently $(4)$) implies $(1)$. It remains to show that $(1)$ implies $(3)$. Suppose $(W_n)_{n\in\Natural}\to W$ in $\mvGraphons$ as $n\to\infty$. We want to show that $\lim_{n\to \infty}\Delta_\blacksquare(W_n, W)=0$. 
    
    We will argue by contradiction. Suppose, for contradiction, that there exists some $\epsilon>0$ and some subsequence $(n_k)_{k=1}^{\infty}$ such that $\Delta_\blacksquare(W_{n_k}, W)\geq \epsilon$. By Lemma~\ref{lem:finitesufficiency} there exists a finite family of functions $\mathcal{F}\subseteq \Lcal$ such that $\Delta_\blacksquare(U, V)\leq \Delta^{\Fcal}_{\blacksquare}(U, V)+ \frac{\epsilon}{2}$, for all $U, V\in \mvGraphons$.
  Since $\Fcal$ is finite and $\round{W_{n_k}}_{k\in\Natural}\to W$ as $k\to \infty$ in $\mvGraphons$, it follows from~\cite[Lemma 3.2, Lemma 3.7]{lovasz2010decorated} that $\lim_{k\to \infty}\Delta^{\Fcal}_{\blacksquare}(W_{n_k}, W)=0$. This implies that $\limsup_{k\to \infty} \Delta_\blacksquare(W_{n_k}, W)\leq \epsilon/2$ which is a contradiction. 
\end{proof}

\begin{proof}[Proof of Lemma~\ref{lem:convergence_of_sample_weightedgraph}]
    Let $(F, f)$ be a decorated graph and let $\mathbb{G}(n, W)$ be as defined in Lemma~\ref{lem:convergence_of_sample_weightedgraph}. Recall that $t\ped{d}(F, \mathbb{G}(n, W)) = \frac{1}{n^k}\sum_{i_1, \ldots, i_k} \prod_{\{j,l\}\in E(F)} f_{j,l}(\zeta_{i_j,i_l})$, where $\zeta_{u,v}$s are independent and distributed as $W(U_{u}, U_{v})$ for all $(u,v)$. In particular, $\E{t\ped{d}(F, \mathbb{G}(n, W))}=t\ped{d}(F, W)$ for each $n\in \N$. It suffices to show that $t\ped{d}(F, \mathbb{G}(n, W))$ concentrates around its mean for all decorated graphs $(F, f)$. To this end, fix a decorated graph $(F, f)$ and set $d_n(F)\coloneqq \abs{t\ped{d}(F, \mathbb{G}(n, W))-\E{t\ped{d}(F, \mathbb{G}(n, W))}}$. Using a $4$-th moment bound, following the same argument as in~\cite[Equation 11.5]{lovasz2012large}, we obtain $\Prob{d_n(F)\geq \epsilon}\leq \frac{C}{\epsilon^2 n^2}$. Using Borel-Cantelli Lemma we conclude that $t\ped{d}(F, \mathbb{G}(n, W))\to t\ped{d}(F, W)$ almost surely. To conclude the proof, we observe that set of all finite simple graphs is countable and $\Ccal=C[-1, 1]$ is a separable space. We, therefore, can find a countable dense subset of decorated graphs for which almost sure convergence of homomorphism densities holds. The proof is complete using a standard approximation argument similar to~\cite[Theorem 3.4]{kovacs2014multigraph}.
\end{proof}

\begin{lemma}\label{lem:ComparisonMetric}
    $\Delta_\blacksquare \leq \Delta_2$.
\end{lemma}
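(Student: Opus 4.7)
The plan is to establish a pointwise-in-$(x,y)$ bound for the integrand defining $\Gamma(\psi,U)-\Gamma(\psi,V)$, then pass from the cut norm to the $L^{1}$ norm, and finally invoke Cauchy--Schwarz together with $\W_{1}\leq \W_{2}$ on the bounded set $I=[-1,1]$. Once this is done for fixed representatives $U,V\in\Wfrak$, the metric inequality follows by applying the estimate to $U=W_{1}^{\varphi_{1}}$ and $V=W_{2}^{\varphi_{2}}$ and taking infima over $\varphi_{1},\varphi_{2}\in\Tcal$.

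First I would fix $\psi\in\Lcal$ and $(x,y)\in[0,1]^{(2)}$, and observe that
\[
    \Gamma(\psi,U)(x,y)-\Gamma(\psi,V)(x,y)=\int_{-1}^{1}\psi(\zeta)\bigl(U(x,y)-V(x,y)\bigr)(\diff\zeta).
\]
Since $\psi$ has Lipschitz norm at most $1$ and $U(x,y),V(x,y)\in\Pcal(I)$ are probability measures with the same total mass, the Kantorovich--Rubinstein duality (in the bounded Lipschitz formulation recalled just before Definition~\ref{defn:Wass_cut}) gives
\[
    \abs{\Gamma(\psi,U)(x,y)-\Gamma(\psi,V)(x,y)}\leq \W_{1}\bklr{U(x,y),V(x,y)}\leq \W_{2}\bklr{U(x,y),V(x,y)},
\]
where the last step uses the standard inequality $\W_{1}\leq \W_{2}$.

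Next, I would dominate the cut norm by the $L^{1}$ norm and apply Cauchy--Schwarz:
\begin{align*}
    \cutnorm{\Gamma(\psi,U)-\Gamma(\psi,V)}
    &\leq \int_{[0,1]^{2}}\abs{\Gamma(\psi,U)(x,y)-\Gamma(\psi,V)(x,y)}\diff x\diff y\\
    &\leq \int_{[0,1]^{2}}\W_{2}\bklr{U(x,y),V(x,y)}\diff x\diff y\\
    &\leq \round{\int_{[0,1]^{2}}\W_{2}^{2}\bklr{U(x,y),V(x,y)}\diff x\diff y}^{1/2}=D_{2}(U,V).
\end{align*}
The right-hand side is independent of $\psi\in\Lcal$, so taking the supremum over $\psi$ yields $\norm{\blacksquare}{U-V}\leq D_{2}(U,V)$.

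Finally, I would apply this pointwise bound to $U=W_{1}^{\varphi_{1}}$ and $V=W_{2}^{\varphi_{2}}$ for arbitrary $\varphi_{1},\varphi_{2}\in\Tcal$, obtaining $\norm{\blacksquare}{W_{1}^{\varphi_{1}}-W_{2}^{\varphi_{2}}}\leq D_{2}(W_{1}^{\varphi_{1}},W_{2}^{\varphi_{2}})$, and take the infimum over $\varphi_{1},\varphi_{2}\in\Tcal$ on both sides to conclude $\Delta_{\blacksquare}(W_{1},W_{2})\leq \Delta_{2}(W_{1},W_{2})$. There is no substantive obstacle here: the result is a routine chain of inequalities, with the only mild subtlety being the choice of the $\W_{1}$ duality in its $1$-Lipschitz (rather than bounded-Lipschitz) form, which is permissible because the two kernels take values in probability measures of equal mass.
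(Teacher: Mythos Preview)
Your proof is correct and follows essentially the same route as the paper's: Kantorovich--Rubinstein duality pointwise in $(x,y)$, domination of the cut norm by the $L^{1}$ norm, the inequality $\W_{1}\leq\W_{2}$, Cauchy--Schwarz, and finally the infimum over measure-preserving maps. The only cosmetic difference is that the paper phrases the first step through the $\W_{\blacksquare}$ formulation (invoking Proposition~\ref{prop:d_D_are_metric}) rather than working directly with $\norm{\blacksquare}{\slot{}}$ as you do, which makes your version marginally more self-contained.
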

\begin{proof} 
    For any $\psi\in\Lcal$ and $W_1, W_2\in \Wfrak$ define
    \[
        V_\psi(x,y) = \int_{-1}^1\psi(\xi) W_1(x,y)(\diff \xi) - \int_{-1}^1\psi(\xi) W_2(x,y)(\diff \xi),
    \]
    for $(x,y)\in[0,1]^{(2)}$. For any $S, T\subseteq [0, 1]$, by the Kantorovich duality and Proposition~\ref{prop:d_D_are_metric}, we observe that
    \begin{align*}
        &\mathbb{W}_1\round{\int_{S\times T}W_1(x,y)\diff x\diff y, \int_{S\times T}W_2(x,y)\diff x\diff y} \nonumber\\
        &= \sup_{\psi\in \Lcal}\abs{\int_{S\times T}V_\psi(x,y)\diff x\diff y} \leq \sup_{\psi\in \Lcal}\int_{[0,1]^2}\abs{V_\psi(x,y)}\diff x\diff y \leq \int_{[0,1]^2}\sup_{\psi\in \Lcal}\abs{V_\psi(x,y)}\diff x\diff y\nonumber\\
        &= \int_{[0,1]^2}\mathbb{W}_1\round{W_1(x,y),W_2(x,y)}\diff x \diff y \leq \int_{[0, 1]^2}\mathbb{W}_2(W_1(x, y), W_2(x, y))\diff x\diff y\nonumber\\
        & \leq \round{\int_{[0,1]^2}\mathbb{W}_2^2\round{W_1(x,y), W_1(x,y)}\diff x \diff y}^{1/2},
    \end{align*}
    where the last inequality follows from the Cauchy-Schwarz inequality. 
The conclusion follows by replacing $W_1, W_2$ by $W_1^{\varphi_1}$ and $W_2^{\varphi_2}$ respectively, and taking infimum over $\varphi_1, \varphi_2\in \Tcal$.
\end{proof}

\subsection{Scaling limit of Metropolis}\label{sec:proof_MH}
\subsubsection{Properties of drift function}
\begin{lemma}\label{lemma:ExplicitDrift}
    Let $Y$ be a standard normal random variable on $\Rd{[r]^{(2)}}$.
    For any $v\in\Rd{[r]^{(2)}}$ and $t>0$ we have
    \[
        \Exp{Y}{Y\exp\round{-t\inner{v,Y}^{+}_{\rm F}}} = -2tv\exp\round{t^2 \normF{v}^2}\overline{\Phi}(\sqrt{2}t\normF{v}).
    \]
\end{lemma}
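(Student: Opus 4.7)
The strategy is a one-dimensional Gaussian reduction by projecting $Y$ onto the direction of $v$. Viewing both $Y$ and $v$ as symmetric $r\times r$ matrices with zero diagonal, the independent coordinates of $Y$ are $\set{Y_{ij}}_{i<j}$, i.i.d.\ standard normal. Setting $S \coloneqq \inner{v,Y}_F = 2\sum_{i<j}v_{ij}Y_{ij}$, I will use that $S\sim\Ncal(0,\sigma^2)$ with
\[
    \sigma^2 = 4\sum_{i<j}v_{ij}^2 = 2\normF{v}^2.
\]
Projecting the vector of upper-triangle coordinates of $Y$ onto that of $v$ in the standard Euclidean inner product on $\Rd{\binom{r}{2}}$ gives
\[
    Y_{ij} = \frac{S}{\normF{v}^2}\,v_{ij} + Y^\perp_{ij},\qquad i<j,
\]
where $Y^\perp$ is Gaussian, orthogonal to $v$, and hence independent of $S$. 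Since $\exp(-tS^+)$ depends only on $S$ and $\E{Y^\perp_{ij}}=0$, the orthogonal part vanishes in expectation, and by matrix symmetry the identity for the independent coordinates extends to the full symmetric matrix. Therefore
\[
    \Exp{Y}{Y\exp\round{-tS^+}} = \frac{v}{\normF{v}^2}\,\E{S\exp\round{-tS^+}}.
\]

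The remaining work is a one-dimensional Gaussian integral. I will split $\E{S\exp(-tS^+)}$ by the sign of $S$:
\[
    \E{S\exp\round{-tS^+}} = \frac{1}{\sigma\sqrt{2\pi}}\squarebrack{\int_{-\infty}^{0} s\,\eu^{-s^2/(2\sigma^2)}\diff s + \int_{0}^{\infty} s\,\eu^{-ts-s^2/(2\sigma^2)}\diff s}.
\]
The first integral evaluates directly to $-\sigma/\sqrt{2\pi}$. For the second, I would complete the square, $-ts - s^2/(2\sigma^2) = t^2\sigma^2/2 - (s+t\sigma^2)^2/(2\sigma^2)$, substitute $u=(s+t\sigma^2)/\sigma$, and evaluate the two resulting tail integrals to obtain $\sigma/\sqrt{2\pi} - t\sigma^2\eu^{t^2\sigma^2/2}\overline{\Phi}(t\sigma)$. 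Adding the two pieces gives
\[
    \E{S\exp\round{-tS^+}} = -t\sigma^2\,\eu^{t^2\sigma^2/2}\,\overline{\Phi}(t\sigma),
\]
and plugging in $\sigma^2 = 2\normF{v}^2$ yields the claimed identity.

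There is no real obstacle beyond careful bookkeeping. The one subtle point is that the Frobenius inner product of the symmetric matrices $v$ and $Y$ has variance $2\normF{v}^2$, not $\normF{v}^2$; this doubling is exactly what produces the $\eu^{t^2\normF{v}^2}$ (rather than $\eu^{t^2\normF{v}^2/2}$) and the $\overline{\Phi}(\sqrt{2}\,t\normF{v})$ in the target formula, so tracking the symmetry factor correctly is the one place where the computation might be tripped up.
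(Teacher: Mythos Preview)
Your proof is correct and follows essentially the same route as the paper: both reduce to a one-dimensional Gaussian via the projection of $Y$ onto $v$ (the paper phrases this as conditioning on $X=\inner{v,Y}_F$ and using $\E{Y\mid X}=\frac{v}{\normF{v}^2}X$, which is exactly your orthogonal decomposition), and both arrive at the same scalar identity $\E{S\exp(-tS^+)}=-t\sigma^2\eu^{t^2\sigma^2/2}\overline{\Phi}(t\sigma)$ with $\sigma^2=2\normF{v}^2$. The only cosmetic difference is that the paper quotes the standard-normal identity $\E{Z\exp(-\alpha Z^+)}=-\alpha\eu^{\alpha^2/2}\overline{\Phi}(\alpha)$ without derivation, whereas you compute it by completing the square.
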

\begin{proof}
Let $Y$ be as above. Let $\pi \colon y\mapsto \inner{v,y}_{\rm F}$ and let $X=\pi(Y)$.
Note that $X=\inner{v, Y}\sim \Ncal(0, 2\normF{v}^2)$. The factor of $2$ is due to symmetry. Observe that
\begin{align*}
    &\E{Y\exp\round{-t\inner{v,Y}^{+}_{\rm F}}}= \Exp{X}{\exp\round{-t X^{+}}\Exp{Y}{Y\given \inner{v, Y}=X }}\\
    &= \frac{v}{\normF{v}^2}\E{X\exp\round{-t X^{+}}}=\frac{ \sqrt{2} v}{\normF{v}}\E{Z\exp\round{-\sqrt{2}t\normF{v} Z^{+}}},
\end{align*}
where $Z\sim N(0, 1)$ is standard normal random variable. he proof follows by observing that $\E{Z\exp\round{-\alpha Z^{+}}}= -\alpha\exp\round{\frac{1}{2}\alpha^2}\overline{\Phi}(\alpha)$ and taking $\alpha=\sqrt{2}t\normF{v}$.
\end{proof}

\subsubsection{Martingale quadratic variation}
The proof of the following lemma follows from a standard argument using Donsker's invariance theorem and the Lipschitzness of Skorokhod map and is skipped.

\begin{lemma}[Time at boundary of reflected RW]\label{lem:BoundaryTime}
Let $\ell_n=\lceil r^{-4}\sigma^2\gamma_n n^4\rceil$. Fix  $x\in \set{i/n^2\given i=0, \ldots, n^2}$. Let $S$ denote the symmetric random walk with step size $\frac{1}{n^2}$ reflected at $\{0, 1\}$ starting at $x$.  Then, 
\[
    \lim_{n\to \infty} \frac{1}{\gamma_n n^4}\sum_{k=1}^{\ell_n}\indicator{\{0, 1\}}{S_k}=0,\quad \text{in probability.}
\]
\end{lemma}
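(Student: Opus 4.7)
The plan is to rescale diffusively, use Donsker's invariance theorem together with the $4$-Lipschitzness of the Skorokhod map on $[0,1]$ to identify the reflected walk with a reflected Brownian motion, and then control the boundary-visit count through the boundary local time of the limit at a vanishing time.

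First, set $T_n \coloneqq \ell_n/n^4 = r^{-4}\sigma^2\gamma_n + O(n^{-4})$, which tends to zero under~\eqref{eqn:gammagrowth}, and define the rescaled reflected process $Y_n(t)\coloneqq S_{\lfloor t n^4\rfloor}$ for $t\in [0,T]$ together with its unreflected counterpart $U_n(t) \coloneqq x + n^{-2}\sum_{j=1}^{\lfloor tn^4\rfloor}\xi_j$, where $\set{\xi_j}$ are i.i.d.\ $\pm 1$. Since the variance of a single step of $U_n$ is $n^{-4}$, Donsker's theorem yields $U_n \Rightarrow x^* + B$ on $D([0,T],\R)$ with the uniform metric for any fixed $T\in\R_+$, where $B$ is standard Brownian motion. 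By~\eqref{eq:skorokhod_lipschitz} the Skorokhod map $\mathrm{Sko}_{[0,1]}$ is $4$-Lipschitz, hence $Y_n = \mathrm{Sko}_{[0,1]}(U_n) \Rightarrow Y^* \coloneqq \mathrm{Sko}_{[0,1]}(x^* + B)$, a reflected Brownian motion on $[0,1]$. The same continuity propagates to the Skorokhod decomposition $Y_n(t) = U_n(t) + L_n^-(t) - L_n^+(t)$: the discrete pushup/pushdown processes $L_n^-, L_n^+$ converge jointly with $Y_n$ to the boundary local times $L^{*,-}, L^{*,+}$ of $Y^*$.

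Second, relate the boundary-visit count to the discrete local times. Each increment of $L_n^-$ has size $n^{-2}$ and is registered at exactly those indices $k$ with $S_k=0$ for which the next step attempts to exit $[0,1]$ from below; conditionally on $S_k=0$, this attempt occurs with probability $1/2$. Writing $N_n^0\coloneqq\abs{\set{k\le\ell_n : S_k=0}}$, a standard Azuma-type concentration for the conditionally-i.i.d.\ boundary coin-flips gives $L_n^-(T_n) = N_n^0/(2n^2) + O_P(\sqrt{N_n^0}/n^2)$, and analogously for $L_n^+(T_n)$ and $N_n^1\coloneqq\abs{\set{k\le\ell_n : S_k=1}}$. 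Hence, setting $N_n\coloneqq N_n^0 + N_n^1 = \sum_{k=1}^{\ell_n}\indicator{\{0,1\}}(S_k)$,
\[
    N_n = 2n^2\round{L_n^-(T_n) + L_n^+(T_n)} + o_P\round{n^2\sqrt{\gamma_n}}.
\]

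Third, combine the convergence of Step~1 with a Brownian local-time bound. By Brownian scaling (equivalently, the $1/2$-H\"older continuity of Brownian local time in time at $0$), $L^{*,-}(T_n) + L^{*,+}(T_n) = O_P(\sqrt{T_n}) = O_P(\sqrt{\gamma_n})$ as $T_n\to 0$. Putting this together,
\[
    \frac{N_n}{\gamma_n n^4} = O_P\round{\frac{n^2\sqrt{\gamma_n}}{\gamma_n n^4}} = O_P\round{\frac{1}{n^2\sqrt{\gamma_n}}} = o_P(1),
\]
since~\eqref{eqn:gammagrowth} forces $\gamma_n n^4\to\infty$. Because the deterministic bound $(n^2\sqrt{\gamma_n})^{-1}\to 0$, convergence in distribution to the constant $0$ upgrades to convergence in probability.

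The main obstacle is the bookkeeping in Step~2, namely identifying $L_n^\pm$ with the discrete Skorokhod local times precisely enough and tracking the factor of $1/2$ produced by the boundary coin-flip, so as to conclude $N_n \asymp n^2\round{L_n^-(T_n)+L_n^+(T_n)}$. A substantially cleaner alternative, which bypasses local-time arguments entirely, is the direct Markov-inequality route: the local central limit theorem for the simple symmetric random walk, combined with the method of images for the reflection at $\set{0,1}$, yields the uniform one-point bound $\Prob{S_k \in \set{0,1}} \le C/\sqrt{k}$ for all $1\le k\le \ell_n$, whence $\E{N_n}\le C\sum_{k=1}^{\ell_n}k^{-1/2}\le C'\sqrt{\ell_n}\le C''n^2\sqrt{\gamma_n}$ and the claim follows from Markov's inequality with the same rate $(n^2\sqrt{\gamma_n})^{-1}\to 0$.
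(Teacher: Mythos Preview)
Your main route---Donsker plus the Lipschitz continuity of the Skorokhod map---is exactly what the paper indicates; the paper in fact skips the details with that one-line description. Your Step~2, tying the boundary count $N_n$ to the discrete Skorokhod local times via the Bernoulli-$1/2$ coin flip at each boundary visit, is a clean way to execute the sketch.

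There is, however, a gap in Step~3. You establish weak convergence $L_n^\pm \Rightarrow L^{*,\pm}$ on $D([0,T])$ and then invoke Brownian scaling for the \emph{limit} process to get $L^{*,\pm}(T_n) = O_P(\sqrt{T_n})$. But the quantity you must control is the \emph{pre-limit} $L_n^\pm(T_n)$, evaluated at a shrinking time $T_n\to 0$. Weak convergence on a fixed window $[0,T]$ only yields $L_n^\pm(T_n)\to L^{*,\pm}(0)=0$ in probability; it does not transfer the rate $O_P(\sqrt{T_n})$ from the limit to the pre-limit. The fix is easy: since $N_n/(\gamma_n n^4) \approx 2 L_n^\pm(T_n)/(\gamma_n n^2)$ and $\gamma_n n^2\to\infty$ (as $s_n=\lceil\gamma_n^2 n^4\rceil\to\infty$), the weaker statement $L_n^\pm(T_n)=o_P(1)$ already suffices. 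So the conclusion stands, just with a softer rate than you claim.

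Your alternative via the local CLT and Markov's inequality is the cleaner of the two and is genuinely different from the paper's Donsker sketch: it bypasses invariance principles entirely, delivers the quantitative bound $\E{N_n}=O(\sqrt{\ell_n})=O(n^2\sqrt{\gamma_n})$ directly, and needs only $\gamma_n n^4\to\infty$ for the final Markov step. One point worth making explicit: the method-of-images bound $\Prob{S_k\in\{0,1\}}\le C/\sqrt{k}$ is uniform on $1\le k\le\ell_n$ precisely because $\ell_n/n^4\asymp\gamma_n\to 0$, so at most $O(1)$ images contribute; if $k$ were comparable to $n^4$ this would fail.
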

We now compute the quadratic variation of the martingale $M^{(n)}_r(t)$ defined in Section~\ref{sec:outline}.
\begin{lemma}[Martingale Quadratic Variation]\label{lem:Martingale_QV}
    \sloppy For $r\in\Natural$, $n\in\Natural$ and $t\in\R_+$, let $M^{(n)}_{r}(t)\coloneqq \sum_{\ell=0}^{t_{n, r}-1}\Delta M^{(n)}_{r, \ell}$ where $t_{n,r} = \floor{tr^4/\gamma_n}$ . Then, the quadratic variation of $M_n$ in the time interval $[0,t]$ converges to $t\sigma^2 I$ for all $t\in\R_+$. That is, the following convergence holds in probability:
    \[
        \lim_{n\to \infty}\sum_{\ell=0}^{t_{n, r}-1}\E{\round{\Delta M^{(n)}_{r, \ell, (i,j)}}\round{\Delta M^{(n)}_{r, \ell, (i',j')}}\given \Fcal_\ell} = t\sigma^2\indicator{}{i=i', j=j'},
    \]
    for all $(i,j), (i', j')\in[r]^{(2)}$.
\end{lemma}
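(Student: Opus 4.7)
The plan is to compute the conditional covariance coordinate-by-coordinate by decomposing one macro-step of the chain into its two random components—the proposal-and-accept/reject step and the subsequent relaxation step—and then use the tower of $\sigma$-algebras $\Fcal_\ell \subset \Gcal_\ell \coloneqq \sigma(\Fcal_\ell, p^{(n)}_{r,\ell+1})$. Write
\[
    \Delta q^{(n)}_{r,\ell} = \underbrace{\round{q^{(n)}_{r,\ell+1} - p^{(n)}_{r,\ell+1}}}_{\eqqcolon I_\ell} + \underbrace{\round{p^{(n)}_{r,\ell+1} - q^{(n)}_{r,\ell}}}_{\eqqcolon J_\ell},
\]
and apply the law of total covariance to obtain, for each pair $(i,j),(i',j')\in[r]^{(2)}$,
\[
    \E{\Delta M^{(n)}_{r,\ell,(i,j)}\Delta M^{(n)}_{r,\ell,(i',j')}\given \Fcal_\ell} = \E{\Cov{I_{\ell,(i,j)}}{I_{\ell,(i',j')}}\,\big|\,\Gcal_\ell \given \Fcal_\ell} + \Cov{m_{\ell,(i,j)}}{m_{\ell,(i',j')}}\,\big|\,\Fcal_\ell,
\]
where $m_{\ell,(i,j)} \coloneqq \E{\Delta q^{(n)}_{r,\ell,(i,j)}\given \Gcal_\ell} = J_{\ell,(i,j)} + \E{I_{\ell,(i,j)}\given \Gcal_\ell}$. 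Given $\Gcal_\ell$, the coordinates of $I_\ell$ are independent reflected symmetric random walks of step size $1/n^2$ run for $\ell_{n,r} = \ceil{r^{-4}\sigma^2\gamma_n n^4}$ steps, so the first term vanishes for $(i,j)\neq (i',j')$ and equals the conditional variance on the diagonal.

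For the diagonal of the first term, standard analysis of a reflected symmetric random walk on $\set{0, \tfrac{1}{n^2},\ldots, 1}$ shows that if the walk does not touch the boundary $\set{0,1\}$ its variance after $\ell_{n,r}$ steps is exactly $\ell_{n,r}/n^4 = r^{-4}\sigma^2\gamma_n(1+o(1))$. The correction due to boundary hits is controlled by the fraction of time spent at $\set{0,1\}$, which we handle via Lemma~\ref{lem:BoundaryTime}: the total number of macro-iterations $t_{n,r} = \floor{tr^4/\gamma_n}$ times the per-iteration correction yields, after summing,
\[
    \sum_{\ell=0}^{t_{n,r}-1}\Var{I_{\ell,(i,j)}\given \Gcal_\ell} = t\sigma^2 + \tfrac{1}{n^4}\sum_{\ell=0}^{t_{n,r}-1}\round{\text{boundary correction at step }\ell},
\]
and the correction term tends to $0$ in probability by Lemma~\ref{lem:BoundaryTime} applied along the Markov chain (using stationarity of the base-chain increments conditional on $p^{(n)}_{r,\ell+1}$).

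For the second (off-diagonal) covariance term, observe that $J_\ell = p^{(n)}_{r,\ell+1} - q^{(n)}_{r,\ell}$ is the increment from a symmetric random walk run for $s_n = \ceil{\gamma_n^2 n^4}$ base-chain steps followed by the accept/reject step, so coordinatewise $\abs{J_{\ell,(i,j)}}\leq C\gamma_n\sqrt{\log n}$ with high probability and $\E{J_{\ell,(i,j)}^2\given \Fcal_\ell} = O(\gamma_n^2)$. Similarly, $\abs{\E{I_{\ell,(i,j)}\given \Gcal_\ell}} = O(\ell_{n,r}/n^2) = O(r^{-4}\sigma^2\gamma_n)$ (nonzero only through the boundary reflection), which is also $O(\gamma_n)$. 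Thus $\E{m_{\ell,(i,j)}^2\given \Fcal_\ell} = O(\gamma_n^2)$, and by Cauchy--Schwarz the total contribution of this term summed over $\ell$ is at most of order $t_{n,r}\cdot \gamma_n^2 = O(tr^4\gamma_n)$, which tends to $0$ by the growth hypothesis~\eqref{eqn:gammagrowth} on $\gamma_n$.

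The main obstacle is step three—quantifying the effect of boundary reflections on the per-iteration variance and showing that the accumulated correction across $t_{n,r}$ iterations is $o(1)$ in probability. This is exactly where Lemma~\ref{lem:BoundaryTime} is indispensable: it asserts that, for a single macro-iteration, the fraction of base-chain relaxation time spent at the boundary vanishes as $n\to\infty$, and feeding this into the variance computation (coordinate by coordinate and then summing) yields the claim. Combining the three contributions gives the stated limit $t\sigma^2 \indicator{}{i=i',\,j=j'}$ in probability.
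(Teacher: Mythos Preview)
Your overall strategy—conditioning on the intermediate $\sigma$-algebra $\Gcal_\ell = \sigma(\Fcal_\ell, p^{(n)}_{r,\ell+1})$, using conditional independence of the relaxation increments across coordinates, and invoking Lemma~\ref{lem:BoundaryTime} for the boundary correction to the diagonal variance—is exactly the paper's approach, and your diagonal analysis matches the paper's computation $\E{I_{\ell,(i,j)}^2\given\Gcal_\ell} = \ell_{n,r}/n^4 - h^{(n)}_\ell$ with $\sum_\ell h^{(n)}_\ell \to 0$.

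The gap is in your treatment of the second covariance term. You write $\abs{\E{I_{\ell,(i,j)}\given \Gcal_\ell}} = O(\ell_{n,r}/n^2) = O(r^{-4}\sigma^2\gamma_n)$, but in fact $\ell_{n,r}/n^2 = r^{-4}\sigma^2\gamma_n n^2$, not $r^{-4}\sigma^2\gamma_n$: you have dropped a factor of $n^2$. Even if you repair this with the Jensen bound $\abs{\E{I\given\Gcal_\ell}}^2 \leq \E{I^2\given\Gcal_\ell} = O(\gamma_n)$, you only get $\E{m_{\ell,(i,j)}^2\given\Fcal_\ell} = O(\gamma_n)$ (the $J_\ell$ part is $O(\gamma_n^2)$ but the $\E{I\given\Gcal_\ell}$ part dominates), and after summing over $\ell$ the Cauchy--Schwarz bound gives $t_{n,r}\cdot O(\gamma_n) = O(tr^4)$, a constant rather than $o(1)$. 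So the argument as written does not close.

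The paper instead controls the off-diagonal contribution by writing $\E{I_{(i,j)}I_{(i',j')}\given\Gcal_\ell} = \E{I_{(i,j)}\given\Gcal_\ell}\,\E{I_{(i',j')}\given\Gcal_\ell}$ via conditional independence and then bounding this product pointwise through the boundary occupation: since the reflected walk has zero drift in the interior, $\abs{\E{I_{(i,j)}\given\Gcal_\ell}}$ is at most $n^{-2}$ times the number of boundary visits, and it is the \emph{summed} boundary term $\sum_\ell h^{(n)}_\ell$ that Lemma~\ref{lem:BoundaryTime} shows is $o(1)$ in probability. In other words, the off-diagonal requires a second application of the boundary-time lemma to the product of conditional means, not a blanket Cauchy--Schwarz estimate of order $\gamma_n$ per macro-step.
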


\begin{proof}
    We first notice that for each $k\in \N$ we have $\E{\norm{2}{\widetilde{q}^{(n)}_{r,k+1}-q^{(n)}_{r,k}}^2\given \Fcal_k} \leq r^2\gamma_n^2$.
    Let $\Gcal_k$ be the sigma algebra generated by $\Fcal_k \vee\set{ p^{(n)}_{r,k+1}}$. Recall that given $p^{(n)}_{r,k+1}$, the iterate $q^{(n)}_{r,k+1}$ is obtained by running $\ell_n$ steps of independent symmetric random walk with step size $\frac{1}{n^2}$ (with reflection at $\{0, 1\}$) starting at $p^{(n)}_{r,k+1}$. Fix $(i,j)\in[r]^{(2)}$. Let $S_k$ denote the symmetric random walk with step size $1/n^2$ run for $m$ steps starting at $p^{(n)}_{r,k+1,(i,j)}$. Now observe that
    \begin{align*}
       \E{\round{q^{(n)}_{r,k+1,(i,j)}-p^{(n)}_{r,k+1,(i,j)}}^2\given \Gcal_k} &= \frac{1}{n^4}\sum_{m=1}^{\ell_n} \indicator{(0, 1)}{S_{k, m}}+\frac{1}{2n^4}\sum_{m=1}^{\ell_n} \indicator{\{0, 1\}}{S_{k,m}}\\
       &=\frac{\ell_n}{n^4} - \frac{1}{2n^4}\sum_{m=1}^{\ell_n} \indicator{\{0, 1\}}{S_{k,m}}.
    \end{align*}
    Set $h^{(n)}_{k} = \frac{1}{2n^4}\sum_{m=1}^{\ell_n} \indicator{\{0, 1\}}{S_m}$. Note that $\lim_{n\to \infty}\sum_{m=0}^{t_{n, r}-1}\frac{\ell_n}{n^4} = t\sigma^2$. It follows that 
    \begin{align*}
        \lim_{n\to \infty} \abs{  \sum_{\ell=0}^{t_{n, r}-1}\E{\round{\Delta M^{(n)}_{k, \ell, (i,j)}}^2} - t\sigma^2} &\leq \lim_{n\to \infty}r^2\gamma_n^2t_{n, r}+\lim_{n\to \infty} \sum_{\ell=0}^{t_{n, r}-1} h^{(n)}_{k}. 
    \end{align*}
    It is clear that $r^2\gamma_n^2t_{n, r}\to 0$ as $n\to \infty$, and $\lim_{n\to \infty} \sum_{k=0}^{t_{n, r}-1} h^{(n)}_{k} = 0$ by Lemma~\ref{lem:BoundaryTime}. 

    For simplicity define $\widehat\Delta q^{(n)}_{r, k, (i, j)}\coloneqq q^{(n)}_{r, k+1, (i, j)}-p^{(n)}_{r, k+1, (i, j)}$. If $\{i, j\}\neq \{i', j'\}$ then  $\widehat\Delta q^{(n)}_{r, k, (i, j)}$ and $\widehat\Delta q^{(n)}_{r, k, (i', j')}$ are independent given $\Gcal_{k}$. In particular, 
    \begin{align*}
        \E{\widehat\Delta q^{(n)}_{r, k, (i, j)}\widehat\Delta q^{(n)}_{r, k, (i', j')}\given \Gcal_{k}}&= \E{\widehat\Delta q^{(n)}_{r, k, (i, j)}\given \Gcal_{k}}\E{\widehat\Delta q^{(n)}_{r, k, (i', j')}\given \Gcal_{k}}\\
        &\leq \frac{1}{n^4}\sum_{m=1}^{\ell_n} \indicator{\{0, 1\}}{S_{k,m}},
    \end{align*}
    where $S_{k, m}$ is as above. Using Lemma~\ref{lem:BoundaryTime} we conclude that 
     \begin{align*}
        \lim_{n\to \infty} \abs{  \sum_{\ell=0}^{t_{n, r}-1}\E{\Delta M^{(n)}_{k, \ell, (i,j)}\Delta M^{(n)}_{k, \ell, (i',j')}}} &\leq \lim_{n\to \infty} \sum_{\ell=0}^{t_{n, r}-1} h^{(n)}_{k}=0. 
    \end{align*}
    This completes the proof.
\end{proof}

\subsubsection{Away from boundary}
In the following, we denote by $S = \round{S_k}_{k\in\Integer_+}$ a standard simple symmetric random walk. Recall the KMT embedding theorem~\cite{komlos1975approximation} which states that one can couple $S$ with some Brownian motion $B$ such that 
\[
    \Prob{\max_{0\leq k\leq T}\frac{\abs{S_k-B(k)}}{n^2}\geq C\frac{\log T+x}{n^2}}\leq \eu^{-x},
\]
for any $T\in\Natural$. 
Taking $T=s_n$ (and $\ell_n$ respectively), we obtain that for $n$ sufficiently large we have 
\begin{align*}
    \Prob{\max_{0\leq k\leq s_n}\frac{\abs{S_k-B(k)}}{n^2}\geq \frac{C\log n}{n^2}}&\leq
    \Prob{\max_{0\leq k\leq \ell_n}\frac{\abs{S_k-B(k)}}{n^2}\geq \frac{C\log n}{n^2}}\leq \frac{1}{n^{4}}.
\end{align*}
Further observe that for a fixed $\delta>0$, we have that 
\begin{align*}
    \Prob{\max_{0\leq t\leq s_n/n^4} \abs{B(t)}\geq \delta} &\leq 
    \Prob{\max_{0\leq t\leq \ell_n/n^4} \abs{B(t)}\geq \delta} \leq 2\overline{\Phi}\round{\frac{\delta}{r^{-2}\sigma\sqrt{\gamma_n}}}.
\end{align*}
We combine these observations to obtain the following lemma.
\begin{lemma}\label{lem:Hitting}
Let $\widetilde{S}_k = \frac{1}{n^2}S_{k}$ for every $k\in\Integer_+$. Let $\epsilon>0$ be fixed. Then, for all $n\in\Natural$ sufficiently large, we have 
\begin{align*}
    \Prob{\max_{k\leq s_n}\abs{\widetilde{S}_k}\geq \epsilon/2} &\leq 
    \Prob{\max_{k\leq \ell_n}\abs{\widetilde{S}_k}\geq \epsilon/2} \leq \frac{1}{n^{4}} + 2\overline{\Phi}\round{\frac{\epsilon}{4r^{-2}\sigma\sqrt{\gamma_n}}}.
\end{align*}
\end{lemma}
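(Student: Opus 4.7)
The plan is to combine the two probabilistic estimates sketched in the paragraphs immediately preceding the lemma via a union bound, together with monotonicity of the maximum.

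First I would dispatch the initial inequality. Since $\gamma_n\to 0$ as $n\to \infty$ (by assumption~\eqref{eqn:gammagrowth}), for all $n$ large enough we have $\gamma_n\le r^{-4}\sigma^2$, so $s_n=\lceil\gamma_n^2 n^4\rceil\le \lceil r^{-4}\sigma^2\gamma_n n^4\rceil=\ell_n$. The first inequality is then immediate from monotonicity of running maxima. Thus the whole problem reduces to proving the second inequality.

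For the second inequality I would couple the simple symmetric random walk $S$ with a standard Brownian motion $B$ via the KMT embedding. This gives, for $n$ large,
\[
    \Prob{\max_{0\le k\le \ell_n}\abs{S_k-B(k)}\ge C\log n}\le \frac{1}{n^4},
\]
so on the complement of this bad event, $\max_{k\le \ell_n}\abs{\widetilde S_k}\le \max_{k\le \ell_n}\abs{B(k)}/n^2+C\log n/n^2$. Choosing $n$ large enough that $C\log n/n^2\le \epsilon/4$, the event $\{\max_{k\le \ell_n}|\widetilde S_k|\ge\epsilon/2\}$ is contained in the union of the KMT bad event and the event $\{\max_{k\le \ell_n}|B(k)|/n^2\ge \epsilon/4\}$.

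The remaining step is to estimate this second event using the continuous-time bound on the maximum of Brownian motion. Bounding the discrete maximum by the continuous one and using Brownian scaling, $\max_{k\le \ell_n}\abs{B(k)}/n^2\le \max_{0\le t\le \ell_n}\abs{B(t)}/n^2\stackrel{d}{=}\max_{0\le s\le \ell_n/n^4}\abs{B(s)}$. Since $\ell_n/n^4\le r^{-4}\sigma^2\gamma_n(1+o(1))$, the estimate recalled just before the lemma yields
\[
    \Prob{\max_{0\le s\le \ell_n/n^4}\abs{B(s)}\ge \epsilon/4}\le 2\overline\Phi\!\round{\frac{\epsilon}{4r^{-2}\sigma\sqrt{\gamma_n}}}.
\]
A union bound then gives exactly the desired $n^{-4}+2\overline\Phi(\epsilon/(4r^{-2}\sigma\sqrt{\gamma_n}))$ upper bound. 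The only subtlety is ensuring that absorbing the $(1+o(1))$ factor from $\ell_n/n^4$ and the $C\log n/n^2$ slack into the constants in the exponent does not change the form of the estimate; this is harmless for all sufficiently large $n$, which is exactly what is claimed. There is no real obstacle here, since both ingredients (KMT coupling, Brownian maximum bound) are already stated immediately above the lemma; the proof just assembles them.
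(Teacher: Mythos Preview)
Your proposal is correct and follows exactly the approach the paper takes: the paper states the KMT bound and the Brownian maximum bound in the paragraphs preceding the lemma and then simply writes ``We combine these observations to obtain the following lemma,'' which is precisely the union bound you have written out, including the same handling of the first inequality via $s_n\le \ell_n$ for large $n$ and the choice $\delta=\epsilon/4$ in the Brownian estimate.
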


\begin{lemma}\label{lem:DeltaTildereduction}
    Let $\epsilon>0$ fixed. Let $\ell\in\Integer_+$ be such that $q^{(n)}_{r, \ell}\in A_{\epsilon}$. Then, for $n$ sufficiently large, we have 
    \[
       \normF{ \E{\Delta q^{(n)}_{r, \ell}\given \Fcal_{\ell}}-\E{\widetilde{\Delta} q^{(n)}_{r, \ell}\given \Fcal_{\ell}}}^2\leq 2\round{\frac{r^2}{n^4}+2r^2\overline{\Phi}\round{\frac{\epsilon}{4r^{-2}\sigma\sqrt{\gamma_n}}}},
    \]
    with probability at least $1-\frac{r^2}{n^4}-2r^2\overline{\Phi}\round{\frac{\epsilon}{4r^{-2}\sigma\sqrt{\gamma_n}}}$.
\end{lemma}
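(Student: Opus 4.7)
Observe first that $\Delta q^{(n)}_{r,\ell} - \widetilde{\Delta} q^{(n)}_{r,\ell} = q^{(n)}_{r,\ell+1} - p^{(n)}_{r,\ell+1}$, so it suffices to bound $\normF{\E{q^{(n)}_{r,\ell+1} - p^{(n)}_{r,\ell+1}\given \Fcal_\ell}}^2$. Conditional on $p^{(n)}_{r,\ell+1}$, each coordinate $(i,j)\in[r]^{(2)}$ of $q^{(n)}_{r,\ell+1}$ is the position at time $\ell_{n,r}$ of a simple symmetric random walk $R^{(i,j)}$ with step $n^{-2}$, reflected at $\set{0,1}$, starting at $p^{(n)}_{r,\ell+1,(i,j)}$. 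My plan is a coordinatewise optional-stopping bound, combined with Lemma~\ref{lem:Hitting} applied to both base-chain phases to control the probability that the walk visits the boundary.

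Let $\Gcal_\ell$ denote the $\sigma$-algebra generated by $\Fcal_\ell$ together with $p^{(n)}_{r,\ell+1}$. For each $(i,j)\in[r]^{(2)}$, couple $R^{(i,j)}$ with its unreflected counterpart $S^{(i,j)}$ via identical $\pm n^{-2}$ increments, and set $\tau^{(i,j)} \coloneqq \inf\set{k\given R^{(i,j)}_k\in\set{0,1}}$; under this coupling $R^{(i,j)}_k = S^{(i,j)}_k$ for $k\leq \tau^{(i,j)}$. Since $S^{(i,j)}$ is a bounded $\Gcal_\ell$-martingale, optional stopping at the bounded time $\tau^{(i,j)}\wedge \ell_{n,r}$ gives $\E{S^{(i,j)}_{\tau^{(i,j)}\wedge \ell_{n,r}}\given \Gcal_\ell} = p^{(n)}_{r,\ell+1,(i,j)}$. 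Splitting $R^{(i,j)}_{\ell_{n,r}}$ on $\set{\tau^{(i,j)}>\ell_{n,r}}$ and its complement, and using $\abs{R^{(i,j)}_{\ell_{n,r}}-S^{(i,j)}_{\tau^{(i,j)}}}\leq 1$ together with $R^{(i,j)}_{\ell_{n,r}} = S^{(i,j)}_{\ell_{n,r}}$ on $\set{\tau^{(i,j)}>\ell_{n,r}}$, a short manipulation yields
\begin{align*}
    \abs{\E{R^{(i,j)}_{\ell_{n,r}} - p^{(n)}_{r,\ell+1,(i,j)}\given \Gcal_\ell}} \leq \Prob{\tau^{(i,j)}\leq \ell_{n,r}\given \Gcal_\ell}.
\end{align*}

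To control this hitting probability under $\Fcal_\ell$, introduce the event $E_1^{(i,j)}$ that during the first base-chain phase of $s_n$ steps the coupled unreflected walk displaces by at most $\epsilon/2$. The hypothesis $q^{(n)}_{r,\ell}\in A_\epsilon$ gives $q^{(n)}_{r,\ell,(i,j)}\in(\epsilon,1-\epsilon)$, so on $E_1^{(i,j)}$ the first-phase reflection is vacuous and $\widetilde{q}^{(n)}_{r,\ell+1,(i,j)}$---hence $p^{(n)}_{r,\ell+1,(i,j)}$---lies in $(\epsilon/2,1-\epsilon/2)$. On this event, $\set{\tau^{(i,j)}\leq \ell_{n,r}}$ forces the second-phase unreflected walk to displace by at least $\min(p^{(n)}_{r,\ell+1,(i,j)},1-p^{(n)}_{r,\ell+1,(i,j)})\geq \epsilon/2$ from its start; translating to the origin and invoking Lemma~\ref{lem:Hitting} bounds the conditional probability by $n^{-4}+2\overline{\Phi}\round{\epsilon/(4r^{-2}\sigma\sqrt{\gamma_n})}$. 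Applying Lemma~\ref{lem:Hitting} a second time bounds $\Prob{E_1^{(i,j),c}\given \Fcal_\ell}$ by the same quantity, and the tower property over $\Gcal_\ell\supseteq \Fcal_\ell$ produces
\begin{align*}
    \abs{\E{R^{(i,j)}_{\ell_{n,r}} - p^{(n)}_{r,\ell+1,(i,j)}\given \Fcal_\ell}} \leq 2\round{n^{-4}+2\overline{\Phi}\round{\frac{\epsilon}{4r^{-2}\sigma\sqrt{\gamma_n}}}}.
\end{align*}

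For $n$ large the right-hand side lies in $[0,1]$, so squaring only tightens the estimate; summing over all $(i,j)\in[r]^{(2)}$ (at most $r^2$ indices) and absorbing the factor $2$ yields exactly the claimed bound. The main technical delicacy is the optional-stopping step: $\Gcal_\ell$ must be arranged so that $p^{(n)}_{r,\ell+1,(i,j)}$ is its $\Gcal_\ell$-measurable initial value of the martingale $S^{(i,j)}$ and the increment coupling between reflected and unreflected walks is faithful up to $\tau^{(i,j)}$; once these are in place, controlling both base-chain phases by a single union-type application of Lemma~\ref{lem:Hitting} is routine.
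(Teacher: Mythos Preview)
Your approach is essentially the same as the paper's: both reduce to bounding $\normF{\E{q^{(n)}_{r,\ell+1}-p^{(n)}_{r,\ell+1}\mid\Fcal_\ell}}^2$, condition on $\Gcal_\ell=\Fcal_\ell\vee\sigma(p^{(n)}_{r,\ell+1})$, couple the reflected and unreflected second-phase walks until first boundary contact, and invoke Lemma~\ref{lem:Hitting} on each of the two base-chain phases. The only organizational difference is that the paper works on the event $\{\widetilde q^{(n)}_{r,\ell+1}\in A_{\epsilon/2}\}$ (whence the ``with probability at least'' clause) and splits $\E{\widehat\Delta q\mid\Gcal_\ell}$ directly into $\{T\le\ell_n\}$ and $\{T>\ell_n\}$, while you phrase the same split via optional stopping and then average over $\Gcal_\ell$ by the tower property. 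Your packaging in fact yields the bound deterministically on $\{q^{(n)}_{r,\ell}\in A_\epsilon\}$ rather than merely with the stated probability, which is a slight strengthening but not a different idea. One cosmetic point: ``bounded $\Gcal_\ell$-martingale'' is imprecise since $S^{(i,j)}$ is unbounded; what you use is that the stopping time $\tau^{(i,j)}\wedge\ell_{n,r}$ is bounded, which suffices for optional stopping.
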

\begin{proof}
    Let $\epsilon>0$, $r, \ell$ be fixed. Let $\widehat{\Delta}q^{(n)}_{r, \ell} \coloneqq q^{(n)}_{r, \ell+1}-\widetilde{q}^{(n)}_{r, \ell+1}$. Begin by observing that
    \[
        \normF{ \E{\Delta q^{(n)}_{r, \ell}\given \Fcal_{\ell}}-\E{\widetilde{\Delta} q^{(n)}_{r, \ell}\given \Fcal_{\ell}}}^2=\norm{2}{\E{\widehat{\Delta}q^{(n)}_{r, \ell}\given \Fcal_{\ell}}}^2.
    \]
    Let ${E}_{n, \ell}$ be the event that $\widetilde{q}^{(n)}_{r, \ell+1}\in A_{\epsilon/2}$. Using Lemma~\ref{lem:Hitting} and union bound we conclude that 
    \[
        \Prob{\widetilde{E}_{n, \ell}}\geq 1-\frac{r^2}{n^{4}}-2r^2\overline{\Phi}\round{\frac{\epsilon}{4r^{-2}\sigma\sqrt{\gamma_n}}}.
    \]
    Given $p^{(n)}_{r, \ell+1}$, we observe that $\widehat{\Delta}q^{(n)}_{r, \ell}$ has the same distribution as symmetric random walk with step-size $\frac{1}{n^2}$ (reflected at boundary $\set{0,1}$) run for $\ell_{n,r}$ steps. Let us denote this $j$-th step of this walk by $S_{k, j}$. Also define a simple random walk with step-size $\frac{1}{n^2}$ (without reflection) $\widetilde{S}_{k}$ starting with the same initial condition as $S_{k}$.
    Given $\widetilde{q}^{(n)}_{r, \ell+1}\in A_{\epsilon/2}$, we can couple the walk $S_{k}$ and $\widetilde{S}_k$ so that $S_{k, j}=\widetilde{S}_{k, j}$ for all $j\leq T$ where $T=\min\set{i \in\Integer_+ \given \abs{\widetilde{S}_{k, i}-\widetilde{S}_{k, 0}}\geq \epsilon/2}$.
    That is, we couple the two walks so that they are equal till they move at least $\epsilon/2$ distance from the starting position. Now notice that 
    \begin{align*}
        \E{\widehat{\Delta}q^{(n)}_{r, \ell}\given \Gcal_{\ell}}= \E{\widehat{\Delta}q^{(n)}_{r, \ell}\indicator{T\leq \ell_n}{}\given \Gcal_{\ell}}+\E{\widehat{\Delta}q^{(n)}_{r, \ell}\indicator{T>\ell_n}{}\given \Gcal_{\ell}}.
    \end{align*}
    using the bound $\widehat{\Delta}q^{(n)}_{r, \ell}\leq 1$ and Lemma~\ref{lem:Hitting} we have that
    \[
        \normF{\E{\widehat{\Delta}q^{(n)}_{r, \ell}\indicator{T\leq \ell_n}{}\given \Gcal_{\ell}}}^2\leq \frac{r^2}{n^4}+2r^2\overline{\Phi}\round{\frac{\epsilon}{4r^{-2}\sigma\sqrt{\gamma_n}}}.
    \]
    On the other hand, using the fact that $\E{\widetilde{S}_{k, \ell_n}\given \Gcal_\ell}=0$, we obtain
    \begin{align*}
        \normF{\E{\widehat{\Delta}q^{(n)}_{r, \ell}\indicator{T>\ell_n}{}\given \Gcal_{\ell}}}^2&=\normF{\E{\widetilde{S}_{k,\ell_n}\indicator{T>\ell_n}{}\given \Gcal_{\ell}}}^2\\
        &=\normF{\E{\widetilde{S}_{k, \ell_n}\indicator{T>\ell_n}{}\given \Gcal_{\ell}}-\E{\widetilde{S}_{k, \ell_n}\given \Gcal_{\ell}}}^2\\
        &=\normF{\E{\widetilde{S}_{k, \ell_n}\indicator{T\leq \ell_n}{}\given \Gcal_{\ell}}}^2\leq r^{-2}\sigma^2\gamma_n \round{\frac{r^2}{n^4}+2r^2\overline{\Phi}\round{\frac{\epsilon}{4r^{-2}\sigma\sqrt{\gamma_n}}}}.
    \end{align*}
    Thus, we conclude that for $n$ sufficiently large we have  
    \[
        \normF{\E{\widehat{\Delta}q^{(n)}_{r, \ell}\given \Fcal_{\ell}}}^2\leq 2\round{\frac{r^2}{n^4}+2r^2\overline{\Phi}\round{\frac{\epsilon}{4r^{-2}\sigma\sqrt{2\gamma_n}}}}.
    \]
    completing the proof.
\end{proof}

\begin{lemma}\label{lem:upper_bound_quadratic_error}
    Let $r\in\Natural$, for any  $k\in\Integer_+$, let $q^{(n)}_{r,k}$ and $\widetilde{q}^{(n)}_{r,k+1}$ be as defined in Step~\ref{step1} of our algorithm in Section~\ref{sec:relax_Metro}. Then, there exists a universal constant $c>0$ such that for all $n\in\Natural\setminus\squarebrack{\ceil{\eu^{cr/4}}}$, we have
    \[
        \enorm{K\round{\widetilde{q}^{(n)}_{r,k+1}} - K\round{q^{(n)}_{r,k}}}^2 \leq 4\gamma_n^2 + (16/c)\gamma_n^2\log n\eqqcolon e_n \leq (32/c)\gamma_n^2\log n,
    \]
    with probability at least $1-2n^{-4}$.
\end{lemma}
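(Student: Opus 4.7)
The plan is to condition on $q^{(n)}_{r,k}$ and exploit the fact that, in this conditional law, the displacements $\widetilde{q}^{(n)}_{r,k+1,(i,j)}-q^{(n)}_{r,k,(i,j)}$ for $(i,j)\in[r]^{(2)}$ are the positions of $\binom{r}{2}$ mutually independent reflected simple symmetric random walks on $\set{0,1/n^2,\ldots,1}$ with step size $1/n^2$, each run for $s_n=\ceil{\gamma_n^2 n^4}$ steps. For each such $(i,j)$, introduce the corresponding unconstrained simple symmetric random walk $(S_{i,j,m})_{m\in\Integer_+}$ with step size $1/n^2$ starting at the origin, so that the reflected walk is precisely $\Lambda_{[0,1]}\round{q^{(n)}_{r,k,(i,j)}+S_{i,j,\cdot}}$. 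The first move is to strip off the reflection by applying the $4$-Lipschitz Skorokhod estimate~\eqref{eq:skorokhod_lipschitz} against the constant path $q^{(n)}_{r,k,(i,j)}$, which gives
\[
    \absinline{\widetilde{q}^{(n)}_{r,k+1,(i,j)}-q^{(n)}_{r,k,(i,j)}}^2 \leq 16\sup_{0\leq m\leq s_n}S_{i,j,m}^2.
\]
Averaging in $(i,j)$ produces
\[
    \enorm{K\round{\widetilde{q}^{(n)}_{r,k+1}}-K\round{q^{(n)}_{r,k}}}^2 \leq \frac{16}{r^2}\sum_{(i,j)\in[r]^{(2)}}\sup_{m\leq s_n}S_{i,j,m}^2.
\]

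The second step is concentration. By the reflection principle, $\sup_{m\leq s_n}\absinline{S_{i,j,m}}$ is subgaussian with variance proxy of order $s_n/n^4=\gamma_n^2$, so each $\sup_{m\leq s_n}S_{i,j,m}^2$ is sub-exponential with both mean and $\psi_1$-norm of order $\gamma_n^2$. Since the $\binom{r}{2}$ summands are mutually independent, Bernstein's inequality gives, for every $t_n>0$,
\[
    \Prob{\absinline{\tfrac{1}{r^2}\sum_{(i,j)\in[r]^{(2)}}\sup_{m\leq s_n}S_{i,j,m}^2-\gamma_n^2}>t_n}\leq 2\exp\round{-c\min\set{\tfrac{t_n^2}{r\gamma_n^4},\tfrac{t_n}{\gamma_n^2}}},
\]
for a universal $c>0$; this is essentially the Hanson--Wright display in the statement. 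Choosing $t_n=(4/c)\gamma_n^2\log n$ and restricting to $n\geq\eu^{cr/4}$ forces the linear term $t_n/\gamma_n^2=(4/c)\log n$ to dominate the quadratic term $t_n^2/(r\gamma_n^4)=(4/c)^2(\log n)^2/r$, so the tail probability is at most $2n^{-4}$. Combining with the displayed bound above then yields, on the complementary event, $\enorm{K\round{\widetilde{q}^{(n)}_{r,k+1}}-K\round{q^{(n)}_{r,k}}}^2\leq 4\gamma_n^2+(16/c)\gamma_n^2\log n=e_n$, exactly as claimed.

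The only delicate point is the first step: we need to replace the reflected displacement by a quantity expressed in terms of the unconstrained walk while preserving independence across $(i,j)$. Using Skorokhod Lipschitzness it is natural to dominate by the path supremum rather than by the endpoint, but this costs only universal constants thanks to the reflection principle's identification of the subgaussian order of $\sup_{m\leq s_n}\absinline{S_{i,j,m}}$ with that of $\absinline{S_{i,j,s_n}}$. Everything subsequent is a routine sum-of-independent-sub-exponentials Bernstein tail bound, with the regime $n\geq\eu^{cr/4}$ chosen precisely so that the linear term in the minimum dominates.
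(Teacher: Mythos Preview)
Your proof is correct and follows the same strategy as the paper: strip off the reflection via the Lipschitz property of the Skorokhod map, then apply a Bernstein/Hanson--Wright tail bound to the resulting sum of independent squares of unconstrained walks. The only difference is cosmetic: the paper dominates the reflected displacement directly by the endpoint $S_{i,j,s_n}$ (implicitly using that two-sided reflection on $[0,1]$ is the $1$-Lipschitz sawtooth fold of the line, so $|\widetilde q^{(n)}_{r,k+1,(i,j)}-q^{(n)}_{r,k,(i,j)}|\le |S_{i,j,s_n}|$), whereas you dominate by the running maximum $\sup_{m\le s_n}|S_{i,j,m}|$---which is what the Skorokhod estimate against a constant path genuinely gives---and then invoke the reflection principle to recover the same subgaussian order. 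Your route is slightly more careful at the Skorokhod step at the cost of one extra classical ingredient; both yield the stated bound up to the choice of the universal constant $c$.
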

\begin{proof}
    For every $i,j\in[r]$, let $\round{S_{i,j,k}}_{k\in\Integer_+}$ denote the $1$-dimensional symmetric random walk with step-size $\frac{1}{n^2}$ starting at $0$. Let these random walks be independent up to the double index symmetry for indices $(i,j)\in[r]^{(2)}$. Recall that $s_n = \ceil{\gamma_n^2 n^4}$, and note that  for any $i, j\in [r]$, given $q^{(n)}_{r,k,(i,j)}$ we have
    \[
        \widetilde{q}^{(n)}_{r,k+1,(i, j)}-q^{(n)}_{r,k,(i, j)}\stackrel{\rm d}{=} \Sko{S_{s_n}}.
    \]
    Since the Skorokhod map is $4$-Lipschitz, we conclude that
    \begin{align}
       \Prob{\enorm{K\round{\widetilde{q}^{(n)}_{r,k+1}} - K\round{q^{(n)}_{r,k}}}^2 \geq e_n}&\leq \Prob{\inv{r^2}\sum_{(i,j)\in[r]^{(2)}}\round{S_{i,j,s_n}}^2\geq e_n/4}.\label{eq:skorokhod_probability_upper_bound}
    \end{align}
    We will now show that the quantity $\inv{r^2}\sum_{i,j\in[r]^{(2)}}\round{S_{i,j,s_n}}^2$ is concentrated near its expectation, that is $s_n\cdot \round{\inv{n^2}}^2$. From the Hanson-Wright concentration inequality~\cite{rudelson2013hanson},
    \begin{align}
        &\Prob{ \abs{ \inv{r^2}\sum_{i,j\in[r]^{(2)}}\round{S_{i,j,s_n}}^2 - \gamma_n^2} > t_n} \leq 
        \Prob{ \abs{ \inv{r^2}\sum_{i,j\in[r]^{(2)}}\round{S_{i,j,s_n}}^2 - s_n n^{-4}} > t_n}\nonumber\\
        &\leq 2\exp\round{-c\min\set{\frac{t_n^2}{\round{\inv{n^2}}^4rs_n^2},\frac{t_n}{\round{\inv{n^2}}^2s_n}}}
        \leq 2\exp\round{-c\min\set{\frac{t_n^2}{r\gamma_n^4},\frac{t_n}{\gamma_n^2}}},
    \end{align}
    for every $t_n\geq 0$, for some universal constant $c>0$. Let us consider $t_n\geq r\gamma_n^2$. Then, the above probability becomes $2\exp\round{-ct_n/\gamma_n^2}$. Moreover, for $n\geq \eu^{cr/4}$ if we choose $t_n = (4/c)\gamma_n^2\log n$, we have that for all $(i,j)\in[r]^{(2)}$,
    \[
        \inv{r^2}\sum_{i,j\in[r]^{(2)}}\round{S_{i,j,s_n}}^2 \leq \gamma_n^2 + (4/c)\gamma_n^2\log n,
    \]
    with probability at least $1-2n^{-4}$,
    for $e_n\coloneqq 4\gamma_n^2 + (16/c)\gamma_n^2\log n \leq (32/c)\gamma_n^2\log n$.
\end{proof}

\begin{lemma}\label{lem:Drift_AwayfromBoundary}
Let $\epsilon>0$ be fixed, $e_n$ be as defined in Lemma~\ref{lem:upper_bound_quadratic_error}, and let $q^{(n)}_{r, \ell}\in A_{\epsilon}$ where $A_\epsilon$ is defined in~\eqref{eq:A_eps}. Then, 
\[
    \normF{\E{\Delta q^{(n)}_{r, \ell}\given \Fcal_{\ell}} - \gamma_n r^{-4}b_{r}\round{q^{(n)}_{r, \ell}}}^2\leq \frac{Cr^2}{n^4}+  4r^2\beta_{n, r}^2e_n^3\max\set{\lambda,L}+2r^2\overline{\Phi}\round{\frac{\epsilon}{4r^{-2}\sigma\sqrt{\gamma_n}}},
\]
with probability at least $1-\frac{2}{n^4}-\frac{2r^2}{n^{4}}-4r^2\overline{\Phi}\round{\frac{\epsilon}{4r^{-2}\sigma\sqrt{\gamma_n}}}$.
\end{lemma}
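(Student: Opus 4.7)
The plan is to chain three approximations, mirroring exactly the heuristic outlined in Section~\ref{sec:relax_Metro}: first discard the post-accept/reject relaxation step, then linearize the Metropolis acceptance probability via the Fr\'echet-like derivative, and finally replace the base-chain random-walk increment by a Gaussian matrix. Once we are down to a Gaussian expectation, Lemma~\ref{lemma:ExplicitDrift} (applied with $t=\beta_{n,r}\gamma_n=\beta r^{-2}$ and $v=\nabla H_r(q^{(n)}_{r,\ell})$) evaluates it explicitly and identifies it with $\gamma_n r^{-4}b_r(q^{(n)}_{r,\ell})$.

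For Step 1, I directly invoke Lemma~\ref{lem:DeltaTildereduction} on the set $\{q^{(n)}_{r,\ell}\in A_\epsilon\}$; this gives $\normF{\E{\Delta q^{(n)}_{r,\ell}\mid \Fcal_\ell}-\E{\widetilde{\Delta} q^{(n)}_{r,\ell}\mid \Fcal_\ell}}^2\le 2(r^2/n^4+2r^2\overline{\Phi}(\epsilon/(4r^{-2}\sigma\sqrt{\gamma_n})))$ at the cost of probability $r^2/n^4+2r^2\overline{\Phi}(\cdots)$, which accounts for the final $\overline{\Phi}$ contribution in the target bound. For Step 2, condition on the event of Lemma~\ref{lem:upper_bound_quadratic_error} (extra failure probability $2n^{-4}$) so that $\enorm{K(\widetilde q)-K(q)}^2\le e_n$. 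By Assumption~\ref{asmp:hamil},
\[
    \babs{\Hcal(K(\widetilde q))-\Hcal(K(q))-\inner{D\Hcal(K(q)),K(\widetilde q-q)}}\le \tfrac{1}{2}\max\{\abs{\lambda},L\}\,e_n,
\]
and since $\abs{e^{-a^+}-e^{-b^+}}\le \abs{a-b}$ and $\inner{D\Hcal(K(q)),K(\cdot)}=\inner{\nabla H_r,\cdot}_F$, the two acceptance exponentials differ by at most $\tfrac{1}{2}\beta_{n,r}\max\{\abs{\lambda},L\}\,e_n$. Applying Jensen and Cauchy--Schwarz to the product $(\widetilde q-q)\cdot(\text{exponential difference})$, and using $\|\widetilde q-q\|_F^2\le r^2 e_n$ on the good event, yields the $4r^2\beta_{n,r}^2 e_n^3\max\{\abs{\lambda},L\}$ contribution (one factor of $e_n$ from the linearization squared, one factor from the $L^2$ moment of $\widetilde q-q$ on the good event).

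For Step 3, I would couple the base-chain increment $\widetilde q-q$ with $\gamma_n Y_r$ (symmetric matrix of i.i.d.\ standard Gaussians) via the KMT embedding of a simple symmetric random walk of $s_n=\gamma_n^2 n^4$ steps of size $1/n^2$. On $\{q^{(n)}_{r,\ell}\in A_\epsilon\}$, Lemma~\ref{lem:Hitting} implies that the reflected walk agrees with the unreflected one with probability $1-r^2/n^4-2r^2\overline{\Phi}(\epsilon/(4r^{-2}\sigma\sqrt{\gamma_n}))$, so this step contributes another copy of the $\overline{\Phi}$ failure probability and an additional $Cr^2/n^4$ Wasserstein error. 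Since the map $w\mapsto w\,\exp(-\beta_{n,r}\inner{\nabla H_r,w}_F^+)$ is Lipschitz (the derivative is bounded once we use that $\norm{\infty}{D\Hcal}$ is finite under Assumption~\ref{asmp:hamil}), the coupling passes to the conditional expectations and produces the $Cr^2/n^4$ term in the final bound. Assembling the three steps and summing the failure probabilities gives the stated $1-2/n^4-2r^2/n^4-4r^2\overline{\Phi}(\cdots)$ (the factor $4r^2$ in front of $\overline{\Phi}$ arises from the boundary-hit estimate being invoked twice, once in Step 1 and once in Step 3).

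The main obstacle is bookkeeping: the final bound must be obtained on the intersection of three good events (no large jump, no boundary hit during relaxation, no boundary hit during the base chain), and the failure probabilities must combine to exactly what the statement claims. The subtlest technical point is the appearance of $e_n^3\max\{\abs{\lambda},L\}$ in Step 2 rather than the naive $e_n^2\max\{\abs{\lambda},L\}^2$ obtained by a blind Cauchy--Schwarz; the cleanest route is to first apply Jensen to $\|\E{(\widetilde q-q)g\mid \Fcal_\ell}\|_F^2\le \E{\|\widetilde q-q\|_F^2\,g^2\mid \Fcal_\ell}$ and then extract one power of $e_n$ from the $L^2$ moment bound $\|\widetilde q-q\|_F^2\le r^2 e_n$ on the good event and another from the squared linearization error $g\le \tfrac{1}{2}\beta_{n,r}\max\{\abs{\lambda},L\}e_n$.
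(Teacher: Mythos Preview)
Your proposal is correct and follows essentially the same three-step decomposition as the paper's proof: (i) discard the relaxation step via Lemma~\ref{lem:DeltaTildereduction}, (ii) linearize the acceptance probability using Assumption~\ref{asmp:hamil} on the good event of Lemma~\ref{lem:upper_bound_quadratic_error}, and (iii) replace the reflected random-walk increment by a Gaussian and read off $\gamma_n r^{-4}b_r$ from Lemma~\ref{lemma:ExplicitDrift}. The only cosmetic difference is that you invoke KMT for the walk-to-Gaussian coupling in Step~3 while the paper cites Berry--Esseen; either yields the $Cr^2/n^4$ Wasserstein error, and the Lipschitzness of $Y\mapsto Y\exp(-\beta_{n,r}\inner{\nabla H_r,Y}_{\rm F}^+)$ transfers it to the expectations in both versions.

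One small remark on your Step~2 bookkeeping: the Jensen route you describe actually produces $\tfrac14 r^2\beta_{n,r}^2\max\{\abs{\lambda},L\}^2 e_n^3$ (two powers of $e_n$ come from $g^2$, one from $\normF{\widetilde q-q}^2\le r^2 e_n$), not $\max\{\abs{\lambda},L\}^1$; the single power in the displayed statement is a harmless constant discrepancy and the paper's own computation carries the same ambiguity.
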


    \begin{proof}

Let $I\coloneqq \enorm{K\left(\widetilde{q}^{(n)}_{r, \ell+1}\right) - K\left(q^{(n)}_{r, \ell}\right)}^2$ and let $A_{n, \ell}$ be the event that $\{I\leq e_n\}$. In the following we will work on this event. Set 
\[
    J\coloneqq \frac{\exp\round{-\beta_{n, r}\squarebrack{\hamil\round{K\round{\widetilde{q}^{(n)}_{r, \ell}}} - \hamil\round{K\round{q^{(n)}_{r, \ell}}}}^+}}{\exp\round{-\beta_{n, r}\inner{D \hamil\round{K\round{q^{(n)}_{r, \ell}}}, K\left(\widetilde{q}^{(n)}_{r, \ell+1}\right) - K\left(q^{(n)}_{r, \ell}\right)}^+}},
\]
From our assumption on $\round{\gamma_n}_{n\in\Natural}$, we have that for sufficiently large $n$, $\beta \gamma_n\log^2 n\leq 1$. Notice that by Assumption~\ref{asmp:hamil}, we have
\[
    1-2\beta_{n,r}\lambda I \leq \exp\round{-\beta_{n,r}\lambda I} \leq J \leq \exp\round{\beta_{n,r}LI} \leq 1+2\beta_{n,r}LI,
\]
if $\beta_{n,r}\lambda I, \beta_{n,r}L I\leq 1$, i.e., when $n$ is sufficiently large.
Define $b^{(n)}_r$ at $q^{(n)}_{r, \ell}$ as
\begin{align*}
    \E{\round{\widetilde{q}^{(n)}_{r, \ell+1}-q^{(n)}_{r, \ell}}\exp\round{-\beta_{n, r}\inner{D \hamil\round{K\round{q^{(n)}_{r, \ell}}}, K\left(\widetilde{q}^{(n)}_{r, \ell+1}\right) - K\left(q^{(n)}_{r, \ell}\right)}^+}\given \Fcal_k}.
\end{align*}
Then, on the event $A_{n, \ell}$ we have 
$\normF{\E{\widetilde{\Delta} q^{(n)}_{r, \ell} \given \mcal{F}_k} -{b}^{(n)}_r\round{q^{(n)}_{r, \ell}}}^2 \leq 4r^2\beta_{n, r}^2e_n^3\max\set{\lambda,L}$.


Let $E_{n,\ell}$ be the event as in the  proof of Lemma~\ref{lem:Hitting}. On this event, we have
\begin{equation}\label{eqn:Delta_Delta}
   \normF{ \E{\Delta q^{(n)}_{r, \ell}\given \Fcal_{\ell}}-\E{\widetilde{\Delta} q^{(n)}_{r, \ell}\given \Fcal_{\ell}}}^2\leq C\round{\frac{r^2}{n^4}+2r^2\overline{\Phi}\round{\frac{\epsilon}{4r^{-2}\sigma\sqrt{\gamma_n}}}}.
\end{equation}
Moreover, on the event $E_{n,\ell}$ we also have that given $\Fcal_{k}$, the coordinates of the $r\times r$ symmetric matrix $\round{\widetilde{q}^{(n)}_{r, \ell+1}-q^{(n)}_{r, \ell}}$ are i.i.d. and have the same distribution as $\widetilde{S}_{s_n}$.

Let $\widetilde{Y}_n$ be $r\times r$ matrix with independent entries such that $\widetilde{Y}_{n,(i, j)}$ be increment of the symmetric random walk (without reflection) of step-size $n^{-2}$ starting from $q^{(n)}_{r,\ell,(i,j)}$ run for $s_n=\ceil{\gamma_n^2n^{4}}$ steps. Let $B_r$ be an $r\times r$ symmetric matrix of standard Brownian motions. On the event $E_{n, \ell}\cap A_{n, \ell}$, we use the Berry-Esseen lemma (see~\cite[Theorem 16]{petrov2012sums}) with a union bound to obtain
   $\W_2^{2}\round{\widetilde{Y}_n, B_r(\gamma_n^2)}\leq \frac{Cr^2}{n^4}$,
for some universal constant $C>0$.

Let $\nabla H_r\round{q^{(n)}_{r, \ell}}=V$. Define a function $G(Y)\coloneqq Y\exp\round{-\beta_{n, r}\inner{V, Y}_{\rm F}^{+}}$. Note that $G$ is a bounded Lipschitz function of $Y$. Observe that 
$b^{(n)}_{r}\round{q^{(n)}_{r, \ell}} = \E{G\round{\widetilde{Y}_n}}$.
On the other hand, we know that $\gamma_n r^{-4}b_{r}\round{q^{(n)}_{r, \ell}} = \E{G(B_r(\gamma_n))}$.
We conclude that on the event $E_{n, \ell}\cap A_{n, \ell}$ we have 
\begin{align*}
    \normF{\E{\widetilde{\Delta} q^{(n)}_{r, \ell}\given \Fcal_{\ell}} - \gamma_n r^{-4}b_{r}\round{q^{(n)}_{r, \ell}}}^2 \leq \frac{Cr^2}{n^4}+  4r^2\beta_{n, r}^2e_n^3\max\set{\lambda,L}.
\end{align*}
The conclusion follows by using~\eqref{eqn:Delta_Delta} and noticing that the
\[
    \Prob{E_{n, \ell}\cap A_{n, \ell}}\geq 1-\frac{2}{n^4}-\frac{2r^2}{n^{4}}-4r^2\overline{\Phi}\round{\frac{\epsilon}{4r^{-2}\sigma\sqrt{\gamma_n}}}.
\]
\end{proof}

\bibliographystyle{alpha} 
\bibliography{references}


\end{document}